\documentclass{article}
\usepackage[T1]{fontenc}
\usepackage[latin9]{inputenc}
\usepackage[tmargin=4cm,bmargin=4cm,lmargin=3.2cm,rmargin=3.2cm]{geometry}
\usepackage{amsmath,amsthm,amssymb}
\usepackage{mathrsfs}
\usepackage{tikz-cd}
\usepackage[all]{xy}
\usepackage{xcolor}
\usepackage[pdfusetitle,bookmarks=true,colorlinks=true,
            linkcolor=purple,citecolor=magenta]{hyperref}

\DeclareFontFamily{U}{mathx}{\hyphenchar\font45}
\DeclareFontShape{U}{mathx}{m}{n}{
      <5> <6> <7> <8> <9> <10>
      <10.95> <12> <14.4> <17.28> <20.74> <24.88>
      mathx10
      }{}
\DeclareSymbolFont{mathx}{U}{mathx}{m}{n}
\DeclareFontSubstitution{U}{mathx}{m}{n}
\DeclareMathAccent{\widecheck}{0}{mathx}{"71}

\theoremstyle{plain}
\newtheorem{thm}{Theorem}[subsection]
\newtheorem{prop}[thm]{Proposition}
\newtheorem{lem}[thm]{Lemma}
\newtheorem{cor}[thm]{Corollary}
\newtheorem{conjecture}[thm]{Conjecture}
\theoremstyle{definition}
\newtheorem{defn}[thm]{Definition}
\newtheorem{example}[thm]{Example}
\newtheorem{property}[thm]{Property}
\theoremstyle{remark}
\newtheorem{rem}[thm]{Remark}

\providecommand{\bysame}{\leavevmode\hbox to3em{\hrulefill}\thinspace}

\title{The Categorical Fargues-Scholze Correspondence at Generic Parameters}
\author{Nikolas Castro}
\date{}

\begin{document}

\maketitle

\begin{abstract}

Let $G$ be a quasi-split reductive group over a finite extension $F$ of $\mathbb{Q}_p$.
We study the Fargues-Scholze categorical Langlands correspondence at generic parameters $\phi$, namely those whose semisimplification does not coincide with that of any parameter with nontrivial monodromy.
Extending the theory of quasi-exceptional sets on the nilpotent cone developed by Bezrukavnikov to the setting of disconnected reductive groups which arise as stabilizers of semisimple generic parameters, to any irreducible representation $\rho$ of such a group we construct sheaves labeled $\overline{\nabla}_\rho$, $\overline{\Delta}_\rho$, $IC_\rho$, and $\delta_\rho$ on the moduli stack of L-parameters for $G$, which were conjectured to exist by Hansen.
Using forthcoming work of Hamann, Hansen, and Mann on geometric Eisenstein series compatibility, the (conjectural, in general) behavior of the correspondence at generous parameters, and the (conjectural, in general) agreement between Fargues-Scholze and classical semisimple L-parameters we prove that these sheaves correspond under the Fargues-Scholze functor to the sheaves $i_{b!}\pi$, $i_{b\sharp}\pi$, $i_{b\sharp!}\pi$, and $i_{b!*}\pi$ on $\text{Bun}_G$, where $\pi$ is a smooth irreducible representation with Fargues-Scholze parameter $\phi$.
As applications we prove t-exactness properties of the Fargues-Scholze functor specialized at a generic parameter, and that Hecke operators are perverse t-exact on the generic locus, as conjectured by Hansen and Hamann-Lee.
We also construct and study foundational properties of Hecke eigensheaves at generic parameters, compute non-semisimplified Fargues-Scholze parameters on the generic locus, and construct a Fargues-Scholze-theoretic version of the BM-O bijection at generic parameters.

\end{abstract}

\tableofcontents{}

\section{Introduction}

Let $G$ be a quasi-split reductive group over a finite extension $F$ of $\mathbb{Q}_p$ with residue field $\mathbb{F}_q$.  Let $\ell$ be a prime distinct from $p$, and let $\Lambda$ be a $\mathbb{Z}_\ell$-algebra.
We denote by $W_F$ and $I_F$ the Weil group of $F$ and its inertia subgroup, respectively.  
When $\Lambda=\overline{\mathbb{Q}}_\ell$, the classical local Langlands correspondence roughly predicts a bijection between smooth irreducible $\overline{\mathbb{Q}}_\ell$-representations of $G(F)$ and Weil-Deligne parameters of $G$ enhanced with extra data (see for example \cite{KalLLC}).
To every element $b$ in the Kottwitz set $ B(G)$ of $G$, there is an associated inner form $G_b$ of a Levi subgroup $G_{\{b\}}$ of $G$, with $G_{\{b\}}$ being $G$ itself exactly when $b$ is basic.
In the early 90s, Vogan \cite{Vog93} realized that to understand non-quasi-split groups, on the representation-theoretic side of the local Langlands correspondence one should not study $G$ in isolation but rather $G$ together with (some of) its inner forms.
There are different approaches to carrying out this program precisely, see \cite{KalLLC} for an overview, but the approach most relevant to the present paper parametrizes extended pure inner forms of $G$ by the set of basic elements of the Kottwitz set, $B(G)^{\text{bas}}$.
A further enhancement of this approach introduced in \cite{BMO} equips $\phi$ with an irreducible representation $\rho$ of its stabilizer  in $\hat G$, denoted $S_\phi$, and one of the primary results of that paper proves that if the $B(L)^{\text{bas}}$-parametrized local Langlands conjecture holds for all Levi subgroups $L$ of $G$, then there is a natural correspondence between pairs $(\phi,\rho)$ and $(b,\pi)$, where $\pi$ is a smooth irreducible $\overline{\mathbb{Q}}_\ell$-representation of $G_b(F)$, and $b$ is now \textit{any} element of $B(G)$.

In another recent development, inspired by the geometric Langlands correspondence for algebraic curves over a characteristic $0$ algebraically closed field, Fargues and Scholze \cite{FS} have built foundations for a sweeping categorical form of the local Langlands correspondence for quasi-split reductive groups $G$ which even makes sense for integral coefficients, though in this paper we will focus on $\overline{\mathbb{Q}}_\ell$-coefficients.
To describe their work, we let $\text{Par}_G$ denote the moduli stack of $\overline{\mathbb{Q}}_\ell$-valued L-parameters for $G$, and let $\text{Bun}_G$ denote the moduli stack of $G$-bundles on the Fargues-Fontaine curve.
Fargues and Scholze construct a category of $\ell$-adic sheaves on $\text{Bun}_G$, $D^{\text{lis}}(\text{Bun}_G,\overline{\mathbb{Q}}_\ell)$, as well as a spectral action of $\text{Perf}(\text{Par}_G)$ on $D^{\text{lis}}(\text{Bun}_G,\overline{\mathbb{Q}}_\ell)$, which 
they use to construct a functor $c_\psi:D^{\text{lis}}(\text{Bun}_G,\overline{\mathbb{Q}}_\ell)\to \text{QCoh}(\text{Par}_G)$, depending only on choice of Whittaker data $\psi$ for $G$, which they conjecture induces an equivalence $ D^{\text{lis}}(\text{Bun}_G,\overline{\mathbb{Q}}_\ell)^\omega\cong \text{Coh}^{b,\text{qc}}(\text{Par}_G)$ between the compact objects of $D^{\text{lis}}(\text{Bun}_G,\overline{\mathbb{Q}}_\ell)$ and the bounded derived category of coherent sheaves on $\text{Par}_G$ with quasi-compact support.
Taking ind-completions of both sides induces a conjectural equivalence $\textbf{L}_\psi:D^{\text{lis}}(\text{Bun}_G,\overline{\mathbb{Q}}_\ell)\ \tilde{\to}\ \text{IndCoh}(\text{Par}_G)$.

In groundbreaking recent work \cite{HM}, Hansen and Mann prove this functor is an equivalence for $\text{GL}(n)$, subject to a compatibility result between $c_\psi$ and geometric Eisenstein series which is an ongoing project of Hamann, Hansen, and Mann.  
By \cite[Corollary 1.5]{HSX} the Hansen-Mann method, conditional on the same geometric Eisenstein compatibility result, implies the categorical local Langlands correspondence for many other groups for which classical local Langlands is well understood.
\newline

For any $b\in B(G)$, let $D(G_b(F), \overline{\mathbb{Q}}_\ell)$ denote the symmetric monoidal derived $\infty$-category of smooth $\overline{\mathbb{Q}}_\ell$-representations of $G_b(F)$.  
In \cite{FS}, it is shown that $D(G_b(F), \overline{\mathbb{Q}}_\ell)$ embeds fully faithfully into $D^{\text{lis}}(\text{Bun}_G,\overline{\mathbb{Q}}_\ell)$ in a canonical way.
In fact, the points of $\text{Bun}_G$ are exactly $B(G)$, and shriek pushforward along the inclusion $i_b:\text{Bun}_G^b\hookrightarrow \text{Bun}_G$ induces an equivalence between $D(G_b(F), \overline{\mathbb{Q}}_\ell)$ and the full subcategory of complexes in $D^{\text{lis}}(\text{Bun}_G,\overline{\mathbb{Q}}_\ell)$ supported on $b$ \cite[Proposition VII.7.1]{FS}.
Fargues and Scholze, using this embedding along with the spectral action, are able to construct a semisimple local Langlands map $(b,\pi)\to\phi_\pi^{\text{FS}}$ from pairs $(b,\pi)$ where $b\in B(G)$ and $\pi$ is a smooth irreducible representation of $G_b(F)$, to semisimple L-parameters \cite[Definition IX.0.4]{FS}.
It has been proven in many cases \cite{BMHN} \cite{CF} \cite{{DvHKZ26}} \cite{FS} \cite{HamGanTak} \cite{Pen} that this map agrees with the semisimplification of classical local Langlands correspondences.

On the spectral side of the correspondence, an ordered pair $(\phi,\rho)$ as appearing in the BM-O bijection can be associated to the sheaf $i_{\phi*}\rho\in\text{Coh}(\text{Par}_G)$, where $i_\phi:BS_\phi\hookrightarrow \text{Par}_G$ is the natural inclusion.
So we seemingly have the (conjectural, in general) BM-O bijection appearing in the Fargues-Scholze categorical correspondence.

But it is in fact not always the case that $c_\psi(i_{b!}\pi)=i_{\phi*}\rho$; in general more complicated sheaves will be involved.  To go into more detail we stratify $\text{Par}_G$ using the behavior of its fibers along the coarse moduli map $q:\text{Par}_G\to\text{Par}_G^{\text{spec}}$.
By \cite[Proposition VIII.3.2]{FS} and independently \cite[Theorem 4.13]{DHKM} the closed points of $\text{Par}_G^{\text{spec}}$ correspond to \textit{semisimple} L-parameters (Definition \ref{ssLparam}).
For any closed point $\phi^{\text{ss}}\in\text{Par}_G^{\text{spec}}$, by Proposition \ref{fibstruct} (see also \cite[Proposition 1.3.1]{BeijingNotes}) $q^{-1}(\phi^{\text{ss}})^{\text{red}}$ 
consists of the variety of ordered pairs $(u,N)\in \mathcal{U}_{S_{\phi^{\text{ss}}}}\times \hat{\mathfrak{g}}^{\text{ad}\phi(I_F)}$ such that $\text{ad}(\text{Fr})\cdot N = q^{-1}N$ and $\text{ad}(u)\cdot N = N$, quotiented (stack-theoretically) by $S_{\phi^{\text{ss}}}$ under simultaneous conjugation, where $\text{Fr}$ denotes geometric Frobenius,
$\mathcal{U}_{S_{\phi^{\text{ss}}}}$ is the unipotent variety of $S_{\phi^{\text{ss}}}$, and $\hat{\mathfrak{g}}$ is the Lie algebra of $\hat{G}$, the dual group of $G$.
We say a point $\phi$ in $\text{Par}_G$ is \textit{generous} if the reduced fiber of $q$ above $q(\phi)$ is $BS_\phi$.
In other words, $\phi$ is semisimple and no other $L$ parameters have $\phi$ as their semisimplification.
More generally, $\phi$ is \textit{generic} if the reduced fiber of $q$ above $q(\phi)$ is isomorphic to $\mathcal{U}_{S_{q(\phi)}}/S_{q(\phi)}$.
In other words, $\phi$ does not share a semisimplification with any L-parameter with nontrivial monodromy.
These two conditions were introduced in Hansen's Beijing notes \cite[Definition 2.1.5, Proposition 2.3.1]{BeijingNotes}, and Hamann-Lee introduced the conditions of Langlands-Shahidi type/weakly Langlands-Shahidi type \cite[Definition 6.2]{HL} which are respectively equivalent (and moreover behave well for torsion coefficients, where the two sets of definitions are no longer equivalent in general).
Prior to these Hamann \cite[Definition 3.6]{HamEis} defined the notion of a generic toral parameter, which agrees with Hansen's notion of a generous toral parameter (as opposed to Hansen's notion of a generic toral parameter).

We illustrate these conditions with a concrete example:
\begin{example}

Consider $G=\hat G=\text{GL}(n)$, and let $\phi$ be a semisimple L parameter.
Then there exist positive integers $n_1,...,n_k$ such that $n=\sum\limits_{i=1}^kn_i$ and $\phi = \bigoplus\limits_{i=1}^k\phi_i$, where each $\phi_i$ is a \textit{supercuspidal} parameter of $\text{GL}(n_i)$, meaning it is semisimple and does not factor through any proper Levi subgroups of $\text{GL}(n_i)$.  Then $\phi$ is generic exactly when $\phi_i\ncong \phi_j(1)$ for all $i\neq j$ where $(1)$ denotes tensoring with the $\ell$-adic cyclotomic character, and $\phi$ is generous exactly when it is generic and furthermore $\phi_i\ncong \phi_j$ for all $i\neq j$.
\end{example}

At generous parameters the Fargues-Scholze categorical local Langlands conjecture is expected to be quite well behaved.
To start, we have the following conjecture, stated roughly:

\begin{conjecture}\label{generousstalks}(Compare \cite[Conjecture 2.1.8(i)]{BeijingNotes})
Let $\phi$ be a generous parameter of a quasi-split reductive group $G$.
Then for all $b\in B(G)$ and smooth irreducible representations $\pi$ of $G_b(F)$ with Fargues-Scholze parameter $\phi$, there exists an irreducible representation $\rho$ of $S_\phi$ such that $c_{\psi}(i_{b!}\pi)=i_{\phi*}(\rho)$.  Furthermore, the map $(b,\pi)\mapsto(\phi,\rho^\vee)$ agrees with the BM-O bijection coming from classical local Langlands.
\end{conjecture}

\noindent See Conjecture \ref{GenerousConj} for the precise statement.  
So far this is known for $G=\text{GL}(n)$ \cite[Theorem 8.6(1)]{Zou25} and tori \cite{Zou22}.  Linus Hamann has also informed the author that this conjecture is likely within reach for some classical groups such as unitary groups.

When $\phi$ is semisimple the stabilizer of $\phi$ in $\hat G$ is a (possibly disconnected) reductive group $S_\phi$ by Proposition \ref{ssLparamprop}.
So when $\phi$ is generous (and therefore semisimple) Conjecture \ref{generousstalks} together with the categorical conjecture imply that the abelian category of finite (see Section \ref{FSSect}) lisse $\overline{\mathbb{Q}}_\ell$-sheaves on $\text{Bun}_G$ with Fargues-Scholze parameter $\phi$ is also semisimple.
So in particular if $\pi$ and $\pi'$ are smooth irreducible representations of $G_b(F)$ and $G_{b'}(F)$ with generous Fargues-Scholze parameter $\phi$ for $b,b'\in B(G)$, then there are no nontrivial relations between sheaves of the form $i_{b!}\pi$ and $i_{b'!}\pi'$ when $b\neq b'$, or (in the case $b=b'$) when $\pi\ncong\pi'$.

Fargues and Scholze prove a semiorthogonal decomposition of $D^{\text{lis}}(\text{Bun}_G)$:

\begin{thm}\cite[Proposition VII.7.3]{FS}
$D^\textup{lis}(\textup{Bun}_G,\overline{\mathbb{Q}}_\ell)$ has a semiorthogonal decomposition indexed by $B(G)$ into the categories $D^\textup{lis}(\textup{Bun}^b_G,\overline{\mathbb{Q}}_\ell)$ supported at a single point $b\in B(G)$.
\end{thm}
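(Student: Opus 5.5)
The plan is to derive the decomposition from the stratification of $\text{Bun}_G$ by the Harder--Narasimhan strata $\text{Bun}_G^b$, using the standard recollement formalism for an open--closed decomposition and inducting along a suitable order on $B(G)$. Recall from \cite{FS} that the underlying topological space $|\text{Bun}_G|$ is identified with $B(G)$ with the order topology coming from the partial order on Newton points (and Kottwitz invariants). Each $\text{Bun}_G^b$ is locally closed, the basic locus is open, and every quasi-compact open substack meets only finitely many strata. Fix a total order refining the partial order, with $b\prec b'$ whenever $\text{Bun}_G^b$ lies in the closure of $\text{Bun}_G^{b'}$ is false in the appropriate direction. The aim is that, for $b\neq b'$ ordered appropriately, $\text{Hom}(i_{b!}A, i_{b'!}B)=0$ or the analogous vanishing with $*$-pushforwards. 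Since the semiorthogonal decomposition is indexed by $B(G)$, I will state it as: the essential images of the fully faithful functors $i_{b!}$ (or equivalently $i_{b*}$ from $D^{\text{lis}}(\text{Bun}_G^b)$) generate, with Hom vanishing in one direction along the order.

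The first step is the single open/closed case. Let $j:U\hookrightarrow \text{Bun}_G$ be an open union of strata and $i:Z\hookrightarrow\text{Bun}_G$ its closed complement. I will verify that $D^{\text{lis}}$ satisfies the six-functor recollement axioms for this pair. That means $j^*$ has fully faithful adjoints $j_!$ and $j_*$, $i_*=i_!$ is fully faithful with adjoints $i^*$ and $i^!$, $i^*j_!=0$, and there are triangles $j_!j^*\to\text{id}\to i_*i^*$. In \cite{FS} this is done for $D_{\text{ét}}$, and one checks that the lisse subcategories are preserved. That preservation is exactly what \cite[Proposition VII.7.1]{FS} together with the compatibility of $D^{\text{lis}}$ with excision provides. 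The recollement then gives a semiorthogonal decomposition $\langle i_*D^{\text{lis}}(Z),\, j_!D^{\text{lis}}(U)\rangle$, since $\text{Hom}(j_!A,i_*B)=\text{Hom}(A,j^*i_*B)=0$.

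The second step is to iterate. For a quasi-compact open $V\subset \text{Bun}_G$ the number of strata is finite, so repeatedly peeling off an open stratum (or a closed one) gives a finite semiorthogonal decomposition of $D^{\text{lis}}(V)$ into the pieces $D^{\text{lis}}(\text{Bun}_G^b)$, $b\in|V|$. Each piece is identified with $D(G_b(F),\overline{\mathbb{Q}}_\ell)$ via \cite[Proposition VII.7.1]{FS}.

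Finally, I pass to all of $\text{Bun}_G$ by writing $D^{\text{lis}}(\text{Bun}_G)=\lim_V D^{\text{lis}}(V)$ over quasi-compact opens, with transition maps given by restriction. The restrictions are compatible with the finite decompositions because they kill exactly the strata outside the smaller open. The Hom vanishing between $i_{b!}$-images for different $b$ can then be checked on any quasi-compact open containing both strata. Generation follows because an object all of whose $i_b^*$ (or $i_b^!$) vanish restricts to zero on every $V$, hence is zero.

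The main obstacle I expect is this infinite-stratum passage: ensuring generation and not merely Hom vanishing. That in turn requires that the relevant functors commute with the limit, i.e.\ that $i_{b!}$ from a stratum into $\text{Bun}_G$ is well defined on the whole stack and compatible with restriction to opens. I would handle this using the fact that each stratum $\text{Bun}_G^b$ is contained in a quasi-compact open, together with base change for $j^*$.
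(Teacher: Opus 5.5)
The paper gives no proof of this statement; it only quotes \cite[Proposition VII.7.3]{FS}. Your architecture is the standard one: excision for an open/closed pair, peeling off a closed stratum inside a quasi-compact open, then passing to the union. But the step carrying all the content is Step~1, and there you assert what has to be proved.

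In the lisse formalism, the operations that visibly preserve lisse objects are pullbacks and the left adjoints $f_\sharp$ of pullback along cohomologically smooth $f$ (e.g.\ $j_!$ for open $j$). Right adjoints such as $i_*$, $i^!$, $j_*$ on $D^{\text{lis}}$ exist abstractly, but they are not a priori the ambient étale/solid functors. This fits the paper's remark that $i_{b\sharp}$ is built by hand in \cite{FS} and $i_{b!}$, $i_b^!$ only later by Hansen.

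Take a stratum $b$ closed in a quasi-compact open $U$, with open complement $j\colon V\hookrightarrow U$. What you need is that the lisse objects on $U$ killed by $j^*$ are exactly the ambient $i_*$ of objects of $D^{\text{lis}}(\text{Bun}^b_G)\simeq D(G_b(F))$.
\begin{itemize}
\item Full faithfulness of $i^*$ on that subcategory is formal from ambient excision, since $A\simeq i_*i^*A$.
\item Essential surjectivity is not formal. It says that every smooth representation of $G_b(F)$ is $i^*$ of some lisse sheaf on $U$, which is a statement about the geometry of $\text{Bun}_G$ near the stratum.
\end{itemize}
The identification $D^{\text{lis}}(\text{Bun}^b_G)\simeq D(G_b(F))$ of the stratum alone does not give this. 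Your appeal to ``compatibility of $D^{\text{lis}}$ with excision'' is just a restatement of the claim.

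The missing input is \cite[Proposition VII.7.2]{FS}: the functor $i_{b\sharp}$, constructed from the cohomologically smooth charts $\pi_b\colon\mathcal{M}_b\to\text{Bun}_G$, satisfies $i_b^*i_{b\sharp}\simeq\mathrm{id}$. With it, for $B\in D(G_b(F))$ let $C$ be the restriction of $i_{b\sharp}B$ to $U$. Then $C$ is lisse with $i^*C\simeq B$, so $\mathrm{cofib}(j_!j^*C\to C)$ is lisse and equals $i_*B$ by ambient excision. Hence $i_*$ preserves lisse objects with $i^*i_*\simeq\mathrm{id}$, and your recollement and $D^{\text{lis}}(U)=\langle i_*D(G_b(F)),\,j_!D^{\text{lis}}(V)\rangle$ follow. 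Alternatively, you can use $\langle j_!D^{\text{lis}}(V),\,i_{b\sharp}D(G_b(F))\rangle$ directly, since $\mathrm{Hom}(i_{b\sharp}A,j_!B)=\mathrm{Hom}(A,i^*j_!B)=0$.

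Two smaller points.
\begin{itemize}
\item The essential images of $i_{b!}$ and $i_{b*}$ are in general different subcategories: the $!$-image consists of objects supported on the stratum, the $*$-image does not. So they give two different decompositions, with opposite orderings, not one ``equivalently.''
\item The infinite passage you single out as the main obstacle is the easy part. One has $A\simeq\operatorname{colim}_V j_{V!}j_V^*A$ over quasi-compact opens $V$, and each term is a finite extension of objects $i_{b!}(-)$ with $b\in|V|$.
\end{itemize}
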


While this decomposition is not orthogonal in general, as discussed above it is expected to be orthogonal after specializing to a generous L-parameter.
In this case another closely related conjecture of Hansen follows:

\begin{conjecture}\cite[Conjecture 2.1.8(ii)]{BeijingNotes}\label{identicalpushforwards}
If $\pi$ is a smooth irreducible $\overline{\mathbb{Q}}_\ell$-representation of $G_b(F)$ with generous Fargues-Scholze parameter then $i_{b\sharp}\pi\cong i_{b!}\pi\cong i_{b*}\pi$.
    Here $i_{b\sharp}$ is the left adjoint of $i_b^*$ constructed in \cite[Proposition VII.7.2]{FS}.
\end{conjecture}

Again this is known for $\text{GL}(n)$ \cite[Theorem 8.6(2)]{Zou25}.
Similar conjectures about the behavior of the categorical correspondence at a generous parameter were made by Hamann and Lee \cite[Conjecture 6.4]{HL}.
\newline

\noindent With the shape of the categorical correspondence at generous parameters relatively well understood (conjecturally, in general), the goal of this paper is to lay foundations for the study of the categorical correspondence at generic parameters, with a particular eye towards introducing the objects needed to precisely formulate an analogue of Conjecture \ref{generousstalks}.
More specifically, we seek to understand $c_\psi(i_{b\sharp}\pi), c_\psi(i_{b!}\pi)$, and related sheaves when the Fargues-Scholze parameter of $\pi$ is generic.
So let $\phi$ from now on be a semisimple generic parameter with centralizer $S_\phi$.
By Proposition \ref{ssLparamprop} $S_\phi$ has reductive identity component.
For each irreducible representation $\rho$ of $S_\phi$, in Section \ref{SpringerSect} we construct sheaves $\overline{\nabla}_\rho$, $\overline\Delta_\rho$, $IC_\rho$, and $\delta_\rho$ on $\mathcal{U}_{S_\phi}/S_\phi$.
Our main theorem is then, roughly: 
\begin{thm} (Theorem \ref{MainThm})\label{IntroMainThm}
Let $\pi$ be a smooth irreducible representation of $G_b(F)$ with generic Fargues-Scholze parameter $\phi$. 
Suppose that the Fargues-Scholze parameter of $\pi$ agrees with its classical semisimple local Langlands parameter, Conjecture \ref{generousstalks} is known for all Levi subgroups of $G$, $c_\psi$ is compatible with geometric Eisenstein series, and $c_\psi$ is fully faithful on $D(\textup{Bun}_G)^{\omega,\phi}$.
Then, letting $\nu_{\phi}$ denote the inclusion of $\mathcal{U}_{S_\phi}/S_\phi$ into $\textup{Par}_G$, $c_\psi(i_{b!}\pi)\cong \nu_{\phi*}\overline{\nabla}_\rho$, $c_\psi(i_{b\sharp}\pi)\cong\nu_{\phi*}\overline{\Delta}_\rho$, $c_\psi(i_{b\sharp!}\pi)\cong\nu_{\phi*}IC_\rho$, and $c_\psi(i_{b!*}\pi)\cong\nu_{\phi*}\delta_\rho$.
   
\end{thm}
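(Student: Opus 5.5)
The plan is to characterize both sides by the same homological data and then match them using full faithfulness. On the spectral side, Section \ref{SpringerSect} builds $\overline{\nabla}_\rho$, $\overline\Delta_\rho$, $IC_\rho$, $\delta_\rho$ on $\mathcal{U}_{S_\phi}/S_\phi$ from a quasi-exceptional structure on the unipotent cone, extended to the disconnected group $S_\phi$. The strata are indexed by pairs (unipotent orbit, irreducible equivariant local system), equivalently by irreducible $\rho$ of $S_\phi$ via a Springer/BM-O-type labeling. In that structure $\overline{\nabla}_\rho$ and $\overline\Delta_\rho$ are characterized by vanishing conditions:
\[
\text{Hom}(\overline\Delta_{\rho'},\overline\nabla_\rho[i])=0 \quad\text{unless } \rho=\rho',\ i=0,
\]
together with the triangular conditions relative to the partial order. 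On the automorphic side the stratification of $\text{Bun}_G$ by $B(G)$ gives a semiorthogonal decomposition, and after localizing to $\phi$ we obtain a recollement-type structure on $D(\text{Bun}_G)^{\omega,\phi}$. In this structure $i_{b!}\pi$ and $i_{b\sharp}\pi$ satisfy the analogous orthogonality, since $i_b^*i_{b'!}=0$ and $i_{b'}^!i_{b\sharp}=0$ for $b\neq b'$ off the appropriate closure relation, and $i_b^*i_{b!}=\mathrm{id}$.

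The steps, in order, are as follows.

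First, I reduce the indexing. Using compatibility of $c_\psi$ with geometric Eisenstein series, for $b$ with Levi $G_{\{b\}}=M$ I write $i_{b!}\pi$ (respectively $i_{b\sharp}\pi$) as an Eisenstein series from $\text{Bun}_M$ applied to $\pi$ viewed at a basic element of $M$. The restriction of $\phi$ to $M$ is generous for $M$ whenever $\pi$ contributes, because genericity kills monodromy and a Levi centralizer stratum corresponds to a unipotent orbit. Conjecture \ref{generousstalks} for $M$ then identifies the $M$-side sheaf with $i_{\phi_M*}\rho_M$. I then transport this along the spectral Eisenstein correspondence $\text{Par}_P\to\text{Par}_G$. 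This should produce a sheaf supported on the closure of the unipotent orbit determined by $\rho$, with generic fiber $\rho$. That pins down the labeling $(b,\pi)\mapsto\rho$ and shows that $c_\psi$ of the standard objects has the correct support and restriction to the open stratum. The hypothesis that Fargues–Scholze and classical semisimple parameters agree ensures that the labeling matches the BM-O bijection.

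Second, I use full faithfulness of $c_\psi$ on $D(\text{Bun}_G)^{\omega,\phi}$ to transport the Hom-vanishing between $i_{b\sharp}\pi'$ and $i_{b!}\pi$ to the spectral side. Together with the support and generic-stalk computation from the first step, the uniqueness part of the quasi-exceptional characterization then forces $c_\psi(i_{b!}\pi)\cong\nu_{\phi*}\overline\nabla_\rho$ and $c_\psi(i_{b\sharp}\pi)\cong\nu_{\phi*}\overline\Delta_\rho$. Some of this vanishing is already on the spectral side; for instance, $\nu_{\phi*}$ is fully faithful enough on the generic fiber because the reduced fiber is $\mathcal{U}_{S_\phi}/S_\phi$.

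Third, the remaining two objects are handled formally. The objects $IC_\rho$ and $\delta_\rho$ are defined as the image of $\overline\Delta_\rho\to\overline\nabla_\rho$ in the appropriate heart and as its dual or minimal extension. Correspondingly, $i_{b\sharp!}\pi$ is the image of $i_{b\sharp}\pi\to i_{b!}\pi$ and $i_{b!*}$ is the intermediate extension. Full faithfulness together with the first two identifications matches the canonical maps. It also matches the t-structures, which requires checking that $c_\psi$ carries the relevant perverse-type t-structure on the $\phi$-localized $\text{Bun}_G$ to the one defining the quasi-exceptional heart. Images and intermediate extensions therefore correspond.

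The main obstacle is the first step: computing spectral Eisenstein series of skyscrapers $i_{\phi_M*}\rho_M$ and showing that the result has exactly the quasi-exceptional normalization, with support on the correct orbit closure and the correct costalk/stalk vanishing. I would try to do this through the Springer-type resolution $\text{Par}_P\to\text{Par}_G$ restricted over $\phi$, which over the generic locus becomes a parabolic Springer map $\mathcal{U}_{P_\phi}/P_\phi\to\mathcal{U}_{S_\phi}/S_\phi$. I would then extract the $\rho$-isotypic piece, comparing with Bezrukavnikov's description of $\overline\nabla$ as a truncation of induced objects.
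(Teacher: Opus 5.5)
\textbf{Verdict: there is a genuine gap, beginning in your first step, and the identification mechanism in your second step does not work either.}

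\textbf{First step: the reduction does not produce a generous parameter.} Reducing along $M=G_{\{b\}}$ cannot work. When $b$ is basic, $G_{\{b\}}=G$ and the reduction is vacuous, so your claim would make every generic parameter generous. A counterexample: $G=\mathrm{GL}_2$, $b=1$, $\pi=i_B^G(\chi\otimes\chi)$, $\phi=\chi\oplus\chi$, $S_\phi=\mathrm{GL}_2$.

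The paper instead descends all the way to the supercuspidal support.
\begin{itemize}
\item It picks an antidominant basic $b_\alpha\in B(M)$ over $b$ and a supercuspidal $\sigma$ of $M_{b_\alpha}(F)$, so that $\pi$ has supercuspidal support $(w_\alpha(M)_{w_\alpha(b_\alpha)},\sigma^{w_\alpha})$.
\item The Fargues--Scholze/classical compatibility is used, through Property \ref{ClassLLCProp}(2), to show that $\phi_M$ is essentially discrete and generic. Hence it is supercuspidal, hence generous, and Conjecture \ref{GenerousConj} gives $c_{\psi_M}(i_{b_\alpha!}\sigma)\cong i_{\phi_M*}\tau$. (This is the actual role of that hypothesis; agreement with BM-O is only conjectured in the paper.)
\item Proposition \ref{EisCalc} then only yields $i^{\mathrm{ren}}_{b!}$ of a parabolic induction $i^{G_b}_{\overline Q_{b,w_\alpha}}(\sigma^{w_\alpha})$, of which $\pi$ is a priori just a subquotient.
\end{itemize}
The missing key idea is Theorem \ref{GenCompRed}: such inductions are completely reducible at generic parameters. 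The appendix proves this via Harish-Chandra $\mu$-functions and Heiermann's pole criterion. It is what makes $i_{b!}\pi$ an isotypic direct summand of $\text{Eis}_{\overline P}(i_{b_\alpha!}\sigma)$.

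\textbf{Second step: the spectral picture and the uniqueness argument.} Your description of the spectral side is off. $\overline\nabla_\rho$ is a summand of $\mu_*\mathcal{O}(\tau)$ for the Springer resolution (Lemma \ref{SpringResDecomp}). It is therefore supported on the whole unipotent cone and indexed by highest weights. ``Support on an orbit closure with generic fiber $\rho$'' describes the $IC$'s, not $\overline\nabla_\rho$.

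A quasi-exceptional uniqueness argument also would not transport to $\text{Par}_G$, for three reasons.
\begin{itemize}
\item The characterization lives in $\text{Coh}(\mathcal{U}_{S_\phi}/S_\phi)$, but $\nu_{\phi*}$ is not fully faithful, since normal directions contribute extra Ext's.
\item Correspondingly, $\text{Hom}(i_{b\sharp}\pi',i_{b!}\pi)=\text{Ext}^*_{G_b(F)}(\pi',\pi)$ need not vanish in positive degrees.
\item You would need in advance an ordering of the pairs $(b,\pi)$ compatible with the weight order on $\rho$, which is essentially the labeling you are trying to prove.
\end{itemize}

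\textbf{What the paper does instead.} It computes directly. Proposition \ref{SpecEisCalc} gives $\text{Eis}^{\text{spec}}_{\hat P}(i_{\phi_M*}\tau)\cong\bigoplus_\rho\theta_\rho\otimes\nu_{\phi*}\overline\nabla_\rho$. Full faithfulness then identifies $\text{End}^0$ of the two Eisenstein objects, so the isotypic components match. For $i_{b\sharp}\pi$ it uses:
\begin{itemize}
\item the dominant element $w_\alpha(b_\alpha)$ and the twist $w_0\tau$;
\item Proposition \ref{FSWeylEquiv};
\item independence of the induction from the inducing parabolic.
\end{itemize}
The nonzero map $i_{b\sharp}\pi\to i_{b!}\pi$, together with Lemma \ref{NablaQExcep}, then forces the same $\rho$.

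\textbf{Third step: $\delta_\rho$ and the two t-structure comparisons.} $\delta_\rho$ is the skyscraper at $0$, not a dual or minimal extension of $IC_\rho$. Two distinct t-exactness statements are needed.
\begin{itemize}
\item Hadal versus shifted perverse-coherent. This is formal from generation once the first two isomorphisms hold, and it gives the $IC_\rho$ case.
\item Perverse versus standard, which is not formal. It needs Corollary \ref{PTruncStability}, which comes from Hecke perverse t-exactness via Lemma \ref{CostFilt}. It also needs the surjection $\overline\nabla_\rho\twoheadrightarrow\delta_\rho$ from Proposition \ref{GenSheafProp}(4) and finiteness of $i_{b!*}\pi$ from Lemma \ref{FinGenPres}.
\end{itemize}
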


\begin{rem}

Regarding the conditions of the theorem, classical local Langlands compatibility is known quite broadly \cite{BMHN} \cite{CF} \cite{{DvHKZ26}} \cite{FS} \cite{HamGanTak} \cite{Pen}, and up to geometric Eisenstein series compatibility the full categorical correspondence is also known in many cases \cite{HM}\cite{HSX}.
Geometric Eisenstein series compatibility is forthcoming work of Hamann, Hansen, and Mann \cite{HHM}, and as mentioned above Conjecture \ref{generousstalks} is known for $\text{GL}(n)$\cite[Theorem 8.6]{Zou25}.
So for $G=\text{GL}(n)$ the theorem is conditional only on \cite{HHM}, and for many classical groups the only further obstruction is Conjecture \ref{generousstalks}, which should therefore be viewed as the primary bottleneck to making the theorem unconditional in more cases.
    
\end{rem}

The $i_{b\sharp!}$ and $i_{b!*}$ functors will be defined below.
They are analogues of intermediate extension functors from the theory of perverse sheaves in algebraic geometry.  
The sheaves $\overline{\nabla}_\rho$, $\overline{\Delta}_\rho$, $IC_\rho$, and $\delta_\rho$ were constructed in \cite[Section 2.3]{BeijingNotes} when $\phi$ is the trivial parameter.

To construct $\overline{\nabla}_\rho$, $\overline{\Delta}_\rho$, $IC_\rho$, and $\delta_\rho$, let $\hat{M}$ be the smallest Levi which $\phi$ factors through. 
We will fix for all time a maximal torus and Borel of $\hat{G}$, which will be used to define dominance of characters and standardness of Levi and parabolic subgroups, and we can assume without loss of generality that $\hat{M}$ is standard by replacing $\phi$ by one of its conjugates if necessary.
By the representation theory of disconnected reductive groups \cite{AHR}, $\rho$ corresponds to a pair $(\lambda, E_\rho)$, well defined up to conjugacy, where $\lambda$ is a dominant weight of the maximal torus $S_\phi^\circ\cap T$ of $S_\phi^\circ$, and $E_\rho$ is an irreducible representation of $k^\alpha[A^\lambda] $, a certain twisted algebra related to the component group of $S_\phi$.
Let $S_{\phi,M}$ denote the stabilizer of $\phi$ in $\hat{M}$.
We will show that the component group of $S_{\phi,M}$ is normal in that of $S_\phi$, and therefore there will be an irreducible representation $\tau$ of $S_{\phi,M}$, unique up to $S_\phi$-conjugacy, such that $\rho$ is a direct summand of $\text{Ind}_{\hat{P}\cap S_\phi}^{S_\phi}(\tau)$.

In one of the central technical sections of the paper, we will extend some aspects of Springer Theory to the setting of disconnected reductive groups appearing as the stabilizers of semisimple generic parameters.
In particular, using the Springer resolution we will construct $\overline{\nabla}_\rho$ as a certain direct summand of $\mu_*\eta^*\tau$.
Its "dual" $\overline{\Delta}_\rho$ will be constructed similarly, using $^{w_0}\tau$ in place of $\tau$, where $w_0$ is the longest-length Weyl group element, and $IC_\rho$ will be constructed from $\overline{\nabla}_\rho$ and $\overline{\Delta}_\rho$ using the theory of perverse-coherent sheaves \cite{PervCoh}.
Finally, $\delta_\rho$ is just $i_{\phi*}\rho$, where $i_\phi$ is the inclusion of $*/S_\phi$ into $\text{Par}_G$ at $\phi$.

The relevance of Springer theory to the study of $\text{Par}_G$ stems from the spectral parabolic induction functor $\text{Eis}^{\text{spec}}:\text{Coh}(\text{Par}_M)\to \text{Coh}(\text{Par}_G)$, which after specializing to a generic parameter will be computable using coherent Springer Theory (Proposition \ref{SpecEisCalc}).

Continuing with the proof overview of Theorem \ref{IntroMainThm}, we prove that if $\pi$ is a smooth irreducible representation of $G_b(F)$ with generic Fargues-Scholze parameter and $(M,\sigma)$ is its supercuspidal support, then $\sigma$ has generous Fargues-Scholze parameter.
On the automorphic side, the sheaves $i_{b!}\pi$ and $i_{b\sharp}\pi$ can, by results from \cite{HHS}, be expressed as direct summands of certain geometric Eisenstein series applied to sheaves of the form $i_{b_M!}\pi$.
And as mentioned above, $\nu_{\phi*}\overline{\nabla}_\rho$ and $\nu_{\phi*}\overline{\Delta}_\rho$ are direct summands of certain spectral Eisenstein series functors applied to $i_{\phi_M*}\tau$ (where $\phi_M$ is a certain lift of $\phi$ to $\text{Par}_M$).
By Conjecture \ref{generousstalks}, for all $b_M\in B(M)$ mapping to $b$, $c_\psi(i_{b_M!}\sigma)\cong i_{\phi*}\tau$ for some irreducible representation $\tau$ of $S_{\phi,M}$.
The isomorphisms $c_\psi(i_{b!}\pi)\cong\nu_{\phi*}\overline{\nabla}_\rho$ and $c_\psi(i_{b\sharp}\pi)\cong\nu_{\phi*}\Delta_{\rho}$ are then proven using the compatibility of the Fargues-Scholze functor with geometric Eisenstein series.

To prove $c_\psi(i_{b\sharp!}\pi) \cong \nu_{\phi*}(IC_\rho)$, we prove a t-exactness result for $c_\psi$, namely that it is t-exact with respect to the \textit{hadal} t-structure on $D(\text{Bun}_G,\overline{\mathbb{Q}}_\ell)^\phi$ and the \textit{perverse coherent} t-structure on $\text{Coh}(\text{Par}_G^\phi)$.
Since $IC_\rho=\text{Im}(\Delta_\rho\to\nabla_\rho)$ and $i_{b\sharp!}=\text{Im}(i_{b\sharp}\pi\to i_{b!}\pi)$ in the perverse-coherent and hadal t-structures respectively the desired result follows from the first two calculations.
Finally, to show $c_\psi(i_{b!*}\pi)\cong\nu_{\phi*}(\delta_\rho)$ we exploit that $i_{\phi*}\rho$ is an irreducible quotient of $\nabla_\rho$ in the standard t-structure on $\text{Coh}(\text{Par}_G^\phi)$, and $i_{b!*}$ is the unique irreducible quotient of $i_{b!}\pi$ in the perverse t-structure on $D(\text{Bun}_G,\overline{\mathbb{Q}}_\ell)$, together with Corollary \ref{PTruncStability}, which states that $c_\psi$ is t-exact with respect to those t-structures on $\text{Coh}(\text{Par}_G^\phi)$ and $D(\text{Bun}_G,\overline{\mathbb{Q}}_\ell)$.
\newline

Theorem \ref{MainThm} has a number of interesting implications.
As mentioned briefly above, it is closely related to many t-exactness theorems on the generic locus of the correspondence.

\begin{cor}(Theorem \ref{MainThm}(ii), Corollary \ref{PTruncStability})
\newline
\indent  i)\ $c_\psi:D(\textup{Bun}_G,\overline{\mathbb{Q}}_\ell)^{\textup{fin},\phi}\to\textup{Coh}(\textup{Par}_G^\phi)$ is t-exact with respect to the hadal t-structure on $D(\textup{Bun}_G,\overline{\mathbb{Q}_\ell})^\phi$ and the perverse-coherent t-structure on $\textup{Coh}(\textup{Par}_G^\phi)$.

ii)  $c_\psi:D(\textup{Bun}_G,\overline{\mathbb{Q}}_\ell)^{\textup{fin,gen}}\to\textup{Coh}(\textup{Par}_G^{\textup{gen}})$ is t-exact with respect to the perverse t-structure on $D(\textup{Bun}_G,\overline{\mathbb{Q}}_\ell)^{\textup{gen}}$ and the standard t-structure on $\textup{Coh}(\textup{Par}_G^{\textup{gen}})$.

\end{cor}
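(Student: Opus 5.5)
Both parts will be derived from Theorem \ref{MainThm}, which identifies $c_\psi$ on the four basic families of objects, together with a dévissage over the finite-length objects on each side.

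For (i), I first describe the hearts. The heart of the hadal t-structure on $D(\textup{Bun}_G,\overline{\mathbb{Q}}_\ell)^{\textup{fin},\phi}$ is a finite-length category. Its simple objects are the $i_{b\sharp!}\pi$, where $(b,\pi)$ runs over pairs whose Fargues--Scholze parameter is $\phi$; these are defined as images of $i_{b\sharp}\pi\to i_{b!}\pi$, exactly as intermediate extensions are. Likewise, the heart of the perverse-coherent t-structure on $\textup{Coh}(\textup{Par}_G^\phi)$ is finite length with simples $\nu_{\phi*}IC_\rho$. Since both t-structures are bounded, every finite object has a finite filtration whose graded pieces are shifts of simples. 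So it suffices to show that $c_\psi$ sends each simple object of the hadal heart to a simple object of the perverse-coherent heart. Indeed, $c_\psi$ is exact, so it preserves the filtration. A bounded object with all cohomologies in degree $[a,b]$ then has an image with all perverse-coherent cohomologies in degree $[a,b]$, by induction on length using the long exact sequences. Theorem \ref{MainThm} gives $c_\psi(i_{b\sharp!}\pi)\cong\nu_{\phi*}IC_\rho$, which lies in the heart, and this settles (i).

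There is a circularity to watch. The identification of $IC_\rho$ in Theorem \ref{MainThm} is itself obtained through a t-exactness statement. To avoid depending on it, I would argue directly from the $\overline{\Delta},\overline{\nabla}$ computations. By construction, $i_{b\sharp}\pi$ lies in hadal degrees $\le 0$ and $i_{b!}\pi$ lies in hadal degrees $\ge 0$, and dually $\overline{\Delta}_\rho\in{}^{p}D^{\le0}$ and $\overline{\nabla}_\rho\in{}^{p}D^{\ge0}$. It is then enough to show that $c_\psi$ carries the hadal aisle and coaisle into their perverse-coherent counterparts. The aisle is generated under extensions, shifts to the left and colimits by the objects $i_{b\sharp}\pi$; the coaisle is cogenerated in the same way by the objects $i_{b!}\pi$. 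Because $c_\psi$ preserves extensions and shifts, this reduces to the first two isomorphisms of Theorem \ref{MainThm}.

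For (ii), the perverse t-structure on the generic locus decomposes as a product over the generic semisimple parameters $\phi$. On $\textup{Par}_G^{\textup{gen}}$, the standard t-structure on $\textup{Coh}$ restricts to the standard t-structure on each summand $\textup{Par}_G^\phi$. So it suffices to work at a single $\phi$. Here the perverse heart is finite length with simple objects $i_{b!*}\pi$, and the standard heart, set-theoretically supported on the fibre $\mathcal{U}_{S_\phi}/S_\phi$, is finite length with simple objects $i_{\phi*}\rho=\nu_{\phi*}\delta_\rho$. The last isomorphism of Theorem \ref{MainThm}, $c_\psi(i_{b!*}\pi)\cong\nu_{\phi*}\delta_\rho$, then gives t-exactness by the same dévissage as in (i).

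The same circularity arises here, since $\delta_\rho$ is identified using Corollary \ref{PTruncStability}. To break it, I would show directly that $c_\psi(i_{b!}\pi)=\nu_{\phi*}\overline{\nabla}_\rho$ is a genuine coherent sheaf, that is, lies in the standard heart. I expect this to follow from $\overline{\nabla}_\rho$ being a summand of $\mu_*\eta^*\tau$, where $\mu$ is the Springer resolution and $\eta$ is the projection from it: $\mu$ is proper with vanishing higher direct images of the relevant line bundles (a Grauert--Riemenschneider or Kempf type vanishing). The objects $i_{b!}\pi$ lie in perverse degree $0$ up to the shift by $\langle 2\rho,\nu_b\rangle$ built into the normalisation. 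Once that vanishing is in hand, both hearts are generated by images of the $i_{b!}$ and $i_{b\sharp}$ objects, and the argument closes.

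The main obstacle is this vanishing of higher cohomology of $\mu_*\eta^*\tau$ in the disconnected setting, together with matching the shift conventions between the perverse t-structure and the hadal t-structure. I would try to reduce it to the connected group $S_\phi^\circ$ via induction from the finite component group.
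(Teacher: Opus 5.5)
Your final argument for (i) is the paper's proof of Theorem \ref{MainThm}(ii). The hadal aisle and coaisle are generated by the $i_{b\sharp}^{\textup{ren}}\pi[n]$ ($n\geq 0$) and the $i_{b!}^{\textup{ren}}\pi[n]$ ($n\leq 0$). Isomorphisms (1) and (2) send these to $\nu_{\phi*}\overline{\Delta}_\rho$ and $\nu_{\phi*}\overline{\nabla}_\rho$, which lie in the suitably shifted perverse-coherent aisle and coaisle. You are right that the route through $IC_\rho$ would be circular.

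Part (ii), however, has a genuine gap.

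\textbf{Only half of t-exactness.} Showing that $c_\psi(i_{b!}^{\textup{ren}}\pi)\cong\nu_{\phi*}\overline{\nabla}_\rho$ is a genuine coherent sheaf is not the real difficulty. It is already the first assertion of Lemma \ref{NablaQExcep}, deduced from Bezrukavnikov's connected case by finite \'etale pushforward. It also yields only \emph{right} t-exactness, $c_\psi(D^{p,\leq 0})\subset\textup{Coh}^{\leq 0}$, because the perverse aisle is generated by the $i_{b!}^{\textup{ren}}\pi[n]$, $n\geq 0$. The perverse coaisle is generated by the $i_{b*}^{\textup{ren}}\pi[n]$, $n\leq 0$, not by $i_{b\sharp}$-objects, which generate the hadal aisle instead. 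Theorem \ref{MainThm} gives no information about $c_\psi(i_{b*}^{\textup{ren}}\pi)$. Containment of one aisle in another does not make two t-structures coincide.

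\textbf{The dévissage fails too.} Your reduction to simples does not rescue this, even ignoring circularity. For non-generous $\phi$ the fibre $\mathcal{U}_{S_\phi}/S_\phi$ is positive-dimensional, so the standard heart of $\textup{Coh}(\textup{Par}_G^\phi)$ is not of finite length. Already $\nu_{\phi*}\overline{\nabla}_{\textup{triv}}=\nu_{\phi*}\mathcal{O}_{\mathcal{U}_{S_\phi}}$ (Proposition \ref{GenSheafProp}(2)) has the infinite descending chain given by powers of the ideal of the identity point. So t-exactness cannot be checked on the simples $\nu_{\phi*}\delta_\rho$. Moreover, treating $i_{b!*}\pi$ as a finite object presupposes the stability under perverse truncation that Corollary \ref{PTruncStability} is supposed to establish (compare Lemma \ref{FinGenPres}).

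\textbf{The missing idea: Hecke operators.} This is the paper's route.
\begin{enumerate}
\item Lemma \ref{CostFilt} filters $S_V\otimes\overline{\nabla}_\rho$ by pieces $\overline{\nabla}_{\rho'}[n]$ with $n\geq 0$. Combined with (1), this makes every $T_V$ right perverse t-exact on the generic ULA locus.
\item Following Hansen, right t-exactness for all $V$ suffices, essentially because $T_{V^*}$ is adjoint to $T_V$ on both sides. This gives full perverse t-exactness (Proposition \ref{HeckePExactness}).
\item Corollary \ref{PTruncStability} then follows from \cite[Proposition 2.5.2]{BeijingNotes} and \cite[Proposition 5.5.7]{HM}, which convert perverse t-exactness of all Hecke operators into t-exactness of $c_\psi$. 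Roughly, one tests $c_\psi(A)$ against the vector bundles $\mathcal{V}$ via $\mathrm{Hom}(\mathcal{V},c_\psi A)\cong\mathrm{Hom}(T_V(W_\psi),A)\cong\mathrm{Hom}(W_\psi,T_{V^*}A)$, so perverse t-exactness of $T_{V^*}$ is exactly what is needed.
\end{enumerate}
The adjunction in step 2 is what supplies the coaisle half. Computing $c_\psi$ on $i_{b!}$- and $i_{b\sharp}$-objects alone cannot reach it.
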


\noindent These properties were conjectured by Hansen \cite[Conjectures 2.5.3, 2.5.1]{BeijingNotes}.  Furthermore, 

\begin{thm} (Proposition \ref{HeckePExactness})
For all $V\in\textup{Rep}(^L{G})$, the Hecke operator $$T_V: D(\textup{Bun}_G)^{\textup{ULA, gen}}\to D(\textup{Bun}_G)^{\textup{ULA, gen}}$$ is perverse t-exact.
\end{thm}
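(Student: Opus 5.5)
Since Hecke operators act through the spectral action of $\textup{Perf}(\textup{Par}_G)$, the plan is to transport the problem to the spectral side with $c_\psi$ and then apply Corollary (ii) above: $c_\psi$ is t-exact for the perverse t-structure on $D(\textup{Bun}_G)^{\textup{fin,gen}}$ and the standard t-structure on $\textup{Coh}(\textup{Par}_G^{\textup{gen}})$. The spectral action is compatible with $c_\psi$, so that $c_\psi(T_V A)\cong C_V\otimes c_\psi(A)$, where $C_V$ is the vector bundle on $\textup{Par}_G$ associated to $V$. Tensoring with a vector bundle is exact for the standard t-structure. Combining these, $T_V$ should be perverse t-exact, at least on objects on which $c_\psi$ detects the t-structure.

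\textbf{Step 1: decomposition and finiteness.} First I use the spectral decomposition over connected components of $\textup{Par}_G$ (equivalently over the Bernstein-type decomposition). A ULA object of $D(\textup{Bun}_G)^{\textup{gen}}$ then splits as a finite sum of pieces, each with a single generic semisimple parameter $\phi$ (or supported over a generic component). Hecke operators preserve this decomposition, since they commute with the spectral action of functions on $\textup{Par}_G^{\textup{spec}}$. Next I check that ULA objects with generic parameter are finite in the sense of Section \ref{FSSect}. This holds because ULA sheaves have admissible stalks, and at a fixed generic parameter only finitely many $(b,\pi)$ contribute. Consequently the full subcategory $D(\textup{Bun}_G)^{\textup{ULA,gen}}$ lies inside $D(\textup{Bun}_G)^{\textup{fin,gen}}$, which is where Corollary (ii) applies. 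The ULA condition is preserved by $T_V$ by \cite{FS}.

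\textbf{Step 2: conservativity and t-exactness.} The key point is that $c_\psi$ reflects the perverse t-structure on this subcategory. That is, $A\in{}^pD^{\le0}$ if and only if $c_\psi(A)\in D^{\le0}_{\textup{std}}$, and likewise for $\ge0$. This follows from t-exactness together with conservativity of $c_\psi$ on $D(\textup{Bun}_G)^{\textup{fin},\phi}$, which is a consequence of full faithfulness at $\phi$. To verify it, apply $c_\psi$ to the perverse truncation triangle of $A$. By exactness this yields the standard truncation triangle of $c_\psi(A)$. If $c_\psi(A)$ lies in $D^{\le 0}$, then $c_\psi(\tau^{>0}A)=0$, and conservativity forces $\tau^{>0}A=0$. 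With this in hand, $c_\psi(T_VA)=C_V\otimes c_\psi(A)$ lies in the same standard range as $c_\psi(A)$, because $C_V$ is a vector bundle and hence flat. Therefore $T_VA$ lies in the same perverse range as $A$.

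\textbf{Main obstacle.} The hardest part is justifying Step 2 under the paper's hypotheses: conservativity and full faithfulness of $c_\psi$ on the finite generic category, and the identification $c_\psi\circ T_V\cong (C_V\otimes -)\circ c_\psi$ on non-compact but finite objects. I would first try to deduce the latter formally from $c_\psi$ being $\textup{Perf}(\textup{Par}_G)$-linear. As a fallback, I would argue directly on generators. By Theorem \ref{MainThm}, the irreducible perverse objects $i_{b!*}\pi$ correspond to the sheaves $\nu_{\phi*}\delta_\rho=i_{\phi*}\rho$. Then $C_V\otimes i_{\phi*}\rho\cong i_{\phi*}(V|_{S_\phi}\otimes\rho)$ is visibly in the heart. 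Perverse exactness of $T_V$ on the irreducibles then extends to all finite-length objects by dévissage, since exact triangles preserve the property of lying in the heart.
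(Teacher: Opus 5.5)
There is a genuine gap: the argument is circular relative to the paper. The perverse/standard t-exactness of $c_\psi$ that you invoke (item (ii) of the corollary in the introduction, i.e.\ Corollary \ref{PTruncStability}) is not an independent input. The paper \emph{deduces} it from the very statement you are proving, via \cite[Proposition 2.5.2]{BeijingNotes} and \cite[Proposition 5.5.7]{HM}; that implication runs from Hecke exactness to exactness of $c_\psi$, not conversely.

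Your fallback has the same problem. The identification $c_\psi(i_{b!*}\pi)\cong\nu_{\phi*}\delta_\rho$ is isomorphism (4) of Theorem \ref{MainThm}, and its proof explicitly uses Corollary \ref{PTruncStability}. Your d\'evissage from irreducibles also presupposes that finite generic objects have finite-length perverse cohomology, i.e.\ stability of $D(\textup{Bun}_G)^{\textup{fin,gen}}$ under perverse truncation. That is again part of the same corollary.

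Separately, Step 1 is false: ULA does not imply finite. Whenever $S_\phi$ is positive-dimensional (e.g.\ always for $\mathrm{GL}(n)$), $\textup{Irr}(S_\phi)$ is infinite. By Theorem \ref{MainThm}(iii), infinitely many $b$ then carry some $\pi$ with parameter $\phi$. So $\bigoplus_b i_{b!}\pi_b$ is ULA with infinite support, hence not compact. You therefore cannot restrict to $D(\textup{Bun}_G)^{\textup{fin},\phi}$; you need an argument compatible with colimits.

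The missing idea is to test $T_V$ on generators whose spectral images are known \emph{without} any perverse-side input. These are the $i_{b!}\pi$, via isomorphism (1), $c_\psi(i_{b!}\pi)\cong\nu_{\phi*}\overline{\nabla}_\rho$, which comes only from Eisenstein compatibility. The paper argues as follows.
\begin{itemize}
\item ${}^pD^{\le0}(\textup{Bun}_G)^{\textup{ULA},\phi}$ is generated under extensions and colimits by the $i_{b!}\pi[n]$ with $n\ge0$.
\item By $\textup{Perf}$-linearity, $c_\psi(T_V i_{b!}\pi)\cong\nu_{\phi*}(S_V\otimes\overline{\nabla}_\rho)$.
\item Lemma \ref{CostFilt} gives a finite filtration of this with graded pieces $\overline{\nabla}_{\rho'}[n]$, $n\ge0$. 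It is obtained by filtering $\mathcal{O}(\lambda)\otimes\tilde S_V$ by dominant line bundles and pushing forward along the Springer resolution.
\item Full faithfulness, together with (1) and the bijection (iii), lifts this to a filtration of $T_V i_{b!}\pi$ with pieces $i_{b'!}\pi'[n]$, $n\ge0$. All of these lie in ${}^pD^{\le0}$.
\end{itemize}
Since $T_V$ commutes with colimits, this gives right t-exactness. Left t-exactness then follows formally by adjunction, as in \cite[Theorem 2.4.4]{BeijingNotes}. Only after this does Corollary \ref{PTruncStability}, and with it your reflection argument, become available.
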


\noindent This was previously proven by Hansen \cite[Theorem 2.4.4]{BeijingNotes} for the trivial parameter, and conjectured in general by Hansen \cite[Conjecture 2.4.1]{BeijingNotes}, and Hamann-Lee \cite[Conjecture 6.4]{HL}.

In a different direction, the sheaves $IC_\rho$ together with the Fargues-Scholze functor can be used to construct Hecke eigensheaves at generic parameters.
A Hecke eigensheaf for an L-parameter $\phi$ is a sheaf $\mathcal{F}$ on $\text{Bun}_G$ such that $T_V\mathcal{F}\cong\mathcal{F}\boxtimes V\circ\phi$ for all $V\in\text{Rep}(^LG)$ (where Rep denotes finite dimensional representations)
Letting $\mathcal{R}(\hat{G})$ denote the left regular representation of $\hat{G}$, for any $W\in\text{Rep}(\hat{G})$ there is a canonical isomorphism $\mathcal{R}(\hat{G})\otimes W\cong \mathcal{R}(\hat{G})\otimes W^{triv}$, where $W^{triv}$ denotes the underlying vector space of $W$ equipped with the trivial representation of $\hat{G}$.
When $\phi$ is generous, one can use this isomorphism to prove $$a_\psi(i_{\phi*}(\mathcal{R}(S_\phi)))\cong\bigoplus\limits_{(b,\pi)}i_{b!}\pi^{\text{dim}(\rho_{(b,\pi)})}$$ is a Hecke eigensheaf, where the sum is over $(b,\pi)$ such that $\pi\in\text{Irr}(G_b(F))$ has Fargues-Scholze parameter $\phi$, and $\rho_{(b,\pi)}$ is the representation of $S_\phi$ such that $c_\psi(i_{b!}\pi)\cong i_{\phi*}\rho$. 
We are using here that $\phi$ is a smooth point of $\text{Par}_G$, so $i_{\phi*}(\mathcal{R}(S_\phi))$ is an ind-perfect complex.

Using the perverse-coherent theory of intersection cohomology sheaves \cite{PervCoh}, we can construct Hecke eigensheaves at parameters with generic semisimplification.
Part of the theorem statement uses the notion of the full (ie, non semisimplified) Fargues-Scholze parameter of a representation $\pi$ with generic Fargues-Scholze parameter (Definition \ref{fullFSParam}).
It is defined to agree with the usual Fargues-Scholze parameter upon semisimplification, and the unipotent component of its Frobenius element is determined by perverse-coherent sheaf theory on $\text{Par}_G^\phi$.
Its construction is of independent interest.

\begin{thm} (Theorem \ref{HeckeEigenThm})
Let $\phi$ be an L-parameter with generic semisimplification $\phi^{\textup{ss}}$ and centralizer $S_\phi$.  
Let $O$ be the $S_{\phi^{\textup{ss}}}$-orbit of the unipotent component of $\phi(\textup{Fr})$ inside the unipotent variety of $S_{\phi^{\textup{ss}}}^\circ$.  
Then $H_\phi:=a_\psi(\nu_{\phi*}IC(O,\mathcal{R}(S_\phi)))[-d]$ is a Hecke eigensheaf for $\phi$.  
It has an isotypic decomposition into indecomposable components $H_{\phi,\rho}$ indexed by the irreducible representations $\rho$ of $S_\phi$.
For all $\rho$, $H_{\phi,\rho}$ is (ind-)hadal, and admits a filtration by (ind-)hadal subsheaves $H_{\phi,\rho}^k$ indexed by nonnegative integers.  The $k$th graded complex $G^k_{\phi,\rho}:=H_{\phi,\rho}^k/H_{\phi,\rho}^{k-1}$ surjects (as a hadal sheaf) onto a finite direct sum of complexes of the form $i_{b\sharp!}\pi$, where $\pi$ has full Fargues-Scholze parameter $\phi$, and the kernel of this map lies in the subcategory of $D(\textup{Bun}_G)^{\phi^{\textup{ss}}}$ generated by complexes of the form $i_{b\sharp!}\pi'$, where the full Fargues-Scholze parameter of $\pi'$ lies in the boundary of the closure of $\phi$ in $\textup{Par}_G$.
\end{thm}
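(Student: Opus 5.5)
The plan is to transport the statement to the spectral side. There, Hecke operators act through the spectral action, and $a_\psi$, the inverse/adjoint of $c_\psi$ on the $\phi^{\text{ss}}$-component, intertwines $T_V$ with tensoring by the vector bundle $\mathcal{V}$ on $\text{Par}_G$ attached to $V$. Hence
$$T_V H_\phi \cong a_\psi\bigl(\nu_{\phi*}IC(O,\mathcal{R}(S_\phi))\otimes \mathcal{V}\bigr)[-d].$$
Since $\nu_{\phi*}IC(O,\mathcal{R}(S_\phi))$ is supported on the closure of the $S_{\phi^{\text{ss}}}$-orbit of $\phi$, the first step is to restrict $\mathcal{V}$ to $\mathcal{U}_{S_{\phi^{\text{ss}}}}/S_{\phi^{\text{ss}}}$. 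On the open orbit $O/S_{\phi^{\text{ss}}}\cong BS_\phi$, this restriction is the $S_\phi$-representation $V\circ\phi$. The regular representation absorbs it: $\mathcal{R}(S_\phi)\otimes (V\circ\phi)\cong\mathcal{R}(S_\phi)\otimes (V\circ\phi)^{\text{triv}}$. The real issue is to extend this isomorphism across the boundary of $\overline{O}$. For that I would use the functoriality of the perverse-coherent intermediate extension of \cite{PervCoh}. Tensoring by a vector bundle is perverse-coherent t-exact, so it commutes with the image construction $IC=\text{Im}(\Delta\to\nabla)$, i.e. with $j_{!*}$. It follows that $IC(O,\mathcal{L})\otimes\mathcal{V}\cong IC(O,\mathcal{L}\otimes\mathcal{V}|_O)$. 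Applying this with $\mathcal{L}=\mathcal{R}(S_\phi)$ and the trivialization above gives $IC(O,\mathcal{R}(S_\phi))\otimes V^{\text{triv}}$, and applying $a_\psi$ yields the eigen-isomorphism $T_VH_\phi\cong H_\phi\boxtimes V\circ\phi$. One must also check compatibility with the $W_F$-action (the $\boxtimes$ factor). I would handle this by working on the $W_F$-equivariant enhancement of the spectral action. Frobenius acts on the fiber at $\phi$ through $\phi(\text{Fr})$ itself, which is exactly what produces $V\circ\phi$ rather than $V\circ\phi^{\text{ss}}$.

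For the isotypic decomposition, $\mathcal{R}(S_\phi)\cong\bigoplus_\rho\rho\otimes\rho^\vee$ as an $S_\phi\times S_\phi$-module, with the left action geometric. This gives $IC(O,\mathcal{R}(S_\phi))\cong\bigoplus_\rho IC(O,\rho)\otimes(\rho^\vee)^{\text{triv}}$. Set $H_{\phi,\rho}=a_\psi(\nu_{\phi*}IC(O,\rho))[-d]$ up to multiplicity. Indecomposability follows from the simplicity of $IC(O,\rho)$ in the perverse-coherent heart, together with full faithfulness of $c_\psi$. Hadality follows from the t-exactness in Theorem \ref{MainThm}(ii)/Corollary \ref{PTruncStability}(i), since $IC(O,\rho)[-d]$ (with the appropriate shift) lies in the perverse-coherent heart. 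Here one passes to ind-objects when $\rho$ is infinite-dimensional in the regular representation sum.

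For the filtration, I would filter $IC(O,\rho)$ in the perverse-coherent heart by the order of vanishing along $O$, or equivalently by the $\mathfrak{m}$-adic filtration coming from the coordinate ring. Concretely, take $IC(O,\rho)^{k}$ to be the perverse-coherent image of sections killed by the $k$-th power of the ideal of $\overline{O}$ inside the thickening. The graded pieces are then perverse-coherent sheaves set-theoretically supported on $\overline{O}$ whose restriction to $O$ is a vector bundle $\rho\otimes\text{Sym}^k(N^\vee_O)$. Each graded piece surjects onto $IC(O,\text{that bundle})$, which decomposes into a finite sum of $IC(O,\rho')$'s, and the kernel is supported on $\partial O$. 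The kernel thus lies in the subcategory generated by $IC(O',\rho'')$ with $O'\subset\partial O$. Transporting by $a_\psi$ and using Theorem \ref{MainThm} ($c_\psi(i_{b\sharp!}\pi)\cong\nu_{\phi*}IC_\rho$), these objects correspond to $i_{b\sharp!}\pi$. The definition of the full Fargues-Scholze parameter (Definition \ref{fullFSParam}) identifies the orbits $O$ and $O'$ with the full parameters, giving the statement.

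The main obstacle I expect is the filtration step. Perverse-coherent sheaves are not filtered as naturally as constructible ones. One must check that the $k$-th pieces are genuinely in the heart, and that the only simple constituents supported on $O$ itself are the $IC(O,\rho')$ appearing in the top quotient. If the naive adic filtration fails, I would instead define $H^k$ by induction: take the maximal subobject of the quotient whose constituents are supported on $\overline{O}$ with generic part of ``socle layer'' $k$. This is a socle/Loewy filtration in the finite-length (ind-)hadal category restricted to the component supported on $\overline{O}$.
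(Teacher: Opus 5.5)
Your Hecke-eigensheaf step is fine and is essentially the paper's argument: a projection formula for $j_{!*}$ combined with $\mathcal{R}(S_\phi)\otimes W\cong\mathcal{R}(S_\phi)\otimes W_{\text{triv}}$.

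\textbf{The gap is in the decomposition.} You write $\mathcal{R}(S_\phi)\cong\bigoplus_\rho\rho\otimes\rho^\vee$. Peter--Weyl requires $S_\phi$ to be reductive, and it never is when $\phi$ is not semisimple. Here $S_\phi=C_{S_{\phi^{\text{ss}}}}(u)$ with $u=\phi(\text{Fr})^u\neq 1$. Since $u$ is central in its own centralizer, the one-parameter group $t\mapsto\exp(t\log u)$ is a central connected unipotent subgroup of $S_\phi$, so the unipotent radical $U_\phi$ is nontrivial. Then $\mathcal{O}(S_\phi)$ is not a semisimple $S_\phi$-module; already $\mathcal{O}(\mathbb{G}_a)$ under translation is indecomposable with socle the constants. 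So your splitting $IC(O,\mathcal{R}(S_\phi))\cong\bigoplus_\rho IC(O,\rho)\otimes(\rho^\vee)^{\text{triv}}$ is false. Your components $a_\psi\nu_*IC(O,\rho)[-d]$ would just be the simple objects $i_{b\sharp!}\pi$, and the filtration statement would be vacuous. Your argument is correct only in the semisimple case $U_\phi=1$, which is the remark after the theorem.

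\textbf{The proposed filtration cannot exist.} $IC(O,\rho)$ is simple in the perverse-coherent heart and scheme-theoretically supported on $\overline{O}$. It therefore has no nontrivial perverse-coherent subobjects and no ``$\text{Sym}^k(N_O^\vee)$'' layers. Also, irreducible $\rho$ are finite-dimensional, so the ind-issue does not arise where you place it. Your Loewy-filtration fallback is in the right spirit, but it only becomes meaningful on the correct, non-simple components.

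\textbf{The missing idea is the Levi decomposition} $S_\phi=M_\phi\ltimes U_\phi$.
\begin{itemize}
\item One has $\mathcal{O}(S_\phi)\cong\mathcal{R}_{S_\phi}(M_\phi)\otimes\mathcal{R}_{S_\phi}(U_\phi)$. Peter--Weyl for the reductive $M_\phi$ alone gives $\mathcal{R}(S_\phi)\cong\bigoplus_{\rho\in\text{Irr}(M_\phi)}(\rho\otimes\mathcal{R}_{S_\phi}(U_\phi))\otimes V_{\rho^*}$, and $\text{Irr}(M_\phi)=\text{Irr}(S_\phi)$.
\item The right components are $H_{\phi,\rho}=a_\psi\nu_*IC(O,\rho\otimes\mathcal{R}_{S_\phi}(U_\phi))[-d]$. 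These are genuinely ind-objects.
\item They are indecomposable because $\rho\otimes\mathcal{R}_{S_\phi}(U_\phi)$ has simple socle $\rho$. By Engel, any irreducible subrepresentation is $U_\phi$-fixed, and the $U_\phi$-invariants are exactly $\rho$.
\item The filtration is $F^k=\rho\otimes\{f\in\mathcal{O}(U_\phi):\mathfrak{m}^kf=0\}$, where $\mathfrak{m}\subset U(\mathfrak{u}_\phi)$ is the augmentation ideal. These pieces are $S_\phi$-stable, and $U_\phi$ acts trivially on $F^k/F^{k-1}$. So the graded pieces are semisimple $M_\phi$-modules $\bigoplus_\tau\tau\otimes W^k_\tau$.
\item Only now does your final step apply. $IC(O,-)$ sends $0\to F^{k-1}\to F^k\to F^k/F^{k-1}\to 0$ to a sequence exact at the two ends. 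Its middle cohomology $K$ satisfies $j^*K=0$, so it is built from $IC(O',\tau')$ with $O'\subset\partial O$.
\item Transporting via $a_\psi$, Theorem \ref{MainThm} and Definition \ref{fullFSParam} then give the surjection of $G^k_{\phi,\rho}$ onto $\bigoplus_\tau i_{b\sharp!}\pi_{\phi,\tau}$, with kernel in the boundary subcategory.
\end{itemize}
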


Lastly, we briefly note the following theorem from the mostly self-contained appendix, which is proven using Harish-Chandra's theory of intertwining operators and is likely of independent interest:

\begin{thm} (Theorem \ref{GenCompRed})\label{IntroGenCompRed}
Let $\pi$ be a smooth irreducible representation of $G(F)$ with supercuspidal support $[M,\sigma]$, and let $P=MU$ be a parabolic with Levi $M$.
Assume that the classical L-parameter $\phi_\pi$ of $\pi$ is generic.  
Then $i_P^G(\sigma)$ is completely reducible.
\end{thm}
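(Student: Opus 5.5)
The plan is to work with Harish-Chandra's theory of intertwining operators and the commuting algebra of the induced representation. Since $\sigma$ is supercuspidal, $i_P^G(\sigma)$ has finite length. Its endomorphism algebra $\mathrm{End}_G(i_P^G\sigma)$ is described by the Harish-Chandra/Bernstein theory, using the Knapp--Stein/Silberger description. It is spanned by normalized intertwining operators $A(w,\sigma)$ for $w\in W(M,\sigma)=\{w\in N_G(M)/M : {}^w\sigma\cong\sigma\}$. These operators are computed after meromorphic continuation in the unramified twist $\sigma\otimes\chi$, and then normalized at $\chi=1$.

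To get complete reducibility, I will show two things:
\begin{itemize}
\item all normalized operators $A(w,\sigma)$ are holomorphic and invertible at $\chi=1$;
\item the resulting algebra is semisimple, being a twisted group algebra of a finite group.
\end{itemize}
Then $i_P^G\sigma$ is semisimple. One way to see this is that $i_P^G\sigma$ has a contragredient pairing with $i_P^G\sigma^\vee$ compatible with the involution $w\mapsto w^{-1}$. Alternatively, one can show that every irreducible subquotient occurs as a submodule via Frobenius reciprocity and the geometric lemma.

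The key input is genericity. The poles and zeros of the rank one operators $A(s_\alpha,\sigma\otimes\chi)$ are governed by Harish-Chandra's $\mu$-function $\mu_\alpha(\sigma\otimes\chi)$. By Shahidi's theory (or Silberger's characterization of reducibility points), a rank one induction $i_{P_\alpha\cap M_\alpha}^{M_\alpha}\sigma$ is reducible with nonsplit structure, i.e.\ $\mu_\alpha$ has a zero at the given point, exactly when the L-parameter of $\sigma$ contains constituents $\phi_i,\phi_j$ with $\phi_i\cong\phi_j(1)$. Equivalently, this happens when the relevant adjoint L-factor $L(s,\sigma,r_{\alpha}^\vee)$ has a pole at $s=1$. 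I will use the generic hypothesis in its form from Proposition \ref{fibstruct}: $\phi_\pi^{\mathrm{ss}}=\iota\circ\phi_\sigma$ admits no nonzero $N\in\hat{\mathfrak g}^{\phi(I_F)}$ with $\mathrm{ad}(\mathrm{Fr})N=q^{-1}N$. This is equivalent to requiring that $\hat{\mathfrak g}/\hat{\mathfrak m}$, twisted by $|\cdot|$, has no Frobenius-invariants on inertia-invariants. In turn, it forces $\mu_\alpha(\sigma\otimes\chi)$ to have no zero at $\chi=1$ for each reduced relative root $\alpha$. Hence each $\mu_\alpha(\sigma)\neq 0$ is either finite and nonzero, or $\sigma$ is fixed by $s_\alpha$ and $\mu_\alpha(\sigma)=0$. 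The latter case must be analyzed separately: in the Knapp--Stein setting it produces the "$R$-group" generators rather than the "$W'$" part, and unitary-type arguments apply.

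Concretely, I will first twist so that $\sigma$ has unitary central character up to a real unramified twist $\nu$. Then I will decompose $W(M,\sigma)=W'\rtimes R$ following Knapp--Stein and Silberger's extension to non-unitary $\nu$ (Heiermann's work on the commuting algebra of $i_P^G\sigma$ is the cleanest reference). Heiermann shows that $\mathrm{End}_G(i_P^G\sigma)$ is the specialization of an affine Hecke algebra $\mathcal H$ at a point. Under genericity, this point avoids all the walls where $\mathcal H$-modules become nonsemisimple: the $q$-root hyperplanes $\alpha(\nu)=q^{\pm1}$ correspond exactly to $\phi_i\cong\phi_j(1)$. At such a regular point the principal series module of $\mathcal H$ is semisimple, and the commuting algebra is the twisted group algebra $\mathbb C[R_\sigma]_\eta$ (Kato's criterion). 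Transporting back via Heiermann's equivalence gives complete reducibility.

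The main obstacle is the precise matching of the genericity condition on $\phi_\pi$ with the vanishing of Harish-Chandra $\mu$-functions, equivalently Kato-type regularity of Heiermann's parameters. This requires the compatibility of the classical LLC for $M$ with Plancherel measures and $\gamma$-factors, which I would cite or assume as part of "classical L-parameter." To avoid heavy inputs, I would first try to argue directly through rank one Levis $M_\alpha\supset M$. Using the geometric lemma, a nonsplit extension inside $i_P^G\sigma$ would produce, after Jacquet restriction to some $M$, a nonsplit self-extension or a reducibility in some rank one $i^{M_\alpha}\sigma'$ with $\sigma'={}^w\sigma$. Rank one reducibility at a non-unitary point (the Silberger point) then forces a pair of parameter constituents differing by $(1)$, which contradicts genericity.
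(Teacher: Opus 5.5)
Your framework (intertwining operators, rank-one $\mu$-functions, genericity ruling out rank-one reducibility) is the right one. However, the step from ``$\mathrm{End}_G(i_P^G\sigma)$ is a twisted group algebra'' to ``$i_P^G\sigma$ is semisimple'' is false. For example, $i_B^{\mathrm{GL}_2}(|\cdot|^{1/2}\otimes|\cdot|^{-1/2})$ has one-dimensional commutant, yet it is a nonsplit extension of the Steinberg and trivial representations. Your proposed repairs do not close this. A contragredient pairing forces semisimplicity only when it is a positive Hermitian form, i.e.\ in the unitary case. ``Every subquotient is a submodule'' is essentially the claim itself. The Hecke-algebra/Kato detour only relocates the problem, since Heiermann's Hecke parameters are defined by the poles of the rank-one $\mu$-functions.

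You also conflate zeros and poles, and the resulting dichotomy you state is self-contradictory. The nonsplit rank-one reducibility you must exclude is a \emph{pole} of $\mu_{M_\beta}$, i.e.\ a zero of $J_{P|\overline{P}}\circ J_{\overline{P}|P}$. Zeros of $\mu$ are harmless and do occur at generic parameters: for $\mathrm{GL}_2$ and $\sigma$ trivial, $\phi^{\mathrm{ss}}=1\oplus1$ is generic, $\mu(\sigma)=0$, and $i_B(\sigma)$ is irreducible. In Knapp--Stein theory such zeros generate $W'$, not $R$.

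The paper obtains semisimplicity structurally. Write $\sigma=\sigma_0\otimes\chi$ and let $L$ be the Levi to which $\mathrm{Re}(\chi)$ extends. Then $i_{P\cap L}^L(\sigma)=\bigoplus_i\tilde\sigma_i\otimes\mathrm{Re}(\chi)_L$ with each $\tilde\sigma_i$ tempered. Inducing along $Q'$, chosen so that $\mathrm{Re}(\chi)_L$ is strictly dominant, gives $i_{P'}^G(\sigma)=\bigoplus_i i_{Q'}^G(\tilde\sigma_i\otimes\mathrm{Re}(\chi)_L)$, a sum of standard modules. Each is irreducible once $\mu^L_G$ is finite there, because its Langlands quotient is the image of $J_{\overline{Q'}|Q'}$ (Proposition \ref{mufiniteimpliesirred}). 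Finally, $J_{P'|P}(\sigma)$ is an isomorphism, since $J_{P|P'}\circ J_{P'|P}$ is, up to a constant, a product of factors $\mu_{M_\beta}(\sigma)^{-1}$. Via the product formula (Proposition \ref{proddecomp}) and a comparison of rank-one intertwining operators, everything reduces to showing that no $\mu_{M_\beta}$ has a pole at $\sigma$.

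That reduction is where your proposal has its real gap. You outsource the link between such a pole and non-genericity to Shahidi's theory, or to an assumed compatibility of the LLC with Plancherel measures and $\gamma$-factors. This is an extra hypothesis not granted by the statement, which only provides Properties \ref{ClassLLCProp}; Shahidi's method would moreover require $\sigma$ to be Whittaker-generic.

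The missing idea is Heiermann's theorem (Theorem \ref{HeiThm}, Corollary \ref{HeiCor}), applied in the rank-one Levi $M_\beta$. A pole of $\mu_{M_\beta}$ at $\sigma$ (necessarily simple, since the parabolic rank is one) means that $[M,\sigma]$ is the supercuspidal support of an essentially discrete series $\tilde\pi_\beta$ of $M_\beta(F)$. By Property \ref{ClassLLCProp}(2), the parameter of $\tilde\pi_\beta$ is essentially discrete for $M_\beta$. By Property \ref{ClassLLCProp}(4), its semisimplification factors through ${}^LM$. Hence it has nonzero monodromy, and after composing into ${}^LG$ its semisimplification is $\phi_\pi^{\mathrm{ss}}$, contradicting genericity.

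This argument uses only the qualitative properties of the LLC. It is what should replace your step ``rank-one reducibility forces $\phi_i\cong\phi_j(1)$'', which as written is both $\mathrm{GL}_n$-specific and unproved.
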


We now briefly overview the sections of the paper.  Section 2 begins with background material relevant for the main theorem and applications, including a review of the necessary inputs from the classical and Fargues-Scholze local Langlands correspondences, as well as the representation theory of disconnected reductive groups.  
It then extends some results from Springer theory and the theory of quasi-exceptional sets to the types of disconnected reductive groups appearing as centralizers of semisimple generic L-parameters, and contains the constructions of $\overline{\nabla}_\rho$, $\overline{\Delta}_\rho$, $IC_\rho$, and $\delta_\rho$.
Section 3 begins with the definition of generous and generic parameters and proofs of foundational properties of their locus in $\text{Par}_G$, and culminates in the proof of Theorem \ref{IntroMainThm}.  
In section 4 we prove the applications described above of the main theorem to t-exactness results for $c_\psi$ and $T_V$, and to the construction of Hecke eigensheaves.
Finally, in the appendix we highlight some results from the theory of intertwining operators of parabolic inductions of supercuspidal representations, and prove Theorem \ref{IntroGenCompRed}.

\subsection{Notation} \label{notation}
\begin{itemize}

\item $F$ denotes a finite extension of $\mathbb{Q}_p$ with absolute Galois group $\Gamma = \text{Gal}(\overline{F}/F)$.  Its Weil group will be denoted $W_F$, and $I_F$ and $P_F$ are its inertia and wild inertia subgroups, respectively.  We fix a lift of geometric Frobenius to $W_F$, denoted $\text{Fr}$.

\item $\ell$ will denote a fixed prime different from $p$, and we fix an isomorphism $\iota:\overline{\mathbb{Q}}_\ell\cong \mathbb{C}$

\item Let $G$ be a connected reductive group over $F$.  We let $\hat{G}$ denote the dual reductive group of $G$ defined over $\overline{\mathbb{Q}}_\ell$.

\item We fix a $\Gamma$-stable pinning of $G_{\overline{F}}$, with Borel $B_{\overline{F}}$ and maximal torus $T_{\overline{F}}$.
We fix a maximal split torus $A_0$ of $G$ and minimal parabolic $P_0$ containing $A_0$, such that $A_{0,\overline{F}} \subset T_{\overline{F}}$, $B_{\overline{F}}\subset P_{0,\overline{F}}$.
We use $A_0$ and $P_0$ to define positivity of relative (co)roots and (semi)standardness of Levis and parabolics of $G$, and $T_{\overline{F}}$ and $B_{\overline{F}}$ to define the same notions in $G_{\overline{F}}$.
If $G$ is quasisplit we will write $B$ for $P_0$, and if $G$ is split we will furthermore write $T$ for $A_0$.
If $M$ is a standard Levi subgroup of $G$ then it inherits the maximal split torus $A_0$ from $G$ as well as the minimal standard parabolic $P_0\cap M$.

\item  We let $\Delta_G$ denote the set of simple (relative) roots of $G$ with respect to $(A_0,P_0)$.  
The simple roots of $(T_{\overline{F}}, B_{\overline{F}})$ will be denoted $\Delta_{G, \text{abs}}$.  

\item Let $W_G$ be the relative Weyl group of $G$, ie $N_G(A_0)(F)/C_G(A_0)(F)$.

\item $^LG$ denotes the Langlands dual of $G$, $\hat{G}\rtimes W_F$, defined with respect to the pinning of $G$ fixed above.
We denote by $Q$ the quotient of $W_F$ by the stabilizer of its action on $\hat{G}$.

\item We follow the convention introduced in \cite[Section VIII.3.1]{FS}, where by a parabolic or Levi subgroup of $^L G$, we refer specifically to a subgroup of the form $\hat{P}\rtimes W_F$ or $\hat{M}\rtimes W_F$, respectively, where $\hat{P}$ is the dual of a standard parabolic $P$ of the unique quasi-split inner form of $G$, and similarly $\hat{M}$ is the dual of a standard Levi $M$ of that form.  The inclusion of $^LM$ into $^LG$ will be denoted $\iota_M$.

\item If $G$ is a reductive group with split torus $T$ we let $X^*(T)$ and $X_*(T)$ be the group of characters and cocharacters of $T$, respectively.  

\item The category of finite dimensional (algebraic) representations of a reductive group will be denoted $\text{Rep}(G)$.  The set of irreducible representations of $G$ will be denoted $\text{Irr}(G)$.

\item Let $B(G)$ be the Kottwitz set of $G$, defined as the set of Frobenius-conjugacy classes of $G(\breve{F})$ where $\breve{F}$ is the maximal unramified extension of $F$ contained in $\overline{F}$.  
We denote the Newton map by $\nu: B(G)\to (X_*(T_{\overline{F}})^+_{\mathbb{Q}})^\Gamma$ and the Kottwitz map by $\kappa: B(G)\to \pi_1(G)_\Gamma$ (where $\pi_1$ is the Borovoi fundamental group).

\item Recall that the Newton and Kottwitz maps together uniquely determine the elements of $B(G)$, and that there is a subset of so called \textit{basic} elements, $B(G)^{\text{bas}}\subset B(G)$, which is in bijection with $\pi_1(G)_\Gamma$ via the Kottwitz map (and defined as the set of elements of $B(G)$ whose image under the Newton map is a cocharacter with central image in $G$).

\item For every Levi subgroup $M$ of $G$ we denote $i_M:B(M)\to B(G)$ the natural map.  If $b_M$ is a basic element of $B(M)$ we use the definitions of dominance and anti-dominance appearing in \cite[Definition 2.2.8]{HHS}.

\item  If $G$ is quasi-split with Borel $B$ and $U$ is the unipotent radical of $B$, we fix a nondegenerate character $\psi:U(F)\to \overline{\mathbb{Q}}_\ell$.
We will usually abbreviate the Whittaker data $(U,\psi)$ as simply $\psi$ ($B$, and hence $U$, having already been fixed).
Fix a $\Gamma$-stable pinning of $G_{\overline{F}}$.
We say $\psi$ is \textit{compatible} with the $\Gamma$-stable pinning if it may be expressed in the form $$\psi_F(\sum\limits_{\alpha\in\Delta_{G,\text{abs}}}x_\alpha)$$ where $\psi_F$ is a character of $F$ and $x_\alpha$ denotes the isomorphism between $U_\alpha$ and $\mathbb{A}^1_{\overline{F}}$ specified by the pinning.
We will always take $\psi$ and the $\Gamma$-stable pinning of $G$ to be compatible.
For each $w\in W_{G_{\overline{F}}}$ let $n_w$ be the Tits representative of $w$ in $N(T_{\overline{F}})(\overline{F})$ (see for example \cite[Section 12]{JA}).
By \cite[Proposition 12.3]{JA}, if $\alpha$ and $\beta$ are simple absolute roots such that $w(\alpha)=\beta$ then $n_w(n_\alpha) = n_\beta$ and $n_w(x_\alpha) = x_\beta$.
If $M$ is a standard Levi subgroup of $G$ then we denote the Whittaker datum $(U\cap M, \psi|_{U\cap M})$ by $\psi_M$.
It follows that $n_w(\psi_M) = \psi_{w(M)}$ for all $w\in W_{G_{\overline{F}}}$ mapping $\Delta_M$ to $\Delta_{w(M)}$ whenever $\psi$ is compatible with a chosen $\Gamma$-stable pinning of $G_{\overline{F}}$.

\item  If $G$ is a linear algebraic group we will denote by $\mathcal{U}_G$ the unipotent variety of $G$, and by $\mathcal{N}_G$ the nilpotent cone of its Lie algebra.

\item When $G$ is an algebraic group we will sometimes use the notation $g\in G$ to denote a closed point of $G$, and refer to $g$ as an "element" of $G$.

\end{itemize}

\subsection{Acknowledgements}

First and foremost, I would like to thank my advisor, Kiran Kedlaya, for providing an enormous amount of support throughout my time working on this project and more broadly as a graduate student.  
The author also owes the existence of this paper to David Hansen, whose suggestion to formalize the ideas about generic parameters in his Beijing lecture notes on the categorical local Langlands correspondence was the genesis of this project.
Section \ref{HeckEigenSect} came about in response to some questions Alexander Bertoloni Meli asked the author at the Categorical Local Langlands Summit in Seoul in June 2026.
We thank Alexander Bertoloni Meli and Linus Hamann for helpful conversations about an earlier draft of this paper.

\textbf{AI Disclosure:}
AI was used during the research phase of this project for help finding references, in the writing phase for creating tikz diagrams, and in the proofreading stage for help finding errors and improving the quality of exposition.
AI contributed the main idea to the proof of Lemma \ref{FinGenPres}, which was introduced to fix an oversight in the original paper draft, as well as Lemmas \ref{AllEigenReg} and \ref{SocIrred}.

\section{Technical foundations}

\subsection{L-Parameters}

On the spectral side of the local Langlands correspondence, the fundamental object of study is the L-parameter.  In this paper, we are exclusively interested in the study of L parameters over $\overline{\mathbb{Q}}_\ell$, which we will often drop from notation.

\begin{defn}
An \textit{L-parameter} is a continuous homomorphism $\phi:W_F\to\!{}^LG(\overline{\mathbb{Q}}_\ell)$ such that $\pi_2\circ\phi(w)=w$ for all $w\in W_F$, where $\pi_2:\!{}^LG\to W_F, (g, w)\mapsto w$ is the natural projection.  Here $^LG(\overline{\mathbb{Q}}_\ell)$ is given the product topology of $\hat{G}(\overline{\mathbb{Q}}_\ell)$ and $W_F$, and $\hat{G}(\overline{\mathbb{Q}}_\ell)$ is given the $\ell$-adic topology inherited from any inclusion $\hat{G}\hookrightarrow \text{GL}(n)$.
\end{defn}

In the context of the categorical local Langlands program we will be interested in the moduli stack of $L$ parameters:

\begin{defn}
We denote by $\mathrm{Par}_G$ the moduli stack of $L$ parameters up to $\hat{G}$-conjugacy, as defined in \cite[Chapter VIII]{FS}, \cite{DHKM}, and \cite{ZhuLParam}.
Its coarse moduli space will be denoted $\text{Par}_G^{\text{spec}}$.
\end{defn} 

If $\phi$ is an L-parameter, we will often abuse notation and also use $\phi$ to denote its $\hat{G}$-conjugacy class, i.e. the point in $\text{Par}_G$ it maps to.
Every property of L-parameters to be studied in this paper will be invariant under conjugation, so this should not cause confusion.

The action of $W_F$ on $\hat{G}$ factors through a finite quotient $Q$.  
In this way we can consider an L parameter to be a map from $W_F$ to $\hat{G}(\overline{\mathbb{Q}}_\ell)\rtimes Q$ (compatible with projection onto the second factor).  

\begin{defn}\label{ssLparam}
An L-parameter is \textit{semisimple} if $\phi(w)$ is a semisimple element of $\hat{G}\rtimes Q$ for all $w\in W_F$.  It is \textit{Frobenius-semisimple} if $\phi(w)$ is semisimple for some (equivalently any) element $w\not\in I_F$.
    
\end{defn}

\begin{prop}\label{ssLparamprop}
The following conditions on an L-parameter $\phi$ are equivalent:

1)  $\phi$ is semisimple.

2)  $\phi$ is Frobenius-semisimple and has open kernel.

3)  Whenever $\phi$ factors through a parabolic subgroup of $^LG$, it factors through a Levi subgroup of that parabolic.

4)  The Zariski closure of $\phi(W_F)$ is reductive.

In particular, if $\phi$ is semisimple then its centralizer in $\ ^LG$ is (possibly disconnected) reductive.
\end{prop}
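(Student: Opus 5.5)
The plan is to prove the cycle of implications $1)\Rightarrow 2)\Rightarrow 4)\Rightarrow 3)\Rightarrow 1)$, working throughout with $\phi$ viewed as a map $W_F\to \hat{G}\rtimes Q$, a linear algebraic group whose identity component is $\hat G$.

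\textbf{$1)\Rightarrow 2)$.} Frobenius-semisimplicity is immediate. For the open kernel, restrict $\phi$ to $I_F$. Its image is a compact subgroup of $\hat{G}(\overline{\mathbb{Q}}_\ell)\rtimes Q$. The restriction to the pro-$p$ group $P_F$ has finite image by the usual $\ell\neq p$ argument: a compact open pro-$\ell$ subgroup of $\hat G(\overline{\mathbb{Q}}_\ell)$ meets the image of $P_F$ in a finite group. Hence $\phi|_{I_F}$ factors, up to finite index, through the tame quotient, and by Grothendieck's $\ell$-adic monodromy theorem it has the form $\exp(t_\ell(\cdot)N)$ on an open subgroup of $I_F$, with $N$ nilpotent. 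Every element of this open subgroup is then unipotent. Semisimplicity forces these elements to be trivial, so $N=0$ and the kernel is open.

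\textbf{$2)\Rightarrow 4)$.} With open kernel, $\phi(I_F)$ is finite and normalized by $\phi(\text{Fr})$, and $\phi(W_F)$ is generated by $\phi(I_F)$ and $s=\phi(\text{Fr})$. Some power $s^m$ centralizes $\phi(I_F)$. The Zariski closure $H$ of $\phi(W_F)$ therefore contains the closure of $\langle s^m\rangle$ as a finite-index normal subgroup. Since $s$ is semisimple, that closure is a diagonalizable group. So $H^\circ$ is a torus and $H$ is reductive.

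\textbf{$4)\Rightarrow 3)$.} Suppose $\phi$ factors through $\hat P\rtimes W_F$. Then $H\subset \hat P\rtimes Q$. A reductive subgroup of a parabolic lies in a Levi factor of it: by Mostow's theorem in characteristic $0$ it is conjugate by an element of the unipotent radical $\hat U$ into $\hat M\rtimes Q$. Conjugating $\phi$ by that element of $\hat U$, which stays within the parabolic, gives the factorization through a Levi of the parabolic. One small point needs checking: a $\hat U$-conjugate of a standard $\hat M\rtimes W_F$ is still a "Levi subgroup of that parabolic" in the sense of the convention fixed in the notation section.

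\textbf{$3)\Rightarrow 1)$.} This is the step I expect to be the main obstacle. The approach is to invoke the Borel--Tits / Serre theory of complete reducibility: property 3) says $\phi(W_F)$ is $\hat G$-completely reducible in Serre's sense, adapted to the non-connected $\hat G\rtimes Q$ as in \cite[VIII.3]{FS} or \cite{DHKM}. I would then use Serre's theorem that a $G$-cr subgroup in characteristic $0$ has reductive Zariski closure, and from this deduce that all elements are semisimple. The latter deduction fails for a general reductive closure, since a unipotent element can generate $\mathbb{G}_a$, which is why the monodromy shape of $\phi$ must also enter.

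Concretely, I would apply the Jacobson--Morozov parabolic attached to the monodromy operator $N$ of $\phi$. Using Grothendieck's theorem again, $\phi$ normalizes the associated parabolic $P_N$ and therefore factors through $P_N$. By 3), it then factors through a Levi subgroup of $P_N$. That Levi's Lie algebra meets the nilradical trivially, yet $N$ would have to lie in both, so $N=0$ and the kernel is open. Frobenius-semisimplicity is obtained the same way: decompose $\phi(\text{Fr})=su$ by Jordan decomposition and use the parabolic attached to a cocharacter adapted to $u$ that is normalized by $\phi$. Once both hold, every $\phi(w)$ is a product of a semisimple element and commuting elements of finite order, hence semisimple.

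\textbf{Final assertion.} The centralizer of $\phi$ equals the centralizer of the reductive group $H$ given by 4). The centralizer of a reductive subgroup of a reductive group is reductive, by Richardson's theorem in characteristic $0$, and this gives the final claim.
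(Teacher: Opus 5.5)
Your proof is correct, but it follows a genuinely different route from the paper.

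\textbf{The paper's route.} It runs the cycle $1\Rightarrow2\Rightarrow3\Rightarrow4\Rightarrow1$.
\begin{itemize}
\item For $2\Rightarrow3$, a Maschke-type averaging over powers of $\phi(\mathrm{Fr})$ shows that $W_F$ acts semisimply on a faithful representation. It then cites Serre's Cor.~4.3 of \cite{SerCompRed}.
\item For $3\Rightarrow4$, it cites Serre's Prop.~4.2: completely reducible subgroups have reductive closure.
\item For $4\Rightarrow1$, it uses the one idea you missed. The group $\phi(W_F)$ is solvable, since $\phi(P_F)$ is a finite $p$-group and the tame quotient is metabelian. So the closure $H$ is solvable and reductive, $H^\circ$ is a torus, and every $\phi(w)$ has a semisimple power.
\end{itemize}
This solvability observation answers exactly the worry you raise, that a reductive closure can contain unipotent elements. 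With it, Serre's reductivity result gives $3\Rightarrow1$ immediately, and no Jacobson--Morozov argument is needed.

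\textbf{Your route.} You compute $H^\circ$ directly in $2\Rightarrow4$. You use Mostow's conjugacy theorem for $4\Rightarrow3$. You prove $3\Rightarrow1$ by hand: $\phi(W_F)$ scales $N$ (respectively centralizes $u$), so it normalizes the canonical Jacobson--Morozov parabolic of $N$ (respectively of $\log u$). Factoring through a Levi of that parabolic then forces $N=0$ (respectively $u=1$).

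\textbf{What each buys.} Your argument is longer. In exchange, it never needs Serre's notion of complete reducibility for the disconnected group $\hat G\rtimes Q$, which the paper invokes somewhat loosely. Your appeal to Richardson also proves the final assertion about centralizers, which the paper's proof leaves unaddressed.

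\textbf{Small repairs.}
\begin{itemize}
\item The last sentence of your $3\Rightarrow1$ is not right as written. $\phi(\mathrm{Fr})$ normalizes, but need not centralize, the finite group $\phi(I_F)$. Either pass to a power $\phi(\mathrm{Fr})^k$ that centralizes $\phi(I_F)$, as in Lemma~\ref{uNcommutation}, or simply quote your own $2\Rightarrow4$.
\item For the parabolic attached to $u$ to be normalized by all of $\phi(W_F)$, you need $u$ to centralize $\phi(I_F)$. This follows because $u^k$ is the unipotent part of $\phi(\mathrm{Fr})^k$ and $u\in\overline{\langle u^k\rangle}$. That argument requires $\phi(I_F)$ to be finite, so the monodromy step must come first.
\item You should note that the normalizer of $P_N$ in $\hat G\rtimes Q$, which surjects onto $Q$, is $\hat G$-conjugate to a standard $\hat P\rtimes Q$. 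This is the remark following the statement.
\end{itemize}
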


\begin{rem}
As in \cite[VIII.3.1]{FS} we define a parabolic subgroup of $^LG$ to be a subgroup of the form $\hat{P}\rtimes Q$ where $P$ is a standard parabolic of the quasi-split inner form of $G$; every parabolic of $\hat{G}\rtimes Q$ surjecting onto $Q$ is of this form up to $\hat{G}$-conjugacy.
\end{rem}

\begin{proof}
$1)\implies  2)$: Follows immediately from Grothendieck's $\ell$-adic monodromy theorem and the fact that $I_F$, being compact, has finite (hence semisimple) image if its kernel is open.

$2)\implies 3)$: Let $V$ be a $\overline{\mathbb{Q}}_\ell$-vector space such that $\hat{G}\rtimes Q$ embeds into $GL(V)$, and fix such an embedding.  Since $\mathrm{Fr}$ normalizes $I_F$ and $\phi(I_F)$ has finite image, 
$\phi(\mathrm{Fr})^k$ centralizes $\phi(I_F)$ for some sufficiently large integer $k$.  Since $\phi(I_F)$ is a finite group it acts completely reducibly on $V$, and therefore so does $\langle \phi(I_F), \phi(\mathrm{Fr})^k\rangle$.  
We claim $W_F$ must act semisimply on $V$, which follows from a Maschke-style argument.  
Namely if $W$ is a $W_F$-invariant subspace of $V$, then it has $\langle \phi(I_F), \phi(\mathrm{Fr})^k\rangle$ invariant complement $W'$.  
Letting $p$ be the projection from $V$ to $W$ along $W'$, one checks $$v\mapsto\frac{1}{k}\sum\limits_{i=0}^{k-1}\text{Fr}^i\cdot p(\text{Fr}^{-i}\cdot v)$$ is a $W_F$-equivariant projection from $V$ onto $W$.  
We conclude by applying \cite[Corollaire 4.3]{SerCompRed} to the Zariski closure of $\phi(W_F)$ in $\hat{G}\rtimes Q$.

$3) \implies 4)$:  Let $H$ be the Zariski closure of $\phi(W_F)$ in $\hat{G}\rtimes Q$.  Then $H$ is a completely reducible subgroup of $\hat{G}\rtimes Q$, and by \cite[Proposition 4.2]{SerCompRed} it is reductive.

$4)\implies 1)$:  Note $\phi(W_F)$ is always solvable, which follows because $\phi(P_F)$ is a $p$-group, hence solvable, combined with the tame inertia and Weil group short exact sequences.  So $H$ is also solvable.  Then, being solvable and reductive, its identity component is a torus $T$.  Since $H$ has finitely many connected components, for any $w\in W_F$, there is an integer $k$ such that $\phi(w)^k\in T$.  But if $\phi(w)^k$ is semisimple then $\phi(w)$ must be semisimple as well.

\end{proof}

The relevance of semisimple L-parameters comes from the fact that their conjugacy classes correspond to the closed points of $\text{Par}_G$ (and therefore its coarse moduli space):

\begin{prop}\cite[Proposition VIII.3.2]{FS}, \cite[Theorem 4.13]{DHKM}\label{ParGClosedOrbits}

An L-parameter $\phi$ is semisimple if and only if its orbit under conjugation by $\hat{G}$ is a closed point of $\textup{Par}_G$.
    
\end{prop}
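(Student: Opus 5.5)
The statement is cited from \cite[Proposition VIII.3.2]{FS} and \cite[Theorem 4.13]{DHKM}. A self-contained argument would reduce to a standard closed-orbit criterion for reductive group actions: Richardson's theorem on completely reducible tuples, combined with the equivalence 1)$\iff$3) of Proposition \ref{ssLparamprop}.

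The first step is to make $\text{Par}_G$ concrete. By \cite{DHKM} and \cite{FS}, the stack $\text{Par}_G$ is a disjoint union of quotient stacks $Z/\hat{G}$, where $Z$ is an affine scheme of finite type. After fixing a sufficiently small open subgroup $P$ of wild inertia (on which $\phi$ is trivial up to conjugacy, a discrete choice) and choosing generators of the tame quotient, $Z$ embeds $\hat{G}$-equivariantly as a closed subscheme of a product $(\hat{G}\rtimes Q)^n$. A parameter is recorded there by the tuple of images of these finitely many generators, and $\hat{G}$ acts by simultaneous conjugation. The closed points of $Z/\hat{G}$ are then exactly the closed $\hat{G}$-orbits in $Z$, and closedness in $Z$ is the same as closedness in $(\hat{G}\rtimes Q)^n$.

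The second step is to apply Richardson's criterion, in the form given by Bate--Martin--R\"ohrle for a possibly disconnected reductive ambient group $\hat{G}\rtimes Q$. It says that the $\hat{G}$-orbit of a tuple $(g_1,\dots,g_n)$ is closed if and only if the subgroup $H$ generated by the $g_i$ is $\hat{G}\rtimes Q$-completely reducible. Concretely, whenever $H$ lies in an R-parabolic subgroup, it lies in an R-Levi subgroup of that parabolic. One then has to check that R-parabolics surjecting onto $Q$ agree with the paper's parabolic subgroups $\hat{P}\rtimes Q$ up to conjugacy; this is the content of the remark after Proposition \ref{ssLparamprop}. With that identification, orbit closedness becomes exactly condition 3) of Proposition \ref{ssLparamprop}, which is equivalent to semisimplicity.

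The main obstacle is the continuity and finite-generation issue. The image $\phi(W_F)$ is not literally generated by finitely many chosen elements, because the image of $I_F$ is finite only for semisimple $\phi$. Monodromy makes the image of tame inertia generated by a single element together with its Frobenius conjugates, which DHKM handles by discretizing $W_F$. One must check that the Zariski closure of the finitely generated discrete group equals that of $\phi(W_F)$, so that complete reducibility of one is equivalent to that of the other. I would handle this by using $\ell$-adic density of the discretized group in $W_F/P$, together with continuity of $\phi$.
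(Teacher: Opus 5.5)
Your argument is correct and is essentially the proof behind the cited results. The paper gives no proof of its own, and Fargues--Scholze (and DHKM) prove this by passing to the finitely generated discretized group and applying Richardson's closed-orbit criterion in the Bate--Martin--R\"ohrle form, with ``semisimple'' there meaning complete reducibility; your use of 1)$\iff$3) of Proposition \ref{ssLparamprop} is exactly the bridge to the paper's Definition \ref{ssLparam}. The matching of R-parabolics and R-Levis of $\hat{G}\rtimes Q$ containing $\phi(W_F)$ with $\hat{G}$-conjugates of $\hat{P}\rtimes Q$ and their Levis, which you defer to the remark, is routine: for the direction you did not spell out, realize $\hat{P}\rtimes Q$ as $P_\lambda$ using a $Q$-invariant cocharacter of $Z(\hat{M})^\circ$ that is strictly dominant for $\hat{P}$, obtained by averaging over $Q$.
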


The tame inertia group $I_F^t$ of $F$ is canonically isomorphic to $$\prod\limits_{\ell\neq p}\mathbb{Z}_\ell(1) $$ and from now on we fix an isomorphism between this product and $$\prod\limits_{\ell\neq p}\mathbb{Z}_\ell$$ in order to define $t: I_F^t\to\mathbb{Z}_\ell$ as the composition of this isomorphism with the projection to $\mathbb{Z}_\ell$.

By Grothendieck's $\ell$-adic monodromy theorem there exists an L-parameter $\tilde\phi$ (depending on $\text{Fr}$ and $t$) with open kernel such that $\phi|_{I_F}(w) = \tilde\phi|_{I_F}(w)e^{t(w)N}$ for all $w\in I_F$ and some nilpotent element $N\in \mathfrak{g}^{\text{ad}(\phi(I_F))}$ such that $\phi(\text{Fr})\cdot N= q^{-1}N$.
It is defined by $\tilde\phi(\text{Fr}^nw) = \phi(\text{Fr})^n\phi(w)_{\text{ss}}$ for $w\in I_F$.

\begin{lem}\label{uNcommutation}

Let $u$ be the unipotent component of $\phi(\textup{Fr})$.  Then $u\in C_{\hat{G}\rtimes Q}(\phi(I_F))$, and it is independent of the choice of $\textup{Fr}$.
    
\end{lem}

\begin{proof}

It suffices to show $u$ centralizes $\tilde \phi(I_F)$ and $N$.
The former follows because $\tilde\phi(I_F)$ is finite, hence has finite automorphism group.
So $u^n$ commutes with $\tilde\phi(I_F)$ for some positive integer $n$, and since $u$ is unipotent it follows $u$ commutes with $\tilde\phi(I_F)$ as well.
Regarding $N$, since $\text{ad}(\phi(\text{Fr}))\cdot N = q^{-1}N$ it follows that $N$ is an eigenvector of $\text{ad}(\phi(\text{Fr}))$ acting on $\mathfrak{g}$, so the semisimple part of $\phi(\text{Fr})$ acts on $N$ by multiplication by $q^{-1}$ and the unipotent part acts trivially.
Therefore $u$ commutes with $e^{t(w)N}$ for all $w\in I_F$.

For the final claim, we start by showing $\phi(\text{Fr})_{\text{ss}}\phi(w)$ is semisimple for all $w\in I_F$.
Again since $\tilde\phi(I_F) = \phi(I_F)_{\text{ss}}$ is finite, some power $\phi(\text{Fr})_{\text{ss}}^n$ commutes with $\tilde\phi(w)$ for all $w\in I_F$, and hence $\phi(\text{Fr})_{\text{ss}}^n\tilde\phi(w)$ is semisimple for all such $w$.
Since $(\phi(\text{Fr})_{\text{ss}}\tilde\phi(w))^n = \phi(\text{Fr})_{\text{ss}}^n\tilde{\phi}(w')$ for some $w'\in I_F$, which is semisimple, it follows that $\phi(\text{Fr})_{\text{ss}}\tilde\phi(w)$ is as well.
Now $\text{ad}(\phi(\text{Fr})_{\text{ss}}\tilde\phi(w))\cdot e^{t(w)N} = e^{q^{-1}t(w)N}$, and it follows that $e^{kt(w)N} \phi(\text{Fr})_{\text{ss}}\tilde\phi(w) e^{t(w)N}e^{-kt(w)N} = \phi(\text{Fr})_{\text{ss}}\tilde\phi(w)$ where $k = 1/(1-q)$.
So $\phi(\text{Fr})_{\text{ss}}\tilde\phi(w) e^{t(w)N}$ is semisimple as well.

Now any two choices of $\text{Fr}$ and $\text{Fr}'$ of Frobenius lift differ by an element $w\in I_F$.
As shown above $\phi(\text{Fr})_{\text{ss}}\phi(w)$ is semisimple for all $w\in I_F$, and together with $u$ lying in the centralizer of $\phi(I_F)$ we conclude that the Jordan decomposition of $\phi(\text{Fr}')$ is $\phi(\text{Fr})_{\text{ss}}\phi(w)u$.

\end{proof}

\begin{defn}\label{Semisimplification}
Given $\phi$ we define its \textit{Frobenius-semisimplification} $\phi'$ by $\phi'(\text{Fr})= \phi(\text{Fr})_{\text{ss}}$ and $\phi'(w)=\phi(w)$ for all $w\in I_F$.
We define its \textit{semisimplification} $\phi_{\text{ss}}$ to be the Frobenius-semisimplification of $\tilde\phi$.  
We will show below that it lies in the closure of the $\hat{G}$-orbit of $\phi$.
\end{defn}

Let $q:\text{Par}_G\to\text{Par}_G^{\text{spec}}$ be the coarse moduli map.  
We will sometimes abuse notation and write $q(\phi)$ to denote $q$ applied to the $\hat{G}$-conjugacy class of an L-parameter $\phi$.  

\begin{prop}\label{fibstruct}
Fix a semisimple L-parameter $\phi$. The reduced fiber of $q$ over $q(\phi)\in \mathrm{Par}_G^{\mathrm{spec}}$ is isomorphic to the subvariety of $\mathcal{U}_{S_{\phi}}\times\hat{\mathfrak{g}}^{\mathrm{ad}\phi(I_{F})}$ consisting of pairs $(u,N)$ satisfying the relations $\mathrm{ad}\phi(\mathrm{Fr})\cdot N=q^{-1}N$ and $\mathrm{ad}(u)\cdot N=N$, (stack-)quotiented by the diagonal conjugation action of $S_\phi$.
\end{prop}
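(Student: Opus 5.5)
The plan is to parametrize each L-parameter $\phi'$ with $q(\phi')=q(\phi)$ by its Jordan--Grothendieck data, and then check that this parametrization is an isomorphism of reduced stacks. First we use the moduli description of \cite[Chapter VIII]{FS} and \cite{DHKM}. Since $I_F$ acts through a finite quotient modulo its pro-$\ell$ tame part, the stack $\text{Par}_G$ is a quotient of an affine scheme of $1$-cocycles by $\hat G$.

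\textbf{Step 1: decomposing a parameter in the fiber.} Let $\phi'$ be a geometric point in the fiber over $q(\phi)$. By Proposition~\ref{ParGClosedOrbits}, the unique closed orbit in the closure of the orbit of $\phi'$ is that of $\phi$. We show that this closed orbit is the orbit of the semisimplification $\phi'_{\text{ss}}$ of Definition~\ref{Semisimplification}, so that after conjugating we may assume $\phi'_{\text{ss}}=\phi$. To do this, take the cocharacter attached (via Jacobson--Morozov in the centralizer of $\tilde\phi'(I_F)$ and $\phi'(\text{Fr})_{\text{ss}}$) to the nilpotent $N$, together with the decomposition $\phi'(\text{Fr})=\phi'(\text{Fr})_{\text{ss}}u$. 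Conjugating by $\lambda(t)$ and letting $t\to0$ contracts $N$ to $0$. To contract $u$, use the standard fact that a unipotent element lies in the closure of the conjugacy class of the identity inside the reductive group $S_\phi$; equivalently, apply a Hilbert--Mumford argument to $\langle u\rangle$. Both degenerations stay within a single fiber of $q$, and the endpoint $\phi'_{\text{ss}}$ is semisimple by Proposition~\ref{ssLparamprop}(2), hence closed.

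\textbf{Step 2: the map to $(u,N)$.} With $\phi'_{\text{ss}}=\phi$, Grothendieck's monodromy theorem and Lemma~\ref{uNcommutation} give
\[ \phi'(\text{Fr})=\phi(\text{Fr})\,u, \qquad \phi'|_{I_F}(w)=\phi(w)\,e^{t(w)N}. \]
Here $u$ is unipotent and centralizes $\phi(I_F)$. Moreover $u$ commutes with $\phi(\text{Fr})=\phi'(\text{Fr})_{\text{ss}}$, since the factors of a Jordan decomposition commute, and therefore $u\in\mathcal U_{S_\phi}$. The element $N$ lies in $\hat{\mathfrak g}^{\text{ad}\phi(I_F)}$. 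The relation $\text{ad}\,\phi'(\text{Fr})N=q^{-1}N$, split into its semisimple and unipotent parts as in the proof of Lemma~\ref{uNcommutation}, yields $\text{ad}\,\phi(\text{Fr})N=q^{-1}N$ and $\text{ad}(u)N=N$. Conversely, any pair $(u,N)$ satisfying these relations defines a continuous cocycle by the displayed formulas. Conjugation by $\hat G$ preserving $\phi'_{\text{ss}}=\phi$ is conjugation by $S_\phi$, and it acts diagonally on $(u,N)$.

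\textbf{Step 3: upgrading to an isomorphism of stacks.} Let $Y$ be the variety of pairs $(u,N)$ satisfying the relations. Steps 1 and 2 give a morphism $Y/S_\phi\to q^{-1}(q(\phi))$ which is bijective on geometric points and on automorphism groups. To upgrade it, realize the map as $\hat G\times^{S_\phi}Y\to Z$, where $Z$ is the locus of cocycles whose semisimplification is conjugate to $\phi$. The inverse $Z\to\hat G\times^{S_\phi}Y$ sends a cocycle to its Jordan data; it is algebraic because Jordan decomposition is algebraic on each fixed-Jordan-type stratum, and more globally it is given by the polynomial formulas $u=\phi'(\text{Fr})_{\text{ss}}^{-1}\phi'(\text{Fr})$ and $N=\log$ of the unipotent part of a topological generator of tame inertia. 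Surjectivity and étale-local sections come from the fact that $\hat G\to \hat G/S_\phi$, the orbit of $\phi$, is smooth, using reductivity of $S_\phi$ (Proposition~\ref{ssLparamprop}). Since the statement only concerns reduced fibers, it is enough to produce this inverse over reduced test schemes.

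The main obstacle is Step 3: making the Jordan-decomposition inverse algebraic in families over a non-normal reduced base. I would first handle it by working over the normalization, or by invoking Luna's étale slice theorem at the closed orbit $\hat G\cdot\phi$. The slice at $\phi$ is $S_\phi$-equivariantly étale over the fiber, and inside it the above formulas become polynomial. Reducedness then identifies the two structures because the map is a bijective étale-locally-split morphism.
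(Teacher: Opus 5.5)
Your architecture matches the paper's proof. You identify geometric points of the fiber by degenerating a parameter to its semisimplification. You then build the inverse over reduced test schemes from a relative Jordan decomposition, the relative monodromy theorem, and an \'etale-local trivialization of the $S_\phi$-torsor. However, Step 1 does not work as written.

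You cannot run Jacobson--Morozov for $N$ inside the centralizer of $\phi'(\mathrm{Fr})_{\mathrm{ss}}$, because $N$ is not in its Lie algebra: $\mathrm{Ad}(\phi'(\mathrm{Fr})_{\mathrm{ss}})N=q^{-1}N\neq N$. A Jacobson--Morozov cocharacter taken only in $C(\tilde\phi'(I_F))$ need not commute with $\phi'(\mathrm{Fr})_{\mathrm{ss}}$ or with $u$. So you have not shown that $\lambda(t)\phi'(\mathrm{Fr})\lambda(t)^{-1}$ converges, nor that the endpoint degenerates further to $\phi'_{\mathrm{ss}}$ itself.

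The missing idea is the paper's. Let $D$ be the Zariski closure of $\langle\phi'(\mathrm{Fr})_{\mathrm{ss}}\rangle$. Since $q^{-1}$ is not a root of unity, $N$ is a weight vector for an infinite-order character of $D$. Hence some cocharacter $\lambda$ of $D^\circ$ satisfies $\mathrm{Ad}(\lambda(z))N=z^mN$ with $m>0$. Such a $\lambda$ automatically commutes with $\phi'(\mathrm{Fr})_{\mathrm{ss}}$ and with $u$. It also centralizes $\tilde\phi'(I_F)$: $D^\circ$ lies in the closure of $\langle\phi'(\mathrm{Fr})_{\mathrm{ss}}^k\rangle$ for any $k$ such that this power centralizes the finite group $\tilde\phi'(I_F)$. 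Therefore $\lambda(z)\cdot\phi'$ tends to the parameter with data $(u,0)$. A graded Jacobson--Morozov triple compatible with $\phi'(\mathrm{Fr})_{\mathrm{ss}}$ would also work, but it needs its own proof.

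Your unipotent fact is also reversed. It is the identity that lies in the closure of the conjugacy class of $u$, not the other way round. Concretely, take a Jacobson--Morozov cocharacter for $\log u$ in $S_{\phi'_{\mathrm{ss}}}^\circ$, as the paper does.

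In Step 3, the obstacle you single out largely disappears, and your proposed remedies are not the right tools. A morphism defined on the normalization need not descend to a non-normal base. A Luna slice only identifies the fiber with the $S_\phi$-quotient of the null-fiber of an abstract slice, which you would still have to compare with $Y$.

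What actually justifies your polynomial formulas is the paper's argument, a consequence of Step 1. For every geometric point $s$ of a reduced test scheme $S\to q^{-1}(q(\phi))$, the element $\Phi_s(\mathrm{Fr})_{\mathrm{ss}}$ is $\hat G$-conjugate to $\phi(\mathrm{Fr})$. So in a faithful representation $\Phi(\mathrm{Fr})$ has constant eigenvalues, and its semisimple part is a fixed polynomial in it. This gives the Jordan decomposition over $S$ with no normality hypothesis.

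The paper then obtains $N$ from \cite[Lemma 3.10]{ZhuLParam}. It uses reducedness of $S$ to propagate the commutation and eigenvalue relations from closed points. Finally it trivializes the $S_\phi$-torsor $\Phi_{\mathrm{ss}}$ \'etale-locally. These are exactly the \'etale-local sections your final step needs.
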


\begin{proof}

Given a closed point $(u,N)$ as in the proposition statement define an L-parameter $\phi_{u,N}$ by $\phi_{u,N}(\text{Fr}^nw) = (\phi(\text{Fr})u)^n\phi(w)e^{t(w)N}$. 
Let $D$ be the Zariski closure of $\langle\phi(\mathrm{Fr})\rangle$, a diagonalizable group commuting with $u$, whose identity component centralizes $\phi(I_F)$ (as some power of
$\phi(\mathrm{Fr})$ does). 
Since $\mathrm{ad}(\phi(\mathrm{Fr}))N = q^{-1}N$ and $q^{-1}$ is not a root of unity, $N$ is a weight vector for a character of $D$ of infinite order, so some cocharacter $\lambda$ of $D^\circ$ satisfies $\mathrm{Ad}(\lambda(z))N = z^mN$ with $m > 0$. 
Letting $S_\phi$ be the stabilizer of $\phi$ in $\hat{G}$, it follows that $\lambda$ is $S_\phi$-valued, commutes with $u$, and that $\lim_{z\to 0}\lambda(z)\cdot\phi_{u,N} = \phi_u$.
Now we apply Jacobson-Morozov to $u=\phi(\text{Fr})_u\in S_{\phi}^\circ$ to find a $S_{\phi}^\circ$ valued cocharacter $\lambda'$ such that $\lim\limits_{t\to0}\lambda'(t)\cdot u = I$, the identity.
It follows that $\lim\limits_{t\to0}\lambda'(t)\cdot \phi_u = \phi$.
Therefore $\phi$ is in the closure of the orbit of $\phi_{u,N}$.

Now let $R$ be a reduced finite type $\overline{\mathbb{Q}}_\ell$-algebra and let $u$ and $N$ be $S:=\text{Spec}(R)$-valued points of $\mathcal{U}_{S_\phi}$ and $\hat{\mathfrak{g}}^{\text{ad}\phi(I_F)}$ satisfying the hypotheses of the proposition.
Denote by $\Phi$ the corresponding $S$-valued parameter.
Letting $\Phi_s$ be the specialization of $\Phi$ to $s$ for any closed point $s\in S$, the closure of the orbit of $\Phi_s$ contains $\phi$.
Therefore $S$ lands in $q^{-1}(q(\phi))$.

Conversely, consider an $S$-point $\Phi$ of $q^{-1}(q(\phi))$.
Then for every $\overline{\mathbb{Q}}_\ell$-point $s\in S$, $\Phi_s(\text{Fr})_{\text{ss}}$ lives in a fixed $\hat{G}(\overline{\mathbb{Q}}_\ell)$-conjugacy class in $\hat{G}(\overline{\mathbb{Q}}_\ell)\rtimes Q$ as $s$ varies, and in particular its eigenvalues are constant after fixing any embedding of $\hat{G}\rtimes Q$ into $\text{GL}(V)$.
Therefore there exists an $S$-point $\Phi(\text{Fr})_{\text{ss}}$ of $\hat{G}\rtimes Q$ restricting to $\Phi_s(\text{Fr})_{\text{ss}}$ for all $s$, and consequently a Jordan decomposition of $\Phi(\text{Fr})$ at the level of $S$-points.

By the relative Grothendieck $\ell$-adic monodromy theorem \cite[Lemma 3.10]{ZhuLParam}, $\Phi|_{I_F}$ can be written as a product $\Phi = \tilde\Phi e^{t(-)N}$ for some $S$-point $N$ of $\mathfrak{g}$ such that $\tilde\Phi$ has open kernel, $\text{ad}(\tilde\Phi(I_F))\cdot N = N$ and $\text{ad}(\tilde\Phi(\text{Fr}))\cdot N = q^{-1}N$.
By Lemma \ref{uNcommutation} $u$ and $N$ commute after restriction to all closed points of $S$, and by the hypotheses on $S$ it follows that they commute.
We can therefore define the semisimplification $\Phi_{\text{ss}}$ of $\Phi$ as in Definition \ref{Semisimplification}.
Possibly replacing $S$ by an etale cover if necessarily, we may trivialize the $S_\phi$-torsor $\Phi_{\text{ss}}$, so that we may assume $\Phi_{ss}$ is the constant map from $S$ to $\phi$.
For a closed point $s\in S$ let $u_s$ and $N_s$ denote the restrictions of $u$ and $N$ to $s$.
Then $u_s$ and $N_s$ satisfy the conditions of the proposition for all $s$, and hence $u$ and $N$ do as well.
Then $\phi$ lies in the closure of the orbit of $\phi_s$ for all $s$.

\end{proof}

 \subsection{Review of Classical Local Langlands}\label{ClassicalLLC}

In this section we review some aspects of the local Langlands correspondence.  
For more detailed overviews see \cite{KalLLC}, \cite{Tai}.

Let $G$ be a reductive group over a finite extension $F$ of $\mathbb{Q}_p$, and let $\ell$ be a prime different from $p$.  
The goal of the local Langlands correspondence is, roughly, to establish a bijection between smooth irreducible $\overline{\mathbb{Q}}_\ell$-representations of $G(F)$, and L-parameters (up to conjugacy) equipped with additional data.
After fixing an isomorphism between $\overline{\mathbb{Q}}_\ell$ and $\mathbb{C}$ we may equivalently consider smooth irreducible complex representations of $G(F)$ on the representation-theoretic side, and complex Weil-Deligne representations $W_F\times SL_2(\mathbb{C})\to\ ^LG(\mathbb{C})$ on the Galois-theoretic side.
When $G$ is not quasi-split, in ideas originally going back to work of Vogan \cite{Vog93} it was realized that on the representation-theoretic side it makes sense to consider representations of $G(F)$ not in isolation but together with those of various inner forms of $G$.
The form of this conjecture most relevant to us is the following for tempered representations due to Kottwitz:
\begin{conjecture} \cite[Conjecture F]{KalLLC}\label{basicLLC}
There is a canonical bijection $\iota_\psi$ between $\bigsqcup\limits_{b\in B(G)^{\textup{bas}}}\textup{Irr}(G_b(F))$ and pairs $(\phi,\rho)$ making the diagram below commute, where $\phi$ is a Weil-Deligne parameter and $\rho$ is an irreducible representation of $S_\phi^\natural:=S_\phi/(S_\phi\cap\hat{G}^{\text{der}})^\circ$, where $S_\phi$ is the stabilizer of $\phi$ in $\hat{G}$.  
Fixing $b$ and $\phi$, there are finitely many smooth irreducible representations of $G_b(F)$ with L-parameter $\phi$.  We denote this set $\Pi_\phi(G_b)$, which will be referred to as the L-packet of $\phi$ for $G_b$.

\[
\xymatrix{\underset{b\in B(G)_{\mathrm{bas}}}{\coprod}\Pi_{\phi}(G_{b})\ar[r]^{\,\;\;\;\;\;\;\;\iota_{\psi}}\ar[d] & \mathrm{Irr}(S_{\phi}^{\natural})\ar[d]\\
B(G)_{\mathrm{bas}}\ar[r]^{\kappa_{G}} & X^{\ast}(Z(\hat{G})^{\Gamma})
}
\]

\end{conjecture}

\begin{rem}

In \cite{KalLLC} the local Langlands correspondence is restricted to tempered representations and parameters, but by the proof of \cite[Theorem 2.5]{BMHN} this restriction can be removed (though the restriction to tempered packets is needed for other properties to hold, such as the endoscopic character identities, which will not be used in this paper).
     
\end{rem}

If $\phi$ is an L-parameter, we will use the notation $\Pi_\phi(G_b)$ to denote the L-packet of the Weil-Deligne parameter associated to $\phi$.

In later work, Bertoloni Meli and Oi extended this result as follows:
\begin{thm}\cite[Theorem 1.2]{BMO}\label{BMO}
Assume Conjecture \ref{basicLLC} holds for all Levi subgroups of a reductive group $G$.  Then there is an induced canonical bijection between $\bigsqcup\limits_{b\in B(G)}\textup{Irr}(G_b(F))$ and pairs $(\phi,\rho)$ where $\rho$ is an irreducible representation of $S_\phi$, making the diagram below commute.  The set of irreducible representations of $G_b(F)$ with Weil-Deligne parameter $\phi$ is denoted $\Pi_\phi(G_b)$.

\[
\xymatrix{\underset{b\in B(G)}{\coprod}\Pi_{\phi}(G_{b})\ar[r]^{\;\;\;\;\;\;\iota_{\psi}}\ar[d] & \mathrm{Irr}(S_{\phi})\ar[d]\\
B(G)\ar[r]^{\kappa_{G}} & X^{\ast}(Z(\hat{G})^{\Gamma})
}
\]

\end{thm}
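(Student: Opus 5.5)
The plan is to build the extended bijection by reducing from arbitrary $b\in B(G)$ to basic elements of Levi subgroups, apply Conjecture \ref{basicLLC} on those Levis, and then translate the resulting Levi-level data into an irreducible representation of the full centralizer $S_\phi\subset\hat G$.

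\textbf{Step 1: reduction to Levis.} Every $b\in B(G)$ comes from a basic element $b_M\in B(M)^{\text{bas}}$ of a standard Levi $M=G_{\{b\}}$, namely the centralizer of the Newton point. This $b_M$ is unique once we require it to be dominant in the sense of \cite[Definition 2.2.8]{HHS}, and $G_b$ is an extended pure inner form of $M$ attached to $b_M$. Applying Conjecture \ref{basicLLC} to $M$, a representation $\pi\in\mathrm{Irr}(G_b(F))$ gives a pair $(\phi_M,\rho_M)$, where $\phi_M$ is a parameter of $M$ and $\rho_M\in\mathrm{Irr}(S_{\phi_M}^{\natural,M})$. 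The central character of $\rho_M$ on $Z(\hat M)^\Gamma$ equals $\kappa_M(b_M)$. Set $\phi=\iota_M\circ\phi_M$.

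\textbf{Step 2: the key group-theoretic comparison.} The inclusion $Z(\hat M)^{\Gamma,\circ}\subset S_{\phi_M}^M\subset S_\phi$, together with the Newton cocharacter of $b$, should identify $\hat M$ with the centralizer in $\hat G$ of a torus $A\subset Z(\hat M)^\Gamma$. This gives $S_{\phi_M}^M=Z_{S_\phi}(A)$. I would show that the following two sets are in canonical bijection with the disjoint union over $b\mapsto b_M$ described above:
\begin{itemize}
\item irreducible representations $\rho$ of $S_\phi$;
\item pairs $(\rho_M,\text{chamber})$ with $\rho_M\in\mathrm{Irr}(S_{\phi_M}^M)$ whose $A$-weight is dominant regular for $\hat M$, taken up to the relative Weyl group $N_{S_\phi}(A)/Z_{S_\phi}(A)$.
\end{itemize}
The tool is highest-weight theory for disconnected reductive groups. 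By Proposition \ref{ssLparamprop}, $S_\phi$ is reductive. The $A$-isotypic piece of $\rho$ on its dominant extremal weight is an irreducible representation of $Z_{S_\phi}(A)$, and conversely one recovers $\rho$ from such a piece.

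The quotient $\natural$ must also be handled. Write $S^\natural_{\phi_M}=S_{\phi_M}/(S_{\phi_M}\cap\hat M^{\mathrm{der}})^\circ$. A representation of $S_{\phi_M}$ that is "basic", meaning it has prescribed restriction to $Z(\hat M)^\Gamma$ and trivial restriction to $(S_{\phi_M}\cap\hat M^{\mathrm{der}})^\circ$, should correspond exactly to an $S_\phi$-highest weight whose $A$-part is the Newton point data $\kappa_M(b_M)$. In this way the Kottwitz invariant of $b$ is matched with the full character of $\rho$ on $Z(\hat G)^\Gamma$, which is the compatibility required by the diagram.

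\textbf{Step 3: bijectivity and canonicity.} Injectivity comes from uniqueness of the dominant representative $b_M$ and uniqueness of highest weights. Surjectivity comes from decomposing an arbitrary $\rho$ by its $A$-weights, where $A$ is taken to be the torus produced from the highest weight of $\rho$. Canonicity means independence of the choice of Levi within its conjugacy class. This follows from the canonicity of $\iota_\psi$ for each Levi, together with compatibility of Whittaker data under $n_w$, as discussed in the notation section.

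I expect \textbf{Step 2} to be the main obstacle, specifically showing that the highest-weight decomposition of $\mathrm{Irr}(S_\phi)$ along central tori of Levis lands exactly on the image of the $\natural$-representations. This is delicate because $S_\phi$ is disconnected and $\pi_0$ can permute weights. I would first try the Clifford-theoretic description of \cite{AHR}, which records an irreducible representation as a dominant weight $\lambda$ together with a twisted representation of $A^\lambda$. I would then identify $A^\lambda$ with the component group of $S_{\phi_M}$, for $M$ the Levi cut out by $\lambda$.
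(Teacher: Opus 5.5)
The paper does not prove this statement itself; it is quoted from \cite{BMO}. Your outline is essentially the argument of that reference: pass to the basic $b_M\in B(M)^{\text{bas}}$ for $M=G_{\{b\}}$, apply Conjecture \ref{basicLLC} to $M$, and use the highest-weight theory of \cite{AHR} to match $\mathrm{Irr}(S_\phi)$ with those $\rho_M\in\mathrm{Irr}(S^\natural_{\phi_M})$ whose restriction to $Z(\hat M)^{\Gamma,\circ}$ gives a $G$-dominant Newton point with centralizer exactly $M$, identifying $A^\lambda$ with $\pi_0(S_{\phi_M})$.

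Two small corrections. First, reductivity of $S_\phi$ does not follow from Proposition \ref{ssLparamprop}, which concerns semisimple $W_F$-parameters; for the $W_F$-parameter of a Weil--Deligne $\phi$ with nontrivial monodromy, the centralizer has a unipotent radical. Reductivity comes instead from viewing $\phi$ as a Frobenius-semisimple map on $W_F\times\mathrm{SL}_2$. Second, the regularity you need is relative to $G$ (the Newton point's centralizer is exactly $M$), not ``regular for $\hat M$''.
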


In general $\Pi_\phi(G_b)$ is not an L-packet itself, but a finite union over the L-packets of $G_b$ which map to $\phi$ under the map from $\text{Par}_{G_b}$ to $\text{Par}_G$ induced by the inclusion $\ ^LG_b\hookrightarrow\ ^LG$.

\begin{rem}
We have not defined what it means for the above bijections to be canonical, and indeed this is a subtle question.
We content ourselves below to merely reviewing the properties of the correspondence which will be relevant to later sections.
\end{rem}
\ 
\newline

It is conjectured that the local Langlands correspondence should satisfy a number of desirable properties.
Before going further we introduce some definitions.

\begin{defn}
A Weil-Deligne parameter $\phi$ is \textit{tempered} or \textit{bounded} if the image of $\phi|_{W_F}$ in $\ ^LG$ is bounded after projecting to $\hat{G}$.
We say $\phi$ is \textit{essentially discrete} if it does not factor through any proper Levi subgroup of $\ ^LG$.
It is \textit{supercuspidal} if it is essentially discrete and its restriction to $\text{SL}_2$ is trivial.
\end{defn}

For later use we translate some of these conditions to the setting of L-parameters:
If $\phi$ is an L-parameter, then $\phi$ is essentially discrete if and only if it does not factor through any proper Levi subgroups of $^LG$.
It is supercuspidal if it is discrete and has open kernel.

For our specific needs we highlight the following properties of the classical local Langlands correspondence.
\begin{property}\label{ClassLLCProp}
\

\begin{enumerate}

\item $\phi$ is tempered if and only if $\bigsqcup\limits_{b\in B(G)}\Pi_\phi(G_b)$ contains a tempered representation if and only if every element of $\bigsqcup\limits_{b\in B(G)}\Pi_\phi(G_b)$ is tempered.  

\item $\phi$ is essentially discrete if and only if $\bigsqcup\limits_{b\in B(G)^{\text{bas}}}\Pi_\phi(G_b)$ contains an essentially discrete series representation if and only if every element of $\bigsqcup\limits_{b\in B(G)^{\text{bas}}}\Pi_\phi(G_b)$ is essentially discrete series.
Recall an essentially discrete series representation is a discrete series tensored with an unramified character.

\item $\phi$ is supercuspidal if and only if every element of $\bigsqcup\limits_{b\in B(G)^{\text{bas}}}\Pi_\phi(G_b)$ is supercuspidal.
It is not necessarily true that $\Pi_\phi(G_b)$ contains a supercuspidal representation if and only if every element is supercuspidal.

\item Define $\phi^{\text{inf}}:W_F\to\! ^LG$ by $\phi^{\text{inf}}(w):=\phi(w,\begin{pmatrix} ||w||^{1/2} & 0 \\ 0 & ||w||^{-1/2} \end{pmatrix})$, and recall $\iota_M$ is the inclusion of $\ ^LM$ into $\ ^LG$.  If $P$ is a parabolic subgroup of $G$ with Levi $M$, $\sigma$ is an irreducible representation of $M(F)$ with Weil-Deligne parameter $\phi_M$, and $\pi$ is an irreducible subquotient of the normalized parabolic induction $i_P^G(\sigma)$ with Weil-Deligne parameter $\phi$, then $\iota_M\circ\phi_M^{\text{inf}}=\phi_G^{\text{inf}}$.

The translation of this condition to the setting of L-parameters is more straightforward: If $\sigma$ and $\pi$ as above have L-parameters $\phi_M$ and $\phi$, then $\iota_M\circ\phi_M^{\text{ss}} = \phi^{\text{ss}}$.

When there is a compatibility for $G$ between the classical local Langlands correspondence and the Fargues-Scholze semisimple parameter $\phi^{\text{FS}}$ defined below, in the sense that $\phi_\pi^{\text{inf}}=\phi_\pi^{\text{FS}}$ for all irreducible representations $\pi$ of $G_b(F)$ and all $b$, then property 4 follows by \cite[Corollary IX.7.3]{FS}.

\end{enumerate}
\end{property}

\subsection{Review of Fargues-Scholze Theory}\label{FSSect}

In this section we briefly review Fargues and Scholze's categorification of the local Langlands correspondence \cite{FS}.  
To set notation, let $F$ be a finite extension of $\mathbb{Q}_p$, and let $X_F$ be the Fargues-Fontaine curve over $F$ \cite[Definition II.1.15]{FS}.  
Denote by $\mathrm{Bun}_G$ the moduli stack of $G$ bundles on $X_F$ \cite[Definition III.1.2]{FS}.  
Recall that $|\mathrm{Bun}_G|$ is homeomorphic to the Kottwitz set $B(G)$ of $G$ \cite{Vie}.  
Fargues and Scholze build a theory of $\ell$-adic sheaves on $\text{Bun}_G$ which works for general $\mathbb{Z}_\ell$-algebra coefficients, but we will restrict ourselves to exclusively working with $\overline{\mathbb{Q}}_\ell$ coefficients.
They construct a four functor formalism $D^{\text{lis}}(\ ^*,\ _*, \otimes, \mathscr{H}\!\text{om})$ of $\ell$-adic sheaves on Artin v-stacks \cite[Chapter VII.6]{FS}, and show \cite[Proposition VII.7.1]{FS} that for all $b\in B(G)$, the subcategory of sheaves in $D(\mathrm{Bun}_G,\overline{\mathbb{Q}}_\ell)$ supported at $b$ is equivalent to $D(G_b,\overline{\mathbb{Q}}_\ell)$, the (derived) category of smooth $\overline{\mathbb{Q}}_\ell$ representations of $G_b(F)$.
They also construct, by hand, a left adjoint $i_{b\sharp}$ to $i_b^*$ \cite[Proposition VII.7.2]{FS}, and Hansen \cite{Han21} constructs $i_{b!}$ and $i_b^!$, where $i_b$ is the inclusion of $b$ into $\text{Bun}_G$.
We will use the superscript $^\text{ren}$ on functors of the form $i^{?}$ and $i_?$ to denote the renormalized functors introduced in \cite[Section 1.1]{BeijingNotes}.
Linus Hamann pointed out to the author that the identification of $D(\text{Bun}_G^b,\overline{\mathbb{Q}}_\ell)$ with $D(G_b(F), \overline{\mathbb{Q}}_\ell)$ implicitly depends on lift of $b$ to $G(\breve{F})$, which we therefore fix for all time and all $b$.

In \cite[Section 4.1]{HM} Hansen and Mann, building off Scholze's theory of Berkovich motives \cite{SchBerk}, study a six functor formalism $D^{\text{rel}}$ for small v-stacks over $\text{Spd}\ \overline{\mathbb{F}}_p$.  
They prove standard equivalences $D^{\text{rel}}(\text{Bun}_G,\overline{\mathbb{Q}}_\ell)\cong D^{\text{lis}}(\text{Bun}_G,\overline{\mathbb{Q}}_\ell)$ \cite[Theorem 4.2.1]{HM}, and $D^{\text{rel}}(\text{Bun}^b_G,\overline{\mathbb{Q}}_\ell)\cong D(G_b(F),\overline{\mathbb{Q}}_\ell)$ \cite[Example 4.1.6, Theorem 4.1.9]{HM}, and therefore $D(\text{Bun}_G,\overline{\mathbb{Q}}_\ell)$ inherits the same functors $i_{b\sharp}$, $i_{b!}$, $i_{b*}$, $i_b^*$, and $i_b^!$ relating $D^{\text{lis}}(\text{Bun}_G,\overline{\mathbb{Q}}_\ell)$ to $D(G_b(F),\overline{\mathbb{Q}}_\ell)$.
All of these results will be discussed in detail in forthcoming work of Gleason and Hansen \cite{GH}.
We will often shorten $D^{\text{rel}}(\text{Bun}_G,\overline{\mathbb{Q}}_\ell)$ to simply $D(\text{Bun}_G)$.
\newline

For every finite dimensional representation $V$ of $\mathrm{Rep}(\hat{G}\rtimes Q)$, Fargues and Scholze \cite[Section IX.2]{FS} construct a Hecke operator $T_V:D(\mathrm{Bun}_G)\to D(\mathrm{Bun}_G)^{BW_F}$, from which they construct a spectral action of $\mathrm{Perf}(\mathrm{Par}_G)$ on $D(\mathrm{Bun}_G)^\omega$ \cite[Corollary X.1.3]{FS}, which can be extended to an action of $\mathrm{Perf}(\mathrm{Par}_G)$ on $D(\mathrm{Bun}_G)$ since $D(\mathrm{Bun}_G)$ is compactly generated \cite[Proposition VII.7.4]{FS}.

When $G$ is quasi-split, then after fixing a Whittaker datum $(U,\psi)$ for $G$, the spectral action on the Whittaker sheaf $W_\psi:= i_{1!} c\text{-}\text{Ind}_{U(F)}^{G(F)}(\psi)$ gives a functor $a_\psi:\mathrm{QCoh}(\mathrm{Par}_G)\to D(\mathrm{Bun}_G)$.
Let $c_\psi$ be the right adjoint to $a_\psi$.  

\begin{conjecture}\cite[Conjecture X.1.4]{FS}\label{FSConj}

After restricting to compact objects, $c_\psi$ induces an equivalence $c_\psi:D(\mathrm{Bun}_G)^\omega \tilde\to\textup{Coh}^{b,\textup{qc}}(\mathrm{Par}_G)$.

\end{conjecture}

The ind-extension of $c_\psi|_{D(\text{Bun}_G)^\omega}$ is denoted $\textbf{L}_\psi:D(\text{Bun}_G)\tilde\to\text{IndCoh}(\text{Par}_G)$ \cite[Definition 1.2.4]{HM}.
If $\phi$ is any $\overline{\mathbb{Q}}_\ell$-point of $\text{Par}_G$, we denote by $D(\text{Bun}_G)^\phi$ the full subcategory of $D(\text{Bun}_G)$ taken by $\textbf{L}_\psi$ to sheaves on $\text{Par}_G$ supported set-theoretically on $q^{-1}(q(\phi))$.
In particular any sheaf of the form $i_{b!}\pi$ in $D(\text{Bun}_G)^\phi$ has Fargues-Scholze parameter $\phi^{\text{ss}}$.
If $U$ is an open substack of $\text{Par}_G$ then $D(\text{Bun}_G)^U$ denotes the full subcategory of $D(\text{Bun}_G)$ with objects those taken by $\textbf{L}_\psi$ to sheaves on $\text{Par}_G$ supported on $U$.
\newline

Using the Hecke action, Fargues and Scholze develop a theory of excursion operators in their setting and construct a map $\mathfrak{z}$ from the spectral Bernstein center $\Gamma(\mathrm{Par}_G^{\text{spec}})\subset\mathfrak{Z}(\text{QCoh}(\text{Par}_G))$ (here $\Gamma(-)$ denotes the classical ring of global functions) to the geometric Bernstein center $\mathfrak{Z}(D(\text{Bun}_G))$, 
which after composing with the projection $\mathfrak{Z}(D(\text{Bun}_G))\to \mathfrak{Z}(D(G_b(F)))$ and applying Proposition \ref{ParGClosedOrbits} associates (the conjugacy class of) a semisimple L-parameter $\phi_\pi^{\text{FS}}$ to any irreducible represent-ation $\pi$ of $G_b(F)$ \cite[Proposition IX.4.1]{FS}.  
The spectral action $\text{Perf}(\text{Par}_G)\to \text{End}(D(\text{Bun}_G))$
induces a map from $\Gamma(\mathrm{Par}_G^{\text{spec}})\cong\text{End}^0(\mathcal{O}(\text{Par}_G))$ to $\text{End}^0(\text{id}_{D(\text{Bun}_G)})=\mathfrak{Z}(D(\text{Bun}_G))$, 
which one can check using the description of $\Gamma(\mathrm{Par}_G^{\text{spec}})$ given in the proof of \cite[Theorem VIII.4.1]{FS} agrees with the map $\mathfrak{z}$ defined using excursion operators.  
Since $a_\psi(F):= F\star W_\psi$ it follows that $a_\psi$ intertwines the action of $\Gamma(\mathrm{Par}_G^{\text{spec}})$ on $\text{QCoh}(\text{Par}_G)$ with its action on $D(\text{Bun}_G)$ through $\mathfrak{z}$.  
So the right adjoint $c_\psi$ of $a_\psi$ also intertwines these two actions of $\Gamma(\mathrm{Par}_G^{\text{spec}})$.
Using this together with the isomorphisms $i_b^{*\text{ren}}i_{b*}^{\text{ren}}\cong i_b^{*\text{ren}}i_{b!}^{\text{ren}}\pi\cong i_b^{*\text{ren}} i_{b\sharp}^{\text{ren}}\cong \text{id}$ and the Schur-irreducibility of $i_{b*}^{\text{ren}}\pi$, $i_{b!}^{\text{ren}}\pi$ and $i_{b\sharp}^{\text{ren}}\pi$ when $\pi$ is irreducible one can check:
\begin{prop}\label{FSsup}
$c_\psi(i_{b*}^{\textup{ren}}\pi)$, $c_\psi(i_{b!}^{\textup{ren}}\pi)$, and $c_\psi(i_{b\sharp}^{\textup{ren}}\pi)$ lie in $D(\textup{Bun}_G)^{\phi_\pi^{\textup{FS}}}$ for all $(b,\pi)$. 
\end{prop}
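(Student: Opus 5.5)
Write $\mathcal{F}$ for any one of $i_{b*}^{\textup{ren}}\pi$, $i_{b!}^{\textup{ren}}\pi$, $i_{b\sharp}^{\textup{ren}}\pi$, with $\pi$ an irreducible smooth representation of $G_b(F)$ and $\phi:=\phi_\pi^{\textup{FS}}$. The plan is to turn "set-theoretic support on $q^{-1}(q(\phi))$" into a statement about the spectral Bernstein center $A:=\Gamma(\textup{Par}_G^{\textup{spec}})$. Let $\mathfrak{m}\subset A$ be the maximal ideal of the closed point $q(\phi)$; this is a closed point by Proposition \ref{ParGClosedOrbits}. A coherent (or ind-coherent) sheaf on $\textup{Par}_G$ is supported set-theoretically on $q^{-1}(q(\phi))$ exactly when $\mathfrak{m}$ acts locally nilpotently on it. More precisely, every $f\in\mathfrak{m}$ should act nilpotently on each cohomology sheaf, equivalently on the whole complex in the bounded coherent case. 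So it suffices to show that each $f\in\mathfrak{m}$ acts nilpotently on $c_\psi(\mathcal{F})$.

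\textbf{Step 1: the center acts on $\mathcal{F}$ through the character of $\phi$.} The map $A\to\mathfrak{Z}(D(\textup{Bun}_G))$ coming from the spectral action, composed with restriction along $b$, is the map used to define $\phi_\pi^{\textup{FS}}$. Concretely, $f\in A$ acts on $i_b^{*\textup{ren}}\mathcal{F}\cong\pi$ by the scalar $f(q(\phi))$, by Schur's lemma and the definition of the Fargues--Scholze parameter. Schur-irreducibility of $\mathcal{F}$ means $\textup{End}(\mathcal{F})\cong\overline{\mathbb{Q}}_\ell$, with the map $\textup{End}(\mathcal{F})\to\textup{End}(i_b^{*\textup{ren}}\mathcal{F})$ an isomorphism. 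For $\mathcal{F}=i_{b!}^{\textup{ren}}\pi$ or $i_{b*}^{\textup{ren}}\pi$ this follows from full faithfulness of $i_{b!}$ and $i_{b*}$. For $i_{b\sharp}^{\textup{ren}}\pi$ it follows from adjunction, $\textup{Hom}(i_{b\sharp}\pi,i_{b\sharp}\pi)=\textup{Hom}(\pi,i_b^*i_{b\sharp}\pi)=\textup{End}(\pi)$. Since the action of the center is natural, $f$ acts on $\mathcal{F}$ by an endomorphism lifting the scalar $f(q(\phi))$. Hence $f-f(q(\phi))$ acts by zero on $\mathcal{F}$, so $\mathfrak{m}$ acts by $0$.

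\textbf{Step 2: transfer to the spectral side.} As recalled above, $a_\psi$ intertwines the $A$-actions, and therefore so does its right adjoint $c_\psi$. For $f\in A$, the endomorphism $c_\psi(f_{\mathcal{F}})$ of $c_\psi(\mathcal{F})$ agrees with the action of $f$ on the quasi-coherent sheaf $c_\psi(\mathcal{F})$. By Step 1 it is $c_\psi(0)=0$ for $f\in\mathfrak{m}$. Thus $c_\psi(\mathcal{F})$ is an $\mathcal{O}_{\textup{Par}_G}$-module on which $q^*\mathfrak{m}$ acts by zero as endomorphisms of the object. For a complex this gives only that each cohomology sheaf is killed by $\mathfrak{m}$, hence is a module over $\mathcal{O}/\mathfrak{m}\mathcal{O}$. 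That is exactly support on $q^{-1}(q(\phi))$, which is what we need.

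\textbf{Step 3 and the main obstacle.} What remains is to check that the definition of $D(\textup{Bun}_G)^{\phi}$ via $\textbf{L}_\psi$ matches this, since the definition uses $\textbf{L}_\psi$ rather than $c_\psi$. For compact $\mathcal{F}$ the two functors agree. In general I would use that $\textbf{L}_\psi$, as an ind-extension, is also $A$-linear, so Step 2 applies verbatim. I expect the main obstacle to be exactly this compatibility: that the $A$-action induced via $\mathfrak{z}$ from excursion operators agrees with the action via the spectral action, and passes through $c_\psi$ and $\textbf{L}_\psi$ to the $\mathcal{O}$-module action on the target. I would treat this by appealing to the identification sketched before the statement, using the description of $A$ in the proof of \cite[Theorem VIII.4.1]{FS}.
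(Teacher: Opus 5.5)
Your proposal is correct and is essentially the paper's argument: Schur-irreducibility together with $i_b^{*\textup{ren}}i_{b!}^{\textup{ren}}\cong i_b^{*\textup{ren}}i_{b*}^{\textup{ren}}\cong i_b^{*\textup{ren}}i_{b\sharp}^{\textup{ren}}\cong\textup{id}$ forces $\Gamma(\textup{Par}_G^{\textup{spec}})$ to act on each of the three sheaves through the character of $q(\phi_\pi^{\textup{FS}})$, and the $\Gamma(\textup{Par}_G^{\textup{spec}})$-linearity of $c_\psi$ then carries the vanishing of the maximal ideal to the spectral side (you also check the same for $\textbf{L}_\psi$ in Step 3, to match the definition of $D(\textup{Bun}_G)^{\phi}$). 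One small correction: the paper's $\textup{Hom}$ is derived, and higher self-Exts of $\pi$ can be nonzero, so Schur's lemma gives only $\textup{End}^0(\mathcal{F})\cong\overline{\mathbb{Q}}_\ell$ rather than $\textup{End}(\mathcal{F})\cong\overline{\mathbb{Q}}_\ell$; since the center acts in degree $0$, this is all your argument uses.
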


\ 
\newline
There are two important finiteness conditions one can place on sheaves in $D(\text{Bun}_G)$, namely \textit{compactness} and \textit{ULA-ness}.
In \cite[Proposition VII.7.4]{FS} Fargues and Scholze show a complex $A\in D(\text{Bun}_G)$ is compact if and only if it has finite support and $i_b^*A$ is a compact object of $D(G_b(F))$ for all $b\in B(G)$, where compactness in $D(G_b(F))$ is equivalent to lying in the subcategory of $D(G_b(F))$ generated under shifts, finite colimits and retracts by the representations $c\text{-}\text{Ind}_K^{G_b(F)}(\overline{\mathbb{Q}}_\ell)$ as $K$ varies over the open pro-p subgroups of $G_b(F)$.

We will not define the ULA property here, but simply note that by \cite[Proposition VII.7.9]{FS} the ULA property for a complex $A$ is equivalent to requiring $(i_b^*A)^K$ be a perfect complex of $\overline{\mathbb{Q}}_\ell$ vector spaces for all $b\in B(G)$ and open pro-p subgroups $K$ of $G_b(F)$.
We say a complex of sheaves is \textit{finite} \cite[Definition 1.6.2]{BeijingNotes} if it is compact and ULA.
Using the above definitions one can check a complex $A$ in $D(\text{Bun}_G)$ is finite if and only if it is bounded with finite support and each cohomology sheaf $H^n(i_b^*A)$ is a finite length $G_b(F)$-representation.
On the spectral side, an object of $\text{Coh}(\text{Par}_G)$ is finite \cite[Conjecture 1.7.9]{BeijingNotes} if it is supported on finitely many fibers of the coarse moduli space map $q:\text{Par}_G\to\text{Par}_G^{\text{spec}}$.
\newline

There are two primary t-structures on $D(\text{Bun}_G)^\omega$ which will be relevant to us, the \textit{perverse} t-structure $(D^{p,\leq0}, D^{p,\geq0})$ \cite[Proposition 1.2.1]{BeijingNotes} and the \textit{hadal} t-structure $(D^{h,\leq0}, D^{h,\geq0})$ \cite[Theorem 1.2.3]{BeijingNotes}.
By definition $D^{p,\leq0}$, resp. $D^{p,\geq0}$ consists of complexes $A$ such that $i_b^{*\text{ren}}A\in D^{\leq0}(G_b(F))^\omega$, resp. $i_b^{!\text{ren}}A\in D^{\geq0}(G_b(F))^\omega$ for all $b\in B(G)$, and $D^{h,\leq0}$, resp. $D^{h,\geq0}$ consists of complexes generated under finite extensions by sheaves of the form $i_{b\sharp}^{\text{ren}}A$, resp. $i_{b!}^{\text{ren}}A$ for $A\in D^{\leq0}(G_b(F))^\omega$, resp. $D^{\geq0}(G_b(F))^\omega$.
We are implicitly using the fact that $i_{b!}^{\text{ren}}$ and $i_{b\sharp}^{\text{ren}}$ preserve compact objects, which for $i_{b!}^{\text{ren}}$ follows directly from the criterion for compactness \cite[Proposition VII.7.4]{FS} mentioned above, and for $i_{b\sharp}^{\text{ren}}$ follows because it is the left adjoint of a colimit-preserving functor.
We note the definition of the perverse t-structure does not use the compactness assumption, and in particular it defines a t-structure on all of $D(\text{Bun}_G)$.
For any $A\in D(G_b(F))^\omega$ define $i_{b\sharp!}^{\text{ren}}A:=\text{Im}(H^{h,0}(i_{b\sharp}^{\text{ren}}A)\to H^{h,0}(i_{b!}^{\text{ren}}A))$ in the hadal heart $\text{Had}(\text{Bun}_G)$, and $i_{b!*}^{\text{ren}}A:=\text{Im}(H^{p,0}(i_{b!}^{\text{ren}}A)\to H^{p,0}(i_{b*}^{\text{ren}}A))$ in the perverse heart $\text{Perv}(\text{Bun}_G)$.
\newline

An important tool in this paper will be the geometric Eisenstein series functor.  
In the Fargues-Scholze setting, its study began in Hamann's thesis \cite{HamEis} and continued in \cite{HHS}.
For $P$ a parabolic subgroup of an arbitrary reductive group $G$ with Levi $M$, the natural projection from $P$ to $M$ and inclusion of $P$ into $G$ induce a diagram $\text{Bun}_M\xleftarrow{q}\text{Bun}_P\xrightarrow{p}\text{Bun}_G$.
Then $\text{Eis}_P^G$ is by definition $p_!(q^*(-)\otimes \text{IC}_{\text{Bun}_P}): D(\text{Bun}_M)\rightarrow D(\text{Bun}_G) $, where $\text{IC}_{\text{Bun}_P}$ is the intersection cohomology complex introduced in \cite{HI}.

On skyscraper sheaves it is often possible to compute the geometric Eisenstein functors explicitly, but in order to do so we must review some background and introduce more notation.
Recall that the basic elements of $B(G)$ are parametrized by the elements $\alpha$ of the Borovoi fundamental group of $G$, $\pi_1(G)_\Gamma$, where $\pi_1(G)$ is by definition the cocharacter lattice modulo the coroot lattice.  
If $b\in B(G)$ is in image of $i_M:B(M)^{\text{bas}}\to B(G)$ then by \cite[Lemma 4.25]{HI} there is an injection from $i_M^{-1}(b)$ into a certain Weyl group $W_{M,b}$ defined in \cite[Section 5.3]{BMO}.  It takes $b_\alpha$, $\alpha\in \pi_1(G)_\Gamma$, to the unique $w_\alpha\in W_{M,b}$ such that $w_\alpha(b_\alpha)$ is dominant in $B(w_\alpha(M))$.  Writing $G_{\{b\}}$ for the centralizer of the Newton point of $b$ in $G$, so that $G_b$ is an inner form of $G_{\{b\}}$, by \cite[Lemma 4.26]{HI} there is a parabolic subgroup $Q_{b,w_\alpha}$ with Levi $w_\alpha(M)_{w_\alpha(b_\alpha)}$ of $G_b$ which is the transfer of $w_\alpha(P)\cap G_{\{b\}}$ under the twisting from $G_{\{b\}}$ to $G_b$.

\begin{prop} \cite[Proposition 2.2.12, Corollary 2.2.14]{HHS}\label{EisCalc}

For any complex $A$ in $D(M_{b_\alpha}(F))$, if $b_\alpha$ is antidominant then $ \textup{Eis}_P^Gi_{b_\alpha!}A\cong i_{b!}^{\textup{ren}}i_{Q_{b,w_\alpha}}^{G_b}((A)^{w_\alpha})$, and if $b_\alpha$ is dominant then $ \textup{Eis}_P^Gi_{b_\alpha!}A\cong i_{b\sharp}^{\textup{ren}}i_{Q_{b,1}}^{G_b}(A)$.

\end{prop}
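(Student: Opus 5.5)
The plan is to compute $p_!$ by restricting to the stratum of $\text{Bun}_P$ lying over $b_\alpha$, and then to treat the antidominant and dominant cases separately.

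\textbf{Reduction to one stratum.} Let $\text{Bun}_P^{b_\alpha}:=q^{-1}(\text{Bun}_M^{b_\alpha})$. Then $q^*i_{b_\alpha!}A$ is the extension by zero from $\text{Bun}_P^{b_\alpha}$, so
\[
\text{Eis}_P^G i_{b_\alpha!}A\cong (p|_{\text{Bun}_P^{b_\alpha}})_!\big(q^*A\otimes \text{IC}_{\text{Bun}_P}|_{\text{Bun}_P^{b_\alpha}}\big).
\]
Since $b_\alpha$ is basic in $M$, $\text{Bun}_P^{b_\alpha}$ should be the classifying stack of the automorphism group $\tilde P_{b_\alpha}$ of the $P$-bundle $\mathcal{E}_{b_\alpha}\times^M P$. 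This group is an extension of $M_{b_\alpha}(F)$ by a unipotent v-group sheaf, built from $H^0$-type positive Banach--Colmez spaces and from $H^1$-type pieces coming from the slopes of $\mathcal{E}_{b_\alpha}\times^M\text{Lie}(U)$. On this stratum $\text{IC}_{\text{Bun}_P}$ restricts to a shifted twist of the constant sheaf by a modulus-type character; I would read off the shift and twist from \cite{HI}, where they are arranged to produce normalized induction.

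\textbf{Antidominant case.} Here all slopes of $\mathcal{E}_{b_\alpha}\times^M\text{Lie}(U)$ have the sign for which $H^0$ vanishes. By \cite[Lemma 4.25, 4.26]{HI}, after conjugating by $w_\alpha$, the induced $G$-bundle is $b$ and the relevant piece of its automorphism group is the parabolic $Q_{b,w_\alpha}\subset G_b$. So $p|_{\text{Bun}_P^{b_\alpha}}$ factors through $\text{Bun}_G^b=[*/\tilde G_b]$ as
\[
[*/\tilde P_{b_\alpha}]\to[*/ Q_{b,w_\alpha}(F)\ltimes(\text{unipotent})]\to[*/\tilde G_b].
\]
The first map is a fibration in classifying stacks of negative Banach--Colmez spaces, which are cohomologically smooth, so its $!$-pushforward is computed up to the shift and twist already absorbed by the $\text{IC}$ normalization. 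The second map has fibers $\tilde G_b/Q_{b,w_\alpha}$, and its $!$-pushforward is compactly supported induction, which equals normalized parabolic induction $i_{Q_{b,w_\alpha}}^{G_b}$ since $G_b(F)/Q_{b,w_\alpha}(F)$ is compact. Finally, transport $A$ along $w_\alpha$ to get $A^{w_\alpha}$. The result is supported at $b$ and given by $i_{b!}$, and the renormalization superscript accounts for the $\tilde G_b\to G_b(F)$ shift.

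\textbf{Dominant case.} Now $H^1$ vanishes and $H^0$ is a positive Banach--Colmez space, so $w_\alpha=1$. The map $\text{Bun}_P^{b_\alpha}\to\text{Bun}_G$ is no longer supported on one stratum: the $P$-bundle can deform to more semistable $G$-bundles. I would compare it with the chart $\pi_b:[*/G_b(F)\ltimes\tilde G_b^{>0}]\to\text{Bun}_G$ that Fargues--Scholze use in \cite[Proposition VII.7.2]{FS} to construct $i_{b\sharp}$ as $\pi_{b!}$ up to shift and twist. Since $b_\alpha$ is dominant, $\tilde P_{b_\alpha}$ should identify with $Q_{b,1}(F)\ltimes\tilde G_b^{>0}$. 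Hence $p|_{\text{Bun}_P^{b_\alpha}}$ factors through $\pi_b$ via a map whose fibers are $G_b(F)/Q_{b,1}(F)$, and its pushforward is again $i_{Q_{b,1}}^{G_b}$. This gives $i_{b\sharp}^{\text{ren}}i^{G_b}_{Q_{b,1}}(A)$.

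\textbf{Main obstacle.} The hard step is bookkeeping rather than structure: matching the cohomological shifts, the Tate twists and the modulus characters from $\text{IC}_{\text{Bun}_P}$, the Banach--Colmez dimensions and the renormalization exactly. In the dominant case one also has to verify the identification $\tilde P_{b_\alpha}\cong Q_{b,1}(F)\ltimes\tilde G_b^{>0}$ of groups over the chart. I would first check both of these for $G=\text{GL}_2$, $P=B$, and then argue by the slope decomposition in general.
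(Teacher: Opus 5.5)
\textbf{The cohomology of the two cases is inverted.} Your outline has the right shape: restrict to the stratum over $b_\alpha$, land in $\text{Bun}_G^b$ in one case and in the Fargues--Scholze chart in the other, and get $i_Q^{G_b}$ from the compact fibers $G_b(F)/Q(F)$. But you assign the cohomology to the wrong cases, and this is fatal in the dominant case.

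First, $\text{Bun}_P^{b_\alpha}$ is not a classifying stack in general. After filtering $U$ by vector groups, it is an iterated quotient of $\mathcal{H}^1(\mathcal{E}_{b_\alpha}\times^M U)$ by $\mathcal{H}^0(\mathcal{E}_{b_\alpha}\times^M U)\rtimes M_{b_\alpha}(F)$. Since $b_\alpha$ is basic, each $A_M$-root space of $\mathrm{Lie}(U)$ gives a semistable bundle, with $H^1=0$ exactly when its slope is $\geq 0$ and $H^0=0$ exactly when its slope is $<0$.

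The class in $H^0$ only contributes automorphisms. A $P$-bundle lifting $\mathcal{E}_{b_\alpha}$ whose underlying $G$-bundle differs from $\mathcal{E}_b$ comes from a nonzero class in $H^1$. So $p$ maps $\text{Bun}_P^{b_\alpha}$ into the single stratum $b$ precisely when $H^1=0$. In that case $\text{Bun}_P^{b_\alpha}=[*/\tilde P_{b_\alpha}]$, and $H^0$ is typically nonzero: its slope-zero part gives the unipotent radical of $Q_{b,w_\alpha}$, and its positive part sits in $\tilde G_b^{>0}$. Consequently the fibers of your first map are positive Banach--Colmez spaces, not classifying stacks of negative ones.

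You assert the opposite pairing in both cases: ``$H^0$ vanishes'' together with a single-stratum image, and ``$H^1$ vanishes'' together with deformation to more semistable bundles. Each pairing is false however dominance is normalized.

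\textbf{The dominant case cannot produce $i_{b\sharp}$ as written.} The stack $[*/G_b(F)\ltimes\tilde G_b^{>0}]=[*/\tilde G_b]$ that you call the chart is $\text{Bun}_G^b$ itself. So with your identification $\tilde P_{b_\alpha}\cong Q_{b,1}(F)\ltimes\tilde G_b^{>0}$, your pushforward is $i_{b!}$ of an induction. But $i_{b\sharp}X$ is in general not supported on $\text{Bun}_G^b$: since $i_b^*i_{b\sharp}\cong\mathrm{id}$, such support would force $i_{b\sharp}X\cong i_{b!}X$.

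What actually happens in the dominant case is this:
\begin{itemize}
\item $H^0$ is only the slope-zero part, which gives the unipotent radical of $Q_{b,1}(F)$.
\item $H^1$ is the nonzero space coming from the negative slopes. It is the spatial diamond $\tilde{\mathcal M}_b$ underlying the chart $\pi_b:\mathcal M_b=[\tilde{\mathcal M}_b/\underline{G_b(F)}]\to\text{Bun}_G$ used in \cite[Proposition VII.7.2]{FS}.
\item The map $\pi_b$ is cohomologically smooth, not an immersion, and $\text{Bun}_P^{b_\alpha}\cong[\tilde{\mathcal M}_b/\underline{Q_{b,1}(F)}]$.
\end{itemize}
Then $p|_{\text{Bun}_P^{b_\alpha}}$ factors as $[\tilde{\mathcal M}_b/\underline{Q_{b,1}(F)}]\to\mathcal M_b\xrightarrow{\pi_b}\text{Bun}_G$. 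The first map has fibers $G_b(F)/Q_{b,1}(F)$; writing $q_b:\mathcal M_b\to[*/\underline{G_b(F)}]$, it turns $q^*A$ into $q_b^*$ of the induced representation. The shift and twist in $\text{IC}_{\text{Bun}_P}$ must then be matched with the dualizing complex of $\pi_b$, so that $\pi_{b!}$ becomes $\pi_{b\natural}$. This yields the construction of $i_{b\sharp}$ from $\pi_{b\natural}q_b^*$ in \cite[Proposition VII.7.2]{FS}, applied to $i_{Q_{b,1}}^{G_b}(A)$.

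This identification of the stratum with a quotient of the chart is the key step you are missing; the shift/twist bookkeeping is secondary. Once the cohomology is corrected, your antidominant factorization through $[*/Q_{b,w_\alpha}(F)\ltimes\tilde G_b^{>0}]$ goes through essentially as you wrote it.
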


There is also a geometric Eisenstein series functor on the spectral side of the correspondence, defined by $\text{Eis}_P^{\text{spec}}:=p^{\text{spec}}_*q^{\text{spec}*}: \text{Coh}(\text{Par}_M)\to \text{Coh}(\text{Par}_G)$, where $q^{\text{spec}}: \text{Coh}(\text{Par}_P)\to\text{Coh}(\text{Par}_M)$ and $p^{\text{spec}}: \text{Coh}(\text{Par}_P)\to \text{Coh}(\text{Par}_G)$ are the natural maps.
A fundamental assumption we will need for some of the main results of the paper is the compatibility of the two Eisenstein functors with $c_\psi$:

\begin{conjecture}\label{EisCompat}

There is an equivalence of functors $\textup{Eis}_{\overline{P}}^{\textup{spec}}\circ c_{\psi_M}\cong c_\psi\circ \textup{Eis}_P$, where $\overline{P}$ is the parabolic opposite to $P$.
\end{conjecture}

This will be proven in forthcoming work of Hamann, Hansen, and Mann \cite{HHM}.
\newline

\noindent Finally, we record a proposition we will need later:

\begin{prop}\label{FSWeylEquiv}

Let $G$ be a quasi-split reductive group with Levi subgroup $M$, and choose a pinning of $G$ along with a compatible Whittaker datum as in Section \ref{notation}.  Let $w\in W_G$ be such that $w(\Delta_M)= \Delta_{w(M)}$ and let $\hat w$ be its corresponding element in $W_{\hat G}^\Gamma$.  
Then $c_{\psi_{w(M)}}(w_*(A))\cong \hat w_*(c_{\psi_M}(A))$ for all $A\in D(\textup{Bun}_M)$.
    
\end{prop}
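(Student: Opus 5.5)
The plan is to reduce the statement to the compatibility of every ingredient in the construction of $c_\psi$ with transport of structure along $w$. Since $w(\Delta_M)=\Delta_{w(M)}$, conjugation by the Tits representative $n_w$ gives an isomorphism $M\cong w(M)$ of reductive groups over $F$. This induces an isomorphism of v-stacks $\mathrm{Bun}_M\cong\mathrm{Bun}_{w(M)}$, and the pushforward $w_*$ along it is an equivalence $D(\mathrm{Bun}_M)\cong D(\mathrm{Bun}_{w(M)})$. Dually, $\hat w$ is realized by a Tits representative in $\hat G$ normalizing $\hat T$. Because $w$ is $\Gamma$-fixed, this representative can be chosen $\Gamma$-stable, so it induces an isomorphism ${}^LM\cong{}^Lw(M)$ compatible with the projection to $W_F$. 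That in turn gives $\mathrm{Par}_M\cong\mathrm{Par}_{w(M)}$, and $\hat w_*$ is pushforward along it.

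First I will check that these two isomorphisms match under Fargues-Scholze's constructions. The Satake equivalence is functorial for isomorphisms of groups, so the Hecke operator $T_V$ on $\mathrm{Bun}_M$ is carried by $w_*$ to $T_{\hat w^*V}$ on $\mathrm{Bun}_{w(M)}$. Here I use that $n_w$ carries the pinning of $M$ to that of $w(M)$ (\cite[Proposition 12.3]{JA}), so the identification of dual groups is the one induced by $\hat w$. Since the spectral action is constructed functorially from the Hecke operators together with the excursion data \cite[Corollary X.1.3]{FS}, this yields $w_*(\mathcal{F}\star A)\cong(\hat w_*\mathcal{F})\star w_*A$ for $\mathcal{F}\in\mathrm{Perf}(\mathrm{Par}_M)$.

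Next I will compare the Whittaker sheaves. By the notation section, $n_w(\psi_M)=\psi_{w(M)}$ because $\psi$ is compatible with the pinning. Conjugation by $n_w$ therefore carries $c\text{-}\mathrm{Ind}_{U_M(F)}^{M(F)}\psi_M$ to $c\text{-}\mathrm{Ind}_{U_{w(M)}(F)}^{w(M)(F)}\psi_{w(M)}$, and since $w$ sends the trivial element of $B(M)$ to that of $B(w(M))$, we get $w_*W_{\psi_M}\cong W_{\psi_{w(M)}}$. Combining this with the previous step gives $w_*\circ a_{\psi_M}\cong a_{\psi_{w(M)}}\circ\hat w_*$. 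Both $w_*$ and $\hat w_*$ are equivalences whose inverses $w^*$ and $\hat w^*$ are also adjoints, so passing to right adjoints gives $c_{\psi_{w(M)}}\circ w_*\cong\hat w_*\circ c_{\psi_M}$, which is the claim.

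I expect the main obstacle to be the first step, namely showing that the spectral action is genuinely natural in the group under an isomorphism, including the normalizations. Two points need care. One is the choice of lift of $b$ to $M(\breve F)$ fixed in Section \ref{FSSect}: the isomorphism should be applied at the level of stacks, not of representations of $M_b(F)$, so that this choice does not enter. The other is that the Satake identification for $w(M)$ uses the pinning transported by $n_w$ and not some other pinning. I would handle this by observing that everything is transport of structure along the pinned isomorphism $(M,\text{pinning})\cong(w(M),\text{pinning})$. The key input is that Tits representatives preserve pinnings, which removes any outer-automorphism ambiguity.
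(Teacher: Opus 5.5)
Your proposal is correct and follows essentially the same route as the paper. Both arguments match the Whittaker sheaves via the Tits representative $n_w$ and $n_w(\psi_M)=\psi_{w(M)}$, match the Hecke operators via Satake, deduce $w_*\circ a_{\psi_M}\cong a_{\psi_{w(M)}}\circ\hat w_*$ on $\mathrm{Perf}(\mathrm{Par}_M)$ from monoidality and functoriality in finite sets $I$, and then pass to right adjoints. The difference is one of detail. Where you appeal to transport of structure along the pinned isomorphism, the paper explicitly checks that the Fargues--Scholze isomorphism $\iota_M:\check M\cong\hat M$ intertwines $n_w^*$ with conjugation by $n_{\hat w}$, using the torus, the Borel coming from the degree filtration, and the canonical root vectors, with $\mathbb{Z}/\ell^n[\sqrt q]$ coefficients. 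It then passes to $\overline{\mathbb{Q}}_\ell$ through Hansen--Mann's relative Satake, and also spells out the reduction from relative to absolute simple roots that you take for granted.
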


\begin{proof}

Without loss of generality we may assume $M$ is standard.  
Since $G$ is quasi-split, $C_G(A_0)$ is a maximal torus, so no absolute root of $G$ is trivial on $A_0$, and an absolute root is positive if and only if its restriction to $A_0$ is. 
Therefore the assumption $w(\Delta_M) = \Delta_{w(M)}$ implies that $w$ takes the positive, and hence simple, absolute roots of $M$ to those of $w(M)$.
Let $n_w$ be the Tits representative of $w$, which as reviewed in the Notation section has the property that $n_w(\psi_M) = \psi_{w(M)}$, and therefore $n_{w*}(i_{1!}W_{\psi_M})\cong i_{1!}W_{\psi_{w(M)}}$.

In this proof only, we will let $\hat{M}$ denote the dual group of $M$ (endowed with $W_F$ action) defined over $\mathbb{Z}_\ell[\sqrt{q}]$, rather than $\overline{\mathbb{Q}}_\ell$. 
Let $\Lambda = \mathbb{Z}/\ell^n\mathbb{Z}[\sqrt{q}]$, and as in \cite[Chapter VI]{FS} let $\check{M}$ denote the Tannaka group of $\text{Sat}_M(\Lambda)$.
By \cite[Theorem VI.11.1]{FS} there is a canonical $W_F$-equivariant isomorphism $\iota_M:\check{M}\cong\hat{M}$, and similarly an isomorphism $\iota_{w(M)}:\widecheck{w(M)}\cong\hat{w}(\hat{M})$.
We will show that the map $n_{w}^*:\text{Sat}_{w(M)}(\Lambda)\to \text{Sat}_{M}(\Lambda)$ coming from conjugation by $n_w$ intertwines with the map from $\text{Rep}(\hat{w}(\hat{M}))$ to $\text{Rep}(\hat{M})$ coming from conjugation by $n_{\hat{w}}$ under $\iota_M$.
Let $n_{\check{w}}:\check{M}\to\widecheck{w(M)}$ be the map induced by $n_w^*$.

As in the proof of \cite[Theorem VI.11.1]{FS} we note that if $T$ is a maximal torus of $M$ then $\text{Sat}_T(\Lambda)$ is identified with $X_*(T_{\overline{F}})$-graded objects of $\text{Rep}^{\text{cont}}(\Lambda)$ equipped with a $W_F$-action, which on the grading agrees with the usual action of $W_F$ on $X_*(T_{\overline{F}})$.
This equivalence between $\text{Sat}_T(\Lambda)$ and $\text{Rep}_{\Lambda}(\hat{T}\rtimes W_F)$ intertwines the action of $w$ on the former with the action of $\hat{w}$ on the latter.
Since $w$ takes simple absolute cocharacters of $M$ to simple absolute cocharacters of $w(M)$, it takes dominant absolute coroots of $M$ to dominant absolute coroots of $w(M)$.
For any dominant absolute cocharacter $\mu$ of $M$ define $A_\mu:=j_{\mu!}^p\Lambda[\langle2\rho_M,\mu\rangle]$, the standard object associated to $\mu$ in $\text{Sat}_M(\Lambda)$.
Then $n_{w}^*$ takes $A_{w(\mu)}$ to $A_\mu$.
The fiber functors of $\text{Sat}_{w(M)}(\Lambda)$ and $\text{Sat}_M(\Lambda)$ are respectively given by $\oplus H^i(\pi_{w(M)*})$ and $\oplus H^i(\pi_{M*})$, and  $\pi_{M*}n_w^*=\pi_{w(M)*}$.
By definition $ \check{B}_M$ is the stabilizer of the increasing degree filtration on the fiber functor of $\text{Sat}_M(\Lambda)$, and similarly $\check{B}_{w(M)}$ is the stabilizer of the increasing filtration on the fiber functor of $\text{Sat}_{w(M)}$.
So $n_{\check{w}}(\check{B}_M) = \check{B}_{w(M)}$ since $n_w^*$ preserves the degree filtration.

At the end of the proof of \cite[Theorem VI.11.1]{FS}, Fargues and Scholze construct canonical (Tate twisted) trivializations of the root spaces $U_{\hat{\alpha}}$ of $\check{M}$ by identifying each root space $U_{\hat{\alpha}}$ with the $\text{Hom}$-space between the 2nd and 0th cohomology groups of the flag variety of $M_{\alpha}^{ad}\cong\text{PGL}(2)$.
These trivializations are mapped to one another under the action of $n_{\check{w}}$ because they only depend on the choice of $T$ and $B$ (and $n_w$ takes $M_\alpha$ to $M_{w(\alpha)}$).  
The isomorphism $\iota_M$ is defined to take the canonical root vectors of $\check{M}$ to the root vectors of $\hat{M}$ coming from the beforehand-chosen (Tate twisted) pinning of $\hat{M}$.

So in summary $\iota_M$ and $\iota_{w(M)}$ are pinning-preserving, conjugation by $n_{\check{w}}$ and $n_{\hat{w}}$ take the pinnings of $\check{M}$ to $\widecheck{w(M)}$ resp. $\hat{M}$ to $\widehat{w(M)}$, and $(\iota_{w(M)}\circ n_{\check{w}})|_{\check{T}}=(n_{\hat{w}}\circ\iota_M)|_{\check{T}}$.
Given that a pinning-preserving isomorphism between reductive groups is determined by its restriction to the maximal tori of the pinning, it follows that $\iota_{w(M)}\circ n_{\check{w}}=n_{\hat{w}}\circ\iota_M$.

By \cite[Theorem 4.1.7]{HM} the geometric Satake equivalence with relative $\mathbb{Z}_\ell[\sqrt{q}]$-coefficients intertwines the action of $w$ and $\hat w$ as well.
As in the proof of \cite[Proposition 4.5.6]{HM}, by the equivalence $$\text{Rep}_{\mathbb{Z}_\ell[\sqrt{q}]}(\hat{M}\rtimes Q)^{\text{fpj}}\otimes _{\text{Rep}_{\mathbb{Z}_\ell[\sqrt{q}]}(Q)^{\text{fpj}} }\text{Rep}_{\overline{\mathbb{Q}}_\ell}(Q)^{\text{dbl}}\cong   \text{Rep}_{\overline{\mathbb{Q}}_\ell}(\hat{M}\rtimes Q)^{\text{dbl}} $$ (where fpj means ``finite projective,'' and dbl means ``dualizable,'' which in our setting simply means a bounded complex of finite dimensional representations) we can extend the Hecke functor, which takes $\text{Rep}_{\mathbb{Z}_\ell[\sqrt{q}]}(\hat{M}\rtimes Q)^{\text{fpj}}$ to  $$\text{End}_{D(\mathbb{Z}_\ell[\sqrt{q}])}(D(\text{Bun}_M,\mathbb{Z}_\ell[\sqrt{q}]))
\otimes_{D(\mathbb{Z}_\ell[\sqrt{q}])} D(\text{Div}_1, \mathbb{Z}_\ell[\sqrt{q}])$$ to a functor from $\text{Rep}_{\overline{\mathbb{Q}}_\ell}(\hat{M}\rtimes Q)^{\text{dbl}}$ to $$\text{End}_{D(\overline{\mathbb{Q}}_\ell)}(D(\text{Bun}_M,\overline{\mathbb{Q}}_\ell))\otimes_{D(\overline{\mathbb{Q}}_\ell)} D(\text{Div}_1, \overline{\mathbb{Q}}_\ell)$$ (using the pullback along the map $\text{Div}_1\to */Q$ to map $\text{Rep}_{\overline{\mathbb{Q}}_\ell}(Q)^{\text{dbl}}$ to $D(\text{Div}_1, \overline{\mathbb{Q}}_\ell)$), which gives the desired intertwining between $w$ and $\hat w$ for relative $\overline{\mathbb{Q}}_\ell$ coefficients.
In summary the Hecke action $T_{\hat{w}(V)}$ is equivalent to $w(T_V)$, $V\in \text{Rep}_{\overline{\mathbb{Q}}_\ell}(\hat{M}\rtimes Q)^{\text{dbl}}$.
Then $$w(a_{\psi_M}(V)) = w(T_V*i_{1!}W_{\psi_M})\cong w(T_V)* w(i_{1!}W_{\psi_M})\cong T_{\hat w(V)}*i_{1!}W_{\psi_{w(M)}} \cong a_{\psi_{w(M)}}(\hat w(V)) $$ 
The intertwining is compatible with the monoidal structure on $\text{Rep}_{\overline{\mathbb{Q}}_\ell}(\hat{M}\rtimes Q)^{\text{dbl}}$ as well as the functoriality in finite sets $I$ of the actions of $\text{Rep}_{\overline{\mathbb{Q}}_\ell}(\hat{M}\rtimes Q)^{I,\text{dbl}}$, so $ w(a_{\psi_M}(F))\cong a_{\psi_{w(M)}}(\hat w(F)) $ for all $F\in\text{Perf}(\text{Par}_M)$.
Taking right adjoints gives the desired result.

\end{proof}

\subsection{Review of the Representation Theory of Disconnected Reductive Groups}\label{DiscRedGp}

In this section we briefly review the representation theory of disconnected reductive groups, following \cite{AHR}.
Let $G$ be an algebraic group over an algebraically closed field $k$ of characteristic $0$.  
We say $G$ is \textit{disconnected reductive} if its identity component $G^\circ$ is reductive. 
Let $A$ be the component group of $G$.  
Fix a pinning of $G^\circ$.  
We freely use the notation in \ref{notation}.

The action of $G$ on $G^\circ$ by conjugation induces a map from $A$ to $\mathrm{Out}(G^{\circ})$.  
$A$ acts on $X^*(T)$ as follows.  
For any $g\in G$ lying in component $a\in A$, let $g^\circ$ be a closed point of $G^\circ$ such that conjugation by $g^\circ g$ preserves the pinning of $G^\circ$.  
For $\lambda\in X^*(T)$ we let ${}^a\lambda$ denote $\lambda$ precomposed with the automorphism of $T$ induced from the conjugation by $(g^\circ g)^{-1}$.  
This action is independent of the pinning, in the sense that if a different pinning is chosen with maximal torus $T'$, the action of $A$ on $X^*(T')$ agrees with that of $A$ on $X^*(T)$ composed with the conjugation map from $T$ to $T'$ taking the first pinning to the second. 
The orbit of a character $\lambda$ under this $A$-action will be denoted $[\lambda]$.

Let $A^\lambda$ be the subgroup of $A$ consisting of elements such that ${}^a\lambda=\lambda$, and let $G^\lambda$ be the preimage of $A^\lambda$ in $G$ under the projection from $G$ to $A$. 
Let $L(\lambda)$ be the irreducible representation $(\rho_\lambda, V_\lambda)$ of $G^\circ$ with highest weight $\lambda$, and for any $g\in G$ we denote by ${}^gL(\lambda)$ the representation defined by precomposing $\rho_\lambda$ with conjugation by $g^{-1}$.
For every element $a\in A$, let $\iota(a)$ be an element of $G$ in the component $a$ such that conjugation by $\iota(a)$ preserves the pinning of $G^\circ$.  
If $a\in A^\lambda$, let $\theta_a$ be the unique isomorphism from ${}^{\iota(a)}L(\lambda)$ to $L(\lambda)$ which is the identity on $\lambda$-weight spaces.
For $a,b\in A$ define $z(a,b)\in G^\circ$ so that $\iota(a)\iota(b)=\iota(ab)z(a,b)$.
Since $\iota(a)$, $\iota(b)$, and $\iota(ab)$ all preserve the chosen pinning of $G^\circ$, $z(a,b)\in Z(G^\circ)$, and hence $z(a,b)$ acts on $V_\lambda$ via a nonzero scalar $\alpha(a,b):=\rho_\lambda(z(a,b))$.  
In \cite[Chapter 2.4]{AHR} it is shown that $\alpha(a,b)$ is a $2$-cocycle of $A^\lambda$ with coefficients in $k^\times$.  Let $k^\alpha[A^\lambda]$ denote the group algebra of $A^\lambda$ twisted by $\alpha$. 

For any irreducible representation $E$ of $k^\alpha[A^\lambda]$ we endow $E\otimes L(\lambda)$ with the structure of a $G^\lambda$-representation by $\iota(a)g\cdot (u\otimes v) = a\cdot u \otimes \theta_a(g\cdot v)$ \cite[Lemma 2.8]{AHR}.
An important point is that the cohomology class of $\alpha$ does not depend on the choices $\iota(a)$ made in its construction, and up to isomorphism the algebra $k^\alpha[A^\lambda]$ and $G^\lambda$-representation $E\otimes L(\lambda)$ only depend on the cohomology class of $\alpha$.

One of the main results from \cite{AHR} are the following:

\begin{prop}\cite[Proposition 2.9]{AHR}
For a dominant character $\lambda$ of $T$, the irreducible representations of $G^\lambda$ which, after restricting to $G^\circ$, are isomorphic to a direct sum of copies of $L(\lambda)$, are exactly the representations of the form $E \otimes L(\lambda)$, where $E$ is an irreducible representation of $k^\alpha[A^\lambda]$.
\end{prop}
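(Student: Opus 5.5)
The plan is the standard Clifford-theory argument, using only that $G^\circ$ is normal of finite index in $G^\lambda$ and that representations of $G^\circ$ are semisimple in characteristic $0$. I first check that $E\otimes L(\lambda)$ is an irreducible $G^\lambda$-representation whose restriction to $G^\circ$ is $L(\lambda)^{\oplus\dim E}$, and then show that every irreducible $G^\lambda$-representation $V$ with isotypic restriction $V|_{G^\circ}\cong L(\lambda)^{\oplus m}$ has this form.

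For the first step, I verify that the formula $\iota(a)g\cdot(u\otimes v)=a\cdot u\otimes\theta_a(g\cdot v)$ defines an action. Composing two such operators produces $\theta_a\circ\theta_b$ versus $\theta_{ab}$ together with the central element $z(a,b)$. Since $\theta_a\theta_b$ and $\theta_{ab}\rho_\lambda(z(a,b))$ are both isomorphisms between the same twists of $L(\lambda)$ that act as scalars on the $\lambda$-weight line, Schur's lemma forces them to differ by exactly $\alpha(a,b)$. This scalar is absorbed by the twisted multiplication in $k^\alpha[A^\lambda]$, which is precisely why $\alpha$ is the right cocycle. Restricting to $G^\circ$ gives $E\otimes L(\lambda)$ with $G^\circ$ acting only on the second factor. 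Hence any $G^\lambda$-subrepresentation has the form $E'\otimes L(\lambda)$, with $E'=\mathrm{Hom}_{G^\circ}(L(\lambda),-)$ of it. That $E'$ is stable under the operators $a\otimes\theta_a$, hence under $k^\alpha[A^\lambda]$, so irreducibility of $E$ gives irreducibility of $E\otimes L(\lambda)$.

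For the converse, given $V$ I set $E:=\mathrm{Hom}_{G^\circ}(L(\lambda),V)$, so that $V\cong E\otimes L(\lambda)$ as $G^\circ$-modules. I make $A^\lambda$ act on $E$ by $a\cdot f:=\iota(a)\circ f\circ\theta_a^{-1}$, adjusting by the appropriate twist. The same Schur computation shows this is a projective action with cocycle $\alpha$, i.e. a $k^\alpha[A^\lambda]$-module. By construction the evaluation map $E\otimes L(\lambda)\to V$ is then $G^\lambda$-equivariant and an isomorphism. Finally, a proper $k^\alpha[A^\lambda]$-submodule of $E$ would give a proper $G^\lambda$-subrepresentation of $V$, so $E$ is irreducible.

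I expect the main obstacle to be bookkeeping: making the normalizations consistent, i.e. the direction of the twists ${}^{\iota(a)}L(\lambda)$, the order of composition in $\theta_a\theta_b$, and whether one gets $\alpha$ or $\alpha^{-1}$. The clean way to handle this is to pin everything down by evaluating on a highest weight vector $v_\lambda$. Each $\theta_a$ fixes $v_\lambda$, so the only discrepancy is $\rho_\lambda(z(a,b))v_\lambda=\alpha(a,b)v_\lambda$, which by definition matches the cocycle of \cite[Chapter 2.4]{AHR}.
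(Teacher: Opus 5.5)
Your proposal is correct, and it is essentially the standard argument: the paper gives no proof of its own and simply cites \cite[Proposition 2.9]{AHR} (with \cite[Lemma 2.8]{AHR} for the $G^\lambda$-action on $E\otimes L(\lambda)$), and the Clifford-theoretic route through the multiplicity space $\mathrm{Hom}_{G^\circ}(L(\lambda),V)$ with $a\cdot f=\iota(a)\circ f\circ\theta_a^{-1}$ is that standard proof. One simplification to your bookkeeping: because $z(a,b)$ is central in $G^\circ$, the twists ${}^{\iota(a)\iota(b)}L(\lambda)$ and ${}^{\iota(ab)}L(\lambda)$ coincide, so normalizing on $v_\lambda$ gives $\theta_a\theta_b=\theta_{ab}$ exactly; the cocycle then enters only through $\rho_\lambda(z(a,b))=\alpha(a,b)\,\mathrm{id}$, and the action on $E$ needs no further adjustment.
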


Via induction from $G^\lambda$ to $G$, pairs $(\lambda, E)$ parametrize all irreducible representations of $G$:

\begin{thm}\cite[Theorem 2.16]{AHR}\label{DiscRedClass}
Every irreducible representation of $G$ is of the form $L(\lambda, E):=\mathrm{Ind}_{G^\lambda}^G(E\otimes L(\lambda))$ where $\lambda$ is a dominant character and $E$ is an irreducible representation of $k^{\alpha}[A^\lambda]$.  
Conversely, for all such pairs $(\lambda, E)$, $L(\lambda, E)$ is irreducible, so there is a bijection between the irreducible representations of $G$ and $A$-conjugacy classes of pairs $(\lambda, E)$.
\end{thm}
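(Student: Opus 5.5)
This is Clifford theory for $G^\circ\trianglelefteq G$ with finite quotient $A$, and I would prove it in three steps.

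\textbf{Step 1: every irreducible has this form.} Let $V$ be an irreducible representation of $G$. Restrict $V$ to $G^\circ$. Since $\mathrm{char}\,k=0$ and $G^\circ$ is reductive, $V|_{G^\circ}$ is semisimple. Because $G$ permutes the $G^\circ$-isotypic components transitively, the constituents form a single $G$-orbit $\{{}^gL(\lambda)\}$. By the description of the $A$-action above, ${}^{g}L(\lambda)\cong L({}^a\lambda)$ when $g$ lies in component $a$, so this orbit corresponds to the orbit $[\lambda]$. Choose the dominant representative $\lambda$ and let $W\subset V$ be its $L(\lambda)$-isotypic component. The stabilizer of $W$ is exactly $G^\lambda$, so $V\cong\mathrm{Ind}_{G^\lambda}^G W$ by the standard Clifford/Mackey argument. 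Moreover $W$ is irreducible as a $G^\lambda$-module: a proper submodule $W'$ would induce a proper submodule of $V$, by Frobenius reciprocity and the fact that the isotypic components are distinct. By \cite[Proposition 2.9]{AHR}, $W\cong E\otimes L(\lambda)$ for some irreducible $E$ of $k^\alpha[A^\lambda]$.

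\textbf{Step 2: $L(\lambda,E)$ is irreducible.} Restricted to $G^\circ$, Mackey's formula gives
\[
\mathrm{Ind}_{G^\lambda}^G(E\otimes L(\lambda))|_{G^\circ}\cong\bigoplus_{a\in A/A^\lambda}E\otimes L({}^a\lambda),
\]
and the summands are pairwise non-isomorphic isotypic pieces. Compute $\mathrm{End}_G$ by Frobenius reciprocity as
\[
\mathrm{Hom}_{G^\lambda}\bigl(E\otimes L(\lambda),\,\mathrm{Ind}(E\otimes L(\lambda))|_{G^\lambda}\bigr).
\]
Only the $a=1$ summand contributes, because the other summands have no $L(\lambda)$-isotypic part. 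This reduces to $\mathrm{End}_{G^\lambda}(E\otimes L(\lambda))=\mathrm{End}_{k^\alpha[A^\lambda]}(E)=k$, using the irreducibility statement of \cite[Proposition 2.9]{AHR}. Since the representation is semisimple (char $0$, finite index), it is irreducible.

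\textbf{Step 3: the bijection.} Suppose $L(\lambda,E)\cong L(\lambda',E')$. Comparing $G^\circ$-constituents gives $[\lambda]=[\lambda']$. Conjugating by an element of $A$ moves the pair $(\lambda',E')$ to one with $\lambda'=\lambda$, transporting $E'$ through the induced isomorphism of twisted group algebras (the cocycle is only defined up to cohomology). Then the $L(\lambda)$-isotypic components identify $E\otimes L(\lambda)\cong E'\otimes L(\lambda)$ as $G^\lambda$-modules, and hence $E\cong E'$ by \cite[Proposition 2.9]{AHR}. This also shows the construction is well defined on $A$-conjugacy classes.

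\textbf{Main obstacle.} The delicate point is the last step: checking that $A$-conjugation acts compatibly on the twisted algebras $k^\alpha[A^\lambda]$ and $k^{\alpha'}[A^{{}^a\lambda}]$ given the choices of $\iota(a)$ and $\theta_a$. I would handle it using the stated independence of $E\otimes L(\lambda)$ from the choices up to the cohomology class of $\alpha$.
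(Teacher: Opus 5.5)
Your argument is correct. The paper gives no proof of this statement; it simply imports \cite[Theorem 2.16]{AHR}. What you have written is a self-contained Clifford-theoretic reduction of that theorem to \cite[Proposition 2.9]{AHR}, which is the natural route.

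\textbf{Where characteristic $0$ enters.} Step~2 uses semisimplicity of $G$-modules together with $\mathrm{End}_G=k$. That is legitimate here, since the paper fixes $\mathrm{char}\,k=0$. A characteristic-free variant is just as short: a nonzero $G$-submodule $V'\subseteq\mathrm{Ind}_{G^\lambda}^G(W)$ is, as a $G^\circ$-module, the sum of its intersections with the pairwise non-isomorphic isotypic pieces $gW$. After translating, $V'\cap W\neq0$, so $V'\cap W=W$ by irreducibility of $W$, and then $V'$ is everything.

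\textbf{Two small corrections.} First, every element of $[\lambda]$ is already dominant, because the $\iota(a)$ preserve the pinning; so ``the dominant representative'' should read ``any representative.'' Second, in Step~3 the implication $E\otimes L(\lambda)\cong E'\otimes L(\lambda)\Rightarrow E\cong E'$ uses that $E$ is recovered as $\mathrm{Hom}_{G^\circ}(L(\lambda),E\otimes L(\lambda))$ with its twisted $k^\alpha[A^\lambda]$-action. That is slightly more than Proposition~2.9 as quoted in the paper, though it is immediate from the construction.

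\textbf{The obstacle you flag.} It disappears if you \emph{define} the $A$-action on pairs by transport of structure. The module ${}^{\iota(a)}(E\otimes L(\lambda))$ is an irreducible $G^{{}^a\lambda}$-module whose restriction to $G^\circ$ is $L({}^a\lambda)$-isotypic. Hence it equals $E'\otimes L({}^a\lambda)$ for a unique $E'$, and its induction to $G$ is visibly isomorphic to $L(\lambda,E)$. With this definition no comparison of cocycles is needed.
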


As in \cite[Section 3.1]{AHR} we define a preorder on $A$-orbits of pairs $[\lambda, E]$ by $[\lambda, E]\leq [\lambda', E']$ if and only if $[\lambda]\leq[\lambda']$, ie $^a\lambda\leq\lambda'$ for some $a\in A$ in the usual partial order on weights.
We will regularly compare irreducible representations $\rho$ and $\rho'$ of $G$, implicitly using their bijection with $A$-orbits of pairs $[\lambda, E]$.

\subsection{Finite groups acting on sheaves}\label{DiscSheaves}

The goal of this section is to extend some ideas from Clifford Theory and the representation theory of disconnected reductive groups reviewed above to the setting of sheaves equivariant under a disconnected (not necessarily reductive) group.

We begin by stating an analogue of Mackey decomposition.  
Let $G$ be an algebraic group over a characteristic $0$ algebraically closed field $k$ with finite index subgroups $H$ and $K$, and let $X$ be a variety with a $G$-action.
Let $p: X/H\to X/G$ and $q:X/K\to X/G$ be the natural projections.
Then $$X/K\times_{X/G}X/H\cong (X\times G)/(K\times H)$$ via the action $(k,h)\cdot (x,g) =(k\cdot x, kgh^{-1})$, which is in turn isomorphic to $$\bigsqcup\limits_{g\in K\backslash G/H}X/(K\cap gHg^{-1})$$ where we abuse notation slightly by letting $g$ denote a double coset representative as opposed to the double coset itself.
Let $\tilde{q}_g: X/K\cap gHg^{-1}\to X/H$ and $\tilde{p}_g: X/(K\cap gHg^{-1})\to X/K$ be the natural maps, where $\tilde{q}_g$ involves of conjugating $K\cap gHg^{-1}$ by $g^{-1}$ before including into $H$.

Then Mackey decomposition states that $$q^*p_*S \cong \bigoplus\limits_{g\in K\backslash G/H}\tilde{p}_{g*}\tilde{q}_g^*S$$ which follows by etale base change.

Now assume $H$ is a closed normal subgroup of $G$ and suppose for the moment that $X$ is equipped with an $H$ action instead of a $G$ action. 
Let $S$ be an $H$ equivariant coherent sheaf on $X$ whose classical endomorphism ring (respecting the equivariant structure) is $k$. 
The quotient stacks $X/H$ and $(G\times^HX)/G$ are canonically isomorphic, and we denote by $\tilde{S}$ the $G$ equivariant sheaf on $G\times^HX$ which is the pushforward of $S$ along this isomorphism.
Here $G\times^HX$ denotes the product $G\times X$ quotiented by the action $h\cdot (g,x) = (gh^{-1}, hx)$, and $G$ acts on $G\times^HX$ via $g\cdot(g',x) = (gg', x)$.
To write $\tilde{S}$ explicitly in terms of $S$ we first choose a set of representatives $g_1,...,g_n$ of the left cosets of $H$ in $G$, where $g_1 = e$ is the identity of $G$.  
So $$G\times^HX =  \bigsqcup g_iH\times^H X$$ and we view $X$ as a subscheme via its isomorphism with the identity coset $H\times^HX$.  
We overload notation and also denote by $S$ the pushforward of $S$ along this inclusion, which is an $H$ equivariant sheaf on $G\times^HX$.

For $h\in H$, we let $\phi_h: h_*S\to S$ be the isomorphism defined by the $H$-equivariant structure on $S$, and for any $g_i$, we endow $g_{i*}S$ with an $H$ equivariant structure via the map $$g_{i*}\phi_{g_i^{-1}hg_i}: h_*g_{i*}S\to g_{i*}S$$
The sheaf $\tilde{S}$ is then $$\tilde{S}\cong \bigoplus\limits_{i=1}^n g_{i*}S$$
To describe its $G$ equivariant structure more explicitly, let $g_ih$ be an element of $G$, and define the integers $\sigma(i,j)$ and elements $h_{ij}\in H$ by $g_ig_j=g_{\sigma(i,j)}h_{ij}$.   
Then $$(g_ih)_*\bigoplus\limits_{j=1}^n g_{j*}S = \bigoplus\limits_{j=1}^n (g_ihg_j)_*S = \bigoplus\limits_{j=1}^n (g_ig_j)_*(g_j^{-1}hg_j)_*S = \bigoplus\limits_{j=1}^n g_{\sigma(i,j)*}(h_{ij}g_j^{-1}hg_j)_*S$$
And since $h_{ij}g_j^{-1}hg_j\in H $, the $H$-equivariant structure on $S$ gives the 
map $$\bigoplus\limits_{j=1}^n g_{\sigma(i,j)*}(h_{ij}g_j^{-1}hg_j)_*S\to\bigoplus\limits_{j=1}^n g_{\sigma(i,j)*}S=\bigoplus\limits_{j=1}^n g_{j*}S $$

Suppose now that there is an action of $G$ on $X$ extending that of $H$.  We examine the pushforward $\overline S$  of $\tilde S$ 
to a $G$ equivariant sheaf on $X$ via the action map $G\times^HX\to X$ given by $(g,x)\mapsto g\cdot x$.  In this case 
$$\overline S\cong\bigoplus\limits_{j=1}^n g_{j*}S$$ with $G$-action exactly analogous to that on $\tilde S$.  
Note also that $\overline{S}$ is isomorphic to the pushforward of $S$ along the projection $X/H\to X/G$.  Suppose furthermore that for all $j$, 
$g_{j*}S$ is isomorphic to $S$ as $H$-equivariant sheaves via a map $\theta_{j}:g_{j*}S\to S$ and moreover that $\theta_{j}$ is unique up to invertible scaling.  
Let $A$ be the finite group (formed by the $k$ points of) $G/H$.  
Consider the map $$g_{\sigma(i,j)*}S\xrightarrow[]{g_{\sigma(i,j)*}\phi^{-1}_{h_{ij}}}  g_{\sigma(i,j)*}h_{ij*}S = g_{i*}g_{j*}S\xrightarrow[]{g_{i*}\theta_{j}} g_{i*}S \xrightarrow[]{\theta_{i}} S$$
 After multiplying this map by some constant $\alpha(i,j)$ it must equal $\theta_{{\sigma(i,j)}}$.

For any $g=g_ih\in G$, let $\theta_g:=\theta_i\circ g_{i*}\phi_h: g_*S\to S$

\begin{lem}
For all $g=g_ih$, $g'=g_jh'\in G$, $\theta_g\circ g_*\theta_{g'}=\alpha(i, j)^{-1}\theta_{gg'}$
\end{lem}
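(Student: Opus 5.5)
The plan is a direct diagram chase. The only inputs are the definition of $\theta_g$, the defining relation for $\alpha(i,j)$, the cocycle condition for the $H$-equivariant structure on $S$, and the $H$-equivariance of each $\theta_j$. Throughout I write $\sigma=\sigma(i,j)$ and $h'':=g_j^{-1}hg_j\in H$, where $H$ is normal in $G$.

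First I record the two structural identities. The cocycle condition of the equivariant structure reads $\phi_{h_1h_2}=\phi_{h_1}\circ h_{1*}\phi_{h_2}$ for $h_1,h_2\in H$. The map $\theta_j\colon g_{j*}S\to S$ is an isomorphism of $H$-equivariant sheaves, where $g_{j*}S$ carries the twisted structure $g_{j*}\phi_{g_j^{-1}hg_j}$. Spelled out, this says
$$\phi_h\circ h_*\theta_j=\theta_j\circ g_{j*}\phi_{h''}.$$
I also rewrite the defining relation for $\alpha$ as
$$\theta_i\circ g_{i*}\theta_j=\alpha(i,j)^{-1}\,\theta_{\sigma}\circ g_{\sigma*}\phi_{h_{ij}}.$$

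Next I compute the left side with $g=g_ih$ and $g'=g_jh'$. Unwinding the definitions gives
$$\theta_g\circ g_*\theta_{g'}=\theta_i\circ g_{i*}\phi_h\circ (g_ih)_*\theta_j\circ (g_ihg_j)_*\phi_{h'}=\theta_i\circ g_{i*}\bigl(\phi_h\circ h_*\theta_j\bigr)\circ (g_ihg_j)_*\phi_{h'}.$$
Applying the equivariance of $\theta_j$ and using $g_ihg_j=g_ig_jh''$, this becomes
$$\theta_i\circ g_{i*}\theta_j\circ (g_ig_j)_*\bigl(\phi_{h''}\circ h''_*\phi_{h'}\bigr)=\theta_i\circ g_{i*}\theta_j\circ (g_ig_j)_*\phi_{h''h'},$$
where the last step is the cocycle condition.

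Finally I substitute the relation for $\alpha$ and use $g_ig_j=g_\sigma h_{ij}$. This yields
$$\alpha(i,j)^{-1}\,\theta_\sigma\circ g_{\sigma*}\bigl(\phi_{h_{ij}}\circ h_{ij*}\phi_{h''h'}\bigr)=\alpha(i,j)^{-1}\,\theta_\sigma\circ g_{\sigma*}\phi_{h_{ij}h''h'}.$$
Since $gg'=g_ig_jh''h'=g_\sigma\,(h_{ij}h''h')$ with $h_{ij}h''h'\in H$, the definition of $\theta$ gives $\theta_{gg'}=\theta_\sigma\circ g_{\sigma*}\phi_{h_{ij}h''h'}$, which is the claim.

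The only step needing care is the equivariance identity for $\theta_j$. It requires checking that the twisted equivariant structure on $g_{j*}S$ defined earlier has exactly the form used above, and that the pushforward conventions $(ab)_*=a_*b_*$ are applied consistently. Everything else is formal bookkeeping.
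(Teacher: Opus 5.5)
Your proof is correct and follows the paper's argument: the same chain of equalities that unwinds $\theta_g\circ g_*\theta_{g'}$, moves $\phi_h$ past $\theta_j$ using its $H$-equivariance, merges the $\phi$'s with the cocycle condition, and then applies the defining relation for $\alpha(i,j)$. The only difference is that you state explicitly the equivariance and cocycle identities that the paper uses without comment.
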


\begin{proof}

\begin{align*}
\theta_g\circ g_*\theta_{g'} &= \theta_{g_i}\circ g_{i*}\phi_{h}\circ (g_ih)_*\theta_{g_j}\circ (g_ihg_j)_*\phi_{h'} \\ &\cong \theta_{i}\circ (g_i)_*\theta_{j}\circ (g_ig_j)_*\phi_{g_j^{-1}hg_j}\circ (g_ihg_j)_*\phi_{h'}\\ &\cong \alpha(i,j)^{-1}\theta_{\sigma(i,j)}\circ g_{\sigma(i,j)*}\phi_{h_{ij}}\circ (g_ig_j)_*\phi_{g_j^{-1}hg_jh'}\\ &\cong \alpha(i,j)^{-1}\theta_{\sigma(i,j)}\circ g_{\sigma(i,j)*}\phi_{h_{ij}g_j^{-1}hg_jh'} \\ &= \alpha(i,j)^{-1} \theta_{gg'}
\end{align*}

\end{proof}

Define $\alpha(g, g')\in k^\times$ by the identity $\theta_g\circ g_*\theta_{g'}=\alpha(g, g')^{-1}\theta_{gg'}$.
We will sometimes also use the notation $\alpha(a_g, a_{g'})$ for $\alpha(g,g')$, where $a_g$ is the component containing $g$ and $a_{g'}$ is the component containing $g'$.
For any $g, g', g''\in G$, computing $\theta_g\circ g_*\theta_{g'}\circ(gg')_*\theta_{g''}$ in two different ways we see $\alpha$ is a 2-cocycle of $A$.

Let $k^\alpha[A]$ be the component algebra of $A$ twisted by $\alpha$.  
For each $1\leq i\leq n$ let $a_i$ be the image of $g_i$ in $A$, and
define an isomorphism of $H$-equivariant sheaves $\Theta:\overline S\  \tilde\to k^\alpha[A]\otimes S$ by $$\Theta = \bigoplus\limits_{1\leq i\leq n}a_i\otimes_k \theta_i: \overline{S}\to \bigoplus\limits_{1\leq i\leq n} a_i\otimes_k S = k^\alpha[A] \otimes S$$
$k^\alpha[A] \otimes S$ therefore inherits a $G$-equivariant structure from $\overline S$.

\begin{lem}\label{InducedLem}
For any $g=g_ih'\in G$ the equivariant action of $g$ on $k^\alpha[A]\otimes S$ is the map $k^\alpha[A]\otimes g_*S\to k^\alpha[A]\otimes S$ given by $a_i\cdot(-)\otimes \theta_g$.

\end{lem}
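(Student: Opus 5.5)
The plan is to transport the $G$-equivariant structure of $\overline S$ through $\Theta$ and compute it one summand at a time. On $\overline S=\bigoplus_j g_{j*}S$, the element $g=g_ih'$ acts via the map described above. It sends the summand $g_*g_{j*}S=(g_ih'g_j)_*S$ to the summand $g_{\sigma(i,j)*}S$. Writing $g'=g_j$ (so $h=e$ in the notation of the preceding lemma), this map is $g_{\sigma(i,j)*}\phi_{h_{ij}g_j^{-1}h'g_j}$. The $G$-structure on $k^\alpha[A]\otimes S$ is by definition $\Theta\circ(\text{action on }\overline S)\circ g_*\Theta^{-1}$. It therefore suffices to check, summand by summand, that the following square commutes: the top arrow is this map $(gg_j)_*S\to g_{\sigma(i,j)*}S$, the vertical arrows are $g_*\theta_j$ (into $a_j\otimes g_*S$) and $\theta_{\sigma(i,j)}$ (into $a_{\sigma(i,j)}\otimes S$), and the bottom arrow is the proposed map $a_j\otimes g_*S\mapsto a_i a_j\otimes S$ given by $\theta_g$. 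Here $a_ia_j$ is the product in $k^\alpha[A]$, which equals $\alpha(a_i,a_j)\,a_{\sigma(i,j)}$ with the standard twisted multiplication convention (up to the inverse, depending on conventions).

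The key step is to identify the composite. First note that $\theta_{\sigma(i,j)}\circ g_{\sigma(i,j)*}\phi_{h_{ij}g_j^{-1}h'g_j}=\theta_{gg_j}$, because $gg_j=g_ih'g_j=g_{\sigma(i,j)}h_{ij}g_j^{-1}h'g_j$ and $\theta$ of an element was defined as $\theta_{\text{coset rep}}\circ(\text{rep})_*\phi_{(\cdot)}$. The preceding lemma then gives $\theta_{gg_j}=\alpha(i,j)\,\theta_g\circ g_*\theta_{g_j}$, with $\theta_{g_j}=\theta_j$. Hence, going around the square, the element $a_j\otimes(g_*\theta_j)(s)$ is sent to $\alpha(i,j)\,a_{\sigma(i,j)}\otimes\theta_g(s)$. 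This is exactly $a_i\cdot a_j\otimes\theta_g$ once the twisted multiplication $a_ia_j=\alpha(a_i,a_j)a_{\sigma(i,j)}$ is used. Summing over $j$ gives the claim.

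The only delicate point is bookkeeping of conventions: the direction of $\alpha$ versus $\alpha^{-1}$ in $k^\alpha[A]$, and whether left multiplication by $a_i$ matches the cocycle appearing in the lemma. I would fix the convention for $k^\alpha[A]$ so that $a_ia_j=\alpha(a_i,a_j)a_{ij}$, consistent with the lemma's normalization, and double check it on the identity coset, where $g=h'\in H$. In that case $\theta_g=\phi_{h'}$, $a_1$ is the unit and $\alpha(1,j)=1$, so the formula reduces to the original $H$-equivariant structure tensored with the identity. No other obstacle is expected.
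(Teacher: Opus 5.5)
Your proposal is correct and is essentially the paper's proof: both compare, summand by summand, the composite of the $\overline{S}$-action with $\Theta$ (which gives $a_{\sigma(i,j)}\otimes\theta_{gg_j}$ by the definition of $\theta_{gg_j}$) against $a_i\cdot(-)\otimes\theta_g$ applied after $g_*\Theta$. Both then match the two using the preceding lemma and the twisted multiplication $a_ia_j=\alpha(i,j)a_{\sigma(i,j)}$, which is the same convention for $k^\alpha[A]$ that the paper uses.
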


\begin{proof}

For any $g_j$, the action of $g\in G$ on $g_{j*}S\subset \overline{S}$ takes the form $g_{k*}\phi_h: (gg_j)_*S\to g_{k*}S$, where $gg_j = g_kh$, and then $\Theta|_{g_{k*}S} = a_k\otimes \theta_k$ maps $g_{k*}S$ to $a_k\otimes S$.
By definition $\theta_k\circ g_{k*}\phi_h = \theta_{gg_j}$, so the action by $g$ and $\Theta$ compose to $a_k\otimes\theta_{gg_j}$.

In the other direction, $g_*\Theta|_{g_{j*}S} = a_j\otimes g_*\theta_j$, and then composing with $a_i\cdot(-)\otimes \theta_g$ gives $a_ia_j\otimes \theta_g\circ g_*\theta_j = \alpha(i,j)a_k\otimes \alpha(i,j)^{-1}\theta_{gg_j} = a_k\otimes\theta_{gg_j}$.

\end{proof}

For any representation $E$ of $k^\alpha[A]$, we can analogously define a $G$-action on $E\otimes S$:

\begin{defn}\label{ECompDef}
The $G$-equivariant structure on $E\otimes S$ is defined, for any $g=g_ih\in G$, by $g\cdot(-) := a_i\cdot (-)\otimes \theta_g: E\otimes g_*S\to E\otimes S$

\end{defn}

Definition \ref{ECompDef} and Lemma \ref{InducedLem} can be combined in the following way.
As setup, suppose we have a chain of inclusions $K\trianglelefteq H \trianglelefteq G$ where $K\trianglelefteq G$ and $G/K$ is finite.
Let $g_i$, $1\leq i\leq n$ be coset representatives of $G/K$.
Suppose $G$ acts on $X$ and let $S$ be a $K$-equivariant sheaf on $X$ such that there is a $K$-equivariant isomorphism $\theta_i:g_{i*}S\tilde\to S$ unique up to scalar for all $g_i$, which defines a 2-cocycle $\alpha(i,j)$ of $A:=G/K$ as above.
Let $A_H:=H/K$ and let $E$ be an irreducible representation of $k^\alpha[A_H]$.  

\begin{lem}\label{PushForwardE}

The pushforward of $E\otimes S$ along the projection $p$ from $X/H$ to $X/G$ is isomorphic to $\textup{Ind}_{k^\alpha[A_H]}^{k^\alpha[A]}(E)\otimes S$.  
It follows that if $F$ is an irreducible representation of $k^\alpha[A]$ whose restriction to $k^\alpha[A_H]$ contains $E$, then $F\otimes S$ is a direct summand of $p_*(E\otimes S)$.
    
\end{lem}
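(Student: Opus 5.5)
The plan is to reduce the statement to the already-established description of the pushforward from $X/K$ to $X/G$ (the computation leading to Lemma \ref{InducedLem}), applied twice, and then compare the two resulting equivariant structures. Factor the projection $X/K\to X/G$ as $X/K\xrightarrow{r}X/H\xrightarrow{p}X/G$. Two inputs are needed. First, $E\otimes S$ with its $H$-equivariant structure from Definition \ref{ECompDef} (for the group $A_H$) is a direct summand of $r_*S$. Indeed, applying Lemma \ref{InducedLem} to the pair $K\trianglelefteq H$, with coset representatives of $H/K$ chosen among the $g_i$, gives $r_*S\cong k^\alpha[A_H]\otimes S$, with $h=g_ik$ acting by $a_i\cdot(-)\otimes\theta_h$. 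The cocycle obtained here is the restriction of $\alpha$ to $A_H$, since the $\theta_i$ are the same maps. Because $k^\alpha[A_H]$ is semisimple (twisted Maschke in characteristic $0$), $E$ is a summand of the left regular module. The corresponding summand $E\otimes S\subset k^\alpha[A_H]\otimes S$ is stable under the $H$-action, because the action is left multiplication tensored with $\theta_h$.

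Second, I compute $p_*(E\otimes S)$ directly, mirroring the construction of $\overline S$. Choose representatives $c_1,\dots,c_m$ of $G/H$ among the $g_i$. Then $p_*(E\otimes S)\cong\bigoplus_j c_{j*}(E\otimes S)$, with the $G$-action twisted by the $H$-equivariant structure, as in the formula for $\tilde S$. Next define
\[
\bigoplus_j c_{j*}(E\otimes S)\longrightarrow \Big(\bigoplus_j \bar c_j\otimes E\Big)\otimes S=\big(k^\alpha[A]\otimes_{k^\alpha[A_H]}E\big)\otimes S
\]
by $\bar c_j\otimes\theta_{c_j}$ on the $j$th summand, exactly as $\Theta$ was defined. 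The main step is to check that this intertwines the $G$-action with the action of Definition \ref{ECompDef} on $\mathrm{Ind}_{k^\alpha[A_H]}^{k^\alpha[A]}(E)\otimes S$. This is the same two-way computation as in the proof of Lemma \ref{InducedLem}. Write $gc_j=c_kh$ with $h\in H$ and use $\theta_g\circ g_*\theta_{g'}=\alpha(g,g')^{-1}\theta_{gg'}$. The scalar $\alpha$ produced when moving $\bar g\bar c_j=\alpha\cdot\bar c_k\bar h$ in $k^\alpha[A]$ cancels against the scalar from composing the $\theta$'s. I expect this cocycle bookkeeping, namely checking that the $H$-part passes through the tensor product over $k^\alpha[A_H]$ correctly, to be the only real obstacle. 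An alternative is to avoid it: since $p_*$ is additive and $r_*S\cong k^\alpha[A_H]\otimes S$, the identity $p_*r_*S\cong k^\alpha[A]\otimes S$ from Lemma \ref{InducedLem} respects the right $k^\alpha[A_H]$-module structures, which act by equivariant endomorphisms. Tensoring over $k^\alpha[A_H]$ with $E$ (via the idempotent cutting out $E$) then yields the claim functorially.

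For the final assertion, suppose $F$ is irreducible and $E\subset F|_{k^\alpha[A_H]}$. By Frobenius reciprocity, $\mathrm{Hom}_{k^\alpha[A]}(\mathrm{Ind}(E),F)\cong\mathrm{Hom}_{k^\alpha[A_H]}(E,F)\neq0$. By semisimplicity of $k^\alpha[A]$, $F$ is therefore a direct summand of $\mathrm{Ind}(E)$. The assignment $M\mapsto M\otimes S$ with the structure of Definition \ref{ECompDef} is additive and functorial in $k^\alpha[A]$-module maps, so $F\otimes S$ is a direct summand of $\mathrm{Ind}(E)\otimes S\cong p_*(E\otimes S)$.
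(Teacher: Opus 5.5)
Your main route is the paper's proof: you write $p_*(E\otimes S)=\bigoplus_{j}c_{j*}(E\otimes S)$ over representatives of $G/H$, define the comparison map on the $j$th summand by $\bar c_j\otimes(-)\otimes\theta_{c_j}$, verify equivariance using $gc_j=c_kh$ and $\theta_g\circ g_*\theta_{g'}=\alpha(g,g')^{-1}\theta_{gg'}$, and deduce the summand claim from $F$ being a direct summand of $\mathrm{Ind}(E)$, exactly as the paper does. Your ``first input'' about $r_*S$ is not needed for this direct computation, and the proposed shortcut through right $k^\alpha[A_H]$-module structures would still require essentially the same cocycle check, namely that $p_*$ carries the right action on $r_*S$ to right multiplication on $k^\alpha[A]$.
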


Note this is not an immediate consequence of Lemma \ref{InducedLem}, because the needed assumption $g_{i*}(E\otimes S)\cong E\otimes S$ for all $g_i$ fails if $E$ is not fixed by $g_i$-conjugation.

\begin{proof}

Let $\mathfrak{G}_H$ be a set of coset representatives of $G/H$, with the identity chosen to represent the identity coset.
Let $g\in G$, $\mathfrak{g}\in \mathfrak{G}_H$, and write $g\mathfrak{g} = \mathfrak{g}'h$.
Then $$p_*(E\otimes S) = \bigoplus\limits_{\mathfrak{g}\in \mathfrak{G}_H}\mathfrak{g}_*(E\otimes S)$$ with $G$-equivariant structure given by $$g\cdot(-):= a_h\cdot(-)\otimes\mathfrak{g}'_*\theta_h:(g\mathfrak{g})_*(E\otimes S) = E\otimes (g\mathfrak{g})_*S\to E\otimes\mathfrak{g}_*'S $$

The induced representation $$\textup{Ind}_{k^\alpha[A_H]}^{k^\alpha[A]}(E) = k^\alpha[A]\otimes_{k^\alpha[A_H]}E = \bigoplus\limits_{\mathfrak{g}\in\mathfrak{G}_H}a_\mathfrak{g}\otimes_{k^\alpha[A_H]} E$$ has $k^\alpha[A]$ structure given by $$a_g\cdot a_\mathfrak{g}\otimes_{k^\alpha[A_H]} E = \alpha(g,\mathfrak{g})a_{g\mathfrak{g}}\otimes_{k^\alpha[A_H]} E = \alpha(g,\mathfrak{g})\alpha(\mathfrak{g}', h)^{-1}a_{\mathfrak{g}'}\otimes_{k^\alpha[A_H]} a_h\cdot E$$

We claim $\Theta: p_*(E\otimes S)\to \textup{Ind}_{k^\alpha[A_H]}^{k^\alpha[A]}(E)\otimes S$ defined by $$\Theta|_{\mathfrak{g}_*S} = a_\mathfrak{g}\otimes(-)\otimes\theta_\mathfrak{g}: \mathfrak{g}_*(E\otimes S) = E\otimes\mathfrak{g}_*S\to a_{\mathfrak{g}}\otimes E\otimes S$$ is an isomorphism of $G$-equivariant sheaves.
In one direction, acting by $g$ and then applying $\Theta$ gives the map $$a_{\mathfrak{g}'}\otimes (a_h\cdot(-)\otimes \theta_{\mathfrak{g}'}\circ\mathfrak{g}'_*\theta_h) = (\alpha(\mathfrak{g}',h)^{-1}a_{\mathfrak{g}'}\otimes a_h\cdot(-))\otimes \theta_{g\mathfrak{g}}: E\otimes \mathfrak{g}_*S\to (a_{\mathfrak{g}'}\otimes E)\otimes S$$

On the other hand, applying $\Theta$ and then acting by $g$ gives $$a_g(a_\mathfrak{g}\otimes \text{id})\otimes\theta_g\circ  g_*\theta_\mathfrak{g} = \alpha(g,\mathfrak{g})\alpha(\mathfrak{g}', h)^{-1}\alpha(g,\mathfrak{g})^{-1}a_{\mathfrak{g}'}\otimes a_h\cdot(-)\otimes\theta_{g\mathfrak{g}}$$ $$= (\alpha(\mathfrak{g}', h)^{-1}a_{\mathfrak{g}'}\otimes a_h\cdot(-))\otimes\theta_{g\mathfrak{g}}: E\otimes (g\mathfrak{g})_*S\to (a_{\mathfrak{g}'}\otimes E)\otimes S$$

The last claim follows because $F$ is a direct summand of $\textup{Ind}_{k^\alpha[A_H]}^{k^\alpha[A]}(E)$.

\end{proof}

We end with the behavior of this equivariant structure under proper morphisms:

\begin{lem}\label{ECompPush}
Let $f:X\to Y$ be a proper morphism of varieties with $G$-action.  If $E\otimes S$ is a $G$-equivariant sheaf on $X$ as defined in Definition \ref{ECompDef}, then $f_*(E\otimes S)\cong E\otimes f_*(S)$ as $G$-equivariant sheaves on $Y$.
\end{lem}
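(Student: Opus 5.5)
The plan is to reduce to the basic compatibility of proper pushforward with equivariant structures, i.e. base change along the action maps. First I make precise what $f_*(E\otimes S)$ is and what $E\otimes f_*(S)$ is. Since $f$ is $G$-equivariant, $f_*S$ carries a $K$-equivariant structure (here $K=H$ in the notation of Definition \ref{ECompDef}) with isomorphisms $f_*\phi_h: h_*f_*S = f_*h_*S\to f_*S$. Similarly, for each coset representative $g_i$ we have $f_*\theta_i: g_{i*}f_*S = f_*g_{i*}S\to f_*S$, and we set $\theta_g^{f}:=f_*\theta_g$. Here I use $f\circ g = g\circ f$, so that $g_*f_*=f_*g_*$ canonically.

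The first point to check is that $E\otimes f_*S$ actually makes sense in the framework of Definition \ref{ECompDef}. That definition assumes the isomorphisms $\theta_i$ are unique up to scalar, and that assumption may fail for $f_*S$. To handle this, I will avoid recomputing the cocycle from scratch. Instead, I define the structure on $E\otimes f_*S$ directly by $g\cdot(-)=a_i\cdot(-)\otimes f_*\theta_g$. The $\alpha$ attached to these chosen maps is the same cocycle, because applying the functor $f_*$ to the identity $\theta_g\circ g_*\theta_{g'}=\alpha(g,g')^{-1}\theta_{gg'}$ gives the same identity for $f_*\theta$. Hence the cocycle condition, and thus the equivariant structure, holds on $E\otimes f_*S$ for exactly the same reason as on $E\otimes S$. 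This is the interpretation of $E\otimes f_*(S)$ I will use, and it is the main subtle point: the statement should be read with these transported $\theta$'s.

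Next I compare the two structures. As an underlying sheaf, $f_*(E\otimes S)\cong E\otimes f_*S$ because $E$ is a finite-dimensional vector space and $f_*$ is additive. It remains to check that the $G$-equivariant structure of $f_*(E\otimes S)$ is the standard one induced from pushforward: the action of $g$ is $f_*$ of the action map $g_*(E\otimes S)\to E\otimes S$, precomposed with $g_*f_*\cong f_*g_*$. Since $f_*$ is $k$-linear, this equals $f_*(a_i\cdot(-)\otimes\theta_g)=a_i\cdot(-)\otimes f_*\theta_g$, which is precisely the structure defined above. The cocycle compatibility of the equivariant structure is then automatic, because it holds before pushforward and $f_*$ is a functor compatible with the identifications $g_*f_*=f_*g_*$. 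These identifications satisfy the needed associativity since they come from strict equalities of morphisms $f\circ g=g\circ f$.

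Properness is used only so that $f_*$ preserves coherence, so that both sides lie in the category of equivariant coherent sheaves, and, in the derived setting, so that $Rf_*$ commutes with the same identifications. There, $Rf_*$ is additive and $k$-linear, so the identical argument works in $D^b\text{Coh}$. I expect no real obstacle beyond the well-definedness issue above.
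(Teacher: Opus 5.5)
Your proposal is correct and follows the paper's approach. The paper's proof is the same direct check that the underlying sheaves and equivariant structures agree, with properness used only to keep the pushforward coherent. The subtlety you flag is exactly the content of the remark following the lemma: $f_*S$ need not have isomorphisms $g_{i*}f_*S\cong f_*S$ unique up to scalar, so one uses the transported maps $f_*\theta_g$, which inherit the same cocycle $\alpha$.
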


\begin{proof}

Follows from direct computation by checking both sheaves have the same sections and equivariant structure.  Properness is assumed to ensure the pushforward is coherent.

\end{proof}

\begin{rem}

The sheaf $f_*(S)$ may not satisfy the property that there is a unique isomorphism between $g_{i*}f_*(S)$ and $f_*(S)$ up to scalar multiple, as $S$ was assumed to have.  But that property, which was used to define the cocycle $\alpha$, is not necessary because $f_*(S)$ already inherits $\alpha$ from $S$ making the analogous diagram involving the compositions of the isomorphisms $f_*(\theta_i)$ commute.
    
\end{rem}

We finally note that although this section was written in the context of the abelian category of coherent sheaves, analogous results hold for certain objects in the derived category of coherent sheaves.
Namely, if $S$ is a complex in $\text{Coh}(X/H)$ (the derived category) such that $\pi_i(\text{Aut}(S))=\text{Ext}^{-i}(S,S)=0$ for all $i>0$ then the results of this section hold verbatim for $S$.

\subsection{Quasi-Exceptional Sets on the Nilpotent Cone for some Disconnected Groups}\label{SpringerSect}

In this subsection we extend some of the notions of \cite{BezQExcep} to the setting of disconnected reductive groups appearing in the study of centralizers of L-parameters.

We begin by reviewing the notion of a quasi-exceptional set, following \cite[Section 2]{BezQExcep}.

\begin{defn}
If $\textbf{C}$ is a stable $\infty$ or triangulated category and $\{\nabla_i\}$ is a set of objects indexed by a totally ordered set $I$, then $\{\nabla_i\}$ is \textit{quasi-exceptional} if $\text{Hom}(\nabla_i, \nabla_j) = 0$ for all $i<j$, $\text{Hom}^{<0}(\nabla_i,\nabla_i) = 0$, and $\text{Hom}^{0}(\nabla_i,\nabla_i)$ is a division algebra for all $i$.  
It is furthermore \textit{exceptional} if $\textbf{C}$ is $k$ linear for a field $k$ and $\text{End}(\nabla_i)=k$ for all $i$.  
A quasi-exceptional set $\{\nabla_i\}$ is \textit{dualizable} if there exists a set of objects $\{\Delta_i\}$ indexed by $I$ such that $\text{Hom}(\Delta_i, \nabla_j) = 0$ whenever $i>j$ and $\Delta_i\cong\nabla_i\ \text{mod}\ \textbf{C}^{<i}$ for all $i$, where $\textbf{C}^{<i}$ is the stable subcategory of $\textbf{C}$ generated by $\{\nabla_j:j < i\}$.
\end{defn}

One of the primary motivations for the study of dualizable quasi-exceptional sets is that they can be used to construct exotic t-structures:

\begin{prop}\cite[Proposition 2.2.1]{BezQExcep}\label{ExoticTDef}
Let $\{\nabla_i\}$ be a dualizable quasi-exceptional set generating \textbf{\textup{C}}.  
Then the two full subcategories $\textbf{\textup{C}}^{\geq 0}$, $\textbf{\textup{C}}^{\leq 0}$ generated under extensions by $\{\nabla_i[d]: i\in I, d\leq 0\}$, $\{\Delta_i[d]: i\in I, d\geq 0\}$ respectively form a t-structure on $\textbf{\textup{C}}$.  This is moreover the unique t-structure such that $\nabla_i\in \textbf{\textup{C}}^{\geq 0}, \Delta_i\in \textbf{\textup{C}}^{\leq 0}$ for all $i$.
\end{prop}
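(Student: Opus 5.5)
The plan is to recover the t-structure by constructing, inductively along the totally ordered index set $I$, a recollement-type gluing of the standard t-structures on the pieces $\langle\nabla_i\rangle$, following \cite{BezQExcep}. For simplicity I first treat $I$ finite, and pass to general $I$ by a filtered-union argument: every object lies in the stable subcategory generated by finitely many $\nabla_i$, and the constructions for $J\subset J'$ are compatible. For each $i$, the subcategory $\langle\nabla_i\rangle$ generated by $\nabla_i$ carries a natural t-structure, since $\text{Hom}^{<0}(\nabla_i,\nabla_i)=0$ and $\text{End}(\nabla_i)$ is a division algebra. This makes $\langle \nabla_i\rangle$ equivalent to the bounded derived category of vector spaces over $D_i=\text{End}(\nabla_i)$, with nonnegative part generated by $\nabla_i[d]$, $d\le 0$.

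The first step is to show that $\textbf{C}^{<i}$ is admissible in $\textbf{C}^{\le i}$. The vanishing $\text{Hom}(\nabla_j,\nabla_i)=0$ for $j<i$ (all degrees, as the statement is read in the stable sense) gives that $\nabla_i$ lies in the right orthogonal $(\textbf{C}^{<i})^\perp$. Dually, $\Delta_i\cong\nabla_i \bmod \textbf{C}^{<i}$ together with $\text{Hom}(\Delta_i,\nabla_j)=0$ for $j<i$ gives $\Delta_i\in {}^\perp(\textbf{C}^{<i})$. Hence $\textbf{C}^{\le i}$ has semiorthogonal decompositions $\langle \textbf{C}^{<i},\nabla_i\rangle$ and $\langle\Delta_i,\textbf{C}^{<i}\rangle$. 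Both the left and right adjoints to the inclusion $\textbf{C}^{<i}\hookrightarrow\textbf{C}^{\le i}$ therefore exist, and the quotient $\textbf{C}^{\le i}/\textbf{C}^{<i}$ is identified with $\langle\nabla_i\rangle$. This is a recollement situation $\textbf{C}^{<i}\to\textbf{C}^{\le i}\to\langle\nabla_i\rangle$, in which $\nabla_i$ is the $*$-extension and $\Delta_i$ the $!$-extension of the generator.

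The second step applies the BBD gluing theorem inductively. Suppose $\textbf{C}^{<i}$ already has the t-structure generated by the $\nabla_j[d]$ ($d\le0$) and $\Delta_j[d]$ ($d\ge0$) for $j<i$. Glue it with the standard t-structure on $\langle\nabla_i\rangle$. In the glued t-structure, $X\in\textbf{C}^{\le0}$ iff $j^*X\in\le0$ and $i^*X\in\le0$, and $X\in\textbf{C}^{\ge0}$ iff $j^*X\ge0$ and $i^!X\ge0$. One checks that $\Delta_i$ lands in $\le0$ because $i^*\Delta_i=0$, and $\nabla_i$ lands in $\ge0$ because $i^!\nabla_i=0$. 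Gluing standard arguments then show the aisles are exactly the extension-closures of the stated generators.

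The main obstacle is this identification of the aisles with the extension-closures. I would handle it using the triangles $i_*i^!X\to X\to j_*j^*X$ and $j_!j^*X\to X\to i_*i^*X$ together with induction.

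Uniqueness follows because any t-structure containing these objects in the respective halves must contain the extension closures. Since those closures already form a t-structure, and the aisle determines its coaisle by orthogonality, the two t-structures coincide.
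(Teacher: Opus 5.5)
Your architecture (admissibility of $\textbf{C}^{<i}$ via $\nabla_i\in(\textbf{C}^{<i})^\perp$ and $\Delta_i\in{}^\perp(\textbf{C}^{<i})$, the recollement with $\nabla_i,\Delta_i$ as $*$- and $!$-extensions, BBD gluing, recovering the aisles from the two gluing triangles, and uniqueness by orthogonality) is the standard one, and those steps are sound. The gap is the base case. Your claim that $\langle\nabla_i\rangle$ is equivalent to $D^b$ of vector spaces over $D_i=\mathrm{End}^0(\nabla_i)$ is false for quasi-exceptional sets. The definition only requires $\mathrm{Hom}^{<0}(\nabla_i,\nabla_i)=0$, so positive self-extensions are allowed; their vanishing, together with $\mathrm{End}^0=k$, is precisely the exceptional case. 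Such an equivalence would force $\mathrm{Hom}^{>0}(\nabla_i,\nabla_i)=0$.

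For a counterexample, take $k=k[t]/(t)$ in the thick subcategory of $D^b(k[t]\text{-mod})$ it generates. It is a dualizable quasi-exceptional set with $\Delta=\nabla=k$, but $\mathrm{Hom}^1(k,k)=k$. The t-structure of the proposition is the standard one, and its heart contains the non-split extension $k[t]/(t^2)$.

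So you still owe the real base case. For one object $N$ with $\mathrm{Hom}^{<0}(N,N)=0$ and $\mathrm{End}^0(N)$ a division ring, you must show that the extension closures of $\{N[d]\}_{d\ge0}$ and $\{N[d]\}_{d\le0}$ form a t-structure on $\langle N\rangle$. Its heart is the extension closure $\mathcal{H}$ of $N$, a length category whose only simple object is $N$.

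Hom-vanishing is immediate; the content is that any iterated extension of shifts of $N$ can be rearranged with decreasing shifts. An extension of $H_2[b]$ by $H_1[a]$, with $H_1,H_2\in\mathcal{H}$ and $b\ge a+2$, splits for degree reasons. So the only real case is $b=a+1$, i.e.\ $\mathrm{cone}(h)[a]$ for a morphism $h\colon H_2\to H_1$ in $\mathcal{H}$, and you need $\mathrm{cone}(h)\in\mathcal{H}[1]*\mathcal{H}$. This follows by induction on the lengths of $H_1,H_2$, using that every nonzero endomorphism of $N$ is invertible (the usual simple-minded-collection argument). This is exactly where the quasi-exceptional hypothesis must be confronted. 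Your false equivalence also made the identification of the aisles on $\langle\nabla_i\rangle$ with extension closures look automatic, which it is not.

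A smaller issue is the passage to infinite $I$. For an arbitrary finite $J\subset I$, the duals of $\{\nabla_j\}_{j\in J}$ inside $\textbf{C}_J=\langle\nabla_j: j\in J\rangle$ are taken modulo $\langle\nabla_k: k\in J,\ k<j\rangle$ rather than $\textbf{C}^{<j}$. When $J$ is not a lower set they therefore differ from the $\Delta_j$, and $\textbf{C}_J\hookrightarrow\textbf{C}_{J'}$ need not be t-exact, so your compatibility claim fails as stated. Compatibility does hold along lower sets, since the closed inclusion is t-exact for a glued t-structure. Your argument therefore works when finite lower sets exhaust $I$ (e.g.\ every $\{j:j<i\}$ is finite), or, by transfinite induction with unions at limit stages, when $I$ is well-ordered. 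For a general total order the reduction does not go through as written.
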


In \cite{BezQExcep}, \cite{BezExotic}, Bezrukavnikov studies certain quasi-exceptional sets and their corresponding t-structures on $\text{Coh}(\mathcal{N}/G)$ and $\text{Coh}(\tilde{\mathcal{N}}/G)$, where $G$ is a connected reductive group and $\mathcal{N}$ is the nilpotent cone of its Lie algebra.
The goal of this section is to extend some of his results to the types of disconnected reductive groups which arise as the centralizers of generic L-parameters, using the results of the previous section.

To set notation, let $H$ be a reductive group over an algebraically closed field $k$ of characteristic $0$ with a fixed maximal torus $S$.
Let $T$ be a subtorus of $S$, and fix a strictly dominant (with respect to the Borel chosen in advance) cocharacter $\nu$ of $T$.
Let $Q$ be the repelling parabolic subgroup of $\nu$ and let $L$ be the centralizer of $\nu$.  
Let $G$ be a (possibly disconnected) reductive subgroup of $H$ with component group $A$ such that $T$ is a maximal torus of $G^\circ$.  
Let $M = G\cap L$ and $P=Q\cap G$.  
Note the identity component of $M$ is $T$, $Z(M)^\circ = T$, and $P^\circ$ is a Borel of $G^\circ$ with Levi $T$.  
We fix a pinning of $G^\circ$ with opposite Borel $P^\circ$ and maximal torus $T$.

Since the conjugation action of $M$ on $T$ is the identity, $M$ must act on $G^\circ$ by inner automorphisms which fix $T$.
Hence for all $m$ in $M$, there exists an element $t_m\in T$ such that the action of $M$ on $G^\circ$ is equal to the action of $t_m$ on $G^\circ$ by conjugation.  
In particular $mt_m^{-1}$ acts trivially on $G^\circ$.
It follows that every connected component of $M$ contains an element which acts trivially on $G^\circ$.  
Conversely if $g\in G$ centralizes $G^\circ$ then it centralizes $\nu$, and hence must lie in $M$.
We conclude that $A_M$ is exactly the kernel of the map from $A$ to $\text{Out}(G^\circ)$.
 
For any weight $\lambda$ of $T$ let $A^\lambda$ be the subgroup of $A$ fixing $\lambda$.
Note $A_M\subset A^\lambda$ since it acts trivially on $T$.
Let $\mathcal{N}$ be the nilpotent cone of the  Lie algebra of $G$, $U$ be the unipotent radical of $P^\circ$, and let $\mathfrak{u}$ be its Lie algebra.
Since $U$ is the kernel of the map from $P$ to $M$, $P$ and $M$ have the same component group.

The standard example of this setup to keep in mind for later applications consists of taking $L$ to be the minimal Levi of $H$ containing the image of some semisimple generic $L$-parameter $\phi$, $G$ the centralizer of $\phi$ in $H$, $M$ the centralizer of $\phi$ in $L$, and $P$ the centralizer of $\phi$ in $Q$.
\newline

Define the stack $\tilde{\mathcal{N}}/G:=G\times^P\mathfrak{u}/G\cong\mathfrak{u}/P$.
Our overarching goal in this section is to endow the derived categories of coherent sheaves on $\tilde{\mathcal{N}}/G$ and $\mathcal{N}/G$ with respective exotic and perverse coherent t-structures, as in the connected setting \cite{BezQExcep}, \cite{BezExotic}.
We study $\tilde{\mathcal{N}}/G$ first.  
In \cite{BezExotic} a dualizable exceptional set $\{\nabla^{\mathrm{ex}}_\lambda\}$ on $\tilde{\mathcal{N}}/G^\circ$ ($\tilde{\mathcal{N}}/G^\circ$ by definition being $G^\circ\times^{P^\circ}\mathfrak{u}/G^\circ\cong\mathfrak{u}/P^\circ$) is constructed, indexed by weights (not necessarily dominant) of $T$.  
Its dual is denoted $\{\Delta^{\mathrm{ex}}_\lambda\}$.
He constructs $\{\nabla^{\mathrm{ex}}_\lambda\}$ by first proving $\{\mathcal{O}(\lambda)\}$ is a quasi-exceptional set with respect to any total order compatible with the usual partial order on weights \cite[Proposition 2.3.6]{BezExotic}, where $\mathcal{O}(\lambda)$ is the pullback to $\tilde{\mathcal{N}}/G^\circ$ of the $G^\circ$ equivariant line bundle associated to $\lambda$ on $G^\circ/P^\circ$ via Borel-Weil-Bott.
He then uses a process called mutation to construct $\{\nabla^{\mathrm{ex}}_\lambda\}$ from $\{\mathcal{O}(\lambda)\}$, changing the order to any compatible with the Bruhat order on weights \cite[Section 2.3]{BezExotic}.
We extend his construction to $\tilde{\mathcal{N}}/G$.

To do so, let $\tau$ be an irreducible representation of $M$, and let $\mathcal{O}(\tau)\in\mathrm{Coh}(\tilde{\mathcal{N}}/G)$ be $\eta^*\tau$ where $\eta$ is defined by the diagram 

\[
\begin{tikzcd}
  \mathfrak{u}/P \arrow[r] \arrow[rr, "\eta", bend left=35] & */P \arrow[r,] & */M
\end{tikzcd}
\]

By Theorem \ref{DiscRedClass} there is a character $\lambda$ of $T$ and an irreducible representation $E$ of $k^\alpha[A_M]$ such that $\tau\cong E\otimes \lambda$.  
Using the notation of Section \ref{DiscSheaves} note that $A_M^\lambda=A_M$ for all $\lambda$ since $M^\circ$ is central in $M$, and consequently $\mathcal{O}(\tau)\cong E_\tau\otimes\mathcal{O}(\lambda)$.
We define $\nabla^{\text{ex}}_\tau\in\text{Coh}(\tilde{\mathcal{N}}/G)\cong \text{Coh}(\mathfrak{u}/P)$ to be $E_\tau\otimes \nabla^{\text{ex}}_\lambda$, and $\Delta^{\text{ex}}_\tau$ to be $E_\tau\otimes \Delta^{\text{ex}}_\lambda$.
Here we are using the fact that $\iota(a)_*\nabla^{\text{ex}}_\lambda\cong \nabla^{\text{ex}}_\lambda$ for all $a\in A_M$ and the fact that the isomorphism $t_*\nabla^{\text{ex}}_\lambda\cong \nabla^{\text{ex}}_\lambda$ is given by multiplication by $\lambda(t)$ for all $t\in T$, each of which follow from the identical statements for $\mathcal{O}(\lambda)$ and the definition of $\nabla^{\text{ex}}_\lambda$ as a certain $k$-linear functor applied to $\mathcal{O}(\lambda)$.

Recall the preorder $\leq$ on irreducible representations of $M$ defined in Section \ref{DiscRedGp}.
If $\leq^b$ is the Bruhat order on weights of $T$ from \cite[Section 2.3]{BezExotic}, extend $\leq^b$ to a preorder on irreducible representations of $M$ by defining $\tau\leq^b\tau'$ whenever $\lambda\leq^b\lambda'$.

\begin{lem}\label{O(tau)vanishing}
i) If $\tau<\tau'$ then $\mathrm{Hom}(\mathcal{O}(\tau),\mathcal{O}(\tau')) = 0$
    
ii)  If $\tau\cong\tau'$ then $\mathrm{Hom}(\mathcal{O}(\tau),\mathcal{O}(\tau))\cong k$ 

iii) If $\tau\sim\tau'$ but $\tau\ncong\tau'$ then $\mathrm{Hom}(\mathcal{O}(\tau),\mathcal{O}(\tau')) = 0$

iv)  The same properties hold with $\nabla^{\textup{ex}}_\tau$ and $\nabla^{\textup{ex}}_{\tau'}$ in place of $\mathcal{O}(\tau)$ and $ \mathcal{O}(\tau')$.

In particular for any total order on the set of irreducible representations of $M$ compatible with $\leq$ resp. $\leq^b$, $\{\mathcal{O}(\tau)\}$ resp. $\{\nabla^{\textup{ex}}_\tau\}$ are exceptional sets.
The dual of $\{\nabla^{\textup{ex}}_\tau\}$ is $\{\Delta^{\textup{ex}}_\tau\}$.
\end{lem}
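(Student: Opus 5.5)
The plan is to reduce every Hom computation on $\mathfrak{u}/P$ to one on $\mathfrak{u}/P^\circ$, where Bezrukavnikov's results for the connected group apply, and then to take invariants under the finite group $A_M=P/P^\circ$. Since $P^\circ\trianglelefteq P$ has finite index, for $P$-equivariant complexes $\mathcal{F},\mathcal{G}$ we have
$$\mathrm{Hom}_{\mathfrak{u}/P}(\mathcal{F},\mathcal{G})\cong \mathrm{Hom}_{\mathfrak{u}/P^\circ}(\mathcal{F},\mathcal{G})^{A_M},$$
because taking invariants of a finite group is exact in characteristic $0$. Write $\tau\cong E_\tau\otimes\lambda$ and $\tau'\cong E_{\tau'}\otimes\lambda'$. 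After forgetting down to $P^\circ$, we get $\mathcal{O}(\tau)\cong E_\tau\otimes\mathcal{O}(\lambda)$, which is $\dim E_\tau$ copies of $\mathcal{O}(\lambda)$. Hence
$$\mathrm{Hom}_{P^\circ}(\mathcal{O}(\tau),\mathcal{O}(\tau'))\cong \mathrm{Hom}_k(E_\tau,E_{\tau'})\otimes \mathrm{Hom}_{P^\circ}(\mathcal{O}(\lambda),\mathcal{O}(\lambda')).$$
One point needs checking here: conjugation by elements of $M$ acts on $\mathfrak{u}/P^\circ$ and on $T$, but $A_M$ acts trivially on $T$. So $\lambda$ is fixed, and the preorder on irreducible representations of $M$ is simply the order on $\lambda$.

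For (i), $\tau<\tau'$ means $\lambda<\lambda'$. Then $\mathrm{Hom}^\bullet_{P^\circ}(\mathcal{O}(\lambda),\mathcal{O}(\lambda'))=0$ by \cite[Proposition 2.3.6]{BezExotic}, and its $A_M$-invariants vanish as well. For (ii) and (iii), $\tau\sim\tau'$ forces $\lambda=\lambda'$ (again because $A_M$ fixes weights). By exceptionality in the connected case, $\mathrm{Hom}_{P^\circ}(\mathcal{O}(\lambda),\mathcal{O}(\lambda))=k$, concentrated in degree $0$, and the $T$-scalars act compatibly. So the Hom space above becomes $\mathrm{Hom}_k(E_\tau,E_{\tau'})$. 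By Definition \ref{ECompDef}, the $A_M$-action on it is the action twisted through $\theta_g$. The cocycle contributions $\alpha$ cancel between source and target, since both carry the same cocycle coming from $\mathcal{O}(\lambda)$. The invariants are therefore $\mathrm{Hom}_{k^\alpha[A_M]}(E_\tau,E_{\tau'})$, and Schur's lemma gives $k$ or $0$, as required.

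The step I expect to be the main obstacle is verifying that the $A_M$-action on $\mathrm{Hom}_{P^\circ}(\mathcal{O}(\lambda),\mathcal{O}(\lambda))=k$ is trivial once the $\theta$-twist is included. I would check this directly from the construction: $\theta_g$ is the unique-up-to-scalar isomorphism used to define $\alpha$, and conjugating the identity endomorphism by $\theta_g$ returns the identity.

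For (iv) the same argument applies verbatim with $\nabla^{\mathrm{ex}}_\lambda$ in place of $\mathcal{O}(\lambda)$, using the Bruhat order. The needed inputs from \cite[Section 2.3]{BezExotic} are three facts about $\{\nabla^{\mathrm{ex}}_\lambda\}$: it is exceptional on $\mathfrak{u}/P^\circ$ for orders compatible with $\leq^b$; $\iota(a)_*\nabla^{\mathrm{ex}}_\lambda\cong\nabla^{\mathrm{ex}}_\lambda$; and $T$ acts through $\lambda$, as noted before the lemma. Quasi-exceptionality also requires $\mathrm{Hom}^{<0}$ vanishing, which descends from the connected case in the same way.

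Duality follows by the same method. We need $\mathrm{Hom}(\Delta^{\mathrm{ex}}_\tau,\nabla^{\mathrm{ex}}_{\tau'})=0$ for $\tau>^b\tau'$, and this reduces to Bezrukavnikov's vanishing after taking invariants. We also need $\Delta^{\mathrm{ex}}_\tau\cong\nabla^{\mathrm{ex}}_\tau$ modulo $\mathbf{C}^{<\tau}$. Tensoring with $E_\tau$ preserves the relevant subcategories, because $E_\tau\otimes\nabla^{\mathrm{ex}}_\mu$ decomposes into a sum of $\nabla^{\mathrm{ex}}_{\tau''}$ with $\tau''$ having weight $\mu$. So the cone of the comparison map, which is $E_\tau\otimes(\text{cone in the connected case})$, lies in $\mathbf{C}^{<\tau}$.
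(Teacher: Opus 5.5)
Your proposal is correct and follows essentially the same route as the paper: pass to the finite \'etale cover $\mathfrak{u}/P^\circ\to\mathfrak{u}/P$, apply Bezrukavnikov's results for $\mathcal{O}(\lambda)$ and $\nabla^{\mathrm{ex}}_\lambda$ there, and identify what survives with $\mathrm{Hom}_{k^\alpha[A_M]}(E_\tau,E_{\tau'})$ via Schur. The paper phrases (i)--(iv) through $\mathcal{O}(\tau)$ being a direct summand of $\tilde{p}_*\mathcal{O}(\lambda)$, which is the adjoint form of your invariants formula, and it uses exactly your invariants formula for the duality statement.

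One small point to tighten in the duality step. $E_\tau\otimes\nabla^{\mathrm{ex}}_\mu$ is only a $P$-equivariant object when $\mu$ and $\lambda$ define the same cocycle. You can arrange this: the cone lies in the $\lambda|_{Z(G^\circ)}$-isotypic block of $\mathrm{Coh}(\mathfrak{u}/P^\circ)$, so only weights $\mu$ with $\mu|_{Z(G^\circ)}=\lambda|_{Z(G^\circ)}$ are needed.
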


\noindent We remind the reader that by convention $\text{Hom}$ indicates \textit{derived} hom.

\begin{proof}

Let $\tilde{p}$ be the projection from $\mathfrak{u}/P^\circ$ to $\mathfrak{u}/P$.
By \cite[Lemma 2.3.5]{BezExotic} $\text{Hom}(\mathcal{O}(\lambda), \mathcal{O}(\lambda'))=0$ when $\lambda<\lambda'$, and $k$ when $\lambda = \lambda'$.
Since $\tilde{p}$ is finite, it follows that $\text{Hom}^i(\tilde{p}_*\mathcal{O}(\lambda), \tilde{p}_*\mathcal{O}(\lambda'))=0$ when $i\neq0$.
Since $\mathcal{O}(\tau)$ is a direct summand of $ \tilde{p}_*\mathcal{O}(\lambda)$ and $\mathcal{O}(\tau')$ is a direct summand of $\tilde{p}_*\mathcal{O}(\lambda') $ we conclude $ \text{Hom}(\mathcal{O}(\tau), \mathcal{O}(\tau'))=0$ when $\lambda<\lambda'$.  
When $\lambda=\lambda'$, $\text{Hom}^i(\mathcal{O}(\tau), \mathcal{O}(\tau'))=0$ for $i\neq0$, and $\text{Hom}^0(\mathcal{O}(\tau), \mathcal{O}(\tau'))= \text{Hom}^0(E_\tau\otimes \mathcal{O}(\lambda), E_{\tau'}\otimes\mathcal{O}(\lambda'))\cong k$ when $\tau\cong\tau'$, and $0$ when $\tau\ncong\tau'$.
An analogous argument holds for $\{\nabla^{\text{ex}}_\tau\}$.

For the last claim, since $\{\Delta^{\text{ex}}_{\lambda}\}$ is dual to $\{\nabla^{\text{ex}}_{\lambda}\}$ in $\text{Coh}(\mathfrak{u}/P^\circ)$ it follows as in the previous paragraph that $\text{Hom}(\Delta^{\text{ex}}_{\tau}, \nabla^{\text{ex}}_{\tau'})=0$ when $\lambda>^b\lambda'$ and $\Delta^{\text{ex}}_{\tau}\cong \nabla^{\text{ex}}_{\tau}$ mod $\text{Coh}^{<^b\lambda}(\mathfrak{u}/P^\circ)$ for all $\tau$.
And since $\text{Hom}(E_\tau\otimes\nabla^{\text{ex}}_\lambda, E_{\tau'}\otimes\nabla^{\text{ex}}_\lambda) = 0$ when $\tau\ncong\tau'$, in fact $\Delta^{\text{ex}}_{\tau}\cong \nabla^{\text{ex}}_{\tau}$ mod $\text{Coh}^{<^b\tau}(\mathfrak{u}/P^\circ)$.
To finish note that since $M^\circ$ is central in $M$, for each $a\in A_M$ there is an element $m_a\in a$ such that $m_a$ acts trivially on $\mathfrak{u}$.
Therefore we can take $m_{a*}\nabla^{\text{ex}}_\tau\cong \nabla^{\text{ex}}_\tau$ and $m_{a*}\Delta^{\text{ex}}_\tau\cong \Delta^{\text{ex}}_\tau$ via the identity.
For any $i$, $\text{Hom}^i(E_\tau\otimes \Delta^{\text{ex}}_\lambda, E_{\tau'}\otimes \nabla^{\text{ex}}_\lambda)\cong (\text{Hom}(E_\tau, E_{\tau'})\otimes \text{Hom}^i(\Delta^{\text{ex}}_\lambda, \nabla^{\text{ex}}_\lambda))^{A_M}$.
But $A_M$ acts on $\text{Hom}^i(\Delta^{\text{ex}}_\lambda, \nabla^{\text{ex}}_\lambda)$ trivially, so $\text{Hom}^i(E_\tau\otimes \Delta^{\text{ex}}_\lambda, E_{\tau'}\otimes \nabla^{\text{ex}}_\lambda)=0$ if $E_\tau\ncong E_{\tau'}$.
Therefore $\text{Hom}(\Delta^{\text{ex}}_\tau, \nabla^{\text{ex}}_{\tau'})=0$ if $\tau>^b\tau'$.

\end{proof}

We now shift focus to $\mathcal{N}/G$ and the perverse-coherent t-structure, following \cite{BezQExcep} where the analogous sheaves were studied in the connected setting.
Let $\rho$ be an irreducible representation of $G$, $G^\lambda$ be the preimage in $G$ of $A^\lambda$, and $G_M$ be the preimage in $G$ of $A_M$.
Fix choices $\iota(a)$ as in Section \ref{DiscRedGp} (so in particular conjugation by $\iota(a)$ fixes the chosen pinning on $G^\circ$ for all $a$) and let $\alpha$ be the associated 2-cocycle of $A$.
By Theorem \ref{DiscRedClass} $\rho$ is isomorphic to $L(\lambda, E_\rho):=\mathrm{Ind}_{G^\lambda}^G(E_\rho\otimes L(\lambda))$ for a dominant weight $\lambda$ and irreducible representation $E_\rho$ of $k^\alpha[A^\lambda]$, well defined up to conjugation by $A$.
Let $\tau\cong E_\tau \otimes \lambda$ be an irreducible subrepresentation of $\rho|_M$.

Let $$G^\circ\times^{P^\circ}\mathfrak{u}\to\mathcal{N}$$ be the Springer resolution sending $(g,u)\mapsto g\cdot u$.  
This map is $G$-equivariant, and we let $\mu^\circ$ denote the map $ \tilde{\mathcal{N}}/G^\circ\cong G^\circ\times^{P^\circ}\mathfrak{u}/G^\circ\to\mathcal{N}/G^\circ$.
For a dominant weight $\lambda$ define $\overline{\nabla}_\lambda:=\mu^\circ_*\mathcal{O}(\lambda)$ and $\overline{\Delta}_\lambda:=\mu^\circ_*\mathcal{O}(w_0(\lambda))$ where $w_0$ is the longest length element of the Weyl group of $G^\circ$.
We remind the reader that all pushforwards here are derived.
By \cite[Theorem 1]{BezQExcep} the set $\{\overline{\nabla}_\lambda\}$ is quasi-exceptional in $\text{Coh}(\mathcal{N}/G^\circ)$ with dual $\{\overline{\Delta}_\lambda\}$.

In the disconnected setting we have the following diagram of maps.

\[
\begin{tikzcd}
\mathfrak{u}\times^PG_M/G_M \arrow[r, "\cong"] \arrow[d, "\mu_M"'] & \mathfrak{u}\times^PG^\lambda/G^\lambda \arrow[r, "\cong"] \arrow[d, "\mu^\lambda"] & \mathfrak{u}\times^PG/G  \arrow[d, "\mu"] \\
\mathcal{N}/G_M \arrow[r, "p_M^\lambda"'] \arrow[rr, "p_M"', bend right=35]                   & \mathcal{N}/G^\lambda \arrow[r, "p^\lambda"']                    & \mathcal{N}/G
\end{tikzcd}
\]

From now on we refer to the stack in the top row as $\tilde{\mathcal{N}}/G$.
For every $a\in A$, we give $\iota(a)_*\overline{\nabla}_\lambda$ a $G^\circ$-equivariant structure as in Section \ref{DiscSheaves}.

\begin{lem}

For all $a\in A^\lambda$, $\iota(a)_*\overline{\nabla}_\lambda\cong \overline{\nabla}_\lambda$ and $\iota(a)_*\overline{\Delta}_\lambda\cong \overline{\Delta}_\lambda$ as $G^\circ$-equivariant sheaves.
    
\end{lem}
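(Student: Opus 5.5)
We reduce to a statement about line bundles on the Springer resolution and then push forward. Fix $a\in A^\lambda$ and write $g=\iota(a)$. Conjugation by $g$ preserves the chosen pinning of $G^\circ$, so it preserves $T$ and the Borel $P^\circ$ (the opposite of the pinned Borel), and hence also $U$ and $\mathfrak{u}$. It therefore induces an automorphism $c_g$ of $G^\circ\times^{P^\circ}\mathfrak{u}$, $(h,x)\mapsto(ghg^{-1},\mathrm{Ad}(g)x)$. This map is equivariant for the twisted $G^\circ$-action $h\mapsto ghg^{-1}$, and it commutes with the Springer map, since $ghg^{-1}\cdot \mathrm{Ad}(g)x=\mathrm{Ad}(g)(h\cdot x)$. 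Thus $c_g$ and the action of $g$ on $\mathcal N$ form a commutative square over $\mu^\circ$.

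The pushforward with the transported $G^\circ$-equivariant structure of Section \ref{DiscSheaves} is compatible with proper pushforward by Lemma \ref{ECompPush} (or simply by base change along this square). Hence
\[\iota(a)_*\overline{\nabla}_\lambda=\iota(a)_*\mu^\circ_*\mathcal O(\lambda)\cong\mu^\circ_*c_{g*}\mathcal O(\lambda),\]
and it suffices to show $c_{g*}\mathcal O(\lambda)\cong\mathcal O(\lambda)$ as $G^\circ$-equivariant sheaves on $\tilde{\mathcal N}/G^\circ\cong\mathfrak u/P^\circ$.

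We check this through the description $\mathfrak{u}/P^\circ$. Here $\mathcal{O}(\lambda)$ is the pullback of the character $\lambda$ along $\mathfrak u/P^\circ\to */P^\circ\to */T$. The transported sheaf $c_{g*}\mathcal O(\lambda)$ is the pullback of the character $\lambda\circ \mathrm{Ad}(g^{-1})|_T$, which by the definition of the $A$-action in Section \ref{DiscRedGp} is ${}^a\lambda$, possibly up to the inverse convention. Since $a\in A^\lambda$ we have ${}^a\lambda=\lambda$, so the line bundles agree. The automorphism group of $\mathcal O(\lambda)$ is $k^\times$, so the isomorphism is unique up to scalar, which matches the setup of Section \ref{DiscSheaves}.

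For $\overline{\Delta}_\lambda=\mu^\circ_*\mathcal O(w_0\lambda)$ the same argument applies once we know ${}^a(w_0\lambda)=w_0\lambda$. The pinning-preserving automorphism $\mathrm{Ad}(g)$ permutes the simple roots and so normalizes the Weyl group action. It also preserves the set of positive roots, so it commutes with the unique longest element: $\mathrm{Ad}(g)w_0\mathrm{Ad}(g)^{-1}=w_0$. Hence ${}^a(w_0\lambda)=w_0({}^a\lambda)=w_0\lambda$.

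The main point of care is bookkeeping rather than mathematics. One must confirm that the $G^\circ$-equivariant structure put on $\iota(a)_*\overline{\nabla}_\lambda$ in Section \ref{DiscSheaves} (twisting by $\phi_{g^{-1}hg}$) matches the structure obtained by transporting along $c_g$. One must also confirm that the direction convention for ${}^a\lambda$ is consistent, though this is harmless because $A^\lambda$ is a subgroup and so is closed under inverses.
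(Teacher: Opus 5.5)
Your proposal is correct and follows the paper's own argument. You identify $\iota(a)_*\mathcal{O}(\lambda)$ with $\mathcal{O}({}^a\lambda)=\mathcal{O}(\lambda)$ (the paper does this via base change along $\eta^\circ$) and then commute $\iota(a)_*$ past $\mu^\circ_*$. Your check that $\mathrm{Ad}(\iota(a))$ commutes with $w_0$, which the $\overline{\Delta}_\lambda$ case needs, is exactly what the paper proves in the following lemma, $A^\lambda=A^{w_0\lambda}$, and you make the "similar argument" for $\overline{\Delta}_\lambda$ explicit rather than leaving it implicit.
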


\begin{proof}

Since $\lambda=\! ^a\lambda = \iota(a)_*\lambda$ on $*/P^\circ$, it follows $\mathcal{O}(\lambda)=\eta^{\circ,*}\lambda\cong\eta^{\circ,*}\iota(a)_*\lambda\cong \iota(a)_*\eta^{\circ,*}\lambda = \iota(a)_*\mathcal{O}(\lambda)$, using smooth base change to swap $\iota(a)_*$ and $\eta^{\circ,*}$.

It follows that $\overline{\nabla}_\lambda = \mu^\circ_*\mathcal{O}(\lambda)\cong\mu^\circ_*\iota(a)_*\mathcal{O}(\lambda)\cong \iota(a)_*\mu^\circ_*\mathcal{O}(\lambda)= \iota(a)_*\overline{\nabla}_\lambda$.  
A similar argument holds for $\overline{\Delta}_\lambda$.
    
\end{proof}

For each $a\in A$, we normalize the isomorphism $\iota(a)_*\overline{\nabla}_\lambda\cong \overline{\nabla}_\lambda$ by taking it to be the pushforward under $\mu^\circ_*$ of the unique isomorphism $\iota(a)_*\mathcal{O}(\lambda)\cong \mathcal{O}(\lambda)$ which restricts to the identity at $0\in\mathfrak{u}$, where uniqueness follows since constants are the only $T$-invariant global sections of $\mathcal{O}(\mathfrak{u})$.
We label this isomorphism $\theta_a^\nabla$.
Define $\theta_a^\Delta:\iota(a)_*\overline{\Delta}_\lambda\tilde\to\ \overline{\Delta}_\lambda$ to be the unique isomorphism such that $\theta_a^\nabla\circ\iota(a)_*f = f\circ\theta_a^\Delta$ for any $f\in\text{Hom}^0(\overline{\Delta}_\lambda,\overline{\nabla}_\lambda)\cong k$.
The uniqueness of $\theta_a^\Delta$ follows because $\text{End}^0(\overline{\Delta}_\lambda)=k$.

\begin{lem}

$A^\lambda = A^{w_0\lambda}$, and $\overline{\nabla}_\lambda$ and $\overline{\Delta}_\lambda$ have the same associated 2-cocycle.
    
\end{lem}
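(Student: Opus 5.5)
The plan is to handle the two assertions separately: the equality $A^\lambda=A^{w_0\lambda}$ is a statement about the action of $A$ on $X^*(T)$, and the equality of cocycles follows formally from how $\theta_a^\Delta$ was defined.

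\emph{First assertion.} By Section \ref{DiscRedGp}, each $a\in A$ acts on $X^*(T)$ through the automorphism of $T$ induced by an element $\iota(a)$ (or $g^\circ g$) whose conjugation preserves the fixed pinning of $G^\circ$. Such an automorphism $\sigma_a$ permutes the simple roots. Hence it normalizes the Weyl group $W$ of $G^\circ$, preserves the length function on $W$, and therefore fixes the unique longest element:
\[
\sigma_a w_0 \sigma_a^{-1}=w_0 .
\]
This gives ${}^a(w_0\lambda)=w_0({}^a\lambda)$ for all $a$ and $\lambda$. Since $w_0$ is an involution, ${}^a\lambda=\lambda$ holds if and only if ${}^a(w_0\lambda)=w_0\lambda$, so $A^\lambda=A^{w_0\lambda}$. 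In particular $\overline{\Delta}_\lambda=\mu^\circ_*\mathcal{O}(w_0\lambda)$ is $\iota(a)_*$-stable exactly for $a\in A^\lambda$, consistent with the previous lemma.

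\emph{Second assertion.} Let $\alpha^\nabla$ and $\alpha^\Delta$ denote the cocycles attached to the normalized isomorphisms $\theta^\nabla_a$ and $\theta^\Delta_a$ as in Section \ref{DiscSheaves}. Fix a nonzero $f\in\mathrm{Hom}^0(\overline{\Delta}_\lambda,\overline{\nabla}_\lambda)\cong k$; such an $f$ exists by the duality statement of \cite[Theorem 1]{BezQExcep}, since $\overline{\Delta}_\lambda\cong\overline{\nabla}_\lambda$ modulo lower objects and $\mathrm{End}^0(\overline{\nabla}_\lambda)=k$. Applying the defining relation $f\circ\theta_a^\Delta=\theta_a^\nabla\circ\iota(a)_*f$ twice gives
\[
f\circ\theta^\Delta_a\circ\iota(a)_*\theta^\Delta_b=\theta^\nabla_a\circ\iota(a)_*\theta^\nabla_b\circ(\iota(a)\iota(b))_*f .
\]
Write $\iota(a)\iota(b)=\iota(ab)z(a,b)$ and use the $G^\circ$-equivariant structures $\phi_{z(a,b)}$. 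Both sheaves carry $G^\circ$-equivariant structures and $f$ is a morphism of $G^\circ$-equivariant sheaves, so $f$ commutes with $\phi_{z(a,b)}$. Combining this with the cocycle identity for $\theta^\nabla$ (the analogue of the first lemma of Section \ref{DiscSheaves}), the right-hand side becomes $\alpha^\nabla(a,b)^{-1}\theta^\nabla_{ab}\circ\iota(ab)_*f$ composed with the appropriate $\phi_z$ term. Using the defining relation for $ab$, this equals $\alpha^\nabla(a,b)^{-1}\, f\circ\theta^\Delta_{ab}\circ(\cdots)$, with the same $\phi_z$ term. Meanwhile, by definition the left-hand side equals $\alpha^\Delta(a,b)^{-1}\, f\circ\theta^\Delta_{ab}\circ(\cdots)$.

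It remains to cancel $f$. Postcomposition with $f$ is injective on the one-dimensional space $\mathrm{Hom}^0((\iota(ab)z)_*\overline{\Delta}_\lambda,\overline{\Delta}_\lambda)\cong\mathrm{End}^0(\overline{\Delta}_\lambda)=k$, because $f\circ c\,\theta=0$ forces $c=0$ when $f\neq 0$. Hence $\alpha^\Delta=\alpha^\nabla$.

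The main point needing care is the bookkeeping of the central elements $z(a,b)$: one must verify that $f$ intertwines the $G^\circ$-equivariant structure maps, which holds precisely because $f$ is a morphism in $\mathrm{Coh}(\mathcal{N}/G^\circ)$. One must also verify that the normalization of $\theta^\nabla$ (identity at $0\in\mathfrak{u}$) does not interfere, which it does not, because $\theta^\Delta$ is defined purely relative to $\theta^\nabla$.
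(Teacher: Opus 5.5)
Your proposal is correct and follows the paper's proof: the first assertion is exactly the paper's argument that pinning-preserving conjugation by $\iota(a)$ fixes $w_0$, so the two actions on characters commute. For the cocycles the paper only says the equality holds by construction, and your computation is a correct unpacking of that remark: it applies the defining relation $f\circ\theta^\Delta_a=\theta^\nabla_a\circ\iota(a)_*f$ twice, uses the $G^\circ$-equivariance of $f$ against $\phi_{z(a,b)}$, and cancels the nonzero $f$.
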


\begin{proof}

Since conjugation by $\iota(a)$ preserves the pinning of $G^\circ$ and $w_0$ is the unique Weyl group element flipping the sign of all roots, conjugation by $\iota(a)$ fixes $w_0$, and hence their actions on characters commute, implying $A^\lambda = A^{w_0\lambda}$.
The matching of cocycles for $\overline{\nabla}_\lambda$ and $\overline{\Delta}_\lambda$ follows by construction.

\end{proof}

The following is one of the central definitions in this paper:

\begin{defn}\label{NablaDef}

For any irreducible representation $\rho$ of $G$ let $$\overline{\nabla}_\rho:= p^\lambda_*(E_\rho\otimes \overline{\nabla}_\lambda) \in \text{Coh}(\mathcal{N}/G)$$
Similarly define $$\overline{\Delta}_\rho:= p^\lambda_*(E_\rho\otimes \overline{\Delta}_\lambda)\in \text{Coh}(\mathcal{N}/G)$$
    
\end{defn}

\begin{rem}

Definition \ref{NablaDef} can be extended to any finite dimensional representation $\rho$ by defining $$\overline{\nabla}_\rho:=\bigoplus\limits_{\rho'}V_{\rho'}\otimes_k \overline{\nabla}_{\rho'}$$ where $$\rho\cong\bigoplus\limits_{\rho'}V_{\rho'}\otimes_k \rho'$$ is the isotypic decomposition of $\rho$.
    
\end{rem}

Since $\!^aE_\rho\otimes \overline{\nabla}_{\!^a\lambda}\cong g_*(E_\rho\otimes\overline{\nabla}_\lambda)$ for any $g$ representing the coset $a$, where the $G^\lambda$-action on the latter sheaf is given by $g_*\phi_{g^{-1}(-)g}$ in the notation of Section \ref{DiscSheaves}, $p^{^a\lambda}_*(\!^aE_\rho\otimes \overline{\nabla}_{\!^a\lambda})\cong p_*^\lambda(E_\rho\otimes\overline{\nabla}_\lambda)$, and so the definition of $\overline{\nabla}_\rho$ is independent of the choice of $\lambda$ within its $A$-orbit, and similarly for $\overline{\Delta}_\rho$.

\begin{lem}\label{SpringResDecomp}
Let $\mathcal{O}(\tau)\cong E_\tau\otimes_k\mathcal{O}(\lambda)$ as above.  
Then $\mu_{M*}\mathcal{O}(\tau)\cong E_\tau\otimes_k \overline{\nabla}_\lambda$ and $\mu^\lambda_*\mathcal{O}(\tau)\cong \textup{Ind}_{k^\alpha[A_M]}^{k^\alpha[A^\lambda]} (E_\tau)\otimes_k \overline{\nabla}_\lambda$.
It follows that $\overline{\nabla}_\rho$ is a direct summand of $\mu_*\mathcal{O}(\tau)$.
The same results hold for $\overline{\Delta}_\rho$.
\end{lem}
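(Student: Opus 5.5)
The plan is to factor $\mu$ through the diagram above, $\mu = p^\lambda\circ\mu^\lambda = p_M\circ\mu_M$ with $p_M = p^\lambda\circ p_M^\lambda$ (all identifications of the top row being the canonical ones), and to compute one step at a time, using Section~\ref{DiscSheaves} for the finite-group bookkeeping.

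\textbf{Step 1: $\mu_{M*}\mathcal{O}(\tau)\cong E_\tau\otimes\overline{\nabla}_\lambda$.} The map $\mu_M:\mathfrak{u}\times^PG_M/G_M\to\mathcal{N}/G_M$ is proper. Its base change along $\mathcal{N}/G^\circ\to\mathcal{N}/G_M$ is $\mu^\circ$, because $G_M = G^\circ M$ and $P\cap G^\circ=P^\circ$, so $G_M\times^P\mathfrak{u}\cong G^\circ\times^{P^\circ}\mathfrak{u}$. On $\tilde{\mathcal{N}}/G_M\cong\mathfrak{u}/P$ we have $\mathcal{O}(\tau)\cong E_\tau\otimes\mathcal{O}(\lambda)$, with the $k^\alpha[A_M]$-equivariant structure of Definition~\ref{ECompDef}. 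Lemma~\ref{ECompPush} (in its derived version, which applies since $\mathrm{Ext}^{<0}(\mathcal{O}(\lambda),\mathcal{O}(\lambda))=0$) then gives $\mu_{M*}(E_\tau\otimes\mathcal{O}(\lambda))\cong E_\tau\otimes\mu_{M*}\mathcal{O}(\lambda)$. The underlying $G^\circ$-sheaf is $\overline{\nabla}_\lambda$ by base change.

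The point to verify here is that the cocycle induced on $\overline{\nabla}_\lambda$ through the normalized isomorphisms $\theta^\nabla_a$ agrees with the cocycle $\alpha$ used for $E_\tau$. This holds because $\theta^\nabla_a$ was defined as $\mu^\circ_*$ of the isomorphism on $\mathcal{O}(\lambda)$ that is normalized at $0\in\mathfrak{u}$. So the cocycle is computed on the fiber at $0$, that is, on the character $\lambda$ of $P$, and this is exactly the cocycle attached to $\lambda$ in Section~\ref{DiscRedGp}, restricted to $A_M$.

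\textbf{Step 2: $\mu^\lambda_*\mathcal{O}(\tau)\cong \mathrm{Ind}_{k^\alpha[A_M]}^{k^\alpha[A^\lambda]}(E_\tau)\otimes\overline{\nabla}_\lambda$.} Write $\mu^\lambda_* = p^\lambda_{M*}\mu_{M*}$, using the commutative left square of the diagram. By Step 1 this equals $p^\lambda_{M*}(E_\tau\otimes\overline{\nabla}_\lambda)$. Apply Lemma~\ref{PushForwardE} with the chain $G^\circ\trianglelefteq G_M\trianglelefteq G^\lambda$ and $S=\overline{\nabla}_\lambda$. Its hypotheses hold: for $a\in A^\lambda$ there is an isomorphism $\iota(a)_*\overline{\nabla}_\lambda\cong\overline{\nabla}_\lambda$ by the lemma preceding Definition~\ref{NablaDef}, and it is unique up to scalar since $\mathrm{End}^0(\overline{\nabla}_\lambda)=k$ by quasi-exceptionality \cite[Theorem 1]{BezQExcep}. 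Normality of $A_M$ in $A^\lambda$ holds because $A_M$ is the kernel of $A\to\mathrm{Out}(G^\circ)$. Lemma~\ref{PushForwardE} then yields the stated formula.

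\textbf{Step 3: $\overline{\nabla}_\rho$ is a summand of $\mu_*\mathcal{O}(\tau)$.} We have $\mu_*\mathcal{O}(\tau)=p^\lambda_*\mu^\lambda_*\mathcal{O}(\tau)$. Since $k^\alpha[A^\lambda]$ is semisimple (characteristic $0$, finite group), $E_\rho$ is a direct summand of $\mathrm{Ind}(E_\tau)$ provided $E_\tau\subset E_\rho|_{A_M}$. That containment follows from $\tau\subset\rho|_M$: restrict $\rho=\mathrm{Ind}_{G^\lambda}^G(E_\rho\otimes L(\lambda))$ to $M$, where the $\lambda$-weight space of $E_\rho\otimes L(\lambda)$ is $E_\rho|_{A_M}\otimes\lambda$. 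We may conjugate $\lambda$ within its $A$-orbit so that $\tau$ has weight $\lambda$, which is harmless by the independence remark after Definition~\ref{NablaDef}. Applying $p^\lambda_*$ to this summand gives $\overline{\nabla}_\rho$.

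\textbf{Step 4: the $\overline{\Delta}$ versions.} These follow verbatim with $\mathcal{O}(w_0\tau)$ in place of $\mathcal{O}(\tau)$, using the lemma that $A^\lambda=A^{w_0\lambda}$ and that $\overline{\nabla}_\lambda$ and $\overline{\Delta}_\lambda$ have the same cocycle. In the $\Delta$ case the normalization $\theta^\Delta_a$ is transported through the nonzero map $\overline{\Delta}_\lambda\to\overline{\nabla}_\lambda$.

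The main obstacle is the cocycle compatibility in Step 1: making sure the $A_M$- and $A^\lambda$-equivariant structures produced by the pushforwards match the twisted-algebra structures coming from Section~\ref{DiscRedGp}, so that the Lemma~\ref{ECompPush} and Lemma~\ref{PushForwardE} identifications are isomorphisms of equivariant objects.
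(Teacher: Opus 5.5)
Your proposal is correct and follows the paper's proof: Lemma \ref{ECompPush} applied to $\mu_M$, then the factorization $\mu^\lambda_*=p^\lambda_{M*}\mu_{M*}$ combined with Lemma \ref{PushForwardE}, then extracting $E_\rho\otimes\overline{\nabla}_\lambda$ as a summand and pushing forward along $p^\lambda$, with the $\overline{\Delta}$ case handled identically. Your extra checks fill in points the paper leaves implicit: the base change of $\mu_M$ to $\mu^\circ$, normality of $A_M$ in $A^\lambda$, cocycle compatibility via the normalization at $0$, and the correction that $E_\rho$ (not $\rho$) is the summand of $\mathrm{Ind}(E_\tau)$.
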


\begin{proof}

The first claim then follows from Lemma \ref{ECompPush} applied to $\mu_M$, and the second follows because $$\mu^\lambda_*\mathcal{O}(\tau)\cong p^\lambda_{M*}\mu_{M*}\mathcal{O}(\tau)\cong p^\lambda_{M*} (E_\tau\otimes_k\overline{\nabla}_\lambda)\cong \text{Ind}_{k^\alpha[A_M]}^{k^\alpha[A^\lambda]} (E_\tau)\otimes_k \overline{\nabla}_\lambda$$ by Lemma \ref{PushForwardE}.

Since $\rho\subset \text{Ind}_{k^\alpha[A_M]}^{k^\alpha[A^\lambda]} (E_\tau)$, $E_\rho\otimes_k\overline{\nabla}_\rho$ is a direct summand of $\mu^{\lambda}_*\mathcal{O}(\tau)$, and hence $\overline{\nabla}_\rho = p^\lambda_*(E_\rho\otimes_k\overline{\nabla}_\lambda)$ is a direct summand of $p^\lambda_*\mu^\lambda_*\mathcal{O}(\tau) = \mu_*\mathcal{O}(\tau)$.

The same arguments hold for $\overline{\Delta}_\rho$.
    
\end{proof}

\begin{lem}\label{NablaQExcep}
$\overline{\nabla}_\rho$ is a genuine coherent sheaf for all $\rho$, and the collection $\{\overline{\nabla}_\rho\ |\ \rho\!\in\!\textup{Irr}(G)\}$ generates $\textup{Coh}(\mathcal{N}/G)$, as do the complexes $\overline{\Delta}_\rho$.  
Furthermore the sheaves $\overline{\nabla}_\rho$ are nearly dualizable quasi-exceptional in that the following properties hold:

i) $\textup{Hom}(\overline{\nabla}_{\rho}, \overline{\nabla}_{\rho'}) = 0$ if $[\lambda]<[\lambda']$, $\textup{Hom}^{<0}(\overline{\nabla}_{\rho}, \overline{\nabla}_{\rho'}) = 0$ if $[\lambda]=[\lambda']$, $\textup{Hom}^{0}(\overline{\nabla}_{\rho}, \overline{\nabla}_{\rho'}) = 0$ if $[\lambda]=[\lambda']$ but $\rho\ncong\rho'$, and $\textup{End}^0(\overline{\nabla}_{\rho})=k$.

ii) $\textup{Hom}(\overline{\Delta}_{\rho}, \overline{\nabla}_{\rho'}) = 0$ if $[\lambda]\neq[\lambda']$, $\overline{\Delta}_\rho\cong\overline{\nabla}_\rho\ \textup{mod}\ \textup{Coh}^{<[\lambda]}(\mathcal{N}/G)$, 
$\textup{Hom}^0(\overline{\Delta}_{\rho}, \overline{\nabla}_{\rho'}) \cong \textup{Hom}(E_{\rho},E_{\rho'})$ if $[\lambda]=[\lambda']$ (and choosing representatives so that $\lambda=\lambda'$), $\textup{Hom}(\overline{\Delta}_{\rho}, \overline{\Delta}_{\rho'}) = 0$ if $[\lambda]>[\lambda']$, $\textup{Hom}^{\leq0}(\overline{\Delta}_{\rho}, \overline{\Delta}_{\rho'}) = 0$ if $[\lambda]=[\lambda']$ but $\rho\ncong\rho'$, $\textup{End}^0(\overline{\Delta}_{\rho})=k$ and $\textup{End}^{<0}(\overline{\Delta}_{\rho})=0$.
    
\end{lem}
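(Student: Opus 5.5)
The strategy is to reduce everything to the connected case, \cite[Theorem 1]{BezQExcep}, by adjunction along the finite étale maps $p^\lambda:\mathcal{N}/G^\lambda\to\mathcal{N}/G$ and $\mathcal{N}/G^\circ\to\mathcal{N}/G^\lambda$. Since these maps are finite étale, $p^\lambda_*$ is both left and right adjoint to $p^{\lambda*}$. By Mackey decomposition (Section \ref{DiscSheaves}), pulling $\overline{\nabla}_\rho$ back to $\mathcal{N}/G^\circ$ gives
\[
\bigoplus_{a\in A/A^\lambda}(E_\rho\otimes\overline{\nabla}_{{}^a\lambda})|_{G^\circ}.
\]
Consequently, for $\rho=L(\lambda,E_\rho)$ and $\rho'=L(\lambda',E_{\rho'})$,
\[
\mathrm{Hom}^i(\overline{\nabla}_\rho,\overline{\nabla}_{\rho'})=\Big(\bigoplus_{a,a'}\mathrm{Hom}(E_\rho,E_{\rho'})\otimes\mathrm{Hom}^i_{\mathcal{N}/G^\circ}(\overline{\nabla}_{{}^a\lambda},\overline{\nabla}_{{}^{a'}\lambda'})\Big)^{A}.
\]
Here $A$ acts through the cocycle-twisted structures $\theta^\nabla_a$. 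The same formula holds with $\overline{\Delta}$ in either slot.

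Coherence of $\overline{\nabla}_\rho$ follows because $\overline{\nabla}_\lambda=\mu^\circ_*\mathcal{O}(\lambda)$ has vanishing higher cohomology for dominant $\lambda$ (Broer / \cite{BezQExcep}) and $p^\lambda_*$ is exact. For generation, I will use that $p^*$ is conservative and that $p_*p^*\mathcal{F}$ contains $\mathcal{F}$ as a summand (characteristic $0$, finite group). The $\overline{\nabla}_\lambda$ generate $\mathrm{Coh}(\mathcal{N}/G^\circ)$, and $p_*\overline{\nabla}_\lambda$ is a direct sum of $\overline{\nabla}_\rho$'s by Lemma \ref{PushForwardE}, since $\mathrm{Ind}_{G^\circ}^{G^\lambda}$ decomposes over $\mathrm{Irr}\,k^\alpha[A^\lambda]$. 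Hence the $\overline{\nabla}_\rho$ generate; the same argument applies to the $\overline{\Delta}_\rho$.

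For the vanishing statements in (i), the connected theory gives $\mathrm{Hom}(\overline{\nabla}_\mu,\overline{\nabla}_{\mu'})=0$ whenever $\mu\not\geq\mu'$ in the appropriate sense. If $[\lambda]<[\lambda']$, then no conjugate ${}^a\lambda$ can be $\geq{}^{a'}\lambda'$, because the $A$-action preserves the dominance order (it preserves the pinning), so the terms with $\mu\neq\mu'$ vanish. When $[\lambda]=[\lambda']$, choose representatives with $\lambda=\lambda'$. Distinct dominant conjugates are incomparable, because conjugates have the same norm under an $A$-invariant form. So only the diagonal terms ${}^a\lambda={}^{a'}\lambda$ survive, where the connected theory gives $\mathrm{End}^{<0}=0$ and $\mathrm{End}^0=k$. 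Taking $A$-invariants then reduces the computation to $\mathrm{Hom}_{k^\alpha[A^\lambda]}(E_\rho,E_{\rho'})$, using that $\theta^\nabla_a$ is the normalized isomorphism, so that $A^\lambda$ acts on $\mathrm{End}^0(\overline{\nabla}_\lambda)=k$ through the cocycle $\alpha$ exactly as in Lemma \ref{InducedLem}. Schur's lemma then gives $0$ or $k$. Part (ii) is the same computation with the dual pair: \cite{BezQExcep} gives $\mathrm{Hom}(\overline{\Delta}_\mu,\overline{\nabla}_{\mu'})=0$ for $\mu\neq\mu'$ and $k$ in degree $0$ for $\mu=\mu'$. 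The shared cocycle lemma shows that $A^\lambda$ acts compatibly, via the definition of $\theta^\Delta_a$, so the invariants are $\mathrm{Hom}(E_\rho,E_{\rho'})$. The statements about $\mathrm{Hom}(\overline{\Delta}_\rho,\overline{\Delta}_{\rho'})$ follow in the same way from the connected versions.

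The congruence $\overline{\Delta}_\rho\cong\overline{\nabla}_\rho \bmod \mathrm{Coh}^{<[\lambda]}$ requires more than the adjunction formula. I will use the canonical map $f:\overline{\Delta}_\lambda\to\overline{\nabla}_\lambda$, whose cone lies in $\mathrm{Coh}^{<\lambda}(\mathcal{N}/G^\circ)$. Tensoring with $E_\rho$ gives an $A^\lambda$-equivariant map, because $f$ intertwines $\theta^\Delta$ and $\theta^\nabla$ by construction. Applying $p^\lambda_*$ yields $\overline{\Delta}_\rho\to\overline{\nabla}_\rho$, and its cone is $p^\lambda_*(E_\rho\otimes\mathrm{cone}(f))$. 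I then need to show this cone is generated by $\overline{\nabla}_{\rho''}$ with $[\lambda'']<[\lambda]$. That follows by pushing forward a generation statement for $G^\lambda$-equivariant objects whose restriction to $G^\circ$ lies in $\mathrm{Coh}^{<\lambda}$, using the summand trick $\mathcal{F}\subset p_*p^*\mathcal{F}$ at the level of $G^\lambda$.

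I expect the main obstacle to be the careful bookkeeping of the equivariant structures. Specifically, I must show that the $A$-action on the connected Hom spaces is exactly the twisted regular action matching the cocycle $\alpha$ of $\rho$. I will handle this by checking it on the normalization at $0\in\mathfrak{u}$, where everything reduces to the action on the $\lambda$-weight line, as in \cite{AHR}.
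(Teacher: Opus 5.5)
Your proposal is correct and follows essentially the paper's route: reduce to Bezrukavnikov's connected results through the finite étale covers $\mathcal{N}/G^\circ\to\mathcal{N}/G^\lambda\to\mathcal{N}/G$ and Mackey decomposition, using that distinct dominant $A$-conjugates are incomparable (your norm argument plays the role of the paper's convex-hull remark). The differences are cosmetic. You compute Homs as $A$-invariants on $\mathcal{N}/G^\circ$, whereas the paper uses adjunction for $p^\lambda$ plus faithfulness of $p^{\circ\lambda'*}$. You also make explicit the congruence $\overline{\Delta}_\rho\cong\overline{\nabla}_\rho$ modulo lower terms, which the paper leaves to ``the same Mackey decomposition arguments.'' One wording fix: the $A^\lambda$-action on $\mathrm{End}^0(\overline{\nabla}_\lambda)=k$ is trivial. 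The reduction to $\mathrm{Hom}_{k^\alpha[A^\lambda]}(E_\rho,E_{\rho'})$ works because $E_\rho$ and $E_{\rho'}$ carry the same cocycle, which cancels.
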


\begin{proof}
Let $p$ be the projection from $\mathcal{N}/G^\circ$ to $\mathcal{N}/G$.  
By \cite[Fact 3.1]{BezQExcep} we know that $\overline{\nabla}_\lambda$ is a genuine coherent sheaf on $\mathcal{N}/G^\circ$.  
Then since $p$ is finite $p_*(\overline{\nabla}_\lambda)$ is a coherent sheaf on $\mathcal{N}/G$.  Since $\overline{\nabla}_\rho$ is a direct summand of $p_*(\overline{\nabla}_\lambda)$ it follows that $\overline{\nabla}_\rho$ is a genuine coherent sheaf.
By \cite[Proposition 3.5]{BezQExcep} the sheaves $\overline{\nabla}_\lambda$ generate $\text{Coh}(\mathcal{N}/G^\circ)$, and since $p^\circ:\mathcal{N}/G^\circ\to \mathcal{N}/G$ is a finite etale cover it follows that $\{\overline{\nabla}_\rho\}$ generates $\text{Coh}(\mathcal{N}/G)$.

For arbitrary irreducible $\rho$, $\rho'$, $$\text{Hom}(\overline{\nabla}_{\rho}, \overline{\nabla}_{\rho'})=\text{Hom}(p^\lambda_*(E_{\rho}\otimes \overline{\nabla}_{\lambda}),p^{\lambda'}_*(E_{\rho'}\otimes \overline{\nabla}_{\lambda'})) = \text{Hom}(p^{\lambda'*}p^\lambda_*(E_{\rho}\otimes \overline{\nabla}_{\lambda}),E_{\rho'}\otimes \overline{\nabla}_{\lambda'})$$

By Mackey decomposition discussed in section \ref{DiscSheaves}, and using notation from that discussion, $$p^{\lambda'*}p^\lambda_*(E_{\rho}\otimes \overline{\nabla}_{\lambda})\cong \bigoplus\limits_{g\in G^{\lambda'}\backslash G/G^{\lambda}}\tilde{p}_{g*}\tilde{q}_g^*(E_\rho\otimes \overline{\nabla}_\lambda)$$ so $$\text{Hom}(p^{\lambda'*}p^\lambda_*(E_{\rho}\otimes \overline{\nabla}_{\lambda}),E_{\rho'}\otimes \overline{\nabla}_{\lambda'})\cong \bigoplus\limits_{g\in G^{\lambda'}\backslash G/G^\lambda}\text{Hom}(\tilde{p}_{g*}\tilde{q}_g^*(E_\rho\otimes \overline{\nabla}_\lambda), E_{\rho'}\otimes \overline{\nabla}_{\lambda'})$$

For the rest of the proof, by the discussion after Definition \ref{NablaDef} we can WLOG assume that if $[\lambda]$ is less than, equal to, or greater than $[\lambda']$ then $\lambda$ is respectively less than, equal to, or greater than $\lambda'$.

\[
\begin{tikzcd}[column sep=huge, row sep=large]
\mathcal{N}/G^\circ \arrow[r, "\tilde{q}_h^\circ"] \arrow[d, "\tilde{p}_h^\circ = \text{id}"]
  & \mathcal{N}/(G^{\lambda'}\cap gG^\lambda g^{-1}) \arrow[r, "\tilde{q}_g"] \arrow[d, "\tilde{p}_g"] 
  & \mathcal{N}/G^\lambda \arrow[d, "p^\lambda"] \\
\mathcal{N}/G^\circ  \arrow[r, "p^{\circ\lambda'}"] 
  & \mathcal{N}/G^{\lambda'}   \arrow[r, "p^{\lambda'}"] 
  & \mathcal{N}/G 
\end{tikzcd}
\]

Fix a $g\in G$ representing a double coset of $G^{\lambda'}\backslash G/G^\lambda$.  
Define $p^{\circ\lambda'}$ to be the quotient from $\mathcal{N}/G^\circ$ to $\mathcal{N}/G^{\lambda'}$.  
Then $p^{\circ\lambda'}$ is finite etale, and hence $p^{\circ\lambda'*}$ is faithful (meaning, injective on all Ext groups).  
Consider $\text{Hom}(p^{\circ\lambda'*}\tilde{p}_{g*}\tilde{q}_g^*(E_\rho\otimes \overline{\nabla}_\lambda), p^{\circ\lambda'*}(E_{\rho'}\otimes \overline{\nabla}_{\lambda'}))$.
Applying Mackey decomposition to the first term, $$ p^{\circ\lambda'*}\tilde{p}_{g*}\tilde{q}_g^*(E_\rho\otimes \overline{\nabla}_\lambda)\cong \bigoplus\limits_{h\in G^\circ\backslash G^{\lambda'}/G^{\lambda'}\cap g G^\lambda g^{-1}}\tilde{p}_{h*}^{\circ}\tilde{q}_{h}^{\circ*}\tilde{q}_g^*(E_\rho\otimes \overline{\nabla}_\lambda)$$ where, noting $$G^\circ\cap h(G^{\lambda'}\cap g G^\lambda g^{-1})h^{-1} = G^\circ$$ for all $h$, $$ \tilde{q}_{h}^\circ: \mathcal{N}/G^\circ\to \mathcal{N}/(G^{\lambda'}\cap g G^\lambda g^{-1})$$ where the inclusion is precomposed with conjugation by $h^{-1}$, and $\tilde{p}^\circ_{h}: \mathcal{N}/G^\circ\to \mathcal{N}/G^\circ$ is the identity.
For the summands corresponding to a choice of $h$ and $g$ the maps are shown in the above figure (note the squares are not cartesian, as the upper left object of every square is only a direct summand of the cartesian product).

As $h$ varies through representatives of $G^{\lambda'}/(G^{\lambda'}\cap gG^\lambda g^{-1})$, the set of elements $hg$ form a set of representatives of the cosets in $G/G^\lambda$ which project to the double coset $G^{\lambda'}\backslash g /G^\lambda$.  
Therefore as $G^\circ$-equivariant sheaves,  $$\bigoplus\limits_{h\in  G^{\lambda'}/G^{\lambda'}\cap g G^\lambda g^{-1}}\tilde{p}_{h*}^{\circ}\tilde{q}_{h}^{\circ*}\tilde{q}_g^*(E_\rho\otimes \overline{\nabla}_\lambda)\cong\bigoplus\limits_{g'\in G/G^\lambda:\ G^{\lambda'} g'  = G^{\lambda'} g} E_\rho\otimes \overline{\nabla}_{\!^{g'}\!\lambda}$$
By \cite[Proposition 3.4]{BezQExcep}, $\text{Hom}(\overline{\nabla}_{\!^{g'}\!\lambda}, \overline{\nabla}_{\lambda'}) = 0$ if $\lambda<\lambda'$,  $\text{Hom}(\overline{\nabla}_{\!^{g'}\!\lambda}, \overline{\nabla}_{\lambda'}) = 0$ if $\lambda=\lambda'$ and $g'\neq G^\lambda$, $\text{End}^{<0}(\overline{\nabla}_{\lambda}) = 0$, and $ \text{End}^0( \overline{\nabla}_{\lambda}) = k$.
In order for the second fact to be true we use that, in the setting where $\lambda=\lambda'$ and $g'\neq G^\lambda$, $\!^{g'}\lambda$ is not contained in the convex hull of the Weyl group orbit of $\lambda$ in the character lattice of $G^\circ$ (as holds for any nontrivial outer automorphism acting on a dominant character).

It follows that $\text{Hom}(\overline{\nabla}_{\rho},\overline{\nabla}_{\rho'}) = 0$ if $\lambda<\lambda'$ and $\text{Hom}^{<0}(\overline{\nabla}_{\rho},\overline{\nabla}_{\rho'}) = 0$ if $\lambda = \lambda'$.  
If $\lambda = \lambda'$ but $E_\rho\ncong E_{\rho'}$, then $$\textup{Hom}^0(\overline{\nabla}_{\rho},\overline{\nabla}_{\rho'}) = \text{Hom}^0(\tilde{p}_{e*}\tilde{q}_e^*(E_\rho\otimes \overline{\nabla}_\lambda), E_{\rho'}\otimes \overline{\nabla}_{\lambda})= \text{Hom}^0(E_\rho\otimes \overline{\nabla}_\lambda, E_{\rho'}\otimes \overline{\nabla}_{\lambda}) = 0$$
If $\rho\cong\rho'$, then $\text{Hom}^0(E_\rho\otimes \overline{\nabla}_\lambda, E_{\rho}\otimes \overline{\nabla}_{\lambda}) = k$ since $E_\rho$ is irreducible and $ \text{Hom}^0(\overline{\nabla}_\lambda, \overline{\nabla}_{\lambda}) = k$ as mentioned above.

By \cite[Propositions 3.4, 3.3, Lemma 2.2]{BezQExcep}, $\text{Hom}(\overline{\Delta}_\lambda, \overline{\nabla}_{\lambda'}) = 0$ when $\lambda\neq\lambda'$ and $\text{Hom}^0(\overline{\Delta}_\lambda, \overline{\nabla}_{\lambda}) \cong k$; $\overline{\Delta}_\lambda\cong \overline{\nabla}_{\lambda}$ mod $\text{Coh}^{<\lambda}(\mathcal{N}/G^\circ)$, where $\text{Coh}^{<\lambda}(\mathcal{N}/G^\circ)$ is the full subcategory of $\text{Coh}(\mathcal{N}/G^\circ)$ generated by $\overline{\nabla}_{\lambda'}$, $\lambda'<\lambda$, under cones and shifts; and $\text{Hom}(\overline{\Delta}_\lambda, \overline{\Delta}_{\lambda'}) = 0$ if $\lambda > \lambda'$, $\text{End}(\overline{\Delta}_\lambda) \cong \text{End}(\overline{\nabla}_{\lambda})$.
Using these facts together with the same Mackey decomposition arguments as above it follows that $\text{Hom}(\overline{\Delta}_\rho, \overline{\nabla}_{\rho'}) = 0$ whenever $[\lambda]\neq[\lambda']$,$\text{Hom}^0(\overline{\Delta}_\rho, \overline{\nabla}_{\rho}) \cong \text{Hom}(E_{\rho},E_{\rho'})$ when $[\lambda]=[\lambda']$, $\overline{\Delta}_\rho\cong \overline{\nabla}_{\rho}$ mod $\text{Coh}^{<[\lambda]}(\mathcal{N}/G)$, $\text{Hom}(\overline{\Delta}_\rho, \overline{\Delta}_{\rho'}) = 0$ if $[\lambda] > [\lambda']$, and $\text{End}(\overline{\Delta}_\rho) \cong \text{End}(\overline{\nabla}_{\rho})$.
From the equivalence of $\overline{\Delta}_\rho$ and $\overline{\nabla}_\rho$ modulo lower degree terms it follows the complexes $\overline{\Delta}_\rho$ generate $\text{Coh}(\mathcal{N}/G)$ as well.

\end{proof}

In \cite{PervCoh} Arinkin and Bezrukavnikov study the perverse-coherent t-structure, which was initially introduced by Deligne.
We won't review the definition here, but it is exactly analogous to the usual definition of the perverse t-structure in the constructible setting and is given in \cite[Definition 3.2]{PervCoh}.
By \cite[Corollary 3.1.3]{BezQExcep}, when $G$ is connected reductive the perverse coherent and the exotic t-structure corresponding to the quasi-exceptional set $\{\overline{\nabla}_\lambda[d]\}$ on $\text{Coh}(\mathcal{N}/G)$ coincide, where $d=\text{dim}(\mathcal{N})/2$.
And by \cite[Lemma 3.1.9]{BezQExcep} the complexes $\overline{\nabla}_\lambda[d]$ and $\overline{\Delta}_\lambda[d]$ are perverse coherent for all $\lambda$.  
Moving to the disconnected setting considered in this section, since $\overline{\nabla}_\rho $ is a direct summand of $p^\circ_*\overline{\nabla}_\lambda$, and perverse-coherent sheaves are perserved under finite etale pushforward and taking direct summands, it follows that $\overline{\nabla}_\rho$ and $\overline{\Delta}_\rho$ are perverse coherent as well.
Together with \cite[Proposition 3.5]{BezQExcep} this establishes:

\begin{prop}\label{PerverseEqExotic}

The sheaves $\overline{\nabla}_\rho[d]$, $d\leq0$ generate $D^{p,\geq0}(\textup{Coh}(\mathcal{N}/G))$, and $\overline{\Delta}_\rho[d]$, $d\geq0$ generate $D^{p,\leq0}(\textup{Coh}(\mathcal{N}/G))$.    
\end{prop}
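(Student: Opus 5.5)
The plan is to realize the perverse-coherent t-structure on $\textup{Coh}(\mathcal{N}/G)$ as the exotic t-structure attached to the quasi-exceptional data $\{\overline{\nabla}_\rho[d]\}$, $\{\overline{\Delta}_\rho[d]\}$, in the sense of Proposition \ref{ExoticTDef}. Here $d=\dim(\mathcal{N})/2$. Once both are known to be t-structures, the uniqueness clause of Proposition \ref{ExoticTDef} identifies them. The comparison uses the facts, just noted, that $\overline{\nabla}_\rho[d]$ and $\overline{\Delta}_\rho[d]$ are perverse coherent. We will use the (non-strict) total order refining the preorder $[\lambda]\leq[\lambda']$ on orbits. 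Within a single orbit $[\lambda]$, the objects $\overline{\nabla}_\rho$ with $\rho\ncong\rho'$ are mutually orthogonal in degree $\leq 0$ by Lemma \ref{NablaQExcep}(i). So they can be ordered arbitrarily, and the set becomes quasi-exceptional, with $\textup{End}^0=k$, which is a division algebra.

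The first step is to check dualizability. Lemma \ref{NablaQExcep}(ii) gives $\textup{Hom}(\overline{\Delta}_\rho,\overline{\nabla}_{\rho'})=0$ whenever $[\lambda]\neq[\lambda']$. When $[\lambda]=[\lambda']$ and $\rho\ncong\rho'$, we need vanishing in all degrees, not only degree $0$. I would get this from the same Mackey argument: $\textup{Hom}(\overline{\Delta}_\lambda,\overline{\nabla}_\lambda)$ is $k$ concentrated in degree $0$ by \cite[Lemma 2.2]{BezQExcep}. Therefore $\textup{Hom}(\overline{\Delta}_\rho,\overline{\nabla}_{\rho'})$ reduces to $\textup{Hom}_{k^\alpha[A^\lambda]}(E_\rho,E_{\rho'})$, which vanishes. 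The congruence $\overline{\Delta}_\rho\cong\overline{\nabla}_\rho$ modulo $\textup{Coh}^{<[\lambda]}$ is also in Lemma \ref{NablaQExcep}(ii). Passing to the refined total order only enlarges $\textup{Coh}^{<\rho}$, so this congruence persists. Generation is also in Lemma \ref{NablaQExcep}. Proposition \ref{ExoticTDef} then produces a t-structure whose $\leq0$ part is generated by $\overline{\Delta}_\rho[d][n]$, $n\ge0$, and whose $\geq0$ part is generated by $\overline{\nabla}_\rho[d][n]$, $n\le0$.

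The second step is the comparison. Both the perverse-coherent t-structure and the exotic one are t-structures. The exotic $\leq0$ part is generated under extensions and positive shifts by perverse objects, so it lies inside $D^{p,\leq0}$, and likewise the exotic $\geq0$ part lies inside $D^{p,\geq0}$. Containment of both halves forces equality of t-structures, since $D^{\leq0}$ is the left orthogonal of $D^{\geq1}$. This gives the proposition. Alternatively, one can pull back along the finite étale cover $p^\circ$ and use \cite[Corollary 3.1.3]{BezQExcep}, since both t-structures are detected after $p^{\circ*}$.

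The main obstacle I expect is the bookkeeping in the dualizability step for distinct $\rho,\rho'$ in the same orbit. The issue is controlling the twisted $A^\lambda$-action on $\textup{Hom}(\overline{\Delta}_\lambda,\overline{\nabla}_\lambda)$, and in particular checking that it is trivial given the normalization of $\theta^\Delta_a$ via $\theta^\nabla_a$. That normalization was designed exactly to make this true, so I would verify it directly from the definition.
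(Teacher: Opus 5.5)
\paragraph{There is a genuine gap in your Step 1.} Within a single $A$-orbit, the family $\{\overline{\nabla}_\rho\}$ is in general not quasi-exceptional for \emph{any} total order. This is why the paper only calls it ``nearly dualizable quasi-exceptional.'' Quasi-exceptionality needs $\mathrm{Hom}(\overline{\nabla}_\rho,\overline{\nabla}_{\rho'})=0$ in \emph{all} degrees for $\rho<\rho'$. Lemma \ref{NablaQExcep}(i) only gives vanishing in degrees $\leq 0$ inside an orbit.

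Your dualizability step has the same problem. The cone of $\overline{\Delta}_\lambda\to\overline{\nabla}_\lambda$ lies in $\mathrm{Coh}^{<\lambda}$, which is left orthogonal to $\overline{\nabla}_\lambda$. Hence $\mathrm{Hom}^\bullet(\overline{\Delta}_\lambda,\overline{\nabla}_\lambda)\cong\mathrm{End}^\bullet(\overline{\nabla}_\lambda)$, and this is \emph{not} concentrated in degree $0$: quasi-exceptional is not exceptional. Already for $\mathrm{SL}_2$ and $\lambda=\varpi$, push $0\to\mathcal{O}(-1)\to\mathcal{O}\otimes V\to\mathcal{O}(1)\to 0$ forward along $\mu$ from $T^*\mathbb{P}^1$. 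This gives a nonsplit equivariant extension of $\mu_*\mathcal{O}(1)$ by $\mu_*\mathcal{O}(-1)\cong\mu_*\mathcal{O}(1)$, with free middle term. So $\mathrm{Hom}^1(\overline{\Delta}_\varpi,\overline{\nabla}_\varpi)\cong\mathrm{End}^1(\overline{\nabla}_\varpi)\neq 0$.

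The normalization of $\theta^\Delta_a$ only forces $A^\lambda$ to act trivially in degree $0$. In positive degrees $A^\lambda$ acts by conjugation, and this action can be nontrivial. Here is an example.
\begin{itemize}
\item Take $G=O(4)$. This is $S_\phi$ for the generic parameter $\tau\otimes W$ of $Sp_8$, where $\tau$ is irreducible symplectic of dimension $2$ with finite image and $W$ is orthogonal of dimension $4$.
\item Then $G^\circ=(\mathrm{SL}_2\times\mathrm{SL}_2)/\{\pm1\}$, and the outer component swaps the two factors.
\item Let $\lambda$ be the highest weight of the standard representation. Then $\overline{\nabla}_\lambda=\overline{\nabla}_\varpi\boxtimes\overline{\nabla}_\varpi$.
\item $\mathrm{End}^1(\overline{\nabla}_\lambda)$ is spanned by $\epsilon\otimes 1$ and $1\otimes\epsilon$, and the swap exchanges them. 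So the sign character occurs.
\item Hence for $\rho=\mathrm{std}$ and $\rho'=\mathrm{std}\otimes\det$, both $\mathrm{Hom}^1(\overline{\nabla}_\rho,\overline{\nabla}_{\rho'})$ and $\mathrm{Hom}^1(\overline{\nabla}_{\rho'},\overline{\nabla}_\rho)$ are nonzero. So is $\mathrm{Hom}^1(\overline{\Delta}_\rho,\overline{\nabla}_{\rho'})$.
\end{itemize}
So Proposition \ref{ExoticTDef} cannot be applied on $\mathcal{N}/G$. Your Step 2 is a correct formal argument, but it compares the perverse-coherent t-structure with a t-structure you have not constructed.

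A smaller point: a total order refining the orbit order also separates incomparable orbits. For those you need the ``vanishes unless $[\lambda']\leq[\lambda]$'' form of Bezrukavnikov's vanishing, not Lemma \ref{NablaQExcep}(i) as literally stated.

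\paragraph{How the paper avoids this.} The paper never builds an exotic t-structure for the disconnected group. It uses three inputs: Bezrukavnikov's identification of the perverse-coherent and exotic t-structures, but only on $\mathcal{N}/G^\circ$; the perversity of $\overline{\nabla}_\rho[d]$ and $\overline{\Delta}_\rho[d]$ as summands of finite \'etale pushforwards; and generation. Concretely:
\begin{itemize}
\item Perversity gives that the objects $\overline{\nabla}_\rho[d][n]$, $n\leq 0$, lie in $D^{p,\geq 0}$.
\item Conversely, take $X\in D^{p,\geq0}(\mathcal{N}/G)$. Then $p^{\circ*}X$ is built from the $\overline{\nabla}_\lambda[d][n]$, $n\leq 0$, by the connected case.
\item Applying $p^\circ_*$ to these splits them into $\overline{\nabla}_\rho[d][n]$'s.
\item Finally, $X$ is a summand of $p^\circ_*p^{\circ*}X$ because $|A|$ is invertible.
\end{itemize}
The same argument works for the $\overline{\Delta}_\rho$ and $D^{p,\leq 0}$. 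Your closing ``alternative'' points in this direction, but it has to replace Step 1, not just check the comparison.
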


There are also intersection cohomology sheaves in the perverse-coherent setting \cite[Definition 4.6]{PervCoh}, and by \cite[Proposition 4.11]{PervCoh} the irreducible perverse-coherent sheaves on $\mathcal{N}/G$ are exactly $IC(\mathcal{O}, L)$ where $O$ is a $G$-orbit in $\mathcal{N}$ and $L$ is an irreducible representation of the stabilizer of $\mathcal{O}$ in $G$.
By \cite[Corollary 3.1.3]{BezQExcep}, the perverse-coherent sheaves on $\mathcal{N}/G^\circ$ form a quasi-hereditary category \cite[Section 2.1]{BezQExcep} with costandard objects $\overline{\nabla}_\lambda[d]$ and standard objects $\overline{\Delta}_\lambda[d]$.
Therefore, letting $IC_\lambda:= \text{Im}(\overline{\Delta}_\lambda\to \overline{\nabla}_\lambda)$, the irreducible perverse-coherent sheaves on $\mathcal{N}/G^\circ$ also take the form $IC_\lambda[d]$.
In the disconnected reductive setting, let $IC_\rho:= p^\lambda_*(E_\rho\otimes IC_\lambda)$.

\begin{lem}\label{IrrIC}
    $IC_\rho$ is an irreducible perverse-coherent sheaf, and every irreducible perverse-coherent sheaf is of the form $IC_\rho$ for a unique $\rho\in\textup{Irr}(S_\phi)$.
\end{lem}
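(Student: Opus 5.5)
We prove the lemma by reducing to the connected case along the finite étale cover $p^\circ:\mathcal{N}/G^\circ\to\mathcal{N}/G$, in the same way as Lemma \ref{NablaQExcep}. Throughout, $G$ plays the role of $S_\phi$.

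\textbf{Step 1: $IC_\rho$ is perverse-coherent.} Since $p^\lambda$ is finite étale, $p^\lambda_*$ is t-exact for the perverse-coherent t-structures. The sheaf $E_\rho\otimes IC_\lambda$ is perverse-coherent on $\mathcal{N}/G^\lambda$, because this can be checked after pullback to $\mathcal{N}/G^\circ$, where it is a sum of copies of $IC_\lambda$. Hence $IC_\rho$ is perverse-coherent, and in fact
\[
IC_\rho\cong \mathrm{Im}(\overline{\Delta}_\rho\to\overline{\nabla}_\rho),
\]
computed in the perverse heart. This uses exactness of $p^\lambda_*$ and of $E_\rho\otimes(-)$, together with the normalisations $\theta^\nabla_a$, $\theta^\Delta_a$. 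The latter make the map $\overline{\Delta}_\lambda\to\overline{\nabla}_\lambda$ compatible with the $k^\alpha[A^\lambda]$-structures, and the two cocycles agree by the previous lemma.

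\textbf{Step 2: $IC_\rho$ is irreducible.} Compute $\mathrm{End}^0(IC_\rho)$ by the Mackey decomposition argument of Lemma \ref{NablaQExcep}:
\[
\mathrm{Hom}(IC_\rho,IC_{\rho'})\cong\bigoplus_g \mathrm{Hom}\bigl(\tilde p_{g*}\tilde q_g^*(E_\rho\otimes IC_\lambda),\,E_{\rho'}\otimes IC_{\lambda'}\bigr).
\]
After pullback to $G^\circ$ the terms are $\mathrm{Hom}(IC_{{}^{g'}\lambda},IC_{\lambda'})$. These vanish unless ${}^{g'}\lambda=\lambda'$, since distinct $IC_\mu$ are non-isomorphic simples; when they agree, the term is $k$. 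So $\mathrm{Hom}^0(IC_\rho,IC_{\rho'})\cong \mathrm{Hom}_{k^\alpha[A^\lambda]}(E_\rho,E_{\rho'})$ when $[\lambda]=[\lambda']$ with representatives chosen equal, and $0$ otherwise.

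This alone does not give simplicity, so we argue directly. Let $0\neq \mathcal{F}\subset IC_\rho$ be a perverse subobject. Then $p^{\circ*}\mathcal{F}\subset p^{\circ*}IC_\rho$, and $p^{\circ*}IC_\rho$ is the semisimple object $\bigoplus_{a\in A/A^\lambda}{}^aE_\rho\otimes IC_{{}^a\lambda}$ (perverse pullback along a finite étale map is exact). So $p^{\circ*}\mathcal{F}$ is a $G$-stable, i.e. $A$-stable, direct summand compatible with the twisted action. Equivariant subobjects of $\bigoplus_a {}^aE_\rho\otimes IC_{{}^a\lambda}$ correspond, by the Clifford-theory description of Section \ref{DiscSheaves}, to $k^\alpha[A]$-submodules of $\mathrm{Ind}_{k^\alpha[A^\lambda]}^{k^\alpha[A]}E_\rho$. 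This induced module is irreducible, as in the proof of Theorem \ref{DiscRedClass}, because the $IC_{{}^a\lambda}$ for distinct ${}^a\lambda$ are pairwise non-isomorphic, which uses the outer-automorphism observation in Lemma \ref{NablaQExcep}. Hence $\mathcal{F}=IC_\rho$. I expect this step to be the main obstacle: making precise the identification between $G$-equivariant subobjects and twisted-group-algebra submodules.

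\textbf{Step 3: exhaustion and uniqueness.} Let $\mathcal{L}$ be an irreducible perverse-coherent sheaf on $\mathcal{N}/G$. Then $p^{\circ*}\mathcal{L}$ is a nonzero perverse-coherent sheaf of finite length, so it contains some $IC_\lambda$, with $\lambda$ dominant. By adjunction $\mathrm{Hom}(p^\circ_*IC_\lambda,\mathcal{L})\neq0$, using the self-adjointness of $p^\circ_*$ and $p^{\circ*}$ for finite étale maps. The sheaf $p^\circ_*IC_\lambda=p^\lambda_*(k^\alpha[A^\lambda]\otimes IC_\lambda)$, by Lemma \ref{PushForwardE}, decomposes as a direct sum of copies of $IC_\rho$ over irreducible $E_\rho$. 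So $\mathcal{L}$ is a quotient of some $IC_\rho$, and by Step 2 $\mathcal{L}\cong IC_\rho$.

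Uniqueness follows from the Hom computation in Step 2. An isomorphism $IC_\rho\cong IC_{\rho'}$ forces $[\lambda]=[\lambda']$ and $E_\rho\cong E_{\rho'}$ after conjugation, hence $\rho\cong\rho'$ by Theorem \ref{DiscRedClass}.

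Alternatively, uniqueness and exhaustion can be cross-checked by counting. Both sets are in bijection with pairs $(O,L)$ via \cite[Proposition 4.11]{PervCoh}, and this is compatible with the $G^\circ$ case through the Lusztig–Bezrukavnikov bijection.
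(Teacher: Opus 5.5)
Your proof is correct and is essentially the paper's argument: both reduce along the finite \'etale cover $p^\circ$, using that $G$ permutes the pairwise non-isomorphic simples $IC_{{}^g\lambda}$ transitively, and your Step 3 is the paper's exhaustion argument (adjunction plus Lemma \ref{PushForwardE}) with a subobject in place of a quotient. Your Step 1, your direct subobject analysis in Step 2 (which also justifies the paper's unargued claim that $E_\rho\otimes IC_\lambda$ is irreducible), and your uniqueness argument fill in points the paper leaves implicit, including the identification $IC_\rho\cong\mathrm{Im}(\overline{\Delta}_\rho\to\overline{\nabla}_\rho)$ used later in Theorem \ref{MainThm}.

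One phrasing in Step 2 should be corrected. As an ungraded module, $\mathrm{Ind}_{k^\alpha[A^\lambda]}^{k^\alpha[A]}E_\rho$ is generally reducible; for $A^\lambda$ trivial it is the twisted regular module of $A$. What your non-isomorphism observation actually gives is that every $G^\circ$-subobject of $\bigoplus_a {}^aE_\rho\otimes IC_{{}^a\lambda}$ has the form $\bigoplus_a W_a\otimes IC_{{}^a\lambda}$. Then $A$-stability forces it to be generated, under the $A$-action, by its identity component $W_e\otimes IC_\lambda$, where $W_e\subseteq E_\rho$ is a $k^\alpha[A^\lambda]$-submodule, so $W_e=0$ or $W_e=E_\rho$.
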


\begin{proof}

We start by showing $IC_\rho$ is irreducible.
Since $IC_\lambda$ and $E_\rho$ are irreducible, so is $E_\rho\otimes IC_\lambda$.
Let $F$ be any irreducible perverse-coherent sheaf such that $F\hookrightarrow IC_\rho$. Then $\text{Hom}(F, IC_\rho) = \text{Hom}(p^{\lambda*}F, E_\rho\otimes IC_\lambda)\neq0$, and since $E_\rho\otimes IC_\lambda$ is irreducible $p^{\lambda*}F\twoheadrightarrow E_\rho\otimes IC_\lambda$ as perverse-coherent sheaves on $\mathcal{N}/G^{\lambda}$.
Pulling back under $p^\circ$,  $p^{\circ*}F\twoheadrightarrow E_\rho\otimes IC_\lambda$ as perverse-coherent sheaves on $\mathcal{N}/G^{\circ}$
Then $$p^{\circ*}g_*F = g_*p^{\circ*}F\twoheadrightarrow E_\rho\otimes IC_{\!^g\lambda}$$ for all $g\in G$.
Since $F$ is $G$-equivariant $g_*F\cong F$, so $$p^{\circ*}F\twoheadrightarrow \bigoplus\limits_{g\in G/G^\lambda}E_\rho\otimes IC_{\!^g\lambda} = p^{\circ*}IC_\rho$$ by Mackey decomposition.
Therefore $F\twoheadrightarrow IC_\rho$ as required.

Now suppose $F$ is an arbitrary irreducible perverse-coherent sheaf on $\mathcal{N}/G$.
Then $p^{\circ*}F$ has an irreducible quotient $IC_\lambda$.
By adjunction $\text{Hom}(p^{\circ*}F,IC_\lambda) = \text{Hom}(p^{\lambda*}F,p^{\circ,\lambda}_*IC_\lambda) = \text{Hom}(p^{\lambda*}F,k^\alpha[A^\lambda]\otimes IC_\lambda)$ where the last equality follows by Lemma \ref{PushForwardE}.
It follows that $\text{Hom}(p^{\lambda*}F,E\otimes IC_\lambda)\neq0$ for some irreducible representation $E$ of $k^\alpha[A^\lambda]$, and consequently $F\cong IC_{L(\lambda, E)}$ since both are irreducible.

\end{proof}

We conclude by establishing the t-exactness of $\mu_*$.

\begin{prop}

Pushforward along $\mu$ is t-exact with respect to the exotic t-structure on $\textup{Coh}(\tilde{\mathcal{N}}/G)$ and the $(-d)$-shifted perverse-coherent t-structure on $\textup{Coh}(\mathcal{N}/G)$.
    
\end{prop}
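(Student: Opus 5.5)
Since both t-structures are generated by explicit objects, I will check t-exactness on generators and reduce to the connected case. The exotic t-structure on $\textup{Coh}(\tilde{\mathcal{N}}/G)\cong\textup{Coh}(\mathfrak{u}/P)$ comes from the dualizable exceptional set $\{\nabla^{\textup{ex}}_\tau\}$ with dual $\{\Delta^{\textup{ex}}_\tau\}$ of Lemma \ref{O(tau)vanishing}. By Proposition \ref{ExoticTDef}, its coconnective part is generated under extensions by $\nabla^{\textup{ex}}_\tau[n]$, $n\leq 0$, and its connective part by $\Delta^{\textup{ex}}_\tau[n]$, $n\geq 0$. Because $\mu_*$ is exact, it suffices to show that each $\mu_*\nabla^{\textup{ex}}_\tau$ and each $\mu_*\Delta^{\textup{ex}}_\tau$ lies in the heart of the $(-d)$-shifted perverse-coherent t-structure on $\textup{Coh}(\mathcal{N}/G)$. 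Then $\mu_*$ is left t-exact and right t-exact.

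To compute these pushforwards, I will factor $\mu=p\circ\mu^\circ\circ\tilde p$ using the diagram above Definition \ref{NablaDef}, where $p:\mathcal{N}/G^\circ\to\mathcal{N}/G$ is finite étale. Since $\nabla^{\textup{ex}}_\tau=E_\tau\otimes\nabla^{\textup{ex}}_\lambda$, I will apply Lemma \ref{ECompPush} and Lemma \ref{PushForwardE}, exactly as in the proof of Lemma \ref{SpringResDecomp}. This gives
\[
\mu_*\nabla^{\textup{ex}}_\tau\cong p^\lambda_*\bigl(\textup{Ind}_{k^\alpha[A_M]}^{k^\alpha[A^\lambda]}(E_\tau)\otimes\mu^\circ_*\nabla^{\textup{ex}}_\lambda\bigr),
\]
with $\mu^\circ_*\nabla^{\textup{ex}}_\lambda$ viewed on $\mathcal{N}/G^\circ$, and similarly for $\Delta^{\textup{ex}}_\tau$. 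Consequently $\mu_*\nabla^{\textup{ex}}_\tau$ is a direct summand of $p_*$ applied to a direct sum of copies of $\mu^\circ_*\nabla^{\textup{ex}}_\lambda$.

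The perverse-coherent t-structure is defined by conditions on restrictions to orbits. Pulling back along the finite étale cover $p^\circ$ reflects these conditions, and pushforward along it preserves them. This is the same fact the paper already used to show $\overline{\nabla}_\rho$ is perverse. So it is enough to know that $\mu^\circ_*\nabla^{\textup{ex}}_\lambda$ and $\mu^\circ_*\Delta^{\textup{ex}}_\lambda$ lie in the $(-d)$-shifted perverse-coherent heart on $\mathcal{N}/G^\circ$. That is the connected statement, which I will cite from Bezrukavnikov: $\mu^\circ_*$ is t-exact from the exotic to the shifted perverse-coherent t-structure (\cite{BezExotic}, together with \cite[Corollary 3.1.3]{BezQExcep}, which identifies the perverse-coherent structure with the one generated by $\overline{\nabla}_\lambda[d]$).

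The main obstacle is making the connected input precise for our setup. Here $G^\circ$ has Borel $P^\circ$ and $\mathfrak{u}$ is its nilradical, so the connected statement applies directly. The work is in checking that the identification $\textup{Coh}(\tilde{\mathcal{N}}/G)\cong\textup{Coh}(\mathfrak{u}/P)$ and the twisted-group-algebra bookkeeping are compatible with the exotic generators.

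If a clean citation is unavailable, my fallback is the following. Bezrukavnikov shows that $\mu^\circ_*\nabla^{\textup{ex}}_\lambda$ is either $0$ or $\overline{\nabla}_{\lambda^+}$ up to the appropriate shift (with $\lambda^+$ dominant in the Weyl orbit of $\lambda$), and likewise for $\Delta$. These objects are perverse after the shift by $d$ by \cite[Lemma 3.1.9]{BezQExcep}, and the descent argument above then finishes the proof.
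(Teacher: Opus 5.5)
Your proposal is correct and is essentially the paper's argument. The paper also checks t-exactness on the generators $\nabla^{\mathrm{ex}}_\tau$, $\Delta^{\mathrm{ex}}_\tau$, but it goes directly through your fallback: \cite[Lemma 2.4.8]{BezExotic} gives $\mu^\circ_*\nabla^{\mathrm{ex}}_{w\lambda}\cong\overline{\nabla}_\lambda$ and $\mu^\circ_*\Delta^{\mathrm{ex}}_{w\lambda}\cong\overline{\Delta}_\lambda$ on the nose, with no shift and no zero case, and étale base change through $\mu_M$ and $p_M$ then lands on direct sums of the $\overline{\nabla}_\rho$, $\overline{\Delta}_\rho$, which are already known to be perverse after shifting by $d$.

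One small slip: $\mu$ does not factor as $p\circ\mu^\circ\circ\tilde p$; the correct relation is $\mu\circ\tilde p=p\circ\mu^\circ$. Your displayed formula is still right, obtained via $\mu=p^\lambda\circ p^\lambda_M\circ\mu_M$.
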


\begin{proof}

By \cite[Lemma 2.4.8]{BezExotic} for all Weyl group elements $w$ and dominant weights $\lambda$, $\mu^\circ_*(\nabla^{\text{ex}}_{w\lambda}) = \overline{\nabla}_\lambda$ and $\mu^\circ_*(\Delta^{\text{ex}}_{w\lambda}) = \overline{\Delta}_\lambda$.
By etale base change, $\mu_{M*}( \nabla^{\text{ex}}_\tau) = E_\tau\otimes \overline{\nabla}_\lambda$ and $\mu_{M*}( \Delta^{\text{ex}}_\tau) = E_\tau\otimes \overline{\Delta}_\lambda$, and therefore $\mu_{*}( \nabla^{\text{ex}}_\tau) = p_{M*}(E_\tau\otimes \overline{\nabla}_\lambda)\in \text{Coh}^{\geq 0}(\mathcal{N}/G)$ and $\mu_{*}( \Delta^{\text{ex}}_\tau) = p_{M*} (E_\tau\otimes \overline{\Delta}_\lambda)\in\text{Coh}^{\leq 0}(\mathcal{N}/G) $.

\end{proof}

\section{Main Theorem}

\subsection{ Generous L Parameters}

Generous parameters are those for which understanding the categorical local Langlands correspondence should be the most straightforward.

\begin{defn}\cite[Definition 2.1.5]{BeijingNotes}
An L parameter $\phi: W_F\to\,^LG(\overline{\mathbb{Q}}_\ell)$ is \textit{generous} if $q^{-1}(q(\phi))^{\text{red}}\cong B_{S_\phi}$.
\end{defn}

\begin{rem}

In \cite[Definition 3.6]{HamEis} Hamann defines a Galois-cohomological condition on toral L-parameters which he calls genericity (and does not require $\overline{\mathbb{Q}}_\ell$-coefficients).
It can be seen that in the setting of $\overline{\mathbb{Q}}_\ell$-coefficients his genericity condition agrees with the generosity condition defined above.
Note that it is distinct from the notion of genericity to be defined in the next section, which is weaker.
Similar Galois-cohomological conditions are defined in \cite[Definition 6.2]{HL}, called \textit{Langlands-Shahidi type} and \textit{weakly Langlands-Shahidi type}, which correspond to generosity and genericity, respectively.
    
\end{rem}

The categorical equivalence is expected to be very well behaved at generous parameters:
\begin{conjecture}\cite[Conjecture 2.1.8]{BeijingNotes}\label{GenerousConj}

Let $G$ be a quasi-split reductive group, $b\in B(G)$, and $\pi$ a smooth irreducible representation of $G_b(F)$ with generous Fargues-Scholze parameter $\phi$.
Then, $i_{b\sharp}^{\textup{ren}}\pi\cong i_{b!}^{\textup{ren}}\pi\cong i_{b*}^{\textup{ren}}\pi$ and $c_\psi(i_{b!}^{\textup{ren}}\pi) = i_{\phi*}\rho$ for an irreducible representation $\rho$ of $S_\phi$.
Varying over all generous parameters $\phi$, the associated mapping from the set of pairs $(b,\pi)$, where the Fargues-Scholze parameter of $\pi$ is generous, to the set of pairs $(\phi,\rho)$, where $\phi$ is generous and $\rho\in\textup{Irr}(S_\phi)$, is a bijection satisfying the opposite sign compatibility with the Kottwitz map as Theorem \ref{BMO}.
In cases where classical local Langlands is known, the map $(b,\pi)\mapsto (\phi,\rho^\vee)$ agrees with the classical BM-O bijection.
\end{conjecture}

This is known for $\text{GL}(n)$ by work of Zou \cite[Theorem 8.6]{Zou25}.

\begin{rem}

In \cite{BeijingNotes} this conjecture is formulated in terms of the spectral action functor $a_\psi$ rather than its right adjoint $c_\psi$.  
So more specifically, \cite[Conjecture 2.1.8]{BeijingNotes} takes the form $a_\psi(i_{\phi*}\rho ) \cong i_{b!}^{\text{ren}}\pi$.
When the categorical correspondence is known and $G$ is reasonable as in \cite[Definition A.1.1]{BeijingNotes} (which all groups are conjectured to be, though currently only some groups such as $\text{GL}(n)$ with a classical LLC compatible with the Fargues-Scholze parameter are known to have this property), 
then after specializing to $\phi$, $\text{Perf}(\text{Par}_G^\phi) = \text{Coh}(\text{Par}_G^\phi)$ since $\phi$ is a smooth point of $\text{Par}_G$, $a_\psi$ and $c_\psi$ both preserve compact objects, and they are inverse equivalences between $\text{IndCoh}(\text{Par}_G^\phi)$ and $D(\text{Bun}_G)^\phi$.

\end{rem}

\subsection{Semisimple Generic L Parameters}

\begin{defn}\label{GenLParamDef}
A semisimple L-parameter $\phi$ is \textit{generic} if it is not the semisimplification of any L parameter with nontrivial monodromy.  More generally, an L-parameter is generic if its semisimplification is.
    
\end{defn}
There are a number of equivalent characterizations of generic parameters:

\begin{prop}  \cite[Proposition 2.3.1]{BeijingNotes}\label{EquivGenDef}\ Let $\phi$ be a semisimple parameter.  
The following are equivalent conditions on genericity:

1) $q^{-1}(q(\phi))^{\textup{red}}\cong \mathcal{N}_{S_\phi}/S_\phi$.

2) $L(s,\mathrm{ad}\circ\phi)$ is holomorphic
at $s=1$.

3) The composite map $BS_{\phi}\to\mathcal{N}_{S_{\phi}}/S_\phi\overset{\nu_{\phi}}{\to}\mathrm{Par}_{G}$
defines a smooth point of $\mathrm{Par}_{G}$, where $\nu_\phi$ denotes the natural inclusion from $q^{-1}(q(\phi))^{\textup{red}}$ into $\textup{Par}_G$.

4) The stack $\mathrm{Par}_{G}$ is smooth in a Zariski neighborhood
of $\textup{Par}_G\times_{\textup{Par}_G^{\textup{spec}}}q(\phi)$.
\end{prop}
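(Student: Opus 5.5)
The plan is to route all four conditions through the local structure of $\mathrm{Par}_G$ near the fiber $q^{-1}(q(\phi))$, using the description in Proposition \ref{fibstruct}. That result says the reduced fiber is the stack of pairs $(u,N)$, with $u\in\mathcal{U}_{S_\phi}$ and $N\in\hat{\mathfrak{g}}^{\mathrm{ad}\phi(I_F)}$ satisfying $\mathrm{ad}\phi(\mathrm{Fr})N=q^{-1}N$ and $\mathrm{ad}(u)N=N$, quotiented by $S_\phi$. Condition 1) is therefore equivalent to the vanishing of the space
\[
V_\phi:=\{N\in\hat{\mathfrak{g}}^{\mathrm{ad}\phi(I_F)}:\mathrm{ad}\phi(\mathrm{Fr})N=q^{-1}N\}.
\]
Indeed, if $V_\phi=0$ then only $N=0$ occurs, and the fiber is $\mathcal{U}_{S_\phi}/S_\phi$, identified with $\mathcal{N}_{S_\phi}/S_\phi$ via the exponential. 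Conversely, if $V_\phi\neq0$, then any $N\in V_\phi$ is nilpotent, since it lies in a nonzero eigenspace of the semisimple element $\mathrm{ad}\phi(\mathrm{Fr})$ with eigenvalue not a root of unity. The pair $(1,N)$ then gives a parameter with nontrivial monodromy whose semisimplification is $\phi$. This also shows that 1) is equivalent to Definition \ref{GenLParamDef}.

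For 1)$\Leftrightarrow$2), write $L(s,\mathrm{ad}\circ\phi)=\det(1-q^{-s}\mathrm{Fr}\mid\hat{\mathfrak{g}}^{I_F})^{-1}$, where the inertia invariants are taken for the semisimple (hence open-kernel) parameter $\phi$. A pole at $s=1$ occurs exactly when $\mathrm{Fr}$ has eigenvalue $q$ on $\hat{\mathfrak{g}}^{I_F}$. Since $\mathrm{Fr}$ acts semisimply, that eigenvalue is present iff its eigenspace is nonzero. The sign conventions (geometric Frobenius, and $q$ versus $q^{-1}$) need to be matched here. Using the Killing form, which is $\hat G\rtimes W_F$-invariant and nondegenerate on $\hat{\mathfrak{g}}^{I_F}$ (the restriction of a nondegenerate invariant form to invariants of a finite group stays nondegenerate), the $q$- and $q^{-1}$-eigenspaces are dual to each other. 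So either one is nonzero iff $V_\phi\neq0$. On the center, one takes the adjoint representation on $\hat{\mathfrak{g}}/\mathfrak{z}$ or $\hat{\mathfrak g}^{\mathrm{der}}$ as the convention demands; this does not affect the argument.

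For the smoothness conditions I would use deformation theory. The tangent complex of $\mathrm{Par}_G$ at $\phi$ is the continuous cochain complex $C^\bullet(W_F,\hat{\mathfrak{g}})[1]$. Its obstruction space is $H^2(W_F,\hat{\mathfrak{g}})$, which by local Tate duality for the Weil group with $\overline{\mathbb{Q}}_\ell$-coefficients is dual to $H^0(W_F,\hat{\mathfrak{g}}^*(1))$. Via the Killing form this is $\hat{\mathfrak g}^{I_F,\mathrm{Fr}=q^{\pm1}}$, i.e. $V_\phi$ up to the same duality as above. Since $\mathrm{Par}_G$ is a local complete intersection of the expected dimension (\cite{DHKM}, \cite{ZhuLParam}, \cite{FS}), it is smooth at $\phi$ iff $H^2=0$, which gives 3)$\Leftrightarrow$1). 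For 4)$\Rightarrow$3) there is nothing to prove. For 1)$\Rightarrow$4), smoothness is an open condition, and every point of the fiber specializes to $\phi$ by Proposition \ref{fibstruct}, since $\phi$ lies in each orbit closure. Hence the smooth locus, an open set containing $\phi$, contains the whole fiber, and its intersection with a Zariski neighborhood of the fiber gives the claim.

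The main obstacle is the smoothness step: making precise that the obstruction group is $H^2(W_F,\hat{\mathfrak g})$ and that vanishing of $H^2$ is equivalent to smoothness on a stack that is only lci. I would first try to cite the lci and dimension statements together with Tate duality as in \cite[Section VIII]{FS} and \cite{DHKM}. Failing that, I would verify directly in the local chart given by the fiber of $\mathrm{Par}_G\to\mathrm{Par}_G^{\mathrm{spec}}$ via the Luna slice at $\phi$, presented as $\{(\mathrm{Fr},N)\}$ with equation $\mathrm{Fr}N\mathrm{Fr}^{-1}=q^{-1}N$.
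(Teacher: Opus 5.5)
Your proposal is correct and follows essentially the same route as the paper. You use Proposition \ref{fibstruct} to reduce 1) to the vanishing of the $q^{-1}$-eigenspace of $\mathrm{ad}\,\phi(\mathrm{Fr})$ on $\hat{\mathfrak g}^{\mathrm{ad}\phi(I_F)}$, match that with 2) and with the smoothness criterion $H^0(W_F,\mathrm{ad}(\phi)^*(1))=0$, and deduce 3)$\Rightarrow$4) from openness of the smooth locus together with $\phi$ lying in every orbit closure of the fiber. The differences are minor: you obtain the $q\leftrightarrow q^{-1}$ symmetry from an invariant bilinear form rather than the paper's root-space argument, and you sketch the deformation-theoretic content of \cite[Corollary 5.3]{DHKM}, which the paper simply cites. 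For the form, use a nondegenerate $\hat G\rtimes Q$-invariant form rather than the Killing form, which degenerates on $\mathfrak z(\hat{\mathfrak g})$; this costs nothing, since $\mathrm{Fr}$ acts on $\mathfrak z(\hat{\mathfrak g})$ with finite order.
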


\begin{proof}

The equivalence of $1)$ and the definition of semisimple genericity is immediate from \ref{fibstruct}.

$2)\iff 1)$: By definition $$L(s, \mathrm{ad}\circ\phi)=\mathrm{det}(1-q^{-s}\mathrm{ad}(\phi(\mathrm{Fr}))|\mathfrak{g}^{\mathrm{ad}\phi(I_F)})^{-1}$$ which is holomorphic at $s=1$ if and only if $q$ is not an eigenvalue of $\mathrm{ad}(\phi(\mathrm{Fr}))|\mathfrak{g}^{I_F}$.  Equivalently there are no $N\in\mathfrak{g}^{\mathrm{ad}\phi(I_F)}$ for which $\phi(\mathrm{Fr})\cdot N = qN$, and since $\phi(\text{Fr})$ is semisimple there are no $N\in\mathfrak{g}^{\mathrm{ad}\phi(I_F)}$ for which $\phi(\mathrm{Fr})\cdot N = q^{-1}N$ either (this can be seen by considering the root space decomposition of $\mathfrak{g}^{\mathrm{ad}\phi(I_F)}$ with respect to a maximal torus containing $\phi(\text{Fr})$, noting that $N$ must be a sum of root space vectors, and then considering any element $N'$ in the corresponding direct sum of opposite root spaces).
We conclude by Proposition \ref{fibstruct}.

$1)\iff3)$: By \cite[Corollary 5.3]{DHKM} $\mathrm{Par}_G$ is smooth at $\phi$ if and only if $H^0(W^0_F, \mathrm{ad}(\phi)^*(1))=0$ for a choice of discretization $W_F^0$ of the Weil group (\cite[Section 1.2]{DHKM}).  The vanishing of $H^0$ is equivalent to $\mathrm{Fr}$ not having any $q^{-1}$ eigenvectors in $\mathfrak{g}^{\mathrm{ad}\phi(I_F)}$.

$3)\iff4)$: Clearly $4)$ implies $3)$, so assume $3)$.  We then know $\mathrm{Par}_G$ is smooth in a neighborhood of $\phi$.
Since $\phi$ is in the closure of every point in $q^{-1}(q(\phi))$, 4) follows.

\end{proof}

\begin{prop} 
The locus $\mathrm{Par}_{G}^{\textup{spec},\mathrm{gen}}\subset \mathrm{Par}_{G}^{\textup {spec}}$ parametrizing semisimple generic parameters is open and dense, and contains all generous parameters.

\end{prop}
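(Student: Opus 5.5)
We need to prove three things: the generic locus in $\text{Par}_G^{\text{spec}}$ is open, it is dense, and it contains the generous parameters. The last is immediate. If $\phi$ is generous, the reduced fiber $q^{-1}(q(\phi))^{\text{red}}$ is $BS_\phi$. By Proposition \ref{fibstruct}, this forces the only admissible pair $(u,N)$ to be $(1,0)$, so in particular no parameter with nonzero monodromy has semisimplification $\phi$. Hence $\phi$ is generic by Definition \ref{GenLParamDef}.

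For openness I would use characterization 2) of Proposition \ref{EquivGenDef}. A semisimple $\phi$ is generic if and only if $q$ (equivalently $q^{-1}$) is not an eigenvalue of $\mathrm{ad}(\phi(\mathrm{Fr}))$ on $\hat{\mathfrak g}^{\mathrm{ad}\phi(I_F)}$. The plan is to express this as the non-vanishing of a regular function on $\text{Par}_G^{\text{spec}}$, locally on connected components.

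First, the restriction to inertia is locally constant on $\text{Par}_G$. The image of wild inertia is finite, and on each connected component the $\hat G$-conjugacy class of $\phi|_{I_F}$ on the semisimple (open kernel) locus is fixed; compare the decomposition of $\text{Par}_G$ in \cite{DHKM} and \cite{FS} into components indexed by restrictions to a finite-index open subgroup of inertia. So over a fixed component we may work with $\text{Par}_G^{\text{spec}} = Z^1(W_F^0,\hat G)/\!/\hat G$. Consider the function
\[
f(\phi)=\det\bigl(q-\mathrm{ad}(\phi(\mathrm{Fr}))\,\big|\,\hat{\mathfrak g}^{\mathrm{ad}\phi(I_F)}\bigr).
\]
The space $\hat{\mathfrak g}^{\mathrm{ad}\phi(I_F)}$ is the image of the averaging idempotent $e=\frac{1}{|\phi(I_F)|}\sum_{x\in\phi(I_F)}\mathrm{ad}(x)$. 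That idempotent depends only on the tame/wild data, which are locally constant up to conjugacy, and $\mathrm{ad}(\phi(\mathrm{Fr}))$ commutes with $e$ because Frobenius normalizes $I_F$. Therefore $f$ is a $\hat G$-invariant polynomial in the matrix entries of $\phi(\mathrm{Fr})$ after trivializing $\phi|_{I_F}$, and it descends to a regular function on $\text{Par}_G^{\text{spec}}$. Its value at a closed point $q(\phi)$ depends only on the semisimple part of $\phi(\mathrm{Fr})$, so evaluating at the semisimple representative is legitimate. The generic locus is then exactly $\{f\neq 0\}$, which is open.

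For density, it suffices to show that $f$ is not identically zero on any irreducible component. Then $\{f\neq0\}$ is a nonempty open subset of each component, and hence dense. Given a semisimple $\phi$, I would twist $\phi(\mathrm{Fr})$ by $z(\mathrm{Fr})$ for $z$ a cocharacter-valued deformation into a maximal torus of $Z(C_{\hat G}(\phi(I_F)))^{\circ,\mathrm{Fr}}$, or more simply deform $\phi(\mathrm{Fr})$ inside a maximal torus of $C_{\hat G}(\phi(I_F))$ commuting with it. Such a deformation stays in the same component, because the image of $I_F$ is unchanged and Frobenius still normalizes it. The eigenvalues of $\mathrm{ad}(\phi(\mathrm{Fr}))$ on root spaces then vary by nonconstant characters, so for a generic deformation none of them equals $q$. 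The eigenvalue $1$ on the torus part of $\hat{\mathfrak g}^{\mathrm{ad}\phi(I_F)}$ is harmless, since $q\neq 1$.

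The main obstacle is making this density argument precise in the non-split case. There $\phi(\mathrm{Fr})$ acts through $Q$, so the relevant torus must be chosen in the $\mathrm{Fr}$-twisted centralizer. The concern is that a $\mathrm{Fr}$-fixed eigenvector with eigenvalue exactly $q$ could persist along every deformation. I would handle this by passing to a power $\mathrm{Fr}^k$ that acts by inner automorphisms on $\hat G$ and commutes with $\phi(I_F)$. The eigenvalues of $\phi(\mathrm{Fr})$ are $k$-th roots of those of $\phi(\mathrm{Fr})^k$, so it is enough to show that $q^k$ can be avoided for the inner element $\phi(\mathrm{Fr})^k$. That element lies in a torus of the connected centralizer, and varying it along that torus moves the nonzero weights nontrivially.
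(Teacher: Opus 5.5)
\textbf{Generous parameters and openness.} Your generous-implies-generic step is fine.

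Your openness argument differs from the paper's. The paper only observes that the image under the good quotient $q$ of the closed, $\hat G$-stable singular locus is closed, and then applies Proposition \ref{EquivGenDef}(4). Your determinant function has the advantage of giving an explicit equation, but it needs a repair. The restriction $\phi|_{I_F}$ is not locally constant: the monodromy varies, $\phi(I_F)$ is infinite when $N\neq0$, and $\hat{\mathfrak g}^{\mathrm{ad}\phi(I_F)}$ jumps in dimension. So your idempotent should average over the finite group $\tilde\phi(I_F)$ from Grothendieck's monodromy theorem. That group is locally constant up to conjugacy and is normalized by $\phi(\text{Fr})$.

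\textbf{The gap is in density.} Your first deformation, by $Z(C_{\hat G}(\phi(I_F)))^{\circ}$, does nothing. Central elements of $S=C_{\hat G}(\phi(I_F))^\circ$ act trivially on $\mathfrak s=\hat{\mathfrak g}^{\mathrm{ad}\phi(I_F)}$.

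Your second deformation asks for a maximal torus of $S$ \emph{commuting} with $\phi(\text{Fr})$. Such a torus exists only if $\phi(\text{Fr})$ acts on $S$ by an inner automorphism. This already fails for $\mathrm{GL}_n$ when Frobenius permutes isotypic pieces of $\phi|_{I_F}$ and so swaps factors of $S$. In general you can only take $T_S$ \emph{normalized} by $\phi(\text{Fr})$, as the paper does.

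Now take a Frobenius orbit $O$ of roots. The eigenvalues of $\mathrm{ad}(t\phi(\text{Fr}))$ on $\bigoplus_{\beta\in O}\mathfrak s_\beta$ are the $|O|$-th roots of $c_O\prod_{\beta\in O}\beta(t)$. These are constant in $t$ whenever $\sum_{\beta\in O}\beta=0$ in $X^*(T_S)$, for example when $O=\{\alpha,-\alpha\}$. This is exactly the persistence you worry about.

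Passing to $\text{Fr}^k$ does not remove it. The family only changes $\phi(\text{Fr})^k$ to $N(t)\phi(\text{Fr})^k$, where $N(t)=\prod_{i=0}^{k-1}\phi(\text{Fr})^i t\,\phi(\text{Fr})^{-i}$. So you move only along the subtorus $N(T_S)$, not all of $T_S$; you cannot vary $\phi(\text{Fr})^k$ independently of $\phi(\text{Fr})$. Moreover $\alpha\circ N=\tfrac{k}{|O|}\sum_{\beta\in O}\beta$, which may vanish.

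Concretely, take $\hat G=S=\mathrm{GL}_2$, $\phi$ unramified with $\phi(\text{Fr})=w=\begin{pmatrix}0&1\\1&0\end{pmatrix}$, and $T_S$ diagonal. Then $(tw)^2$ is central, and $\mathrm{ad}(tw)$ has root-space eigenvalues $\pm1$ for every $t$.

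\textbf{What is missing} is a reason why such a constant eigenvalue is harmless. There are two ways to supply it.
\begin{itemize}
\item The paper's way: choose $k$ with $\phi(\text{Fr})^k\in T_S$. If $\alpha\circ N$ is trivial, evaluate it at $t=\phi(\text{Fr})^k$ to get $1=\alpha(\phi(\text{Fr})^{k^2})=\alpha(\phi(\text{Fr})^k)^k$. Hence the constant eigenvalue is a root of unity and is never $q^{\pm1}$.
\item Alternatively, choose $T_S$ inside a $\phi(\text{Fr})$-stable Borel subgroup of $S$. This exists by Steinberg's theorem, since $\phi(\text{Fr})$ induces a semisimple automorphism of $S$. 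Then each orbit $O$ consists of roots of one sign, so $\sum_{\beta\in O}\beta\neq0$, and every root-space eigenvalue genuinely moves with $t$.
\end{itemize}
With either fix, the rest of your argument goes through.
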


\begin{proof}
The last claim is immediate.  
Openness follows because by GIT theory the image of the singular locus of $\text{Par}_G$ is closed in $\text{Par}_G^{\text{spec}}$, and by Proposition \ref{EquivGenDef} an L-parameter is generic if and only if $\text{Par}_G$ is smooth in a neighborhood of $q^{-1}(q(\phi))$.
For density, let $\phi$ be a semisimple parameter and let $S$ be the connected component of the centralizer of $\phi(I_F)$ ($S$ is reductive since $\phi(I_F)$ is finite).  
Let $T_S$ be a maximal torus of $S$, which we choose to be normalized by $\phi(\text{Fr})$. 
For any $t\in T_S(\overline{\mathbb{Q}}_\ell)$ define $\phi_t$ to be the $L$-parameter such that $\phi_t(w) = \phi(w)$ for all $w\in I_F$ and such that $\phi_t(\mathrm{Fr}) = t\phi(\mathrm{Fr})$. 
We claim the collection of $t$ such that $\text{ad}(t\phi(\mathrm{Fr}))$ has no $q^{-1}$ eigenvalues are the $\overline{\mathbb{Q}}_\ell$-points of an open dense subvariety of $T_S$. 
Let $\mathfrak{t}$ be the Lie algebra of $T_S$.
Then since a finite power of $t\phi(\text{Fr})$ acts as the identity on $\mathfrak{t}$, all the eigenvalues of $t\phi(\text{Fr})|_\mathfrak{t}$ are roots of unity, and hence cannot be $q^{-1}$ for any $t$.
To study the eigenvalues of $\phi(\text{Fr})$ on $\mathfrak{s}$ lying in the span of roots it suffices to study its eigenvalues on each subspace of the form $$\mathfrak{s}_O:=\bigoplus\limits_{\alpha\in O}\mathfrak{s}_\alpha$$ where $O$ is an orbit of roots under the action of $\phi(\text{Fr})$.
Let $v\in\mathfrak{s}_O$ be an eigenvector.  
Then the eigenvalue of $t\phi(\text{Fr})$ acting on $v$ defines a section $f_v(t)$ of $\mathcal{O}(T_S)$, and if $f_v(t)$ is not constant the requirement $f_v(t)=q^{-1}$ is a closed condition on $T_S$.
We claim that if $f_v(t)$ is constant then it must be a root of unity, which will conclude the proof of generic density.
Since $t\phi(\text{Fr})$ permutes the roots of $T_S$, so there is some large integer $k$ such that every root in $O$ is an eigenvector of $(t\phi(\text{Fr}))^k$; $k$ is independent of $t$ since $t$ doesn't permute roots.
In fact we may replace $k$ with a large multiple of itself to assume $(t\phi(\text{Fr}))^k\in T_S$ for all $t$ ($\hat{G}\rtimes Q$ has finite component group, so any $T_S$-point can be power-raised into the identity component, and $\phi(I_F)$ is finite so any $T_S$-point of its normalizer can be power-raised to its centralizer).
To show the eigenvalues of $t\phi(\text{Fr})$ are roots of unity it suffices to show the same for $(t\phi(\text{Fr}))^k$, and since $(t\phi(\text{Fr}))^k\in T_S$ we may assume $v\in\mathfrak{s}_\alpha$ for some $\alpha\in O$. 
Note that $$(t\phi(\text{Fr}))^k=(\prod\limits_{i=1}^{k}\phi(\text{Fr})^{i-1}t\phi(\text{Fr})^{-(i-1)})\phi(\text{Fr})^k $$
Since the eigenvalues of $(t\phi(\text{Fr}))^k$ and $\phi(\text{Fr})^k$ acting on $v$ are both constant as $t$ varies, their ratio must be as well.
And being a character of $T_S$ it is trivial, so we are reduced to showing $\alpha(\phi(\text{Fr})^k)$ is a root of unity.
Plugging in $t=\phi(\text{Fr})^k$, $$ 1= \alpha(\prod\limits_{i=1}^{k}\phi(\text{Fr})^{i-1}\phi(\text{Fr})^k\phi(\text{Fr})^{-(i-1)} )= \alpha(\phi(\text{Fr})^k)^{k}$$ as desired.
So the map from $T_S$ to $\text{Par}_G^{\text{spec}}$ sending $t$ to $q(\phi_t)$ sends $1$ to $q(\phi)$ and a dense open of $T_S$ into the generic locus, establishing density.

\end{proof}

A nice property of the generic locus which will simplify the study of spectral Eisenstein series is the following:
\begin{prop}
\cite[Proposition 2.3.3]{BeijingNotes}For any parabolic $P=MU\subset G$, mapping to $\textup{Par}_G^{\textup{spec}}$ via the composite $\mathrm{Par}_{P}\overset{p^{\mathrm{spec}}}{\to}\mathrm{Par}_{G}\overset{q}{\to}\textup{Par}_G^{\mathrm{spec}}$, the open substack 
\[
\mathrm{Par}_{P}^{\mathrm{gen}}:=\mathrm{Par}_{P}\times_{\textup{Par}_G^{\mathrm{spec}}}\textup{Par}_G^{\mathrm{spec,gen}}\subset\mathrm{Par}_{P}
\]
has trivial derived structure.
\end{prop}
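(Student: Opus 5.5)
The plan is to reduce the claim to a cohomology vanishing at each point, using the standard deformation-theoretic description of the derived stack of parameters. As in \cite{ZhuLParam} and \cite[Chapter VIII]{FS}, $\mathrm{Par}_P$ (as a derived stack) is quasi-smooth. Its cotangent complex at a $\overline{\mathbb{Q}}_\ell$-point $\phi_P$ is computed by the continuous cohomology of (a discretization $W_F^0$ of) the Weil group with coefficients in $\mathrm{ad}\circ\phi_P$ acting on $\hat{\mathfrak p}$, concentrated in degrees $0,1,2$. The obstruction group is $H^2(W_F^0,\hat{\mathfrak p})$. I will use the criterion that a quasi-smooth derived stack has trivial derived structure (indeed is smooth, with classical truncation equal to itself) on the open locus where the obstruction group vanishes at every point. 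This is the same argument as \cite[Corollary 5.3]{DHKM}, which was used in Proposition \ref{EquivGenDef}. So it suffices to show $H^2(W_F^0,\hat{\mathfrak p})=0$ for every point $\phi_P$ of $\mathrm{Par}_P$ whose image in $\mathrm{Par}_G^{\mathrm{spec}}$ is generic. Since the generic locus is open by the preceding proposition, $\mathrm{Par}_P^{\mathrm{gen}}$ is an open substack, and checking at closed points suffices.

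By local Tate duality for $W_F^0$ as in \cite{DHKM}, $H^2(W_F^0,\hat{\mathfrak p})\cong H^0(W_F^0,\hat{\mathfrak p}^*(1))^*$. We therefore need to show that there is no nonzero vector of $\hat{\mathfrak p}^*$ fixed by $I_F$ on which $\phi_P(\mathrm{Fr})$ acts by the appropriate power of $q$; the sign convention is the one matching the proof of $1)\iff 3)$ in Proposition \ref{EquivGenDef}. Via the Killing form, $\hat{\mathfrak p}^*\cong \hat{\mathfrak g}/\hat{\mathfrak u}$, where $\hat{\mathfrak u}$ is the nilradical of $\hat{\mathfrak p}$. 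This quotient is a $\hat P$-stable subquotient of $\hat{\mathfrak g}$, and $\hat{\mathfrak g}$ has a $\hat P$-stable filtration whose graded pieces are all $\hat M$-modules. Taking $W_F^0$-invariants is left exact. Hence if $H^0$ of the twisted module were nonzero, then $H^0$ of some graded piece of $\hat{\mathfrak g}(1)$ would be nonzero. On these graded pieces $\phi_P$ acts through its image $\phi_M$ in $\mathrm{Par}_M$, so their direct sum is $\hat{\mathfrak g}$ with the action $\mathrm{ad}\circ\iota_M\circ\phi_M$.

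Next I pass to semisimplifications. A nonzero $W_F^0$-invariant vector in $\hat{\mathfrak g}(1)$ is fixed by all of $I_F$, so it is killed by the monodromy $N$. It lies in the $I_F$-invariants of the semisimple part $\tilde\phi$, and it is a generalized eigenvector for $\phi(\mathrm{Fr})$. Then $\phi(\mathrm{Fr})_{\mathrm{ss}}$ has the same eigenvalue on some nonzero vector. The same holds after composing with $\iota_M$, so $\mathrm{ad}\circ(\iota_M\circ\phi_M)^{\mathrm{ss}}$ has a $q^{\pm1}$-eigenvector in $\hat{\mathfrak g}^{\mathrm{ad}\phi(I_F)}$. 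Semisimplification commutes with $\mathrm{Par}_P\to\mathrm{Par}_M\to\mathrm{Par}_G$ on the level of $q$: $q(p^{\mathrm{spec}}(\phi_P))=q(\iota_M\circ\phi_M)$, because both have the same semisimplification, as the Levi projection lies in the closure of the $\hat P$-orbit via a central cocharacter of $\hat M$. So this semisimple parameter is the generic point $q(\phi)$. By the eigenvalue symmetry argument in the proof of $2)\iff1)$ of Proposition \ref{EquivGenDef}, a $q$-eigenvector exists if and only if a $q^{-1}$-eigenvector does, and this contradicts genericity. Therefore the obstruction group vanishes.

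I expect the main obstacle to be the first step: justifying rigorously that vanishing of $H^2$ at all points of an open substack forces trivial derived structure for $\mathrm{Par}_P$ specifically. The cleanest route I would try first is to write $\mathrm{Par}_P$ locally, as in \cite{DHKM} and \cite{ZhuLParam}, as a derived fiber of $\hat P\times\hat P\to\hat P$ (imposing the tame relation $\mathrm{Fr}\,\sigma\,\mathrm{Fr}^{-1}=\sigma^q$ on finite-type pieces). There, $H^2$ is the cokernel of the differential of the defining map, so its vanishing means the map is smooth at the point. Hence the derived fiber product is classical and smooth there. The compatibility of semisimplifications along $\mathrm{Par}_P\to\mathrm{Par}_G$ is routine by comparison.
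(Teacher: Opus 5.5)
Your argument is correct. The paper gives no proof of its own here; it simply defers to \cite[Proposition 2.3.3]{BeijingNotes}. So what you have written is a self-contained proof rather than a variant of an argument in the text.

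It also gives slightly more than is stated. After fixing the restriction to wild inertia (harmless in characteristic $0$), $\mathrm{Par}_P$ is locally the derived zero locus of $(F,S)\mapsto FSF^{-1}S^{-q}$. Vanishing of $H^2(W_F^0,\hat{\mathfrak p})$ at a point makes this map a submersion there, so $\mathrm{Par}_P^{\mathrm{gen}}$ is in fact smooth, not merely classical.

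Your ingredients are all ones the paper already uses: the duality criterion of \cite[Corollary 5.3]{DHKM} from Proposition \ref{EquivGenDef}, and degeneration to the Levi part by a cocharacter as in Proposition \ref{fibstruct}. Your semisimplification paragraph can even be shortcut. After passing to graded pieces you need exactly $H^0(W_F^0,\mathrm{ad}(\iota_M\circ\phi_M)^*(1))=0$. By \cite[Corollary 5.3]{DHKM} this is smoothness of $\mathrm{Par}_G$ at $\iota_M\circ\phi_M$. That smoothness is Proposition \ref{EquivGenDef}(4), since $\iota_M\circ\phi_M$ lies in the fiber of $q$ over the generic point $q(p^{\mathrm{spec}}(\phi_P))$.

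Two small points to tidy:
\begin{itemize}
\item The Killing form is degenerate on the center of $\hat{\mathfrak g}$. Either use a nondegenerate form invariant under $\hat G\rtimes Q$, or just note that $\hat{\mathfrak p}^*$ is filtered with graded pieces the duals of the graded pieces of $\hat{\mathfrak p}$. These are ${}^LM$-direct summands of $\hat{\mathfrak g}^*$.
\item The contracting cocharacter of $Z(\hat M)$ should be chosen $W_F$-invariant. Then conjugation by it fixes the $W_F$-component of $\phi_P$, and the limit really is $\iota_M\circ\phi_M$.
\end{itemize}
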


\begin{proof}
Given in \cite{BeijingNotes}.
\end{proof}

\ 
\newline

Let $M$ be the minimal standard Levi of $G$ such that some conjugate of $\phi$ factors through $\hat{M}\rtimes Q$, and WLOG we replace $\phi$ by such a conjugate.
Let $\hat{P}$ be the opposite standard parabolic of $\hat{G}$ with Levi $\hat{M}$, $S_{\phi,P}=S_\phi\cap\hat P$, and $S_{\phi,M}=S_\phi\cap\hat M$.
We wish to show we are in the situation of Section \ref{SpringerSect}, with the following translation: 

\[
G=S_\phi,\quad M=S_{\phi,M},\quad P=S_{\phi,P},\quad H=\hat{G},\quad Q=\hat{P},\quad L=\hat{M}, \quad T=Z(\hat{M}\rtimes Q)^\circ
\]

Let $\phi_M$ be the lift of $\phi$ to $\hat M\rtimes Q$, so in particular $S_{\phi,M} = S_{\phi_M}$.
\begin{lem}\label{CenterSphiM}
$S_{\phi, M}^\circ = Z(\hat{M}\rtimes Q)^\circ$, and therefore $\phi_M$ is generous.
\end{lem}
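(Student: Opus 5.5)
First I will show that the identity component of $S_{\phi,M}$ is a torus. Since $\phi$ is semisimple, $\phi_M$ is a semisimple parameter of $M$, and $S_{\phi,M}=S_{\phi_M}$ is (possibly disconnected) reductive by Proposition \ref{ssLparamprop}. Suppose $S_{\phi,M}^\circ$ contained a nontrivial cocharacter $\chi$ not central in $\hat M$. Then the centralizer $C_{\hat M}(\chi)$ is a Levi subgroup of $\hat M$, and the parabolic attached to $\chi$ is normalized by $\phi(W_F)$. Hence $\phi$ factors through $C_{\hat M}(\chi)\rtimes Q$. The needed fact is that a cocharacter of $S_{\phi_M}^\circ$ gives a $\phi$-stable Levi of $^LM$ in the sense of \cite[VIII.3.1]{FS}, up to $\hat M$-conjugacy. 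With the identification of parabolics in the Remark after Proposition \ref{ssLparamprop}, this Levi is conjugate to a proper standard Levi of $\hat M$, which contradicts the minimality of $M$. So every cocharacter of a maximal torus of $S_{\phi,M}^\circ$ is central in $\hat M$. Since $S_{\phi,M}^\circ$ is reductive and its maximal tori are central in $\hat M$, it is itself a torus contained in $Z(\hat M)$.

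Next I identify this torus. It is contained in $Z(\hat M)$ and commutes with $\phi(W_F)$, so it lies in $(Z(\hat M)^{\phi(W_F)})^\circ$. The action of $\phi(w)$ on $Z(\hat M)$ is only through the image of $w$ in $Q$, because $\hat M$ acts trivially on its own center. This gives $S_{\phi,M}^\circ=(Z(\hat M)^{Q})^\circ=Z(\hat M\rtimes Q)^\circ$. The reverse inclusion is immediate, since $Z(\hat M\rtimes Q)^\circ$ centralizes all of $\hat M\rtimes Q$ and hence $\phi$.

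For generosity I use Proposition \ref{fibstruct} applied to $M$. The reduced fiber over $q(\phi_M)$ consists of pairs $(u,N)$ with $u\in\mathcal U_{S_{\phi_M}}$ and $N\in\hat{\mathfrak m}^{\mathrm{ad}\phi(I_F)}$ satisfying $\mathrm{ad}\phi(\mathrm{Fr})N=q^{-1}N$.
\begin{itemize}
\item Since $S_{\phi_M}^\circ$ is a torus, $\mathcal U_{S_{\phi_M}}$ is trivial. A unipotent element of a group whose identity component is a torus generates a unipotent connected closed subgroup, which must lie in the torus and is therefore trivial.
\item For $N$, I argue as in the proof of Proposition \ref{fibstruct}. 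If $N\neq 0$, there is a cocharacter $\lambda$ of the Zariski closure of $\langle\phi(\mathrm{Fr})\rangle^\circ$ acting on $N$ with positive weight, and $\lambda$ lands in $S_{\phi_M}^\circ$. By the previous step $\lambda$ is then central in $\hat M$, so it acts trivially on $\hat{\mathfrak m}$. This contradicts the positive weight, so $N=0$.
\end{itemize}
Hence the reduced fiber is $BS_{\phi_M}$, and $\phi_M$ is generous.

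The main obstacle is the first step: producing from a noncentral cocharacter of $S_{\phi_M}^\circ$ a proper Levi of $^LM$, in the FS sense, through which $\phi$ factors, compatibly with $Q$. I would handle it as follows. Take $\chi$ in a maximal torus of $S_{\phi_M}^\circ$. The subgroup $C_{\hat M}(\chi)\rtimes Q$ contains $\phi(W_F)$ and surjects onto $Q$, because $\phi$ is a section. It is the Levi of the $Q$-stable parabolic $P_\chi\rtimes Q$. By the Remark following Proposition \ref{ssLparamprop}, this parabolic is conjugate to a standard one, which transports $C_{\hat M}(\chi)$ to a proper standard Levi whenever $\chi$ is not central.
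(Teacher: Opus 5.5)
Your argument is correct and is essentially the paper's. The paper works with a maximal torus $T_\phi$ of $S_{\phi,M}^\circ$ instead of a single cocharacter, notes $\phi(W_F)\subseteq C_{\hat G\rtimes Q}(T_\phi)$, and uses minimality of $M$ to rule out this centralizer being a Levi strictly inside $\hat M\rtimes Q$. You also spell out the conjugacy-to-standard step and the deduction of generosity, both of which the paper leaves implicit.

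One small repair: $C_{\hat M}(\chi)$ and $P_\chi$ need not be stable under the pinned $Q$-action, so $C_{\hat M}(\chi)\rtimes Q$ and $P_\chi\rtimes Q$ are not subgroups in general. Use $C_{\hat M\rtimes Q}(\chi)$ and $P_\chi\cdot C_{\hat M\rtimes Q}(\chi)$ instead. These contain $\phi(W_F)$, surject onto $Q$, and meet $\hat M$ in $C_{\hat M}(\chi)$ and $P_\chi$ respectively, so the Remark after Proposition~\ref{ssLparamprop} applies to them.
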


\begin{proof}

$Z(\hat{M}\rtimes Q)^\circ\subset S_{\phi, M}^\circ$ is immediate since the image of $\phi$ is contained in $\hat{M}\rtimes Q$.
In the other direction, let $T_\phi$ be a maximal torus of $S_{\phi,M}^\circ$.  
The image of $\phi$ is contained in $C_{\hat{G}\rtimes Q}(T_\phi)$, and if $S_{\phi,M}^\circ$ is not a torus $C_{\hat{G}\rtimes Q}(T_\phi)$ is a Levi strictly contained in $\hat{M}\rtimes Q$, a contradiction.

\end{proof}

Now we study the sheaves on the centralizer $S_\phi$ of $\phi$ constructed in section \ref{SpringerSect}.  
Fix an irreducible representation $\rho$ of $S_\phi$.  
Let $\overline{\Delta}_{\rho}, IC_{\rho},$ and $ \overline{\nabla}_{\rho}$ be as defined in \ref{SpringerSect}, and let $\delta_{\rho}$ be the pushforward of $\rho$ along the inclusion of $*/S_\phi$ into $\mathcal{N}_{S_\phi}/S_\phi$ at $0$.

\begin{prop}\label{GenSheafProp}
The sheaves $\overline{\nabla}_\rho$ and $\overline{\Delta}_\rho$ satisfy the following properties, conjectured in \cite[Section 2.3]{BeijingNotes}.

1. When $\phi$ is generous, $\overline{\Delta}_{\rho}=IC_{\rho}=\overline{\nabla}_{\rho}=\delta_{\rho}=\rho$.

2. When $\rho$ factors over the quotient $S_{\phi}\to S_{\phi}^{\natural}$,
$\overline{\Delta}_{\rho}=IC_{\rho}=\overline{\nabla}_{\rho}$ is
the pullback of $\rho$ along the tautological map $\mathcal{N}_{S_{\phi}}\to BS_{\phi}$.

3. For all $\rho$, there is a unique irreducible representation $\rho'$ of $S_\phi$
such that $\delta_{\rho}\cong IC_{\rho'}[d]$, where $d=\mathrm{dim}(\mathcal{N}_{S_\phi})/2$.

4. There is a surjection of coherent sheaves $\overline{\nabla}_{\rho}\twoheadrightarrow\delta_{\rho}$.
\end{prop}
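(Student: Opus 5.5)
I would treat the four items separately, working throughout in the setup of Section \ref{SpringerSect} with $G=S_\phi$, $P=S_{\phi,P}$, $M=S_{\phi,M}$ and $T=Z(\hat M\rtimes Q)^\circ=S_{\phi,M}^\circ$ (Lemma \ref{CenterSphiM}). In that setting $S_\phi^\circ$ has Borel $P^\circ$ and torus $T$, and $d=\dim\mathcal N_{S_\phi}/2=\dim\mathfrak u$.

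\textbf{Item 1.} If $\phi$ is generous, the nilpotent cone $\mathcal N_{S_\phi}$ is the single point $0$, because Proposition \ref{fibstruct} identifies the reduced fiber with $BS_\phi$. Hence $S_\phi^\circ$ is a torus. So $\mathfrak u=0$ and $\mu$ is the identity of $*/S_\phi$, and $\overline{\nabla}_\lambda=\overline{\Delta}_\lambda=\lambda$ since $w_0=1$. Then Theorem \ref{DiscRedClass} gives $\overline{\nabla}_\rho=\overline{\Delta}_\rho=p^\lambda_*(E_\rho\otimes\lambda)=\mathrm{Ind}_{G^\lambda}^G(E_\rho\otimes\lambda)=\rho$. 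The image $IC_\rho$ of the isomorphism $\overline{\Delta}_\rho\to\overline{\nabla}_\rho$ is also $\rho$. Finally $\delta_\rho=\rho$ by definition, since $d=0$.

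\textbf{Item 2.} Suppose $\rho$ factors through $S_\phi^\natural$. Then $\rho|_{S_\phi^\circ}$ is trivial on $(S_\phi^\circ)^{\mathrm{der}}$, so $\lambda$ is a character of $S^\circ_\phi$ with $\lambda=w_0\lambda$. Bezrukavnikov's computation gives $\overline{\nabla}_\lambda=\overline{\Delta}_\lambda=\mathcal O_{\mathcal N}\otimes\lambda$: by the projection formula and normality of $\mathcal N$, $\mu^\circ_*\mathcal O(\lambda)=\lambda\otimes\mu^\circ_*\mathcal O=\lambda\otimes\mathcal O_{\mathcal N}$. The cocycles and the identifications $\theta_a$ are those of $\lambda$ itself. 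So $p^\lambda_*(E_\rho\otimes\overline\nabla_\lambda)$ is the pullback of $\mathrm{Ind}(E_\rho\otimes\lambda)=\rho$ to $\mathcal N/S_\phi$, and the same holds for $\overline{\Delta}_\rho$. The canonical map between them is nonzero, so by $\mathrm{End}^0=k$ it is an isomorphism, and $IC_\rho$ equals both.

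\textbf{Item 4.} Restrict to $0$ via $i_0:*/S_\phi\to\mathcal N/S_\phi$. The module $\overline{\nabla}_\lambda=\Gamma(\tilde{\mathcal N},\mathcal O(\lambda))$ is graded by the $\mathbb G_m$ scaling action. Its degree-zero piece, the fiber at the cone point after quotienting by the augmentation ideal, is $H^0(G^\circ/P^\circ,\mathcal O(\lambda))=L(\lambda)$, with the sign conventions chosen so that $\lambda$ dominant gives $L(\lambda)$. So there is a surjection $\overline{\nabla}_\lambda\twoheadrightarrow i_{0*}L(\lambda)$, and it is $G^\lambda$-equivariant by the normalization of $\theta_a^\nabla$ (identity at $0\in\mathfrak u$). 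Tensoring with $E_\rho$ and applying the exact functor $p^\lambda_*$ gives $\overline{\nabla}_\rho\twoheadrightarrow i_{0*}\mathrm{Ind}(E_\rho\otimes L(\lambda))=\delta_\rho$.

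\textbf{Item 3.} $\delta_\rho$ is supported on the closed orbit $\{0\}$. A skyscraper at a point orbit of codimension $2d$ lies in the heart of the perverse-coherent t-structure shifted by $d$, with the middle perversity of \cite{PervCoh}. As $\rho$ is irreducible, $\delta_\rho[-d]$ is irreducible perverse-coherent, in fact $IC(\{0\},\rho)$. Lemma \ref{IrrIC} then gives a unique $\rho'$ with $\delta_\rho\cong IC_{\rho'}[d]$.

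The main obstacle is fixing the shift conventions consistently: the paper's $IC_\rho$ lives in the heart after shifting by $d$, and one must check that the shift attached to $\delta_\rho$ matches. If needed, I would verify this on $\mathcal N/S_\phi^\circ$ using Bezrukavnikov's description, and then descend by the Mackey argument of Lemma \ref{IrrIC}. Uniqueness of $\rho'$ follows from the uniqueness part of that lemma.
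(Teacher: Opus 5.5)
Your proposal is correct and is essentially the paper's proof, item by item: the torus centralizer in the generous case; the projection formula for $\mu^\circ$ plus base change along $p^\lambda$ in the $S_\phi^\natural$ case; $\delta_\rho=IC(\{0\},\rho)$ together with Lemma \ref{IrrIC} for item 3; and for item 4 the surjection onto $i_{0*}L(\lambda)$ followed by the exact functor $p^\lambda_*$. There are a few small fixes, and the shift you flagged does resolve in your favour:
\begin{itemize}
\item Normality of $\mathcal{N}$ only gives $R^0\mu^\circ_*\mathcal{O}=\mathcal{O}_{\mathcal{N}}$. Vanishing of the higher direct images needs rational singularities, which is what the paper uses (alternatively, \cite[Fact 3.1]{BezQExcep} at $\lambda=0$).
\item The fiber at $0$ surjects onto the degree-zero piece but need not equal it. The projection onto the degree-zero piece is all you actually use.
\item For the middle perversity $p(O)=-\dim O/2$ of \cite{PervCoh} one has $p(\{0\})=0$. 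So $\delta_\rho$ itself, not $\delta_\rho[-d]$, is the irreducible perverse-coherent sheaf $IC(\{0\},\rho)$. The $IC_{\rho'}$, being images of $\overline{\Delta}_{\rho'}\to\overline{\nabla}_{\rho'}$, are normalized so that $IC_{\rho'}[d]$ is perverse, which gives $\delta_\rho\cong IC_{\rho'}[d]$.
\end{itemize}
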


\begin{proof}

1.  If $\phi$ is generous, then $\mathcal{N}_{S_\phi}$ consists of the single point $0$, so $S_\phi^\circ = S_{\phi,M}^\circ$.
For any dominant weight $\lambda$, $\overline{\nabla}_\lambda = \lambda\in\text{Coh}(*/S_\phi^\circ)$.
Then $$\overline{\nabla}_\rho=E_\rho\otimes \lambda=\rho=\delta_\rho$$

\noindent Since $S_\phi^\circ$ is a torus its Weyl group is trivial.  
So $w_0(\lambda)=\lambda$ and  $\overline{\Delta}_\rho=\overline{\nabla}_\rho$.  
It follows that $$IC_\rho = \overline{\Delta}_\rho=\overline{\nabla}_\rho=\delta_\rho$$

2.  If $\rho$ factors through $S_\phi^\natural:=S_\phi/(S_\phi\cap\hat{G}^{\text{der}})^\circ$ then in particular $\rho|_{S_\phi^\circ}$ factors through $S^\circ_\phi/(S_\phi^\circ)^{\text{der}}$, the abelianization of $S_\phi^\circ$.
So all weights in $\rho$ are the pullbacks of characters of $S_\phi^\circ$ and thus invariant under $W_{S_\phi^\circ}$.
So $\overline{\Delta}_\rho=\overline{\nabla}_\rho$, and therefore $\overline{\Delta}_\rho=\overline{\nabla}_\rho=IC_\rho$.

Consider the following diagram (which does not commute!):

\[
\begin{tikzcd}
\tilde{\mathcal{N}}_{S_\phi}/S_\phi^\circ\cong\mathfrak{u}/S_{\phi,P}^\circ \arrow[r, "\eta^\circ"] \arrow[d, "\mu^\circ"'] & */S_{\phi, P}^\circ \arrow[r, "p_{P,M}^\circ"] & */S_{\phi,M}^\circ \arrow[d, "\iota_M^\circ"] \\ \mathcal{N}_{S_\phi^\circ}/S_\phi^\circ \arrow[rr, "\pi^\circ"'] & & */S_\phi^\circ
\end{tikzcd}
\]

Note that $\pi^\circ\mu^\circ$ is also given by $\eta^\circ$ composed with $\iota_P^\circ: */S_{\phi,P}^\circ\to*/S_\phi^\circ$.  
If $\lambda$ is a character of $S_\phi^\circ$, then although the diagram does not commute it is still true that

$$\mathcal{O}_{\tilde N}(\lambda) = (\iota^\circ_Mp^\circ_{P,M}\eta^\circ)^*\lambda = \mu^{\circ,*}\pi^{\circ,*}\lambda$$ 
because the unipotent radical of $S_{\phi,P}^\circ$ lies in the kernel of $\lambda$.  
So $$\overline{\nabla}_\lambda = \mu_*(\mu^*\pi^*\lambda) = \pi^*\lambda$$ (where the last equality follows from the projection formula, and the fact that the nilpotent cone has rational singularities).

Now consider the diagram

\[
\begin{tikzcd}
\mathcal{N}_{S_\phi}/S_\phi^\lambda \arrow[r, "\pi^\lambda"] \arrow[d, "p^\lambda"'] & */S_\phi^\lambda \arrow[d, "\tilde{p}^\lambda"] \\
\mathcal{N}_{S_\phi}/S_\phi \arrow[r, "\pi"'] & */S_\phi
\end{tikzcd}
\]

Then $$\overline{\nabla}_\rho\cong p^\lambda_*(E_\rho\otimes \overline{\nabla}_\lambda)\cong p^\lambda_*(E_\rho\otimes\pi^{\circ,*}\lambda)\cong p_*^\lambda\pi^{\lambda,*}(E_\rho\otimes\lambda)\cong \pi^*\tilde{p}^\lambda_*(E_\rho\otimes\lambda )$$ as desired, using etale base change in the last step.

3.  Recalling $d=\text{dim}(\mathcal{N}_{S_\phi})/2$, by Lemma \ref{IrrIC} and \cite[Proposition 4.11]{PervCoh} the perverse-coherent sheaves $IC_\rho[d]$ are exactly those of the form $ IC(O,L)$ for some orbit $O\subset\mathcal{U}_{S_\phi}$ and irreducible representation $L$ of its centralizer in $S_\phi$.
Taking $O$ to be the trivial orbit in $\mathcal{U}_{S_\phi}$, its centralizer is $S_\phi$, and $IC(O,\rho)=\delta_\rho$.
It follows that there exists some irreducible representation $\rho'$ of $S_\phi$ such that $IC_{\rho'}[d]\cong\delta_\rho$.

4.  The classical fiber at $0$ of $\overline{\nabla}_\lambda$ surjects onto $L(\lambda)$, so the classical fiber of $E_\rho\otimes \overline{\nabla}_\lambda$ at $0$ surjects onto $E_\rho\otimes L(\lambda)$, and hence the classical fiber of $p^\lambda_*(E_\rho\otimes \overline{\nabla}_\lambda)$ at $0$ surjects onto $\text{Ind}_{S^\lambda_{\phi}}^{S_\phi}(E_\rho\otimes L(\lambda)) = \rho$.
And then $\overline{\nabla}_\rho\twoheadrightarrow\delta_\rho$ is just the natural projection.

\end{proof}

\subsection{The Fargues-Scholze correspondence at a generic parameter}

The goal of this section is to study the Fargues-Scholze correspondence at a generic parameter $\phi$ and prove the main theorem of the paper.  We keep the notation of the previous section.

We begin by recalling the relationship between the spectral geometric Eisenstein functor and the Springer theory studied in earlier sections:

\begin{prop}\label{SpecEisCalc}

Let $\phi$ be a semisimple generic parameter and let $\phi_M$ be a lift of $\phi$ to $\textup{Par}_M$.  Then for any irreducible representation $\tau$ of $S_{\phi,M}$ with $\tau|_{S_{\phi,M}^\circ}$ dominant, the following isomorphisms hold: $$\textup{Eis}^{\textup{spec}}_{\hat P}(i_{\phi_M*}\tau) \cong  \nu_{\phi*}\overline{\nabla}_{\textup{Ind}_{S_{\phi,P}}^{S_\phi}(\tau)}$$ $$\textup{Eis}^{\textup{spec}}_{\hat P}(i_{\phi_M*}w_0\tau) \cong  \nu_{\phi*}\overline{\Delta}_{\textup{Ind}_{S_{\phi,P}}^{S_\phi}(\tau)}$$
    
\end{prop}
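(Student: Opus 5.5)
We plan to compute $\textup{Eis}^{\textup{spec}}_{\hat P}=p^{\textup{spec}}_*q^{\textup{spec}*}$ directly on the fibre over $q(\phi)$, identifying the relevant piece of $\textup{Par}_P$ with the Springer stack $\tilde{\mathcal{N}}/S_\phi\cong\mathfrak{u}/S_{\phi,P}$ of Section \ref{SpringerSect}. Then $q^{\textup{spec}*}$ becomes $\eta^*$ and $p^{\textup{spec}}_*$ becomes $\mu_*$. The first step is geometric. Since $\phi$ is generic, the proposition of \cite[Proposition 2.3.3]{BeijingNotes} quoted above says $\textup{Par}_P^{\textup{gen}}$ has trivial derived structure. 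We describe the fibre of $\textup{Par}_P\to\textup{Par}_M$ over $\phi_M$ as in Proposition \ref{fibstruct}: by Lemma \ref{CenterSphiM}, $\phi_M$ is generous, so a $P$-parameter with Levi part $\phi_M$ is given by a deformation into the unipotent radical of $\hat P$. Genericity kills monodromy, so only a unipotent Frobenius twist $u$ centralizing $\phi$ survives. This identifies the reduced fibre with $\mathcal{U}_{S_{\phi,P}}\cap R_u(\hat P)/S_{\phi,P}$, which via the exponential (Springer isomorphism) is $\mathfrak{u}/S_{\phi,P}$. The map to $\textup{Par}_G$ factors through $\nu_\phi$ and is the Springer map $\mu$. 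We would check scheme-theoretically (not just on reduced loci) that $i_{\phi_M}\colon BS_{\phi,M}\to\textup{Par}_M$ pulled back along $q^{\textup{spec}}$ is exactly $\mathfrak{u}/S_{\phi,P}\to\textup{Par}_P$. This should follow from smoothness of $\textup{Par}_M$ at the generous point $\phi_M$ together with flatness/triviality of the derived structure, so that base change holds: $q^{\textup{spec}*}i_{\phi_M*}\tau\cong j_*\eta^*\tau=j_*\mathcal{O}(\tau)$, where $j$ is the inclusion of $\mathfrak{u}/S_{\phi,P}$.

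Next, $p^{\textup{spec}}\circ j=\nu_\phi\circ\mu$, so $\textup{Eis}^{\textup{spec}}_{\hat P}(i_{\phi_M*}\tau)\cong\nu_{\phi*}\mu_*\mathcal{O}(\tau)$. It remains to show $\mu_*\mathcal{O}(\tau)\cong\overline{\nabla}_{\textup{Ind}_{S_{\phi,P}}^{S_\phi}\tau}$. Write $\tau=E_\tau\otimes\lambda$ with $\lambda$ dominant. By Lemma \ref{SpringResDecomp}, $\mu_*\mathcal{O}(\tau)\cong p^\lambda_*(\textup{Ind}_{k^\alpha[A_M]}^{k^\alpha[A^\lambda]}(E_\tau)\otimes\overline{\nabla}_\lambda)$. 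Decomposing the induced $k^\alpha[A^\lambda]$-module into irreducibles $E$ with multiplicities gives $\bigoplus_E \overline{\nabla}_{L(\lambda,E)}^{\oplus m_E}$ by Definition \ref{NablaDef}. On the representation side, $\textup{Ind}_{S_{\phi,P}}^{S_\phi}\tau$ is understood as the finite-dimensional representation $\textup{Ind}_{G^\lambda}^{G}(\textup{Ind}_{k^\alpha[A_M]}^{k^\alpha[A^\lambda]}(E_\tau)\otimes L(\lambda))$. This is the $S_\phi$-module whose $S_\phi^\circ$-part is $L(\lambda)$ (the ``algebraic'' induction via global sections on $S_\phi^\circ/S_{\phi,P}^\circ$ by Borel--Weil, using that $P^\circ$ is the opposite Borel) and whose component part is induced from $A_M$. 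Its isotypic decomposition by Theorem \ref{DiscRedClass} and the AHR pairing matches the one above, and the extended definition of $\overline{\nabla}_\rho$ for reducible $\rho$ gives the first isomorphism.

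For the second isomorphism we run the same argument with $w_0\tau$, i.e. $E_\tau\otimes w_0\lambda$, using $A^\lambda=A^{w_0\lambda}$ and equality of cocycles (the lemma preceding Definition \ref{NablaDef}). This gives $\mu_*\mathcal{O}(w_0\tau)\cong p^\lambda_*(\textup{Ind}(E_\tau)\otimes\overline{\Delta}_\lambda)$. Here we must confirm that $E_\tau\otimes\mathcal{O}(w_0\lambda)$ pushes forward to $E_\tau\otimes\overline{\Delta}_\lambda$ with the same twisted equivariant structure (Lemma \ref{ECompPush}), which is the normalization of $\theta^\Delta_a$.

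The main obstacle is the first step: the scheme-theoretic base change identifying $\textup{Par}_P\times_{\textup{Par}_M}BS_{\phi,M}$ with $\mathfrak{u}/S_{\phi,P}$, including any non-reduced or derived corrections and the matching of $\mathcal{U}$ with $\mathcal{N}$. We would first try to use smoothness of $\textup{Par}_M$ near the generous point $\phi_M$ (Proposition \ref{EquivGenDef}) to realize $i_{\phi_M}$ locally as a regular embedding cut out by a Koszul complex, and then use trivial derived structure of $\textup{Par}_P^{\textup{gen}}$ to pull this back to a genuine fibre product. A secondary subtlety is ensuring the $\hat P$ versus opposite-parabolic conventions make $\mathcal{O}(\lambda)$ correspond to dominant $\lambda$, which would be handled by the choice of $\hat P$ as the opposite standard parabolic.
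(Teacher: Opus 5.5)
Your proposal follows the paper's route. The paper obtains $\textup{Eis}^{\textup{spec}}_{\hat P}(i_{\phi_M*}\tau)\cong\nu_{\phi*}\mu_*\mathcal{O}(\tau)$ by citing the analogue of Hansen's trivial-parameter argument \cite[Proposition 2.2.3]{BeijingNotes}, which is exactly the base-change identification of the fibre with $\mathfrak{u}/S_{\phi,P}$ that you flag as the main obstacle, and it then concludes with Lemma \ref{SpringResDecomp}, handling the $w_0\tau$ case ``similarly'' just as you do; your extra details on genericity, the reduction to a unipotent Frobenius twist, and the description of $\textup{Ind}_{S_{\phi,P}}^{S_\phi}\tau$ fill in what the paper leaves implicit.
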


\begin{proof}

An analogous argument to that given in the proof of \cite[Proposition 2.2.3]{BeijingNotes} in the setting of the trivial L-parameter holds in general to show that $\textup{Eis}^{\textup{spec}}_{\hat P}(i_{\phi_M*}\tau) \cong \mu_*\mathcal{O}(\tau)$, and we conclude by using the isomorphism $$\mu_*\mathcal{O}(\tau)\cong \overline{\nabla}_{\text{Ind}_{S_{\phi,P}}^{S_\phi}(\tau)}$$ given by Lemma \ref{SpringResDecomp}.
The second claim follows similarly.

\end{proof}

\begin{thm}\label{MainThm}
Let $G$ be a quasi-split reductive group for which local Langlands is known in the sense of Section \ref{ClassicalLLC} for $G$ and all of its Levi subgroups.  
Suppose that for all $b\in B(G)$ and all smooth irreducible representations $\pi$ of $G_b(F)$, the Fargues-Scholze parameter of $\pi$ agrees with the semisimplification of its classical parameter, and furthermore the same holds for all Levi subgroups of $G$.
Fix $(b,\pi)$, let $\phi$ be the Fargues-Scholze parameter of $\pi$, and let $S_\phi$ be the stabilizer of $\phi$ in $\hat{G}$.  
Suppose $\phi$ is generic, $c_\psi$ is fully faithful on $D(\textup{Bun}_G)^{\omega,\phi}$, and that Conjectures \ref{GenerousConj} and \ref{EisCompat} hold for $G$ and all of its Levi subgroups.

i) There is a unique irreducible representation $\rho$ of $S_\phi$ such that the following isomorphisms hold: 

\begin{align}
&c_\psi(i_{b!}^{\textup{ren}}\pi)\cong\nu_{\phi*}\overline{\nabla}_\rho\\
&c_\psi(i_{b\sharp}^{\textup{ren}}\pi)\cong\nu_{\phi*}\overline{\Delta}_\rho\\
&c_\psi(i_{b\sharp!}^{\textup{ren}}\pi)\cong\nu_{\phi*}IC_\rho\\
&c_\psi(i_{b!*}^{\textup{ren}}\pi)\cong\nu_{\phi*}\delta_\rho
\end{align}

ii)  $c_\psi$ is t-exact with respect to the hadal t-structure on $D(\textup{Bun}_G)^\phi$ and the $(-d)$-shifted perverse-coherent t-structure on $\textup{Coh}(\textup{Par}_G^\phi)$.

iii)  The map given by i), from the set of pairs $(b,\pi)$ where $\pi$ has generic Fargues-Scholze parameter $\phi$, to the set of pairs $(\phi,\rho)$ where $\rho\in\textup{Irr}(S_\phi)$, is a bijection.
We conjecture it agrees with the classical BM-O bijection after dualizing $\rho$.

\end{thm}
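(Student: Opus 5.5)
The plan is to reduce everything to the parabolic-induction picture on both sides and then match standard and costandard objects. First I would pin down the supercuspidal support $(M,\sigma)$ of $\pi$ and check that it is compatible with the minimal Levi $\hat M$ through which $\phi$ factors. Using the hypothesis that Fargues--Scholze parameters agree with semisimplified classical parameters (for $G$ and its Levis), Property~\ref{ClassLLCProp}(4) gives $\iota_M\circ\phi_\sigma^{\mathrm{ss}}=\phi$. Since a supercuspidal parameter of $M$ does not factor through a proper Levi, this forces $M$ to be the minimal Levi. Then Lemma~\ref{CenterSphiM} shows $\phi_M$ is generous, so Conjecture~\ref{GenerousConj} for $M$ gives $c_{\psi_M}(i_{b_M!}\sigma)\cong i_{\phi_M*}\tau$ for an irreducible $\tau$ of $S_{\phi,M}$, for each $b_M\in B(M)$ over $b$. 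I would choose $b_M$ antidominant, respectively dominant, using Proposition~\ref{FSWeylEquiv} to move between Weyl conjugates of $(M,\sigma)$, so that $\tau$ restricted to $S^\circ_{\phi,M}$ is dominant, respectively $w_0$ of that.

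Next I would apply Eisenstein compatibility (Conjecture~\ref{EisCompat}) together with Proposition~\ref{EisCalc} and Proposition~\ref{SpecEisCalc}. This yields
\[
c_\psi\bigl(i^{\mathrm{ren}}_{b!}\,i_{Q}^{G_b}(\sigma^{w})\bigr)\cong \nu_{\phi*}\overline{\nabla}_{\mathrm{Ind}(\tau)},
\qquad
c_\psi\bigl(i^{\mathrm{ren}}_{b\sharp}\,i_{Q}^{G_b}(\sigma)\bigr)\cong \nu_{\phi*}\overline{\Delta}_{\mathrm{Ind}(\tau)}.
\]
Theorem~\ref{GenCompRed} says the parabolic induction is completely reducible, so $i_{b!}\pi$ is a direct summand on the left. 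Since $c_\psi$ is fully faithful on $D(\mathrm{Bun}_G)^{\omega,\phi}$, direct summand decompositions correspond, and $c_\psi(i_{b!}\pi)$ is a direct summand of $\nu_{\phi*}\overline{\nabla}_{\mathrm{Ind}(\tau)}=\bigoplus V_{\rho'}\otimes\nu_{\phi*}\overline{\nabla}_{\rho'}$. Schur-irreducibility of $i_{b!}\pi$, i.e.\ $\mathrm{End}=\overline{\mathbb{Q}}_\ell$, together with $\mathrm{End}^0(\overline\nabla_\rho)=k$ and the orthogonality in Lemma~\ref{NablaQExcep}(i), should force $c_\psi(i_{b!}\pi)\cong\nu_{\phi*}\overline\nabla_\rho$ for a single $\rho$. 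I expect the \textbf{main obstacle} to be showing that the same $\rho$ occurs for $i_{b\sharp}\pi$, i.e.\ that the $\sharp$ and $!$ summands match. Here I would use the nonzero map $i_{b\sharp}\pi\to i_{b!}\pi$, which exists by adjunction because $i_b^*i_{b!}\pi=\pi$. Combined with Lemma~\ref{NablaQExcep}(ii), namely that $\mathrm{Hom}^0(\overline\Delta_{\rho''},\overline\nabla_\rho)\neq0$ only if $\rho''\cong\rho$, this identifies the summand for $i_{b\sharp}\pi$ as $\overline\Delta_\rho$, proving (1) and (2). Uniqueness of $\rho$ follows from $\mathrm{End}^0(\overline\nabla_\rho)=k$ and the fact that non-isomorphic $\overline\nabla$'s are distinct.

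For (ii), the hadal $D^{h,\le0}$ and $D^{h,\ge0}$ are generated by the $i_{b\sharp}\pi[d\ge0]$ and $i_{b!}\pi[d\le0]$ respectively, with $\pi$ ranging over finite-length objects, which reduce to irreducible ones with parameter $\phi$. By (1) and (2) these are sent to the generators $\overline\Delta_\rho$ and $\overline\nabla_\rho$ of the shifted perverse-coherent t-structure in Proposition~\ref{PerverseEqExotic}, so $c_\psi$ is t-exact. Then (3) follows because $c_\psi$ preserves images in the hearts: $i_{b\sharp!}\pi$ and $IC_\rho$ are the images of the unique-up-to-scalar maps $i_{b\sharp}\pi\to i_{b!}\pi$ and $\overline\Delta_\rho\to\overline\nabla_\rho$ respectively. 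For (4), I would first prove perverse/standard t-exactness on the generic locus in the same way, checking it on the standard and costandard generators. Then $i_{b!*}\pi$ is the irreducible perverse quotient of $i_{b!}\pi$, so $c_\psi$ of it is an irreducible coherent quotient of $\nu_{\phi*}\overline\nabla_\rho$; the irreducible coherent objects are the $\delta$'s, and Proposition~\ref{GenSheafProp}(4) identifies this quotient as $\delta_\rho$.

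For (iii), injectivity comes from full faithfulness, since $\overline\nabla_\rho$ determines $(b,\pi)$ through $i_b^*$. For surjectivity I would count: the irreducible objects $i_{b!*}\pi$ of the generic perverse heart at $\phi$ must go to all of the irreducibles $\nu_{\phi*}\delta_{\rho}$. This uses Proposition~\ref{GenSheafProp}(3) and generation by the $\overline\nabla_\rho$ (Lemma~\ref{NablaQExcep}), together with essential surjectivity onto $\mathrm{Coh}(\mathrm{Par}_G^\phi)$, which I would deduce from the Eisenstein images generating. The last part of (iii) is stated as a conjecture, so it needs no proof.
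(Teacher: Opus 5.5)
Your treatment of (1), (2), (3) and of part ii) is essentially the paper's. That is: generous supercuspidal support, Eisenstein compatibility with Propositions \ref{EisCalc} and \ref{SpecEisCalc}, complete reducibility from Theorem \ref{GenCompRed}, full faithfulness to match summands, and the nonzero map $i_{b\sharp}\pi\to i_{b!}\pi$ plus Lemma \ref{NablaQExcep}(ii) to force the same $\rho$.

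\textbf{The gap is in (4).} You propose to prove perverse/standard t-exactness ``in the same way, checking it on the standard and costandard generators.'' On the automorphic side, however, the coconnective half $D^{p,\geq 0}$ is cut out by the functors $i_b^{!\textup{ren}}$, and its natural generators are the objects $i_{b*}^{\textup{ren}}\pi[n]$, $n\leq 0$. You never compute $c_\psi$ of these. They are also not a priori compact, so your full faithfulness hypothesis on $D(\textup{Bun}_G)^{\omega,\phi}$ does not reach them.

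What (1) gives is only one half. Since $\overline{\nabla}_\rho$ is a genuine coherent sheaf and $D^{p,\leq0}$ is generated by the $i_{b!}^{\textup{ren}}\pi[n]$, $n\geq 0$, the functor $c_\psi$ is right t-exact. (2) does not help, since $\overline{\Delta}_\rho$ need not lie in the standard heart. The other half needs a new input showing that standard-connective coherent sheaves are built out of the $\nu_{\phi*}\overline{\nabla}_\rho$.

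The paper gets this from Lemma \ref{CostFilt}: $S_V\otimes\overline{\nabla}_\rho$ is filtered by $\overline{\nabla}_{\rho'}[n]$ with $n\geq0$. Together with (1), this gives perverse t-exactness of Hecke operators (Proposition \ref{HeckePExactness}). Hansen's Proposition 2.5.2 and Hansen--Mann's Proposition 5.5.7 then convert this into Corollary \ref{PTruncStability}.

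Even granting t-exactness on finite objects, you must also know that $i_{b!*}\pi$ lies in $D(\textup{Bun}_G)^{\textup{fin},\phi}$. It is defined as an image through $i_{b*}^{\textup{ren}}\pi$ inside the perverse heart of all of $D(\textup{Bun}_G)$, so this is not automatic. The paper proves it as Lemma \ref{FinGenPres}, which itself relies on the Noetherianity provided by Corollary \ref{PTruncStability}.

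Your surjectivity argument for iii) runs through (4), so it inherits this gap. It is also more roundabout than necessary. The claim that the Eisenstein images generate already needs every $\tau\in\textup{Irr}(S_{\phi,M})$ to arise as $c_{\psi_M}(i_{b_M!}\sigma)$, i.e.\ the bijectivity clause of Conjecture \ref{GenerousConj} for $M$. Once you use that, surjectivity follows directly and independently of (4):
\begin{itemize}
\item Given $\rho$, choose $\tau$ with $\rho\subset\textup{Ind}_{S_{\phi,P}}^{S_\phi}(\tau)$.
\item Realize $i_{\phi_M*}\tau$ by an antidominant pair $(b_M,\sigma)$.
\item Use full faithfulness together with complete reducibility of the induced representation to identify the $\overline{\nabla}_\rho$-summand of $\nu_{\phi*}\overline{\nabla}_{\textup{Ind}(\tau)}$ with $c_\psi(i_{b!}^{\textup{ren}}\pi)$ for some summand $\pi$.
\end{itemize}
This is how the paper proves iii).
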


\begin{rem}
The Fargues-Scholze and classical local Langlands parameters are known to agree in many cases, including $\text{GL}(n)$ and many classical groups \cite{BMHN} \cite{{DvHKZ26}} \cite{FS} \cite{HamGanTak} \cite{Pen}.
The full faithfulness assumption is currently known unconditionally for tori \cite{Zou22}, and for $\text{GL}(n)$ and some classical groups subject to Conjecture \ref{EisCompat} \cite{HM}, \cite{HSX}, whose proof will appear in forthcoming work \cite{HHM}.
Conjecture \ref{GenerousConj} is known for $\text{GL}(n)$ \cite[Theorem 8.6]{Zou25}.
So currently Theorem \ref{MainThm} is most significantly conditional on Conjecture \ref{EisCompat}.
Granting that, the theorem holds for $\text{GL}(n)$ and is close to being fully unconditional for many classical groups, subject to extending Conjecture \ref{GenerousConj} to more cases, which may be possible now given the significant recent advances in understanding categorical local Langlands and its compatibility with classical local Langlands for many classical groups.
    
\end{rem}

\begin{rem}

Since $\text{Coh}(\text{Par}_G^\phi)$ is generated by $\nu_{\phi*}\overline{\nabla}_\rho$ by Lemma \ref{NablaQExcep}, the theorem shows that $c_\psi|_{D(\text{Bun}_G)^{\omega,\phi}}$ is essentially surjective onto $\text{Coh}(\text{Par}_G^\phi)$.
In every case where $c_\psi|_{D(\text{Bun}_G)^{\omega,\phi}}$ is known to be fully faithful it is already known to be an equivalence, but what Theorem \ref{MainThm} shows is that essential surjectivity can also be shown by explicit computation.
    
\end{rem}

\begin{proof}
We will freely use the notation reviewed in Section \ref{FSSect} surrounding geometric Eisenstein series.
Choose an opposite standard parabolic $P$ of $G$ with Levi $M$, such that there exists an antidominant basic element $b_\alpha\in B(M)$ mapping to $b$ and a supercuspidal representation $\sigma$ of $M_{b_\alpha}$ such that the cuspidal support of $\pi$ is $$(w_\alpha(M)_{w_\alpha(b_\alpha)}, \sigma^{w_\alpha})$$
Let $\phi_M$ be the Fargues-Scholze parameter of $\sigma$.
By \cite[Corollary IX.7.3]{FS} $\phi_M$ agrees with $\phi$ after composing with the inclusion of $^LM$ into $^LG$.
From now on we will overload notation and also use $\phi$ and $\phi_M$ to denote the unique closed points of $\text{Par}_G$, resp. $\text{Par}_M$ mapping to $\phi$, resp. $\phi_M\in\text{Par}_G^{\text{spec}}$, resp. $\text{Par}_M^{\text{spec}}$.
We hope this will not cause too much confusion, and the use of the symbols should be clear from context (for example in the sentence ``$\pi$ has Fargues-Scholze parameter $\phi$,'' $\phi\in\text{Par}_G^{\text{spec}}$, and in the expression ``$i_\phi:BS_\phi\hookrightarrow\text{Par}_G$,'' $\phi\in \text{Par}_G$). 
Fix orbit representatives for $\phi$ and $\phi_M$, in other words honest L-parameters, which agree exactly under the inclusion $\!^LM\hookrightarrow\!^LG$. 
With these choices we give $S_\phi$ the structure of a closed subgroup of $\hat{G}$, and $S_{\phi,M}$ the structure of a closed subgroup of $\hat{M}$, contained in $S_\phi$ after including $^LM$ into $^LG$ (viewing $\hat{M}$ and $\hat{G}$ as the respective identity components).

By the assumed compatibility between Fargues-Scholze and semisimple classical parameters, Property \ref{ClassLLCProp}(2) and genericity of $\phi$ together imply that $\phi_M$ is essentially discrete and generic, hence supercuspidal, hence generous for $M$.  
By Conjecture \ref{GenerousConj} there is an irreducible representation $\tau$ of $S_{\phi,M}$ such that $c_{\psi_M}(i_{b_\alpha!}\sigma) = i_{\phi_M*}\tau$.  

By Conjecture \ref{EisCompat}, $$c_\psi(\text{Eis}_{\overline{P}}^G(i_{b_\alpha!}\sigma))\cong\text{Eis}^{\hat{G}, \text{spec}}_{\hat{P}}(c_{\psi_{M}}(i_{b_\alpha!}\sigma))\cong \text{Eis}^{\hat{G}, \text{spec}}_{\hat{P}}(i_{\phi_M*}\tau)$$
By Proposition \ref{EisCalc}, $$\text{Eis}_{\overline{P}}^G(i_{b_\alpha!}\sigma)\cong i_{b!}^{\textup{ren}}i_{\overline Q_{b,w_\alpha}}^{G_b}(\sigma^{w_\alpha})$$
Since the semisimple classical parameter of $\pi$ agrees with its Fargues-Scholze parameter, Theorem \ref{GenCompRed} applies, so $i_{\overline Q_{b,w_\alpha}}^{G_b}(\sigma^{w_\alpha})$ is completely reducible, and $\pi$ is the $\theta_\pi$-isotypic component of $i_{\overline Q_{b,w_\alpha}}^{G_b}(\sigma^{w_\alpha})$ for some irreducible representation $\theta_\pi$ of $\text{End}_G(i_{\overline Q_{b,w_\alpha}}^{G_b}(\sigma^{w_\alpha}))$.
By Conjecture \ref{GenerousConj} $\tau|_{S_{\phi,M}^\circ}$ is dominant, and by Proposition \ref{SpecEisCalc}, $$\text{Eis}^{\hat{G}, \text{spec}}_{\hat{P}}(i_{\phi_M*}\tau)\cong\bigoplus\limits_{\rho\in\text{Irr}(S_\phi)}\theta_\rho\otimes\nu_{\phi*}\overline{\nabla}_\rho$$ where $\theta_\rho$ is an irreducible representation of $$\text{End}^0(\text{Eis}^{\hat{G}, \text{spec}}_{\hat{P}}(i_{\phi_M*}\tau))\cong \text{End}^0(\text{Ind}_{S_{\phi,P}}^{S_\phi}(\tau))$$
where the isomorphism follows from Proposition \ref{SpecEisCalc} and Lemma \ref{NablaQExcep}.
By the assumption that $c_\psi$ is fully faithful on the generic locus of $D(\text{Bun}_G)$, $$\text{End}^0(\text{Eis}_{\overline{P}}^G(i_{b_\alpha!}\sigma))\overset{c_\psi}{\cong} \text{End}^0(\text{Eis}^{\hat{G}, \text{spec}}_{\hat{P}}(i_{\phi_M*}\tau))$$
So there exists a $\rho\in\text{Irr}(S_\phi)$ such that pullback under this isomorphism identifies $\theta_\pi$ with $\theta_\rho$.
It follows that $c_\psi$ identifies the $\theta_\pi$-isotypic component of $\text{Eis}_{\overline{P}}^G(i_{b_\alpha!}\sigma)$ with the $\theta_\rho$-isotypic component of $\text{Eis}^{\hat{G}, \text{spec}}_{\hat{P}}(i_{\phi_M*}\tau)$, ie $$c_\psi(i_{b!}^{\textup{ren}}\pi)\cong\nu_{\phi*}\overline{\nabla}_\rho$$ proving the first equivalence in part i) of the theorem.

Every step of the process done to construct $\rho$ from $(b,\pi)$ is reversible in the following way: starting with $\rho$ we can choose a highest weight $\lambda$ of $\rho|_{S_\phi^\circ}$ (unique up to $S_\phi$-conjugacy) and define $\tau$ to be an irreducible representation of $S_{\phi,M}$, unique up to $S_\phi$-conjugacy since $A_M\trianglelefteq A$, such that $\tau$ is a direct sum of $\lambda$-weight spaces and $\rho\subset\text{Ind}_{S_{\phi,P}}^{S_\phi}(\tau)$.
Then by Conjecture \ref{GenerousConj} there is a pair $(b_\alpha,\sigma)$ such that $c_\psi(i_{b_\alpha!}\sigma) = i_{\phi_M*}\tau$ and $b_\alpha$ is antidominant.
Again applying Conjecture \ref{EisCompat} it follows that $$\overline{\nabla}_{\text{Ind}_{S_{\phi,P}}^{S_\phi}(\tau)}\cong c_\psi(i_{b!}^{\textup{ren}}i_{\overline Q_{b,w_\alpha}}^{G_b}(\sigma^{w_\alpha}))$$
Again by Theorem \ref{GenCompRed} $i_{\overline Q_{b,w_\alpha}}^{G_b}(\sigma^{w_\alpha})$ is completely reducible, and so the indecomposable direct summands of $i_{b!}i_{\overline Q_{b,w_\alpha}}^{G_b}(\sigma^{w_\alpha})$ are all of the form $i_{b!}\pi$ for some irreducible representation $\pi$ of $G_b(F)$.  
To $\rho$, we assign the $\pi$ such that under the isomorphism $$\text{End}^0(i_{b!}^{\textup{ren}}i_{\overline Q_{b,w_\alpha}}^{G_b}(\sigma^{w_\alpha}))\overset{c_\psi}{\cong} \text{End}^0(\overline{\nabla}_{\text{Ind}_{S_{\phi,P}}^{S_\phi}(\tau)})$$ induced by $c_\psi$, $\pi$ is the $\theta_\rho$-isotypic component of $i_{b!}i_{\overline Q_{b,w_\alpha}}^{G_b}(\sigma^{w_\alpha})$.
This gives part iii).

Returning to the isomorphisms in part i) and the ``forward direction'' where the pair $(b,\pi)$ is the given initial input, we now use the Levi $w_\alpha(M)$ and supercuspidal representation $w_\alpha(\sigma)$.
Let $\!^\alpha P$ be the opposite standard parabolic of $G$ with Levi $w_\alpha(M)$.
Then by Proposition \ref{EisCalc} $$\text{Eis}_{\overline {\!^\alpha\! P}}^G(i_{w_\alpha(b_\alpha)!}\sigma^{w_\alpha})\cong i_{b\sharp}^{\textup{ren}}i_{\overline {\!^\alpha Q}_{b,1}}^{G_b}(\sigma^{w_\alpha})$$ 
By \cite[Corollary 6.6]{KV} and Theorem \ref{GenCompRed} the parabolic induction of $\sigma^{w_\alpha}$ is independent (up to isomorphism) of the inducing parabolic, so 
$i_{\overline Q_{b,w_\alpha}}^{G_b}(\sigma^{w_\alpha})\cong i_{\overline {\!^\alpha Q}_{b,1}}^{G_b}(\sigma^{w_\alpha})$ and therefore $$ \text{Eis}_{\overline {\!^\alpha\! P}}^G(i_{w_\alpha(b_\alpha)!}\sigma^{w_\alpha})\cong i_{b\sharp}^{\textup{ren}}i_{\overline Q_{b,w_\alpha}}^{G_b}(\sigma^{w_\alpha})\cong \bigoplus\limits_{\pi}\theta_\pi\otimes i_{b\sharp}^{\textup{ren}}\pi$$ where $\pi$ runs over the irreducible subrepresentations of $i_{\overline Q_{b,w_\alpha}}^{G_b}(\sigma^{w_\alpha})$.

Let $\lambda = \tau|_{S_{\phi,M}^\circ}$, which recall is dominant, and let $w_{0,\lambda}$ be the longest length Weyl group element of $S_\phi^\circ$ fixing $\lambda$, in other words the longest length Weyl group element of $S_\phi\cap \widehat{G_{\{b_\alpha\}}}$.
Since $w_\alpha(b_\alpha)$ is dominant and $S_{\phi,M}^\circ = Z(\hat M)^\circ$, $\hat w_\alpha w_{0,\lambda} (S_{\phi,P})$ is the standard Borel of $S_{\hat w_\alpha\phi}$, and so $\hat w_\alpha w_{0,\lambda}w_0(S_{\phi,P})$ is the opposite standard Borel of $S_{\hat w_\alpha \phi}$, which is $S_{\hat w_\alpha \phi,\ \!^\alpha P}$.
Note that $\hat w_\alpha\in W_{\hat G}$ whereas $w_0\in W_{S_\phi}$, so to multiply them we implicitly use the map $W_{S_\phi}\to W_{\hat G}$ induced by the inclusion $S_\phi\hookrightarrow\hat G$ (and the fixed choice of positivity on roots).
Then
\begin{align*}
\text{Eis}^{\hat{G}, \text{spec}}_{\widehat{\!^\alpha P}}(i_{\hat w_\alpha\phi_M*}\hat w_\alpha\tau) &\cong \text{Eis}^{\hat{G}, \text{spec}}_{\hat w_\alpha w_{0,\lambda}w_0\hat P}(i_{\hat w_\alpha\phi_M*}\hat w_\alpha\tau) \\ &= \text{Eis}^{\hat{G}, \text{spec}}_{\hat w_\alpha w_{0,\lambda}w_0\hat P}(i_{\hat w_\alpha w_{0,\lambda}w_0\phi_M*}\hat w_\alpha w_{0,\lambda}w_0w_0\tau)\\ &\cong \text{Eis}^{\hat{G}, \text{spec}}_{\hat P}(i_{\phi_M*} w_0\tau)\\ &\cong \bigoplus\limits_{\rho'\in\text{Irr}(S_\phi)}\theta_{\rho'}\otimes\nu_{\phi*}\overline{\Delta}_{\rho'}
\end{align*}
where in the second line we made use of the facts that $w_{0,\lambda}w_0\phi_M = \phi_M$ since $w_{0,\lambda},w_0\in S_\phi$, and that $w_{0,\lambda}$ acts trivially on $\tau$; in the last equivalence we used Proposition \ref{SpecEisCalc}; and we index over the variable $\rho'$ instead of $\rho$ because $\rho$ is already taken as the irreducible representation of $S_\phi$ for which $c_\psi(i_{b!}\pi)\cong\nu_{\phi*}\overline{\nabla}_\rho$.
By Proposition \ref{FSWeylEquiv} $$c_{\psi_{w_\alpha(M)}}(i_{w_\alpha(b_\alpha)!}\sigma^{w_\alpha})\cong i_{\hat{w}_\alpha(\phi_M)*}\hat{w}_\alpha(\tau)$$
Then using geometric Eisenstein series compatibility and the fact that $\text{End}^0(i_{b\sharp}\pi)=\overline{\mathbb{Q}}_\ell$ we conclude $$c_\psi(i_{b\sharp}^{\textup{ren}}\pi)\cong\nu_{\phi*}\overline{\Delta}_{\rho'}$$ for some irreducible representation $\rho'$.
Since there is a nonzero map from $i_{b\sharp}\pi\to i_{b!}\pi$, by full faithfulness of $c_\psi$ there is a nonzero map from $\overline{\Delta}_{\rho'}\to\overline{\nabla}_\rho$.
By Lemma \ref{NablaQExcep} $\rho\cong\rho'$, giving the second desired isomorphism in part i).

By Proposition \ref{PerverseEqExotic} the (-d)-shifted connective perverse coherent subcategory of $\text{Coh}(\text{Par}_G^\phi)$ is generated by $\{\overline{\Delta}_\rho[d]\ |\ d\geq0\}$ and the coconnective ($-d$)-shifted subcategory is generated by $\{\overline{\nabla}_\rho[d]\ |\ d\leq0\}$.
On $D(\text{Bun}_G)^{\text{fin},\phi}$ the subcategories of hadal connective and coconnective objects are respectively generated by $$\{i_{b\sharp}^{\textup{ren}}\pi[d]\ |\ d\geq0\}$$ and $$\{i_{b!}^{\textup{ren}}\pi[d]\ |\ d\leq0\}$$
So by (1) and (2) $c_\psi$ is t-exact with respect to the hadal t-structure on $D(\text{Bun}_G)^\phi$ and the $(-d)$-shifted perverse-coherent t-structure on $\text{Coh}(\text{Par}_G^\phi)$, showing ii).
Furthermore by the discussion above Proposition \ref{PerverseEqExotic} $\overline{\nabla}_\rho[d]$ and $\overline{\Delta}_\rho[d]$ are perverse-coherent, so $i_{b\sharp}\pi$ and $i_{b!}\pi$ are hadal.
Therefore $$c_\psi(i_{b\sharp !}^{\textup{ren}}\pi)\cong c_\psi(\text{Im}(i_{b\sharp}^{\textup{ren}}\pi\to i_{b!}^{\textup{ren}}\pi))\cong \text{Im}(\nu_{\phi*}\overline{\Delta}_\rho\to\nu_{\phi*}\overline{\nabla}_\rho)\cong\nu_{\phi*}IC_\rho$$

Finally, to prove $c_\psi(i_{b!*}^{\textup{ren}}\pi)\cong\nu_{\phi*}\delta_\rho$ we need the fact that $c_\psi$ is t-exact with respect to the perverse t-structure on $D(\text{Bun}_G)^{\text{fin},\phi}$ and the standard coherent t-structure on $\text{Coh}(\text{Par}_G^\phi)^{\text{fin}}$.
This is proven in Corollary \ref{PTruncStability} below, which uses the first two isomorphisms in part i) of this theorem.

\begin{lem}\label{FinGenPres}

Let $F\in \textup{Perv}(\textup{Bun}_G)^{\textup{fin,gen}}$, and let $G\in \textup{Perv}(\textup{Bun}_G)$ be a subobject of $F$.
Then $G$ is finite and generic.
    
\end{lem}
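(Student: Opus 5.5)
The plan is to avoid working with $G$ directly. Instead, I will show that $F$ has finite length \emph{in the whole abelian category} $\textup{Perv}(\textup{Bun}_G)$, with composition factors that are finite and generic. The Jordan--H\"older theorem in the abelian category $\textup{Perv}(\textup{Bun}_G)$ then gives a composition series for the subobject $G$. Its factors are among those of $F$. So $G$ is an iterated extension of finitely many finite generic perverse sheaves, and both properties pass to finite extensions: finiteness by the characterization via $i_b^*$ and boundedness recalled in Section \ref{FSSect}, genericity since $D(\textup{Bun}_G)^{\textup{gen}}$ is a stable full subcategory.

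\textbf{Step 1 (simple objects).} For $b\in B(G)$ and $\pi$ irreducible, I will show $i_{b!*}^{\textup{ren}}\pi$ is simple in all of $\textup{Perv}(\textup{Bun}_G)$, not just in the finite part. Let $K\hookrightarrow i_{b!*}^{\textup{ren}}\pi$ be a perverse subobject. Since $i_{b!*}^{\textup{ren}}\pi$ is supported on the closure of $b$, so is $K$; restricting to the open stratum $b$ of that closure is perverse t-exact. Hence $i_b^{*\textup{ren}}K$ is a subrepresentation of $\pi$, so it is $0$ or $\pi$.
\begin{itemize}
\item If $i_b^{*\textup{ren}}K=\pi$, apply the same argument to the cokernel. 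It is perverse and has trivial restriction to $b$, so it lies in the part supported on the boundary of $b$, where $i_{b!*}^{\textup{ren}}\pi$ has no nonzero quotients.
\item If $i_b^{*\textup{ren}}K=0$, then $K$ is a subobject supported on the boundary of $b$, and $i_{b!*}^{\textup{ren}}\pi$ has no such nonzero subobjects.
\end{itemize}
The characterization of $i_{b!*}^{\textup{ren}}\pi$ as the image $H^{p,0}(i_{b!}^{\textup{ren}}\pi)\to H^{p,0}(i_{b*}^{\textup{ren}}\pi)$ gives both "no boundary sub/quotient" statements by the usual adjunction argument with $i_{c}^{*\textup{ren}}$ and $i_c^{!\textup{ren}}$. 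These functors are respectively right and left t-exact by definition of the perverse t-structure.

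\textbf{Step 2 (finite filtration of $F$).} I will argue by induction on the size of the (finite) support of $F$. Choose $b$ open in the support. Then $i_b^{*\textup{ren}}F$ is a finite-length perverse object on the stratum, i.e.\ a finite-length representation in a single degree. By adjunction there are natural maps $H^{p,0}(i_{b!}^{\textup{ren}}i_b^{*\textup{ren}}F)\to F\to H^{p,0}(i_{b*}^{\textup{ren}}i_b^{*\textup{ren}}F)$. Taking a composition series of $i_b^{*\textup{ren}}F$ and the usual d\'evissage, the kernels and cokernels have smaller support, and the pieces on $b$ are of the form $i_{b!*}^{\textup{ren}}\pi$. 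Each such $\pi$ has parameter among the finitely many generic parameters on which $F$ is supported; this is Proposition \ref{FSsup} together with the action of $\Gamma(\textup{Par}_G^{\textup{spec}})$ through $\mathfrak{z}$. The remaining pieces are finite and generic with smaller support, so induction applies. The output is a finite filtration of $F$ with graded pieces of the form $i_{b!*}^{\textup{ren}}\pi$, each finite and generic.

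\textbf{Main obstacle.} The hard point is the genericity of the composition factors, equivalently that membership in $D(\textup{Bun}_G)^{\textup{gen}}$ is detected by Fargues--Scholze parameters. If that is awkward, I will first try the more direct route via the center. Since $F$ is finite and generic, some ideal $I\subset\Gamma(\textup{Par}_G^{\textup{spec}})$, cofinite and supported at finitely many generic points, acts nilpotently on $F$ through $\mathfrak{z}$. The center acts by natural endomorphisms compatible with the inclusion $G\hookrightarrow F$, which is a monomorphism in the heart. Hence $I$ also acts nilpotently on $G$. The corresponding idempotent projectors then place $G$ in $D(\textup{Bun}_G)^{\textup{gen}}$.
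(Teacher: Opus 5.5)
\textbf{Verdict: the proposal has a genuine gap.} The genericity part is fine. The finiteness part rests on a finite-length claim that is false in this setting.

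\textbf{Genericity.} Your argument works and is essentially the paper's. The annihilating ideal acts nilpotently on $F$ through $\mathfrak{z}$. It therefore acts nilpotently on $G$, by naturality and because $G\hookrightarrow F$ is a monomorphism in the heart.

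\textbf{Finiteness: the key claim is false.} A finite generic perverse sheaf need not have finite length in $\textup{Perv}(\textup{Bun}_G)$. Use Corollary \ref{PTruncStability}, full faithfulness, and the essential surjectivity noted after Theorem \ref{MainThm}. Together they let $c_\psi$ identify $\textup{Perv}(\textup{Bun}_G)^{\textup{fin},\phi}$ with the heart of coherent sheaves on $\textup{Par}_G$ set-theoretically supported on $q^{-1}(q(\phi))$. The reduced structure of that fiber is $\mathcal{N}_{S_\phi}/S_\phi$, so this category is Noetherian but not Artinian.

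Concretely, take the $(b,\pi)$ corresponding to the trivial representation, so that $c_\psi(i_{b!}^{\textup{ren}}\pi)\cong\nu_{\phi*}\overline{\nabla}_{\rho}$ with $\rho$ trivial. By Proposition \ref{GenSheafProp}(2), $\overline{\nabla}_\rho\cong\mathcal{O}_{\mathcal{N}_{S_\phi}}$. Then $i_{b!}^{\textup{ren}}\pi$ is perverse, because $\overline{\nabla}_\rho$ is a genuine sheaf and $c_\psi$ is t-exact and conservative here. But $\mathcal{O}_{\mathcal{N}_{S_\phi}}$ has the strictly descending chain of $S_\phi$-stable subsheaves $\mathfrak{m}_0^k\mathcal{O}_{\mathcal{N}_{S_\phi}}$, where $\mathfrak{m}_0$ is the ideal of the cone point. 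This chain is strict as soon as $\mathcal{N}_{S_\phi}\neq 0$, for example $\phi=\chi\oplus\chi$ for $\textup{GL}(2)$. So $i_{b!}^{\textup{ren}}\pi$ is a finite generic perverse sheaf of infinite length, and no Jordan--H\"older argument is available.

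\textbf{Where Step 2 breaks.} Take $F=i_{b!}^{\textup{ren}}\pi$ and let $K$ be the kernel of $F\twoheadrightarrow i_{b!*}^{\textup{ren}}\pi$. Then $i_b^{*\textup{ren}}K=0$, so $K$ has nonzero stalks on $\overline{\{b\}}\smallsetminus\{b\}$, outside the stalk support $\{b\}$ of $F$. Your induction on the size of the support is therefore not well-founded, and by the chain above it cannot be repaired.

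You also assert without proof that the boundary pieces and the objects $i_{b!*}^{\textup{ren}}\pi$ are finite. Since $H^{p,0}(i_{b*}^{\textup{ren}}\pi)$ is not finite, the only handle on these is that they are subquotients of finite perverse sheaves. That finiteness is exactly what the lemma asserts, and the paper in fact deduces the finiteness of $i_{b!*}^{\textup{ren}}\pi$ from this lemma.

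\textbf{The missing idea: use only the ascending chain condition.} The paper argues as follows.
\begin{enumerate}
\item Write $G=\mathrm{colim}\,G_\alpha$ with each $G_\alpha$ compact.
\item Let $f_1,\dots,f_r$ generate the annihilator of $c_\psi(F)$, and apply $H^{p,0}\bigl(\bigotimes_i(\mathcal{O}/f_i)*(-)\bigr)$. Applied to $G$ this returns $G$. Applied to $G_\alpha$ it gives objects that are compact with finite spectral support, hence ULA and finite.
\item Take the images $F_\alpha\subset F$ of these objects; they are finite by Corollary \ref{PTruncStability}, and they exhaust $G$.
\item Noetherianity of $\textup{Perv}(\textup{Bun}_G)^{\textup{fin,gen}}$ then forces $G=F_\alpha$ for some $\alpha$.
\end{enumerate}
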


\begin{proof}

Let the annihilating ideal of $c_\psi(F)$ in $\text{Par}_G^{\text{spec}}$ be generated by $f_1,..., f_r$.
Since $G$ maps to $F$ via a monomorphism in $\text{Perv}(\text{Bun}_G)$, $c_\psi(G)$ is also annihilated by $(f_1,...,f_r)$ (using $\text{IndPerf}(\text{Par}_G)$ linearity of $c_\psi$), which proves genericity.
Let $\mathcal{O}$ denote the ring of global sections on $\text{Par}_G^{\text{spec}}$.
A direct computation using induction, monoidality of the spectral action, and the fact that $\mathcal{O}*G\cong G$ and $f_i$ acts on $G$ as the $0$ map shows $\bigotimes\limits_{i=1}^r(\mathcal{O}/f_i)*G$ is a finite direct sum of shifts of $G$, with the unshifted summand being precisely $G$, so $H^{p,0}(\bigotimes\limits_{i=1}^r(\mathcal{O}/f_i)*G) \cong G$.
Now express $G$ as a filtered colimit of compact objects, $G\cong \text{colim}\ G_\alpha$.
It follows that $$G\cong H^{p,0}(\bigotimes\limits_{i=1}^r(\mathcal{O}/f_i)*G)\cong \text{colim}\ H^{p,0}(\bigotimes\limits_{i=1}^r(\mathcal{O}/f_i)*G_\alpha)$$
Since $G_\alpha$ is compact so is $\bigotimes\limits_{i=1}^r(\mathcal{O}/f_i)*G_\alpha$ (\cite[Corollary X.1.3]{FS}), and since its image under $c_\psi$ is supported on the finitely many points of $\text{Par}_G^{\text{spec}}$ in the vanishing set of $(f_1,...f_r)$, $\bigotimes\limits_{i=1}^r(\mathcal{O}/f_i)*G_\alpha$ is also ULA (using \cite[Lemma 1.6.4]{BeijingNotes}), hence finite.
By Corollary \ref{PTruncStability} (which only relies on the first two isomorphisms of part i)) the image $F_\alpha$ of $\bigotimes\limits_{i=1}^r(\mathcal{O}/f_i)*G_\alpha$ in $F$ is finite for all $\alpha$, and since $G$ is a subobject of $F$ we may write $G=\text{colim}\ F_\alpha$.
By Corollary \ref{PTruncStability} $\text{Perv}(\text{Bun}_G)^{\text{fin,gen}}$ is Noetherian, so, all being subobjects of $F$, there must be some $\alpha$ for which $G\cong  F_\alpha$.
    
\end{proof}

It follows that $i_{b!*}\pi$, the unique irreducible quotient of $i_{b!}\pi$ in $\textup{Perv}(\textup{Bun}_G)$, is finite.
And by Corollary \ref{PTruncStability} and Proposition \ref{GenSheafProp} it follows that $c_\psi(i_{b!*}\pi)\cong\nu_{\phi*}\delta_\rho$.

\end{proof}

Theorem \ref{MainThm} can be used to attach nonsemisimple Fargues-Scholze parameters to representations $\pi$ such that the usual Fargues-Scholze parameter of $\pi$ is generic.

\begin{defn}\label{fullFSParam} \textup{(Full Fargues-Scholze Parameters on the Generic Locus)}\ 
\newline
Given a smooth irreducible representation $\pi$ of $G_b(F)$, define the full Fargues-Scholze parameter $\phi_\pi$ to be the unique point of $\text{Par}_G$ such that $\phi_\pi^{\text{ss}} = \phi_\pi^{\text{FS}}$, and such that $$c_\psi(i_{b\sharp!}^{\textup{ren}}\pi)\cong IC(\phi(\text{Fr})^u,\tilde{\rho})[-d]$$ for some (necessarily unique) irreducible representation $\tilde{\rho}$ of $C_{S_\phi}(\phi(\text{Fr})^u)$.
This gives a bijective map $(b,\pi)\mapsto(\phi_\pi, \tilde\rho)$ from pairs consisting of $b\in B(G)$ and $\pi$ with generic Fargues-Scholze parameter to pairs consisting of a generic parameter and an irreducible representation of its centralizer.

\end{defn}

\section{Applications}

\subsection{Exactness of Hecke Operators at Semisimple Generic Parameters}

In this section we will prove that Hecke operators are perverse t-exact at semisimple generic parameters, following \cite[Section 2.4]{BeijingNotes} which studies t-exactness at the trivial parameter.

\begin{lem}\label{CostFilt}
For any $\rho\in \mathrm{Irr}(S_\phi)$
and any $V\in\mathrm{Rep}(S_\phi)$, the sheaf $S_V\otimes \overline{\nabla}_{\rho}\in\mathrm{Coh}(\mathcal{N}_{S_\phi}/S_\phi)$
admits a finite filtration with graded pieces of the form $\overline{\nabla}_{\rho'}[n]$
where $\rho'\in \mathrm{Irr}(S_\phi)$ and $n\geq0$. 
Here $S_V$ denotes the pullback of $V$ from $*/S_\phi$ to $\mathcal{N}_{S_\phi}/S_\phi$ along the canonical projection.
\end{lem}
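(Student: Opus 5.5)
We reduce to the Springer resolution and to Bezrukavnikov's connected argument. By Lemma \ref{SpringResDecomp}, $\overline{\nabla}_\rho$ is a direct summand of $\mu_*\mathcal{O}(\tau)$, where $\tau$ is an irreducible constituent of $\rho|_{S_{\phi,M}}$ of dominant weight $\lambda$. The projection formula then gives
$$S_V\otimes\mu_*\mathcal{O}(\tau)\cong\mu_*(\mu^*S_V\otimes\mathcal{O}(\tau)).$$
On $\tilde{\mathcal{N}}/S_\phi\cong\mathfrak{u}/S_{\phi,P}$, the sheaf $\mu^*S_V$ is the pullback of $V|_{S_{\phi,P}}$ from $*/S_{\phi,P}$. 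The group $S_{\phi,P}$ has unipotent radical $U$ and Levi quotient $S_{\phi,M}$, and $S_{\phi,M}$ has central identity component. So $V|_{S_{\phi,P}}$ has a finite $S_{\phi,P}$-stable filtration whose graded pieces are irreducible $S_{\phi,M}$-representations $\tau_i$ pulled back to $S_{\phi,P}$; these are obtained by filtering by weight with respect to the cocharacter $\nu$. Tensoring with $\mathcal{O}(\tau)$ gives a filtration of $\mu^*S_V\otimes\mathcal{O}(\tau)$ with graded pieces $\mathcal{O}(\tau_i\otimes\tau)$. Each $\tau_i\otimes\tau$ is a finite sum of irreducibles $\mathcal{O}(\tau')$, using Theorem \ref{DiscRedClass} and the description $\tau'\cong E_{\tau'}\otimes\lambda'$.

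It remains to understand $\mu_*\mathcal{O}(\tau')$ when the weight $\lambda'$ of $\tau'$ is not dominant. In the connected case, \cite[Lemma 2.4.8]{BezExotic} and the Bott-type computations in \cite{BezQExcep} show that $\mu^\circ_*\mathcal{O}(\lambda')$ lies in the subcategory generated by $\overline{\nabla}_{\lambda''}[n]$ with $n\geq 0$. We would invoke the stronger statement used in \cite[Section 2.4]{BeijingNotes} for the trivial parameter: $\mu^\circ_*\mathcal{O}(\lambda')$ admits a filtration by $\overline{\nabla}_{\lambda''}[n]$ with $n\ge0$. This comes from expressing $\mathcal{O}(\lambda')$ in the exotic heart. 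By \cite[Section 2.3]{BezExotic} it has a filtration by $\nabla^{\text{ex}}_{\mu'}[n]$ with $n\ge0$, since line bundles lie in exotic degrees $\le 0$ relative to the appropriate order. Applying the $t$-exactness of $\mu_*$ from the last proposition of Section \ref{SpringerSect} then gives $\mu^\circ_*\nabla^{\text{ex}}_{w\lambda''}=\overline{\nabla}_{\lambda''}$.

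Passing to the disconnected group, we tensor with $E_{\tau'}$ and push forward along $p_M$. Lemmas \ref{ECompPush} and \ref{PushForwardE} show that each $\mu_*(E_{\tau'}\otimes\nabla^{\text{ex}}_{\lambda'})$ is $p_{M*}(E_{\tau'}\otimes\overline{\nabla}_{\lambda''})$. That sheaf is a direct sum of $\overline{\nabla}_{\rho'}$'s, since $p_{M*}$ factors through $p^{\lambda''}_*$ and $\mathrm{Ind}^{k^\alpha[A^{\lambda''}]}_{k^\alpha[A_M]}E_{\tau'}$ is semisimple. Filtrations are preserved by exact functors, so $S_V\otimes\mu_*\mathcal{O}(\tau)$ gets a filtration with pieces $\overline{\nabla}_{\rho'}[n]$, $n\ge0$.

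The main obstacle is passing from $\mu_*\mathcal{O}(\tau)$ to its summand $\overline{\nabla}_\rho$: a direct summand of a filtered object need not inherit the filtration. We would handle this using the quasi-exceptional structure of Lemma \ref{NablaQExcep}. Objects admitting such a filtration should be exactly those $X$ with $\mathrm{Hom}(\overline{\Delta}_{\rho'},X[m])=0$ for $m<0$ (a costandard-filtration criterion, via Proposition \ref{PerverseEqExotic}), and this Hom-vanishing condition is visibly closed under direct summands. The alternative is to work equivariantly for $k^\alpha[A^\lambda]$ and project to the $E_\rho$-isotypic part, which is exact. We would verify the Hom criterion in the disconnected setting by Mackey decomposition, exactly as in the proof of Lemma \ref{NablaQExcep}.
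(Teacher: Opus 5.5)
Your reduction via Lemma \ref{SpringResDecomp}, the projection formula, and the $\nu$-weight filtration of $V|_{S_{\phi,P}}$ is fine. The argument breaks when you treat the resulting graded pieces $\mathcal{O}(\tau')$ one at a time.

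You assert that $\mu_*\mathcal{O}(\lambda')$ admits a filtration by $\overline{\nabla}_{\lambda''}[n]$ with $n\ge 0$ for every weight $\lambda'$, dominant or not. This is false.
\begin{itemize}
\item Each $\overline{\nabla}_{\rho'}$ is a genuine coherent sheaf (Lemma \ref{NablaQExcep}). Hence any iterated extension of objects $\overline{\nabla}_{\rho'}[n]$ with $n\ge 0$ has cohomology sheaves only in degrees $\le 0$.
\item Take $S_\phi^\circ$ of type $A_1$, e.g.\ the trivial parameter with $\hat G=\mathrm{SL}_2$. Then $\tilde{\mathcal{N}}=T^*\mathbb{P}^1$, and $R^1\mu_*\mathcal{O}(-\alpha)$ is the skyscraper $H^1(\mathbb{P}^1,\mathcal{O}(-2))\neq 0$ at the origin. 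Note that $\mu_*\mathcal{O}(-\alpha)=\overline{\Delta}_\alpha$.
\item Such weights genuinely occur among your pieces: take $\rho$ trivial and $V\supseteq\mathrm{Lie}(S_\phi)$.
\item The exotic justification does not rescue this. The exotic $\mathbf{C}^{\le 0}$ is generated by $\Delta^{\mathrm{ex}}[d]$ with $d\ge 0$, so lying in exotic degrees $\le 0$ gives no $\nabla^{\mathrm{ex}}$-filtration with nonnegative shifts.
\end{itemize}
The higher cohomology of the non-dominant pieces has to cancel against the other pieces. So the statement can only be proved for the whole tensor product with $\lambda$ dominant, never piece by piece.

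That is exactly the input the paper uses. After the same projection-formula reduction, done on the connected resolution via $p^\circ\mu^\circ$, it imports from \cite[Section 2.4]{BeijingNotes} a filtration of all of $\mathcal{O}(\lambda)\otimes\tilde S_V$ by $\mathcal{O}(\lambda_i)[n]$ with $\lambda_i$ dominant and $n\ge 0$. Each such piece pushes forward directly to $p^\circ_*\overline{\nabla}_{\lambda_i}[n]$, so no non-dominant line bundle is ever pushed forward on its own.

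Your fix for the direct-summand issue also fails as stated.
\begin{itemize}
\item The sign is off: $X=\overline{\nabla}_{\rho'}[1]$ satisfies $\mathrm{Hom}(\overline{\Delta}_{\rho'},X[-1])\cong\mathrm{End}^0(\overline{\nabla}_{\rho'})\neq 0$.
\item Even with the sign corrected, such a criterion holds for exceptional collections, not quasi-exceptional ones. Here $\mathrm{Hom}^\bullet(\overline{\Delta}_{\rho'},\overline{\nabla}_{\rho'})\cong\mathrm{End}^\bullet(\overline{\nabla}_{\rho'})$ can be nonzero in positive degrees. For $\mathrm{SL}_2$, $\overline{\nabla}_\varpi$ is the non-free rank one maximal Cohen--Macaulay module on the $A_1$ singularity, and $\mathrm{End}^i(\overline{\nabla}_\varpi)\cong k$ for all $i\ge 1$. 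So $\overline{\nabla}_{\rho'}$ itself violates the condition.
\end{itemize}
You are right that the paper passes to the summand without comment. For its only use, Proposition \ref{HeckePExactness}, this is harmless: there one only needs membership in the subcategory generated under extensions and colimits, which is closed under retracts.
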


\begin{proof}

Let $\lambda$ be a highest weight of $\rho$.
Then $\overline{\nabla}_\rho \cong p^{\lambda}_*(E_\rho\otimes\overline{\nabla}_\lambda)$ is a direct summand of $$p^{\lambda}_*(\text{Ind}_{k^\alpha[\text{Id}_A]}^{k^\alpha[A]}\otimes\overline{\nabla}_\lambda)\cong p^\circ_*\mu^\circ_*\mathcal{O}(\lambda)$$ so it suffices to prove the desired result for $p^\circ_*\mu^\circ_*\mathcal{O}(\lambda)$.
Letting $\tilde{S}_V:=(p^\circ\mu^\circ)^*S_V$, by the projection formula $$p^\circ_*\mu^\circ_*(\mathcal{O}(\lambda))\otimes S_V\cong (p^\circ\mu^\circ)_*(\mathcal{O}(\lambda)\otimes \tilde{S}_V)$$  
So it suffices to show $\mathcal{O}(\lambda)\otimes \tilde{S}_V$ admits a finite filtration with graded pieces of the form $\mathcal{O}(\lambda_i)[n]$ where $n\geq0$ and the $\lambda_i$ are all dominant.  But this is shown in \cite[Section 2.4]{BeijingNotes}.

\end{proof}

\begin{prop}\label{HeckePExactness}\cite[Conjecture 2.4.1]{BeijingNotes}, \cite[Conjecture 6.4]{HL}
Under the conditions of Theorem \ref{MainThm}, the Hecke action is perverse t-exact on $D(\mathrm{Bun} _G)^{\mathrm{ULA},\phi}$, and therefore $D(\mathrm{Bun} _G)^{\mathrm{ULA},\mathrm{gen}} $.
    
\end{prop}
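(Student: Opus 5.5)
We transport the question to the spectral side via $c_\psi$ and use that the spectral action commutes with $c_\psi$. The Hecke operator $T_V$ for $V\in\mathrm{Rep}({}^LG)$ corresponds to the spectral action of the vector bundle $\mathcal{V}$ on $\mathrm{Par}_G$. After restricting to the generic fiber $\mathcal{N}_{S_\phi}/S_\phi$ (or its formal neighbourhood), $\mathcal{V}$ becomes the pullback $S_{V'}$ of the $S_\phi$-representation $V':=V\circ\phi$ along the projection to $*/S_\phi$. Since $c_\psi$ is $\mathrm{Perf}(\mathrm{Par}_G)$-linear, we get
\[c_\psi(T_V A)\cong \mathcal{V}\otimes c_\psi(A).\]
By Corollary \ref{PTruncStability} (and the main theorem), $c_\psi$ is t-exact on finite generic objects for the perverse t-structure on $D(\mathrm{Bun}_G)$ and the standard t-structure on $\mathrm{Coh}(\mathrm{Par}_G^\phi)$. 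Tensoring with a vector bundle is exact for the standard t-structure, so $T_V$ is perverse t-exact on finite objects supported at $\phi$.

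This is too quick, however. We only know $c_\psi$ is fully faithful on compact objects, and ULA objects need not be compact. So I would instead argue directly with the generators, following \cite[Section 2.4]{BeijingNotes}.

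\emph{Right t-exactness.} $D^{p,\geq 0}$ restricted to the $\phi$-part is built from objects of the form $i_{b!}\pi[n]$ with $n\le 0$. Lemma \ref{CostFilt} then shows $S_{V'}\otimes\overline{\nabla}_\rho$ has a filtration by $\overline{\nabla}_{\rho'}[n]$ with $n\ge0$. I need to recheck the sign so that this lands in the coconnective part. Pulling back through $c_\psi$ with Theorem \ref{MainThm}(i), $T_V(i_{b!}\pi)$ is filtered by objects $i_{b'!}\pi'[n]$ that are perverse-coconnective, because $i_{b!}$ is left t-exact for the perverse t-structure.

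\emph{Left t-exactness.} Here I would use Verdier/Bernstein duality, which swaps $T_V$ with $T_{V^\vee}$ up to the involution, and interchanges $D^{p,\le0}$ with $D^{p,\ge0}$ on ULA objects. Alternatively, I would use that $T_V$ has both adjoints $T_{V^\vee}$, which gives exactness on the other side.

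\emph{Passing to ULA generic objects.} Any ULA object with support at $\phi$ decomposes over the finitely many $b$ meeting its support. Its perverse truncations are ULA, and it is an iterated extension of objects of the form $i_{b!}$ or $i_{b*}$ applied to admissible representations. An admissible representation at $\phi$ has finite length by the finiteness of Bernstein components with generic parameter, which is where Lemma \ref{FinGenPres}-type arguments enter. So it suffices to treat the finite generators above. The statement for the full generic locus then follows because $D(\mathrm{Bun}_G)^{\mathrm{ULA,gen}}$ splits as a product over the semisimple generic parameters $\phi$, using the spectral Bernstein center action.

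The main obstacle is the reduction from ULA objects to finite generators. In particular I must check that ULA objects at a generic parameter have only finitely many $b$ in their support, so that the filtration argument terminates, and that the duality exchanging the two halves of the t-structure is compatible with the Hecke action at $\phi$ (it sends $\phi$ to its dual). I would first try to prove this finiteness using that the $\phi$-part of $D(G_b(F))$ vanishes for all but finitely many $b$, since the compatibility of Fargues–Scholze parameters forces $b$ to come from $B(M)$ for the finitely many Levis through which $\phi$ factors.
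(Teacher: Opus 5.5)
Your skeleton is the paper's: generators, Lemma \ref{CostFilt}, Theorem \ref{MainThm}(i), and the other half via the adjoint/duality reduction of \cite[Theorem 2.4.4]{BeijingNotes}. However, the central step has the two halves of the perverse t-structure swapped, and as written it fails. With the paper's definitions, $D^{p,\le0}$ is cut out by the functors $i_b^{*\mathrm{ren}}$ and $D^{p,\ge0}$ by the $i_b^{!\mathrm{ren}}$. Since $i_{b'}^{*\mathrm{ren}}i_{b!}^{\mathrm{ren}}\pi$ is $\pi$ for $b'=b$ and $0$ otherwise, $i_{b!}^{\mathrm{ren}}$ is right t-exact, not left t-exact, and $D^{p,\ge0}$ is not built from the $i_{b!}\pi[n]$, $n\le 0$. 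More decisively, the pieces $\overline{\nabla}_{\rho'}[n]$ with $n\ge0$ supplied by Lemma \ref{CostFilt} are shifted toward $D^{\le0}$. So even if $i_{b'!}\pi'$ were perverse, $i_{b'!}\pi'[n]$ with $n>0$ would not be coconnective; your worry about the sign is justified.

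The fix is to run your argument on the connective half, which is exactly the paper's proof. The category $D^{p,\le0}(\mathrm{Bun}_G)^{\mathrm{ULA},\phi}$ is generated under colimits and extensions by $i_{b!}^{\mathrm{ren}}\pi[n]$ with $n\ge0$. For these generators, $c_\psi(T_V i_{b!}\pi)\cong\nu_{\phi*}(S_V\otimes\overline{\nabla}_\rho)$ is filtered by $\nu_{\phi*}\overline{\nabla}_{\rho'}[m]$ with $m\ge0$. All objects involved are compact, so full faithfulness of $c_\psi$ on $D(\mathrm{Bun}_G)^{\omega,\phi}$ together with Theorem \ref{MainThm}(i) lifts this to a filtration of $T_Vi_{b!}\pi$ by objects $i_{b'!}\pi'[m]$. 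These lie in $D^{p,\le0}$ precisely because $i_{b'!}$ is right t-exact. That gives right t-exactness, and your adjunction/duality step supplies the other half.

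Two further corrections. First, your opening route is not merely ``too quick.'' In the paper, Corollary \ref{PTruncStability} (and hence Lemma \ref{FinGenPres}, which uses it) is deduced from this very proposition, so appealing to either here is circular.

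Second, the reduction in your last paragraph is unnecessary and rests on a false claim. Since $T_V$ commutes with colimits and $D^{p,\le0}$ is closed under colimits and extensions, generation under colimits suffices, and no finiteness of support is needed. For the whole generic locus you simply let $(b,\pi)$ range over all pairs with generic parameter; no product decomposition is required. Meanwhile the claimed finiteness fails: the $\phi$-part of $D(G_b(F))$ is nonzero for infinitely many $b$. Already for $G=\mathrm{GL}_1$, every $b\in B(G)\cong\mathbb{Z}$ carries the character with parameter $\phi$. In general, Theorem \ref{MainThm}(iii) puts the pairs $(b,\pi)$ at $\phi$ in bijection with $\mathrm{Irr}(S_\phi)$, which is infinite as soon as $S_\phi^\circ\neq1$, while each $b$ carries only finitely many such $\pi$. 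So ULA objects at $\phi$ can have infinite support.
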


\begin{proof}
Following the proof of \cite[Theorem 2.4.4]{BeijingNotes} it suffices to show right t-exactness.
    
\noindent$D^{p,\leq0}(\text{Bun}_G)^{\text{ULA},\phi}$ is generated (under extensions and colimits) by $i_{b!}\pi[n]$ over all pairs $(b,\pi)$ such that $\pi$ has Fargues-Scholze parameter $\phi$, $n\geq0$.
For all $V\in\text{Rep}(\hat{G})$, $$c_\psi(T_V*i_{b!}\pi[n])\cong \nu_{\phi*}(S_V\otimes\overline{\nabla}_\rho)[n]$$ and we conclude by applying Lemma \ref{CostFilt} and passing back to $D(\text{Bun}_G)$ using Theorem \ref{MainThm}.  
Perverse t-exactness on $D(\mathrm{Bun} _G)^{\mathrm{ULA},\mathrm{gen}} $ follows by varying $(b,\pi)$ over all pairs for which the Fargues-Scholze parameter of $\pi$ is generic, as opposed to only those $\pi$ with Fargues-Scholze parameter $\phi$.
\end{proof}

\begin{cor}\label{PTruncStability}\cite[Conjecture 2.5.1]{BeijingNotes}
Under the assumptions of Theorem \ref{MainThm}, $D(\textup{Bun}_G)^{\textup{fin},\textup{gen}}$ is stable under perverse truncation operators, and $c_\psi|_{D(\textup{Bun}_G)^{\textup{fin},\textup{gen}}}$ is t-exact with respect to the perverse t-structure on $D(\textup{Bun}_G)^{\textup{fin},\textup{gen}}$ and the standard t-structure on $\textup{Coh}(\textup{Par}_G^{\textup{gen}})^{\textup{fin}}$.

\end{cor}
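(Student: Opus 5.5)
The plan is to reduce both claims to a statement about generators on each side, using the first two isomorphisms of Theorem \ref{MainThm}(i): $c_\psi(i_{b!}^{\textup{ren}}\pi)\cong\nu_{\phi*}\overline{\nabla}_\rho$ and $c_\psi(i_{b\sharp}^{\textup{ren}}\pi)\cong\nu_{\phi*}\overline{\Delta}_\rho$.

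First I would split along the generic locus. A finite generic complex has finite support in $\textup{Par}_G^{\textup{spec},\textup{gen}}$. By the $\Gamma(\textup{Par}_G^{\textup{spec}})$-linearity of $c_\psi$ and Proposition \ref{FSsup}, it decomposes as a finite direct sum of pieces lying in $D(\textup{Bun}_G)^{\textup{fin},\phi}$ for distinct semisimple generic $\phi$. Both the perverse truncations and the standard truncations respect this decomposition, so it suffices to work at a single $\phi$.

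At a fixed $\phi$, I would show that $c_\psi$ takes $D^{p,\leq0}$ into $\textup{Coh}^{\leq0}$ and $D^{p,\geq0}$ into $\textup{Coh}^{\geq0}$. On the finite part, $D^{p,\leq 0}$ is generated under extensions by the objects $i_{b!}^{\textup{ren}}\pi[n]$ with $n\geq0$. This holds because a finite complex has a finite filtration by the strata $b$, with graded pieces $i_{b!}^{\textup{ren}}$ of finite-length representations, and these are perverse-connective exactly when each $i_b^{*\textup{ren}}$ is connective. Since $\overline{\nabla}_\rho$ is a genuine coherent sheaf (Lemma \ref{NablaQExcep}), $c_\psi$ sends these generators into $\textup{Coh}^{\leq0}$. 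Dually, $D^{p,\geq0}$ is cogenerated by the objects $i_{b*}^{\textup{ren}}\pi[n]$ with $n\leq0$. Here I would either identify $c_\psi(i_{b*}^{\textup{ren}}\pi)$ through Verdier/Grothendieck–Serre duality (the Bernstein–Zelevinsky dual of $\pi$ combined with coherent duality on the smooth stack $\textup{Par}_G^\phi$), or use that $i_{b*}^{\textup{ren}}\pi$ and $i_{b!}^{\textup{ren}}\pi$ differ by objects from other strata that can be controlled inductively.

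The cleaner route, which I would try first, is to show that $c_\psi$ is conservative and exact on the hearts and to characterize $\textup{Coh}^{\geq0}$ through Hom-vanishing. An object $X$ lies in $D^{p,\geq 1}$ if and only if $\textup{Hom}(i_{b!}^{\textup{ren}}\pi[n],X)=0$ for all $n\geq0$ and all $(b,\pi)$. By full faithfulness of $c_\psi$, this becomes $\textup{Hom}(\nu_{\phi*}\overline{\nabla}_\rho[n],c_\psi X)=0$. Since the $\overline{\nabla}_\rho$ surject onto the $\delta_\rho$ (Proposition \ref{GenSheafProp}(4)) and generate, I would convert this into vanishing against all coherent sheaves in degree $\leq0$ that are set-theoretically supported on the fiber. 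Right t-exactness is already established, so left t-exactness follows formally once we know that $\textup{Coh}^{\leq0}$ restricted to the fiber is generated under extensions and colimits by the shifts $\nu_{\phi*}\overline{\nabla}_\rho[n]$ with $n\geq0$. That generation holds because each $\delta_\rho$ is a quotient of $\overline{\nabla}_\rho$ with kernel supported on larger orbits, and an induction on the quasi-exceptional order from Lemma \ref{NablaQExcep} handles the rest.

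Stability under perverse truncation then follows from t-exactness together with essential surjectivity. For $X$ finite, I take $\tau^{\leq0}c_\psi X$, which is finite and coherent, and pull it back to a finite object whose image agrees with $\tau^{p,\leq0}X$ by full faithfulness. The step I expect to be the main obstacle is the generation statement for $\textup{Coh}^{\leq0}$ by the $\overline{\nabla}_\rho$, that is, controlling the kernels of $\overline{\nabla}_\rho\twoheadrightarrow\delta_\rho$ within the correct range of degrees.
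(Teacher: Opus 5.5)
Your reduction to a single $\phi$, the right t-exactness argument via $c_\psi(i_{b!}^{\textup{ren}}\pi[n])\cong\nu_{\phi*}\overline{\nabla}_\rho[n]$, the Hom-characterization of $D^{p,\geq1}$ on finite objects, and the truncation argument are all fine. The gap is exactly the step you flag, and the justification you give for it does not work.

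To conclude $c_\psi X\in\textup{Coh}^{\geq1}$ from $\textup{Hom}(\nu_{\phi*}\overline{\nabla}_\rho[n],c_\psi X)=0$ for $n\geq0$, you need that every nonzero coherent sheaf on $\mathcal{N}_{S_\phi}/S_\phi$ receives a nonzero map from some $\overline{\nabla}_\rho$. Your argument does not give this, for three reasons.
\begin{itemize}
\item Knowing that the $\delta_\rho$ lie in the aisle generated by the $\overline{\nabla}_\rho[n]$ only handles sheaves supported at the origin.
\item The generation in Lemma \ref{NablaQExcep} is triangulated generation using all shifts, so it says nothing about aisles.
\item The proposed induction does not run. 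The kernel of $\overline{\nabla}_\rho\twoheadrightarrow\delta_\rho$ has full support, and it need not be lower in the quasi-exceptional order. Already for the nilpotent cone of $\mathfrak{sl}_2$, with dominant weights identified with integers $n\geq0$, the zero section of $T^*\mathbb{P}^1$ is cut out by the tautological section. Hence the sequence $0\to\mathcal{O}(n+2)\to\mathcal{O}(n)\to\mathcal{O}_{\mathbb{P}^1}(n)\to0$ pushes forward to $0\to\overline{\nabla}_{n+2}\to\overline{\nabla}_n\to\delta_{L(n)}\to0$. For instance, $\ker(\mathcal{O}_{\mathcal{N}}\to\delta_0)=\overline{\nabla}_2$ is the maximal ideal of the origin, which sits higher in the order.
\end{itemize}

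The missing input is Lemma \ref{CostFilt}: $S_V\otimes\overline{\nabla}_\rho$ has a finite filtration with graded pieces $\overline{\nabla}_{\rho'}[n]$, $n\geq0$. Apply it to $\rho$ trivial, where $\overline{\nabla}_{\textup{triv}}=\mathcal{O}$ by Proposition \ref{GenSheafProp}(2).
\begin{itemize}
\item Since $\mathcal{N}_{S_\phi}/S_\phi$ is an affine scheme modulo a reductive group, the $S_V$ are projective generators of the coherent heart. So a nonzero coherent $H$ receives a nonzero map $S_V\to H$.
\item Restricting this map to the first step of the filtration on which it is nonzero gives a nonzero map $\overline{\nabla}_{\rho'}[n]\to H$. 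Since both are sheaves, $n=0$ is forced.
\item Passing from set-theoretic support on the fiber of $q$ to the reduced fiber is the usual torsion-subsheaf argument.
\end{itemize}
With this your argument closes.

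It then becomes a spectral-side version of the paper's proof. The paper uses the same lemma to prove perverse t-exactness of Hecke operators (Proposition \ref{HeckePExactness}). It then invokes \cite[Proposition 2.5.2]{BeijingNotes} and \cite[Proposition 5.5.7]{HM} to convert this into t-exactness of $c_\psi$. The mechanism is essentially yours: the standard t-structure is detected by Homs out of the $S_V$, which correspond to Hecke translates of the Whittaker sheaf, whose Hom-functor is perverse t-exact. Your version replaces the Hecke and Whittaker formalism with a direct generation statement on $\mathcal{N}_{S_\phi}/S_\phi$.
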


\begin{proof}

This follows from \cite[Proposition 2.5.2]{BeijingNotes}, \cite[Proposition 5.5.7]{HM}, and Proposition \ref{HeckePExactness}.
    
\end{proof}

\subsection{Hecke Eigensheaves at Generic Parameters}\label{HeckEigenSect}

We recall that a Hecke eigensheaf for an L-parameter $\phi\in\text{Par}_G$ is an object $\mathcal{F}\in D(\text{Bun}_G)$ such that for all finite sets $I$ and all $V\in\text{Rep}((\hat{G}\rtimes Q)^I)$, $$T_V*\mathcal{F}\cong \mathcal{F}\otimes_{\overline{\mathbb{Q}}_\ell} V_\phi \in D(\text{Bun}_G)\otimes\text{Rep}^{\text{cont}}_{\overline{\mathbb{Q}}_\ell}(W_F^I)$$ functorial in $V$ and $I$ and respecting the tensor product of $\text{Rep}((\hat{G}\rtimes Q)^I) $.
Here $V_\phi$ denotes the underlying vector space of $V$ equipped with the $W_F$ action coming from $\phi$.
On the spectral side, by the $\text{Perf}(\text{Par}_G)$-linearity of $\textbf{L}_\psi$ a Hecke eigensheaf for $\phi$ corresponds under the Fargues-Scholze correspondence to a sheaf $\mathcal{P}$ on $\text{Par}_G$ such that $$\mathcal{P}\otimes \mathcal{V}\cong\mathcal{P}\otimes_{\overline{\mathbb{Q}}_\ell}  V_\phi\in\text{IndCoh}(\text{Par}_G)\otimes\text{Rep}^{\text{cont}}_{\overline{\mathbb{Q}}_\ell}(W_F^I)$$ where $\mathcal{V}$ is by definition the vector bundle on $\text{Par}_G$ associated to $V$.

Let $\phi$ be an L-parameter with generic semisimplification and centralizer $S_\phi\subset\hat{G}$.
We denote its regular representation by $\mathcal{R}(S_\phi)$.
Recall $\mathcal{R}(S_\phi)$ has the property that for any representation $W$ of $S_\phi$, $$\mathcal{R}(S_\phi)\otimes W\cong \mathcal{R}(S_\phi)\otimes W_{\text{triv}}$$ where $W_{\text{triv}}$ denotes $W$ with the trivial action of $S_\phi$.  
This follows from the projection formula applied to the map $*\to */S_\phi$.  
Furthermore when $S_\phi$ is reductive every nonzero representation with this property is a direct sum of copies of $\mathcal{R}(S_\phi)$:

\begin{lem}\label{AllEigenReg}

Let $V$ be a representation of a reductive algebraic group $G$ (over an algebraically closed field of characteristic $0$) such that $V\otimes W\cong V\otimes W_{\textup{triv}}$ for all irreducible, and hence all, representations of $G$.
Then $V\cong\mathcal{R}(G)^{\oplus c}$ for some cardinal $c$.
    
\end{lem}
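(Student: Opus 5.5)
The plan is to decompose $V$ isotypically and use the hypothesis to pin down all the multiplicities. Since $G$ is reductive in characteristic $0$, every representation (I take representations to be locally finite, as for comodules) is semisimple, so
\[
V\cong\bigoplus_{\sigma\in\mathrm{Irr}(G)} \sigma\otimes_k M_\sigma ,
\]
where the multiplicity spaces $M_\sigma$ are vector spaces of some cardinality $m_\sigma$. The regular representation decomposes as $\mathcal{R}(G)\cong\bigoplus_\sigma \sigma\otimes\sigma^\vee$ by algebraic Peter--Weyl, so $\sigma$ occurs in it with multiplicity $\dim\sigma$. The goal is therefore to show that there is a single cardinal $c$ with $m_\sigma=c\cdot\dim\sigma$ for every $\sigma$.

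The key step is to apply the hypothesis with $W=\sigma^\vee$ and compare multiplicities of the trivial representation. On the right-hand side, $V\otimes W_{\mathrm{triv}}\cong V^{\oplus\dim\sigma}$, so the trivial representation occurs with multiplicity $m_{\mathbf 1}\cdot\dim\sigma$. On the left-hand side,
\[
(V\otimes\sigma^\vee)^G\cong\mathrm{Hom}_G(\sigma,V)\cong M_\sigma ,
\]
using Schur's lemma and the fact that invariants commute with direct sums (tensor products with a finite-dimensional $\sigma^\vee$ preserve direct sums). Since isotypic multiplicities are isomorphism invariants of semisimple representations, this gives $m_\sigma=m_{\mathbf 1}\cdot\dim\sigma$ for every irreducible $\sigma$. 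Setting $c:=m_{\mathbf 1}$, both $V$ and $\mathcal{R}(G)^{\oplus c}$ are semisimple with the same multiplicity for each irreducible, so they are isomorphic.

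It remains to justify the phrase ``for all irreducible, and hence all'': every finite-dimensional $W$ is a direct sum of irreducibles, $W_{\mathrm{triv}}$ respects that decomposition, and $V\otimes(-)$ commutes with direct sums. In any case, only irreducible $W$ are used in the argument above.

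I do not expect a real obstacle. The only care needed is with infinite-dimensional $V$: one has to work with cardinal multiplicities, and note that cardinal arithmetic with $\dim\sigma$ finite still lets us compare multiplicities. That is legitimate because multiplicity spaces are canonically $\mathrm{Hom}_G(\sigma,-)$, and because semisimple locally finite representations are determined up to isomorphism by these cardinals.
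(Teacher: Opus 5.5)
Your proposal is correct and takes essentially the same approach as the paper. Both arguments compare the multiplicity of the trivial representation in $V\otimes W$ and in $V\otimes W_{\mathrm{triv}}$ for irreducible $W$, obtain $m_V(\sigma)=m_V(\mathbf{1})\dim\sigma$ for every irreducible $\sigma$, and conclude by Peter--Weyl; the paper takes $W=V_\nu$ and reads off $m_V(\nu^*)$, while you take $W=\sigma^\vee$, which is the same computation.
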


\begin{proof}

Let $W\cong V_\nu$ be an irreducible representation of $G$, and let $1_G$ denote the trivial representation.
For every irreducible representation $V_\lambda$, $\text{Hom}_G(1_G,V_\lambda\otimes V_\nu)\cong \text{Hom}_G(V_{\nu}^*,V_\lambda)$ is one dimensional if $V_\nu^*\cong V_\lambda$ and $0$ otherwise, so letting $m_V(\lambda)$ denote the multiplicity of $V_\lambda$ in $V$, $\text{Hom}_G(1_G, V\otimes V_\nu)$ has dimension $m_V(\nu^*)$.
On the other hand $\text{Hom}(1_G, V\otimes V_{\nu,\text{triv}})$ has dimension $m_V(1_G)\text{dim}(V_\nu) = m_V(1_G)\text{dim}(V_\nu^*)$.
Varying over all $\nu$ we find $V\cong \mathcal{R}(G)^{\oplus m_V(1_G)}$.

\end{proof}

For any L-parameter $\phi$, an application of the projection formula to the map $i_\phi:*/S_\phi\to\text{Par}_G$ shows that $i_{\phi*}\mathcal{R}(S_\phi)$ is a spectral Hecke eigensheaf for $\phi$.
If $\phi$ is semisimple then by Proposition \ref{ssLparamprop} $S_\phi$ is reductive, and so by the above Lemma every spectral Hecke eigensheaf for $\phi$ is of the form $i_{\phi*}\mathcal{R}(S_\phi)^{\oplus c}$.
But even for spectral Hecke eigensheaves which are relatively easy to describe, it is not always clear what sheaves on $\text{Bun}_G$ they should correspond to under the Fargues-Scholze correspondence.
The goal of this section is to use the main results of this paper to begin answering this question in the setting where $\phi$ is generic (and not necessarily semisimple).

To simplify notation let $\mathcal{N} := \mathcal{N}_{S_{\phi^{\text{ss}}}}$, $ S:= S_{\phi^{\text{ss}}}$, $\nu = \nu_{\phi^{\text{ss}}}$, and let $O$ be the orbit of $\phi(\text{Fr})^u$ in $\mathcal{N}$.
In \cite[Theorem 4.2]{PervCoh} Arinkin and Bezrukavnikov prove that the category of perverse-coherent sheaves on $O$ is equivalent to a certain subcategory of perverse-coherent sheaves on $\overline{O}$ satisfying some extra vanishing conditions on $\partial\overline{O}$.
Letting $j:O/S\hookrightarrow\overline{O}/S$ denote the open inclusion, the backward map inducing the equivalence is $j^*$, and $j_{!*}$ is by definition its inverse.
One can show $j_{!*}$ satisfies a projection formula.
Indeed, if $V$ is any vector bundle on $\overline{O}/S$ and $P$ is a perverse-coherent sheaf on $\mathcal{O}/S$, then $j^*(j_{!*}(P)\otimes V)$ and $j^*(j_{!*}(P\otimes j^*V))$ are both isomorphic to $P\otimes j^*V$, and therefore $j_{!*}(P)\otimes V\cong j_{!*}(P\otimes j^*V) $.
We apply this formula to the setting where $P = \mathcal{R}(S_\phi)[\text{dim}(O)/2]$ and $V = \mathcal{V}|_{\overline{O}/S}$.
Note $\mathcal{R}(S_\phi)$ is an ind-coherent sheaf, so the functors applied to it below will be those ind-extended from their coherent incarnations.
We define the intersection cohomology sheaf $IC(O,\mathcal{R}(S_\phi)):= j_{!*}\mathcal{R}(S_\phi)[\text{dim}(O)/2]$.

\begin{prop}

The complex $\nu_{*}IC(O,\mathcal{R}(S_\phi))$ is a spectral Hecke eigensheaf with eigenparameter $\phi$.  
Consequently $a_\psi(\nu_*IC(O,\mathcal{R}(S_\phi)))[-d]$ is a Hecke eigensheaf on $D(\textup{Bun}_G)$ with eigenparameter $\phi$.  
We refer to this sheaf as $H_\phi$.
\end{prop}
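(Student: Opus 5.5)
We need to show that for every $V\in\text{Rep}((\hat G\rtimes Q)^I)$ there is an isomorphism
\[
\nu_*IC(O,\mathcal{R}(S_\phi))\otimes\mathcal{V}\cong \nu_*IC(O,\mathcal{R}(S_\phi))\otimes V_\phi,
\]
functorial in $V$ and $I$ and compatible with tensor products. The plan is to reduce to the orbit $O/S$ using the projection formula for $j_{!*}$ established just above, and on $O/S$ to use the regular-representation trick.

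First, by the projection formula for the closed immersion $\nu$ (and the closed inclusion $\overline{O}/S\hookrightarrow\mathcal{N}/S$),
\[
\nu_*IC(O,\mathcal{R}(S_\phi))\otimes\mathcal{V}\cong\nu_*\bigl(IC(O,\mathcal{R}(S_\phi))\otimes\mathcal{V}|_{\overline{O}/S}\bigr).
\]
The $j_{!*}$ projection formula then identifies this with
\[
\nu_*j_{!*}\bigl(\mathcal{R}(S_\phi)\otimes j^*\mathcal{V}\bigr)[\dim O/2].
\]
Next, $O/S\cong */C_S(u)$ with $u=\phi(\mathrm{Fr})^u$, and $C_S(u)$ is the stabilizer of $\phi$ in $\hat G$. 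This holds because a point of $\mathcal{U}_S/S$ is the parameter $\phi^{\text{ss}}_u$, and $\phi$ is the parameter obtained from the point $u$ as in Proposition~\ref{fibstruct}, with $N=0$ by genericity. Under this identification, $j^*\mathcal{V}$ corresponds to the representation $V$ of $S_\phi$ obtained by restricting along $S_\phi\to\hat G$, together with its $W_F^I$-action through $\phi$. In other words, the vector bundle $\mathcal{V}$ restricted to the point $\phi$ is $V_\phi$ as a $W_F^I$-module, with $S_\phi$ acting through the inclusion.

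The isomorphism $\mathcal{R}(S_\phi)\otimes W\cong\mathcal{R}(S_\phi)\otimes W_{\text{triv}}$ (the projection formula for $*\to */S_\phi$) is natural in $W$ and monoidal. It also commutes with any $W_F^I$-action on $W$ that commutes with the $S_\phi$-action, and this is the case here since $S_\phi$ centralizes $\phi$. Applying it gives
\[
\mathcal{R}(S_\phi)\otimes j^*\mathcal{V}\cong\mathcal{R}(S_\phi)\otimes_{\overline{\mathbb{Q}}_\ell}V_\phi,
\]
functorially in $V$, $I$ and tensor products. Since $j_{!*}$ and $\nu_*$ are $\overline{\mathbb{Q}}_\ell$-linear, they commute with $-\otimes_{\overline{\mathbb{Q}}_\ell}V_\phi$, which yields the spectral eigensheaf property.

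For the automorphic statement, we use that $a_\psi$ is $\text{Perf}(\text{Par}_G)$-linear (its definition is $a_\psi(F)=F\star W_\psi$ via the spectral action, extended to ind-objects). This gives
\[
T_V*a_\psi(\mathcal{P})\cong a_\psi(\mathcal{P}\otimes\mathcal{V})\cong a_\psi(\mathcal{P})\otimes V_\phi,
\]
again functorial and monoidal; the shift $[-d]$ is harmless.

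The main obstacle is making the functoriality and monoidality rigorous at the level of $\infty$-categorical coherence, in two places. The first is that the $j_{!*}$ projection formula was derived only from $j^*$ being an equivalence on the relevant subcategories, so it must be checked to be natural in $V$ and to respect tensor products. The plan is to deduce both from the fully faithfulness of $j^*$ on the Arinkin--Bezrukavnikov subcategory: every structure map is determined by its restriction to $O/S$, where everything is already coherent. The second is the ind-extension of $j_{!*}$ applied to $\mathcal{R}(S_\phi)$. Here we write $\mathcal{R}(S_\phi)$ as a filtered colimit of finite-dimensional subrepresentations and use that $j_{!*}$ on the perverse-coherent heart commutes with these colimits and is compatible with the isomorphisms above.
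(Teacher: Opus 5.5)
Your proposal is correct and follows the paper's argument. The projection formulas for $\nu$ and for $j_{!*}$ reduce everything to the identification $j^*\nu^*\mathcal{V}\cong V_\phi|_{S_\phi}$, the regular-representation isomorphism $\mathcal{R}(S_\phi)\otimes W\cong\mathcal{R}(S_\phi)\otimes W_{\text{triv}}$ then pulls $V_\phi$ out, and the automorphic statement follows by $\text{Perf}(\text{Par}_G)$-linearity, with your remarks on $O/S\cong */S_\phi$ (via $N=0$), functoriality, and ind-extension filling in details the paper leaves implicit.
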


\begin{proof}

In the expressions below we will slightly abuse notation and use $\nu$ to denote $\nu|_{\overline{O}}$, so $\nu\circ j$ in particular will be the inclusion of $O/S$ into $\text{Par}_G$.
By the projection formula $$\nu_{*}j_{!*}\mathcal{R}(S_\phi) \otimes \mathcal{V}\cong \nu_{*}j_{!*}(\mathcal{R}(S_\phi)\otimes j^* \nu^*\mathcal{V})\cong \nu_{*}j_{!*}(\mathcal{R}(S_\phi)\otimes V_\phi|_{S_\phi})$$  $$\cong \nu_{*}j_{!*}(\mathcal{R}(S_\phi)\otimes V_{\phi,\text{triv}})\cong \nu_{*}j_{!*}(\mathcal{R}(S_\phi))\otimes_{\overline{\mathbb{Q}_\ell}}V_\phi$$ for all $V\in\text{Rep}(\hat{G}\rtimes Q)$, proving $\nu_*IC(O,\mathcal{R}(S_\phi))[-d]$ is a spectral Hecke eigensheaf.

\end{proof}

\begin{rem}

The $-d$ degree shift is a normalization that will make the comparison between $H_\phi$ and the sheaves studied in Theorem \ref{MainThm} more straightforward.
    
\end{rem}

\begin{rem}

By Proposition \ref{EquivGenDef} $\text{Par}_G^{\text{gen}}$ is smooth, so $\text{QCoh}(\text{Par}_G^{\text{gen}})$ and $\text{IndCoh}(\text{Par}_G^{\text{gen}})$ agree.
Therefore when $\textbf{L}_\psi$ is an equivalence (as is conjectured to always be the case), $a_\psi|_{\text{QCoh}(\text{Par}_G^{\text{gen}})}$ is the inverse of $\textbf{L}_\psi|_{D(\text{Bun}_G)^{\text{gen}}}$.
    
\end{rem}

The following theorem, broadly, states that the intermediate extension sheaves $i_{\sharp!}\pi$ on $D(\text{Bun}_G)$ can be used to build Hecke eigensheaves.

\begin{thm}\label{HeckeEigenThm}
Let $M_\phi$ be the quotient of $S_\phi$ by its unipotent radical $U_\phi$.  Then $H_\phi$ has a direct sum decomposition $$H_\phi\cong\bigoplus\limits_{\rho\in\textup{Irr}(M_\phi)}H_{\phi,\rho}\otimes V_{\rho^*}$$ where each $H_{\phi,\rho}$ is indecomposable and $V_{\rho^*}$ is the underlying vector space of $\rho^*$.  
Furthermore, under the conditions of Theorem \ref{MainThm}, $H_{\phi,\rho}$ is (ind-)hadal for all $\rho$, and admits a filtration by (ind-)hadal subsheaves $H_{\phi,\rho}^k$ indexed by nonnegative integers.  
The $k$th graded complex $G^k_{\phi,\rho}:=H_{\phi,\rho}^k/H_{\phi,\rho}^{k-1}$ surjects (as a hadal sheaf) onto a finite direct sum of complexes of the form $i_{b\sharp!}\pi$, where $\pi$ has full Fargues-Scholze parameter $\phi$, and the kernel of this map lies in the subcategory of $D(\textup{Bun}_G)^\phi$ generated by complexes of the form $i_{b\sharp!}\pi'$, where the full Fargues-Scholze parameter of $\pi'$ is a strict specialization of $\phi$ in $\textup{Par}_G$.
\end{thm}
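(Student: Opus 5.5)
The proof has two parts: first decompose on the spectral side, then transport the decomposition through $a_\psi$ using Theorem \ref{MainThm}.

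\textbf{Decomposition.} Let $x=\phi(\text{Fr})^u\in O$, so that $S_\phi=C_S(x)$ and $O/S\cong */S_\phi$. The regular representation decomposes as a bimodule. Since $U_\phi$ is unipotent, I expect the part of $\mathcal{R}(S_\phi)$ relevant to a semisimple decomposition to be pulled back from $M_\phi$ via the Levi decomposition, giving
\[
\mathcal{R}(S_\phi)\cong\bigoplus_{\rho\in\mathrm{Irr}(M_\phi)}\mathcal{R}_\rho\otimes V_{\rho^*}.
\]
Here $\mathcal{R}_\rho$ is the (ind-)indecomposable $S_\phi$-module whose head is $\rho$; it is the $U_\phi$-coinduced part, namely the injective hull of $\rho$ in $\mathrm{Rep}(S_\phi)$. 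The right $S_\phi$-action only contributes multiplicity spaces.

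Applying the additive functor $\nu_*j_{!*}$ and then $a_\psi[-d]$ gives the decomposition of $H_\phi$ with
\[
H_{\phi,\rho}:=a_\psi\bigl(\nu_*j_{!*}\mathcal{R}_\rho[\dim O/2]\bigr)[-d].
\]
To see that $H_{\phi,\rho}$ is indecomposable, I would use that $j^*$ is an equivalence onto its image (\cite[Theorem 4.2]{PervCoh}). This reduces the question to indecomposability of $\mathcal{R}_\rho$, which holds because it has simple socle $\rho$; this is where Lemma \ref{SocIrred} should be used. Since $a_\psi$ is an equivalence on the generic locus under the hypotheses, indecomposability transfers.

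\textbf{Filtration.} Filter $\mathcal{R}_\rho$ by its socle filtration $\mathrm{soc}^k\mathcal{R}_\rho$. Each piece is finite-dimensional, and each graded piece is a finite direct sum of irreducible $S_\phi$-representations $\tilde\rho$, all factoring through $M_\phi$. The functor $j_{!*}$ is exact on the relevant abelian category of perverse-coherent sheaves on $O/S$, because $j^*$ is an equivalence of abelian categories (\cite[Theorem 4.2]{PervCoh}). It therefore produces a filtration of $j_{!*}\mathcal{R}_\rho$ by perverse-coherent subsheaves. Set $H^k_{\phi,\rho}$ to be the image of the $k$-th piece under $a_\psi\nu_*[-d]$.

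By Theorem \ref{MainThm}(ii), $c_\psi$ (equivalently $a_\psi$) is t-exact between the shifted perverse-coherent and hadal t-structures. Hence the $H^k_{\phi,\rho}$ are (ind-)hadal and form a filtration in the hadal heart, and $H_{\phi,\rho}$ itself is ind-hadal as their colimit.

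\textbf{Graded pieces.} The graded piece $j_{!*}(\mathrm{gr}^k)$ is an object of the Arinkin–Bezrukavnikov subcategory on $\overline O$ whose restriction to $O$ is $\bigoplus\tilde\rho$. It therefore surjects onto $\bigoplus IC(O,\tilde\rho)$, with kernel supported on $\partial\overline O$. The kernel is perverse-coherent with support in $\partial\overline O$, so its Jordan–Hölder factors are irreducible perverse-coherent sheaves $IC(O',L)$ with $O'\subset\partial\overline O$.

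Under Definition \ref{fullFSParam} and Theorem \ref{MainThm}(i), $a_\psi\nu_*IC(O,\tilde\rho)[-d]\cong i_{b\sharp!}\pi$ with $\pi$ of full Fargues-Scholze parameter $\phi$. The factors $IC(O',L)$ correspond to $i_{b\sharp!}\pi'$ with full parameter supported on $O'$, i.e. a strict specialization of $\phi$.

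\textbf{Expected obstacle.} The main difficulty is the ind-setting: $\mathcal{R}_\rho$ is infinite-dimensional, so $j_{!*}$ has to be ind-extended. One must check that it still commutes with the filtered colimit of the socle filtration and stays exact, and that t-exactness of $a_\psi$ passes to ind-objects. I would handle this by working term by term with the finite-dimensional $\mathrm{soc}^k$, where everything is coherent and finite, and only pass to the colimit at the end.
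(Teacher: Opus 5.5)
Your outline follows the paper's route: Levi decomposition plus Peter--Weyl for the splitting, the simple socle (Lemma~\ref{SocIrred}) for indecomposability, the socle filtration (which coincides with the paper's filtration by kernels of $\mathfrak{m}^k$), and transport through Theorem~\ref{MainThm}. However, the step you use for the graded pieces is false. The functor $j_{!*}$ is \emph{not} exact as a functor from perverse-coherent sheaves on $O/S$ to perverse-coherent sheaves on $\overline{O}/S$.

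\cite[Theorem 4.2]{PervCoh} only says that $j^*$ is an equivalence from the \emph{full} subcategory of objects with no nonzero subobject or quotient supported on $\partial O$. That subcategory is not closed under cokernels taken in the ambient perverse-coherent heart. So you get full faithfulness of $j_{!*}$, not exactness, just as in the constructible setting. Perverse-coherent sheaves on $O/S\cong */S_\phi$ are shifted representations of $S_\phi$, and this category is non-semisimple precisely when $U_\phi\neq 1$. So the failure genuinely occurs in the cases the theorem is about.

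For instance, take $S=\mathrm{GL}_2$, $O$ the regular orbit and $\rho$ trivial. Exactness of $0\to j_{!*}\mathrm{soc}^1\to j_{!*}\mathrm{soc}^2\to j_{!*}\mathrm{gr}^2\to 0$ would make the coherent sheaf underlying $j_{!*}\mathrm{soc}^2$ an extension of $\mathcal{O}_{\mathcal{N}}$ by itself, hence free. In fact it is the first syzygy module of the maximal ideal of the $A_1$-singularity $\mathcal{N}$ at $0$. That module is not free, since otherwise the residue field would have finite projective dimension.

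Consequently your graded-piece paragraph does not hold together. An object of the Arinkin--Bezrukavnikov subcategory whose restriction to $O$ is $\bigoplus\tilde\rho$ \emph{is} $\bigoplus IC(O,\tilde\rho)$, so there would be no kernel at all. The object that actually underlies $H^k_{\phi,\rho}/H^{k-1}_{\phi,\rho}$ (via t-exactness of $a_\psi$ and exactness of $\nu_*$) is the cokernel, in the perverse-coherent heart on $\overline{O}/S$, of $j_{!*}\mathrm{soc}^{k-1}\hookrightarrow j_{!*}\mathrm{soc}^k$. That cokernel is in general neither $j_{!*}\mathrm{gr}^k$ nor in that subcategory.

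What you need instead, and what the paper uses, is that $j_{!*}$ preserves monomorphisms and epimorphisms. The kernel, resp.\ cokernel, of $j_{!*}$ applied to a mono, resp.\ epi, is killed by $j^*$, hence supported on $\partial O$. It is therefore zero, because objects in the image of $j_{!*}$ have no such subobjects, resp.\ quotients.
\begin{itemize}
\item Monomorphism preservation gives your filtration by subobjects.
\item Epimorphism preservation applied to $\mathrm{soc}^k\twoheadrightarrow\mathrm{gr}^k$ gives $j_{!*}\mathrm{soc}^k\twoheadrightarrow j_{!*}\mathrm{gr}^k=\bigoplus IC(O,\tilde\rho)$. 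This map kills $j_{!*}\mathrm{soc}^{k-1}$, so it factors through a surjection from the graded piece.
\item The kernel of that surjection is the middle cohomology of $j_{!*}\mathrm{soc}^{k-1}\to j_{!*}\mathrm{soc}^k\to j_{!*}\mathrm{gr}^k$. It vanishes after $j^*$, so its Jordan--H\"older factors are $IC(O',L)$ with $O'\subset\partial O$.
\end{itemize}
With that substitution, the rest of your argument goes through as in the paper.

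Two minor slips. First, $\rho\otimes\mathcal{O}(U_\phi)$ has simple \emph{socle} $\rho$, not simple head. Second, the real difficulty is not the ind-setting you flag; it is this failure of exactness, which already appears at each finite stage $\mathrm{soc}^k$.
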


\begin{proof}
    
By Levi's Theorem there is an isomorphism between $\mathcal{O}(S_\phi)$ and $\mathcal{O}(M_\phi)\otimes\mathcal{O}(U_\phi)$, with an element $s'=m'u'\in S_\phi$ acting on $g\otimes f\in\mathcal{O}(M_\phi)\otimes\mathcal{O}(U_\phi)$ via the formula $s\cdot (g(m)\otimes f(u)) = g(mm')\otimes f((m')^{-1}um'u')$.
In particular the action of $S_\phi$ on $\mathcal{O}(M_\phi)$ is the inflation of $\mathcal{R}(M_\phi)$ to $S_\phi$, which we denote $\mathcal{R}_{S_\phi}(M_\phi)$, and we denote the above action of $S_\phi$ on $\mathcal{O}(U_\phi)$ given by Levi component conjugation and unipotent component right translation by $\mathcal{R}_{S_\phi}(U_\phi)$ (so $m'u'\cdot f(u) := f((m')^{-1}um'u')$).

By the algebraic Peter-Weyl theorem, $$\mathcal{R}(M_\phi)\cong\bigoplus\limits_{\rho\in\text{Irr}(M_\phi)}\rho\otimes V_{\rho^*}$$ where the $M_\phi$ action is the usual one on $\rho$ and trivial on $V_{\rho^*}$. 
So $$\mathcal{R}(S_\phi)\cong\mathcal{R}_{S_\phi}(M_\phi)\otimes\mathcal{R}_{S_\phi}(U_\phi)\cong\bigoplus\limits_{\rho\in\text{Irr}(M_\phi)}(\rho\otimes V_{\rho^*})\otimes\mathcal{R}_{S_\phi}(U_\phi)\cong\bigoplus\limits_{\rho\in\text{Irr}(M_\phi)}(\rho\otimes\mathcal{R}_{S_\phi}(U_\phi))\otimes V_{\rho^*}$$ 
Let $H_{\phi,\rho}:=a_\psi(\nu_{*}(IC(O,\rho\otimes\mathcal{R}_{S_\phi}(U_\phi))))[-d]$.  
It is immediate that $$H_\phi \cong \bigoplus\limits_{\rho\in\text{Irr}(M_\phi)}H_{\phi,\rho}\otimes V_{\rho^*}$$

\noindent From now on we write $\mathcal{R}_{S_\phi,\rho}(U_\phi)$ as shorthand for $\rho\otimes\mathcal{R}_{S_\phi}(U_\phi) $.

\begin{lem}\label{SocIrred}

$\textup{Soc}(\mathcal{R}_{S_\phi,\rho}(U_\phi))$ is irreducible, and 
therefore $\mathcal{R}_{S_\phi,\rho}(U_\phi)$ and $H_{\phi,\rho}$ are indecomposable.
    
\end{lem}

\begin{proof}

Let $W$ be an irreducible subrepresentation of $\textup{Soc}(\mathcal{R}_{S_\phi,\rho}(U_\phi))$.
By Engel's Theorem its space of $U_\phi$-invariants $W^{U_\phi}$ is nonzero, and since $U_\phi$ is normal in $S_\phi$, $W^{U_\phi}$ is $S_\phi$-invariant, hence equals $W$.
Therefore $W$ is a subrepresentation of the $U_\phi$-invariant subspace of $\textup{Soc}(\mathcal{R}_{S_\phi,\rho}(U_\phi))$.
$U_\phi$ acts trivially on $\rho$, and $\mathcal{R}_{S_\phi}(U_\phi)^{U_\phi}$ = $\overline{\mathbb{Q}}_\ell$, so $(\mathcal{R}_{S_\phi,\rho}(U_\phi))^{U_\phi} = \rho$ is irreducible.
Therefore $W=\rho$ and since $W$ was arbitrarily chosen it follows that $\textup{Soc}(\mathcal{R}_{S_\phi,\rho}(U_\phi)) = \rho$.
Hence $\mathcal{R}_{S_\phi,\rho}(U_\phi)$, and therefore $H_{\phi,\rho}$, are indecomposable.

\end{proof}

For all $\rho$, $\mathcal{R}_{S_\phi,\rho}(U_\phi)$ has a filtration, inherited from $\mathcal{R}_{S_\phi}(U_\phi)$, indexed by the nonnegative integers.
To define it, we start by letting $U(\mathfrak{u}_\phi)$ be the universal enveloping algebra of $\mathfrak{u}_\phi$, and $\mathfrak{m}$ its augmentation ideal (not to be confused with the Lie algebra of $M_\phi$, which will not be used here).
We define $$F^k\mathcal{R}(S_\phi):=\{f\in\mathcal{O}(U_\phi):Df=0\ \forall\ D\in\mathfrak{m}^k\}$$

\begin{lem}
    For all $k\geq0$, $F^k\mathcal{R}(S_\phi)$ is $S_\phi$ stable, and the action of $S_\phi$ on $F_{k+1}\mathcal{R}(S_\phi)/F_{k}\mathcal{R}(S_\phi)$ factors through $M_\phi$.
\end{lem}

\begin{proof}

For the first part of the lemma, let $f\in F^k\mathcal{R}(S_\phi)$ and $s\in S_\phi$.  Then for all $D\in\mathfrak{m}^k$, $D\cdot (s\cdot f) = s\cdot (\text{Ad}_{s^{-1}}D\cdot f) = 0$ since $\text{Ad}_{s^{-1}}D\in\mathfrak{m}^k$.  

For the second part, we need to show the action of $U_\phi$ on $F^{k+1}\mathcal{R}(S_\phi)/F^{k}\mathcal{R}(S_\phi)$ is trivial.
Let $u\in U_\phi$ and let $X_u$ be the element of $\mathfrak{u}_\phi$ such that $\exp(X_u)=u$.
Then $$u\cdot f = \sum\limits_{i=0}^\infty \frac{1}{i!}X_u^i\cdot f$$ where the sum is finite since $X_u$ is nilpotent and $\mathcal{R}(S_{\phi})$ is locally finite.
If $f\in F^{k+1}$ we need to show $u\cdot f-f\in F^k\mathcal{R}(S_\phi)$.
Using the expansion above, $$u\cdot f-f = \sum\limits_{i=1}^\infty \frac{1}{i!}X_u^i\cdot f$$
If $D\in\mathfrak{m}^k$, $$D\cdot\sum\limits_{i=1}^\infty\frac{1}{i!} X_u^i\cdot f = \sum\limits_{i=1}^\infty \frac{1}{i!}DX_u^i\cdot f = 0$$ since $DX_u^i\in \mathfrak{m}^{k+1}$ for all $i\geq1$.

\end{proof}

Since $U_\phi$ acts trivially on $\rho$, $F^k(\mathcal{R}_{S_\phi,\rho}(U_\phi))=\rho\otimes F^k\mathcal{R}_{S_\phi}(U_\phi)$, where $F^k\mathcal{R}_{S_\phi}(U_\phi)$ is the subspace of elements of $\mathcal{R}_{S_\phi}(U_\phi)$ killed by $\mathfrak{m}^k$.
For $k\geq0$ we define $F^kH_{\phi,\rho}$ to be $a_\psi(\nu_*IC(O,F^k\mathcal{R}_{S_\phi,\rho}(U_\phi)))[-d]$.

For all $k\geq1$ and $\rho\in\text{Irr}(M_\phi)$, let $G^{k}\mathcal{R}_{S_\phi,\rho}(U_\phi):= F^{k}\mathcal{R}_{S_\phi,\rho}(U_\phi)/F^{k-1}\mathcal{R}_{S_\phi,\rho}(U_\phi)$, and let $G^{k}IC(O,\mathcal{R}_{S_\phi,\rho})$ be the quotient (in the perverse-coherent category) of the inclusion $IC(O,F^{k-1}\mathcal{R}_{S_\phi,\rho}(U_\phi))\hookrightarrow IC(O,F^{k}\mathcal{R}_{S_\phi,\rho}(U_\phi))$.

\begin{lem}
For all $\tau\in\textup{Irr}(M_\phi)$, let $$W_\tau^k:= \textup{Hom}_{M_\phi}(\tau,G^{k}\mathcal{R}_{S_\phi,\rho}(U_\phi))$$
Then for all $k\geq1$, $G^{k}IC(O, \mathcal{R}_{S_\phi,\rho})$ fits into a short exact sequence of perverse-coherent sheaves $$0\to K^k_{\phi,\rho}\to G^{k}IC(O, \mathcal{R}_{S_\phi,\rho})\to\bigoplus\limits_{\tau\in\textup{Supp}_{M_\phi}(G^{k}\mathcal{R}_{S_\phi,\rho}(U_\phi))}(IC(O,\tau))\otimes W_\tau^k\to 0$$ where $K^k_{\phi,\rho}\in\textup{IndCoh}(\overline{O}/S)$ lies in the subcategory generated under colimits and extensions by sheaves of the form  $IC(O',\tau')$ such that $O'\subset\partial O$ (and $\tau'$ is an irreducible representation of the stabilizer of $O'$ in $S_\phi$).
\end{lem}

\begin{rem}
We are slightly abusing notation, in that as defined above $IC(O',\tau')$ is technically an object of $\textup{IndCoh}(\overline{O'}/S)$ rather than $\textup{IndCoh}(\overline{O}/S)$, but pushforward under closed immersion is exact and fully faithful for perverse-coherent sheaves, so we identify $IC(O',\tau')$ with its image in $\textup{IndCoh}(\overline{O}/S)$ upon inclusion.
    
\end{rem}

\begin{proof}

Since pushforward and $IC$ commute with direct sums  $$\bigoplus\limits_{\tau\in\textup{Supp}_{M_\phi}(G^{k}\mathcal{R}_{S_\phi,\rho}(U_\phi))}IC(O,\tau)\otimes W^k_\tau\cong IC(O,\bigoplus\limits_{\tau\in\textup{Supp}_{M_\phi}(G^{k}\mathcal{R}_{S_\phi,\rho}(U_\phi))}\tau\otimes W^k_\tau)$$ $$\cong IC(O,G^{k}\mathcal{R}_{S_\phi,\rho}(U_\phi))$$
As in the constructible setting, the $IC$ functor preserves monomorphisms and epimorphisms of perverse-coherent sheaves.
So the sequence $$0\to IC(O,F^{k-1}\mathcal{R}_{S_\phi,\rho}(U_\phi))\to IC(O,F^{k}\mathcal{R}_{S_\phi,\rho}(U_\phi))\to IC(O,G^{k}\mathcal{R}_{S_\phi,\rho}(U_\phi))\to 0$$ is exact on the left and the right but potentially not in the middle.
We define $K^k_{\phi,\rho}$ to be the middle cohomology sheaf of the above sequence.
Then the desired exact sequence from the lemma statement follows since $G^{k}IC(O, \mathcal{R}_{S_\phi,\rho})= IC(O,F^{k}\mathcal{R}_{S_\phi,\rho}(U_\phi))/IC(O,F^{k-1}\mathcal{R}_{S_\phi,\rho}(U_\phi))$.

It remains to check the claim about $K^k_{\phi,\rho}$.
Since $j^*K^k_{\phi,\rho} = 0$, $K^k_{\phi,\rho} $ is supported on $\partial O$, and therefore lies in the subcategory of $\text{IndCoh}(\mathcal{N}/S)$ generated by $IC(O',\tau), O'\in\partial O$.

\end{proof}

Applying $a_\psi\circ\nu_*[-d]$ to the short exact sequence of the above lemma gives the exact sequences $$0\to F^{k-1}H_{\phi,\rho}\to F^kH_{\phi,\rho}\to G^kH_{\phi,\rho}\to 0$$ and $$0\to\mathcal{K}^k_{\phi,\rho}\to G^kH_{\phi,\rho}\to \bigoplus\limits_{\tau\in\textup{Supp}_{M_\phi}(G^{k}\mathcal{R}_{S_\phi,\rho}(U_\phi))}i_{b_{\phi,\tau},\sharp !}\pi_{\phi,\tau}\to0$$ in $\text{Had}(\text{Bun}_G)^{\phi^{\text{ss}}}$ (where the hadal t-structure \cite[Section 1.2]{BeijingNotes}, defined on compact objects, is ind-extended).
And furthermore by the above lemma $\mathcal{K}^k_{\phi,\rho}$ lies in the subcategory of $D(\text{Bun}_G)^{\phi^{\text{ss}}}$ generated by sheaves of the form $i_{b,\sharp !}\pi$ such that $c_\psi(i_{b,\sharp !}\pi)\cong \nu_*IC(O',\rho)$ for some $O'\subset \partial O$.

\end{proof}

\begin{rem}
If $\phi$ is semisimple generic, then $U_\phi$ is trivial so the filtration in the above theorem collapses.
And since the unipotent component of Frobenius is trivial every sheaf of the form $IC(\phi(\text{Fr})^u,L)$ is in fact a skyscraper sheaf of the form $i_{\phi*}\rho$ for an irreducible representation $\rho$ of $S_\phi$.
Under the conditions of Theorem \ref{MainThm}, $i_{\phi*}\rho$ is of the form $c_\psi(i_{b!*}\pi)$ for an irreducible representation $\pi$ of $G_b(F)$.
Putting these facts together, $H_\phi[d]$ is a direct sum of sheaves of the form $i_{b!*}\pi$, and in particular it is perverse.
\end{rem}

\appendix

\section{Intertwining Operators and Reducibility}

In this section we review the theory of intertwining operators and prove some results needed in the main body of the text.  We will mostly follow \cite{Wal}.  

As in Section \ref{notation} let $G$ be a reductive group defined over a finite extension $F$ of $\mathbb{Q}_p$, fix $(A_0, P_0)$ , and let $W_G$ be the relative Weyl group of $G$.
$\pi$ will always denote a smooth irreducible representation of $G(F)$, and $M$ will always denote a standard Levi subgroup of $G$.

Recall that the category of smooth representations of $G(F)$ is a direct sum of Bernstein blocks indexed by unramified equivalence classes of pairs $(M,\sigma)$ where $\sigma$ is a supercuspidal representation of $M$ \cite{BD}.
For each such pair, the Bernstein block $\mathfrak{s}(M,\sigma)$ denotes the category of smooth representations $\pi$ of $G(F)$ such that every irreducible subquotient of $\pi$ is a subrepresentation of $i_P^G(\sigma\otimes\chi)$ for some unramified character $\chi$ of $M(F)$, where $i_P^G$ denotes normalized parabolic induction and $P$ denotes any parabolic subgroup of $G$ with Levi $M$.  

Let $W(M):= N_G(M)(F)/M(F) = N_{G(F)}(M(F))/M(F)$, and $W(M,\sigma)= \{w\in W(M)| w(\sigma)=\sigma\}$, where $w(\sigma)$ is the representation of $M$ defined by precomposing $\sigma$ with conjugation by $w^{-1}$.  
For $w\in W(M)$, let $U^w:=wUw^{-1}$, and $P^w = wPw^{-1}$.  

Let $A_M$ be a maximal split torus in the center of $M$.  Let $\Sigma(P)$ denote the roots of $A_M$ positive with respect to $P$, and $\Delta(P)$ the subset of simple roots.  For any positive root $\alpha$ of $A_M$, we define the semistandard Levi subgroup $M_\alpha$ containing $M$ as the centralizer in $G$ of the torus $\text{Ker}(\alpha)^\circ$.

Let $a_M^*$ be $X^*(A_M)\otimes \mathbb{R}$, $a_M$ be its dual $X_*(A_M)\otimes \mathbb{R}$, and $\langle\cdot,\cdot\rangle$ be the standard pairing between them.
If $L$ is a semistandard Levi containing $M$ then we let $a_M^L$ be the subspace of $a_M$ generated by the coroots $\check\alpha$ of $A_M$ such that $A_L$ lies in the kernel of $\alpha$.
The orthogonal complement of $a_M^L$ in $a_M$ with respect to any $W_L(M):=N_L(A_M)/M$ invariant inner form is by standard root system theory equal to the subspace annihilated by $\langle\alpha,\cdot\rangle$ for all of the $\check\alpha$ spanning $a_M^L$.
But this is exactly $a_L$, giving an orthogonal decomposition $a_M=a_L\oplus a_M^L$.
We similarly have an orthogonal decomposition $a_M^* = a_L^*\oplus a_M^{*L}$.

We define the Harish-Chandra map \cite[Section I.1]{Wal} $H_M:M\to a_M$ via the equation $$|\chi(m)|=q^{-\langle \chi, H_M(m)\rangle}$$ for all $\chi\in a_M^*$.  
There is a surjection from $a_M^*\otimes \mathbb{C}$ to the 1-dimensional unramified complex characters of $M$, defined by sending $\lambda$ to the character defined by $\chi_\lambda(m)=q^{-\langle \lambda, H_M(m)\rangle}$.   
Given $\chi$, the set of $\lambda$ such that $\chi_\lambda = \chi$ is not unique, but the real part of $\lambda$ is, and will be denoted $\mathfrak{R}(\chi)$.
We denote the imaginary part, well defined only up to shifts by $X^*(A_M)\cdot 2\pi i/\ln(q)$, by $\mathfrak{I}(\chi)$.
If $\sigma_0$ is a unitary supercuspidal representation and $\sigma\cong\sigma_0\otimes\chi$ is a representation in its inertial equivalence class, we define $\mathfrak{R}(\sigma):=\mathfrak{R}(\chi)$, and more generally $\mathfrak{R}(\sigma)_L$ is the projection of $ \mathfrak{R}(\sigma)$ to $a_L^*$.
For a given inertial equivalence class, this definition does not depend on the choice of $\sigma_0$ within that class, since all such choices differ by unitary unramified characters, and therefore do not affect the real part of $\lambda$.
We define $$\text{Re}(\chi):= q^{-\langle \mathfrak{R}(\chi), H_M(-)\rangle}$$ $$\text{Im}(\chi):=q^{-\langle \mathfrak{I}(\chi), H_M(-)\rangle}$$

We now review the general theory of intertwining operators, closely following \cite[Chapter IV]{Wal}.  We let $P=MU$ and $P'=MU'$ be semistandard parabolics with Levi $M$. By the vanishing of $H^1$ for unipotent groups over $F$ note that $((U'\cap U)\backslash U')(F) = (U'\cap U)(F)\backslash U'(F)$.
The latter group inherits a normalized Haar measure from $U(F)$ and $(U'\cap U)(F)$ since $U'\cap U$ is closed in $U$ and they are both unimodular.
For any admissible representation $\sigma$ of $M$ define the intertwiner $J_{P'|P}(\sigma):i_P^G(\sigma)\to i_{P'}^G(\sigma)$ by $$(J_{P'|P}(\sigma)f)(g) = \int_{((U'\cap U)\backslash U')(F)}f(u'g)du'$$
This integral does not converge in general, but when $\sigma$ is finite length it does after twisting by a sufficiently positive unramified character of $M(F)$, and furthermore the integral can be meromorphically continued to the entire inertial class of $\sigma$ \cite[Th\'eor\`eme IV.1.1]{Wal}.

Letting $\overline{P}$ denote the opposite parabolic of $P$, we can compose $J_{P|\overline{P}}(\sigma)$ with $J_{\overline{P}|P}(\sigma)$ to get an endomorphism of $i_P^G(\sigma)$.  
By \cite[Section IV.3]{Wal}, $i_P^{G}(\sigma)$ is generically irreducible whenever $\sigma$ is irreducible, and hence $J_{P|\overline{P}}(\sigma)J_{\overline{P}|P}(\sigma)$ is generically an isomorphism.
We define the Harish-Chandra $\mu$-function, $\mu_G(\sigma)$, by $$J_{P|\overline{P}}(\sigma)J_{\overline{P}|P}(\sigma) = \mu_G(\sigma)^{-1} \prod\limits_\alpha \gamma(M_\alpha|M)^2$$ \cite[Section V.2]{Wal}, where the $\gamma$ factors are nonzero constants defined in \cite[Section I.1]{Wal}, and the product is over the positive reduced roots of $A_M$.
It is shown in \cite[Section IV.2]{Wal} that the composition $J_{P|\overline{P}}(\sigma)J_{\overline{P}|P}(\sigma)$ is independent of $P$ among the parabolics of G with Levi subgroup $M$, so $\mu$ is independent of $P$.
It follows that if $J_{P|\overline{P}}(\sigma)$ and $J_{\overline{P}|P}(\sigma)$ are finite, then $\mu(\sigma)$ is finite and nonzero if and only if both operators are isomorphisms.

For every reduced positive root $\alpha$ we use $\mu_{\alpha}$ as shorthand for $\mu_{M_\alpha}$.

\begin{prop}\label{proddecomp}
\cite[Section V.2]{Wal} $\mu_G=\prod\limits_{\alpha}\mu_{\alpha}$ where the product is over the reduced positive roots of $A_M$.
\end{prop}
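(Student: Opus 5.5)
We are asked to prove Proposition \ref{proddecomp}: $\mu_G=\prod_\alpha \mu_\alpha$ over reduced positive roots of $A_M$. The plan is to derive it from the factorization of intertwining operators along a minimal gallery of parabolics together with the rank-one computation of each factor.

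\textbf{Step 1: factor along a gallery.} Fix $P$ with Levi $M$ and choose a sequence of semistandard parabolics $P=P_0,P_1,\dots,P_n=\overline{P}$, all with Levi $M$, such that consecutive ones are adjacent. Adjacent means $\Sigma(P_{i})\cap(-\Sigma(P_{i+1}))$ consists of the positive multiples of a single reduced root $\alpha_i$. Such a gallery crosses each reduced root hyperplane of $a_M$ exactly once, so $\{\pm\alpha_i\}$ runs exactly once over the reduced roots.

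I would then invoke the cocycle relation \cite[Th\'eor\`eme IV.3(?)]{Wal}: if $d(P'',P)=d(P'',P')+d(P',P)$, then
\[
J_{P''|P}(\sigma)=J_{P''|P'}(\sigma)J_{P'|P}(\sigma)
\]
as meromorphic families. Using this, one writes $J_{\overline P|P}=J_{P_n|P_{n-1}}\cdots J_{P_1|P_0}$, and symmetrically $J_{P|\overline P}$ as the product along the reversed gallery.

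\textbf{Step 2: reduce to rank one.} Each adjacent operator $J_{P_{i+1}|P_i}(\sigma)$ is induced in stages from $J_{P_{i+1}\cap M_{\alpha_i}|P_i\cap M_{\alpha_i}}(\sigma)$ on $M_{\alpha_i}$. This holds because $U_{i+1}\cap U_i\backslash U_{i+1}$ is contained in the unipotent radical of $M_{\alpha_i}$, together with transitivity of induction through the parabolic $P_i M_{\alpha_i}$ (which equals $P_{i+1}M_{\alpha_i}$).

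Composing the product with its reverse, the innermost pair $J_{P_{n-1}|P_n}J_{P_n|P_{n-1}}$ is induced from the rank-one composite on $M_{\alpha_{n-1}}$. By definition that composite is the scalar $\mu_{\alpha_{n-1}}(\sigma)^{-1}\gamma(M_{\alpha_{n-1}}|M)^2$, since $M_{\alpha_{n-1}}$ has a single reduced positive root. Being a scalar, it commutes out of the composite. Inducting outward, the full composite is therefore the product of the scalars $\mu_{\alpha_i}^{-1}\gamma(M_{\alpha_i}|M)^2$.

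\textbf{Step 3: compare and conclude.} Comparing with the definition of $\mu_G$, and using the independence of $P$ noted above, gives $\mu_G=\prod_\alpha\mu_\alpha$. The identity holds first on the open set where everything converges and is regular, and then everywhere by meromorphic continuation in the unramified twist. A minor point to check is that $\mu_{\alpha}$ computed via $P_i\cap M_{\alpha_i}$ agrees with the one computed via $P\cap M_\alpha$. This follows from the same independence-of-parabolic statement applied inside $M_\alpha$, since $M_\alpha$ has only the two parabolics with Levi $M$.

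\textbf{Main obstacle.} The step requiring the most care is the cocycle relation with the correct Haar measure normalizations. The $\gamma$ factors are exactly what absorb the measure discrepancies, so one must track that the normalized measures on $(U'\cap U)\backslash U'$ multiply along the gallery. The rest is formal.
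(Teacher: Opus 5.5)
Your proposal is correct, and it is the argument behind the paper's bare citation of \cite[Section V.2]{Wal}; the paper gives no proof of its own. You factor $J_{\overline{P}|P}$ and $J_{P|\overline{P}}$ along a minimal gallery by the cocycle relation (this is \cite[IV.1(12)]{Wal}, not IV.3), reduce each adjacent step to $M_{\alpha_i}$ by induction in stages exactly as in the paper's proof of Proposition \ref{chiconverg}, and peel off the rank-one scalars, which is \cite[IV.3(4)]{Wal}.

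Two points simplify. First, with the paper's normalization the factor $\prod_\alpha\gamma(M_\alpha|M)^2$ appears identically on both sides and cancels, so the $\gamma$'s absorb no measure discrepancy; the only measure input is that the cocycle relation holds exactly, with no constants. Second, your ``minor point'' is automatic: along a minimal gallery from $P$ to $\overline{P}$ each $\alpha_i$ is $P$-positive, so $P_i\cap M_{\alpha_i}=P\cap M_{\alpha_i}$.
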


\textbf{Warning}: We caution the reader that when defining the $\mu$ function Waldspurger restricts to the unramified unitary twists of a discrete series representation of $M$, so his definition differs slightly from ours above, but all of the specific properties of the $\mu$ function we will need do not require this restriction.

It will be important to have some conditions about where $\mu$ can have zeros and poles.

\begin{prop} \label{chiconverg}
Let $\chi$ be an unramified character of $M$ such that $\langle\mathfrak{R}(\chi),\check{\alpha}\rangle\neq0$ for all coroots $\check{\alpha}$ of $A_M$ which are positive with respect to $P$ and negative with respect to $P'$.
If $\sigma$ is a unitary supercuspidal representation of $M$, then $J_{P'|P}(\sigma\otimes\chi)$ is finite.
The same conclusion moreover holds if $\sigma$ is a direct summand of a parabolically induced unitary cuspidal.

\end{prop}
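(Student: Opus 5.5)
The plan is to reduce finiteness of $J_{P'|P}(\sigma\otimes\chi)$ to the rank-one case via the standard factorization of intertwining operators, and then to handle each rank-one factor using holomorphy of the defining integral away from the walls $\langle\mathfrak{R}(\chi),\check\alpha\rangle=0$.

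First, choose a minimal gallery of semistandard parabolics $P=P_0,P_1,\dots,P_r=P'$ with Levi $M$, where consecutive parabolics are adjacent: $P_{i}$ and $P_{i+1}$ differ exactly in one reduced root $\alpha_i$ (up to positive multiples), and that root is positive for $P$ and negative for $P'$. By \cite[Th\'eor\`eme IV.1.1 and Section IV.3]{Wal}, one has the factorization
$$J_{P'|P}(\sigma\otimes\chi)=J_{P_r|P_{r-1}}(\sigma\otimes\chi)\circ\cdots\circ J_{P_1|P_0}(\sigma\otimes\chi)$$
as meromorphic families. This holds on a positive cone where the integrals converge absolutely, hence everywhere by analytic continuation. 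It therefore suffices to show that each factor is finite at $\chi$.

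Second, for an adjacent pair, the operator $J_{P_{i+1}|P_i}$ is induced from the rank-one operator $J^{M_{\alpha_i}}_{P_{i+1}\cap M_{\alpha_i}|P_i\cap M_{\alpha_i}}(\sigma\otimes\chi)$ on $i^{M_{\alpha_i}}(\sigma\otimes\chi)$, by transitivity of induction \cite[IV.1]{Wal}. So we reduce to $M$ maximal in $M_\alpha$. In that situation the operator depends meromorphically on the single variable $s=\langle\mathfrak{R}(\chi),\check\alpha\rangle$ and its imaginary counterpart. Moreover, for $\sigma$ supercuspidal, matrix coefficients are compactly supported modulo center. This gives the standard result (Harish-Chandra, \cite[IV.1--IV.2, V.2]{Wal}) that the poles of $J$ lie only where $s$ takes finitely many values, all on the unitary axis $s=0$ for unitary supercuspidal $\sigma$. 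That is, the poles of the rank-one intertwining operator attached to a unitary supercuspidal lie on $\mathfrak{R}=0$. The hypothesis $\langle\mathfrak{R}(\chi),\check\alpha_i\rangle\neq0$ then ensures each factor is finite.

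The main obstacle is exactly this location of poles. I would argue it as follows. For $s>0$ the integral converges absolutely. For $s<0$, compose with the operator in the other direction and use the fact that $J_{\overline P|P}J_{P|\overline P}$ is the scalar $\mu^{-1}$ times constants (Proposition \ref{proddecomp}). By Silberger's result, $\mu$ for a unitary supercuspidal has zeros and poles only at $\mathfrak{R}=0$ or at reducibility points $s=\pm s_0$ with $s_0>0$ real, and $J$ itself is holomorphic there in the convergent direction. If needed, I would cite Waldspurger's statement that $J_{P'|P}(\sigma\otimes\chi)$ is holomorphic on $\{\langle\mathfrak{R}(\chi),\check\alpha\rangle>0\}$ and use the functional equation to cover the negative side.

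Finally, for $\sigma$ a direct summand of $i_{P_1}^{M}(\sigma_1)$ with $\sigma_1$ a unitary cuspidal of $L\subset M$, I would embed $i_P^G(\sigma\otimes\chi)$ into $i_{P_1U}^G(\sigma_1\otimes\chi|_L)$. The operator $J_{P'|P}$ is the restriction of $J_{P_1U'|P_1U}(\sigma_1\otimes\chi)$. The relevant coroots of $A_L$ are those whose restriction to $a_M$ is a coroot of $A_M$ separating $P$ and $P'$, and $\mathfrak{R}(\chi)$ pairs with them through that projection, nonzero by hypothesis. The first case then applies to $\sigma_1$.
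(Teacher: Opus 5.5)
Your overall route is the paper's. You use a minimal gallery $P=P_0,\dots,P_n=P'$, factor $J_{P'|P}$ along it, and identify each adjacent factor with $i_{Q_i}^G$ of a rank-one operator on $M_{\alpha_i}$. For the general case you pass to the cuspidal support using parabolics with the same $M$-part. Your check that $\mathrm{pr}_{a_M}\check\beta$ is a positive multiple of $\check\alpha$, so that the hypothesis on $\chi$ transfers, is a detail the paper leaves implicit.

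The paper does not prove the rank-one statement. That statement is: for $M$ maximal and $\sigma$ unitary supercuspidal, $J_{\overline P|P}(\sigma_s)$ is finite whenever $\mathrm{Re}\,s\neq 0$, where $s$ is the pairing with $\check\alpha$. The paper simply takes it from \cite[IV.1(13)]{Wal}, and if you also just cite it, your proof is the paper's.

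The argument you offer for this ``main obstacle'', however, has a hole. On $\mathrm{Re}\,s<0$ you write $J_{\overline P|P}(\sigma_s)=c\,\mu(\sigma_s)^{-1}J_{P|\overline P}(\sigma_s)^{-1}$, with $J_{P|\overline P}$ holomorphic there. This gives finiteness wherever $\mu^{-1}$ is finite and $J_{P|\overline P}(\sigma_s)$ is invertible, i.e.\ away from the reducibility points. At $s=-s_0$, though, $\mu^{-1}$ vanishes while $J_{P|\overline P}(\sigma_{-s_0})$ has a nonzero kernel (its image is the Langlands quotient of the reducible standard module $i_{\overline P}^G(\sigma_{-s_0})$). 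So $J_{P|\overline P}(\sigma_s)^{-1}$ has a pole at $-s_0$ whose order the functional equation does not control. Finiteness of $J_{\overline P|P}$ there is equivalent to that pole being no worse than the zero of $\mu^{-1}$, which is exactly what you set out to prove. Holomorphy ``in the convergent direction'' at $\pm s_0$ does not bear on the non-convergent one.

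What closes the gap is the genuinely cuspidal input you mention and then abandon. Matrix coefficients of $\sigma$ are compactly supported modulo $Z(M)$, so only the central direction of $M$ contributes to the asymptotics of $\int_{\overline U}f(\bar u g)\,d\bar u$. The non-entire part of the operator therefore comes from a geometric series in the value of the central character of $\sigma\otimes\chi$ on an element of $A_M$. Its continuation has poles only where that value equals $1$, and for unitary $\sigma$ this forces $\langle\mathfrak{R}(\chi),\check\alpha\rangle=0$. For instance, for unramified principal series of $\mathrm{SL}_2$ the spherical vector is scaled by $(1-q^{-1}z)/(1-z)$ with $|z|=q^{-\mathrm{Re}\,s}$: the pole is at $\mathrm{Re}\,s=0$, and the reducibility point $\mathrm{Re}\,s=-1$ gives a zero, not a pole. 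This direct Harish-Chandra computation, not the functional equation, is what excludes a pole at $-s_0$.
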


\begin{proof}

Let $P=P_0,P_1,...,P_n=P'$ be a minimal length sequence of semistandard parabolics with Levi $M$ such that for all $1\leq i\leq n$, there is exactly one reduced root $\alpha_i$ of $A_M$ which is positive for $P_{i-1}$ and negative for $P_i$.
By \cite[IV.1(12)]{Wal}, $J_{P'|P}(\sigma\otimes\chi) = J_{P'|P_{n-1}}\circ ...\circ J_{P_1|P}(\sigma\otimes\chi)$, so it suffices to show each $J_{P_i|P_{i-1}}$ is finite.
For each $i$ let $Q_i:=P_{i-1}M_{\alpha_i} = P_{i}M_{\alpha_i}$; from now on we abbreviate $M_{\alpha_i}$ by $M_i$.
Then $P_{i-1}\cap M_i$ and $P_i\cap M_i$ are opposite parabolics of $M_i$, and the canonical isomorphism of functors $i_{P_i}^G\cong i_{Q_i}^G\circ i_{P_i\cap M_i}^{M_i}$ together with a definition chase shows $J_{P_i|P_{i-1}} = i_{Q_i}^G(J_{P_i\cap M_i|P_{i-1}\cap M_i})$ when the defining integrals converge, and by rationality the operators agree globally.
So it suffices to show $J_{P_i\cap M_i|P_{i-1}\cap M_i}(\sigma\otimes\chi)$ is finite.
This follows by the assumption on $\chi$ and \cite[IV.1(13)]{Wal}.

For the more general claim suppose $\sigma$ has supercuspidal support $(\tilde M, \tilde \sigma)$ and let $\tilde P$, resp. $\tilde P'$ be a parabolic of $G$ with Levi $\tilde M$ such that $\tilde P M = P$, resp. $\tilde P' M = P'$.  
Then $J_{ P'| P}(i_{\tilde P\cap M}^M(\tilde \sigma)\otimes\chi)=J_{\tilde P'|\tilde P}(\tilde\sigma\otimes\chi)$ is finite, and since $\sigma \otimes\chi$ is a direct summand of $i_{\tilde P\cap M}^M(\tilde \sigma)\otimes\chi$, $J_{ P'| P}(\sigma\otimes\chi)$ must be as well.

\end{proof}

The following corollary is an immediate consequence of the above proposition and the definition of $\mu$.

\begin{cor}\label{munonvanishing}
    In the setting of the Proposition \ref{chiconverg} (taking $P'=\overline{P})$, $\mu(\sigma\otimes\chi)$ is nonzero.  If it is finite, then $J_{\overline{P}|P}(\sigma\otimes\chi)$ is an isomorphism.
\end{cor}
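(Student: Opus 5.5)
The two claims of Corollary \ref{munonvanishing} are handled separately: nonvanishing first, then the isomorphism statement.

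\textbf{Nonvanishing of $\mu$.} Take $P'=\overline{P}$ in Proposition \ref{chiconverg}. Every reduced root of $A_M$ that is positive for $P$ is negative for $\overline{P}$, and conversely. So the hypothesis on $\chi$, namely $\langle\mathfrak{R}(\chi),\check\alpha\rangle\neq0$ for all these coroots, is symmetric under swapping $P$ and $\overline{P}$. It therefore also gives the hypothesis for the pair $(\overline{P},P)$.

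Proposition \ref{chiconverg} now applies twice. It shows that both $J_{\overline{P}|P}(\sigma\otimes\chi)$ and $J_{P|\overline{P}}(\sigma\otimes\chi)$ are finite, meaning they have no pole at $\sigma\otimes\chi$. Hence the composite $J_{P|\overline{P}}(\sigma\otimes\chi)J_{\overline{P}|P}(\sigma\otimes\chi)$ is a finite endomorphism of $i_P^G(\sigma\otimes\chi)$.

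By the defining identity
\[
J_{P|\overline{P}}J_{\overline{P}|P}=\mu_G^{-1}\prod_\alpha\gamma(M_\alpha|M)^2,
\]
together with the fact that the $\gamma$ factors are nonzero constants, the scalar $\mu_G(\sigma\otimes\chi)^{-1}$ is finite. Therefore $\mu_G(\sigma\otimes\chi)\neq0$.

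One subtlety needs care here. The identity holds as rational functions in the unramified twist, and we must evaluate it at the specific point $\sigma\otimes\chi$. Because both operators are regular at that point, the composite is regular there, and the scalar identity specializes. If $i_P^G(\sigma\otimes\chi)$ is reducible, one should check that the scalar identity still holds at the specific point. It does, because the equality of rational families of operators persists under specialization at regular points.

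\textbf{Isomorphism when $\mu$ is finite.} Suppose in addition that $\mu(\sigma\otimes\chi)$ is finite. Combined with the first part, $\mu$ is then finite and nonzero, so the composite $J_{P|\overline{P}}J_{\overline{P}|P}$ is a nonzero scalar. In particular it is invertible.

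This makes $J_{\overline{P}|P}(\sigma\otimes\chi)$ injective. Applying the same identity in the other order, which holds by the independence of $P$ noted in \cite[Section IV.2]{Wal}, shows that $J_{\overline{P}|P}J_{P|\overline{P}}$ is also a nonzero scalar. So $J_{\overline{P}|P}(\sigma\otimes\chi)$ is surjective as well, hence an isomorphism. This is exactly the remark already made after the definition of $\mu$.

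\textbf{Where the work lies.} The only real obstacle is the specialization of the meromorphic identity at the point in question. I would handle it by restricting to the one-parameter family $\sigma\otimes\chi\chi_{s\lambda}$ and noting that both sides are regular at $s=0$.
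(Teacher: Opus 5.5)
Your proposal is correct and matches the paper's route. The paper gives no separate proof: it calls the corollary an immediate consequence of Proposition \ref{chiconverg} and of the defining identity $J_{P|\overline{P}}J_{\overline{P}|P}=\mu_G^{-1}\prod_\alpha\gamma(M_\alpha|M)^2$, together with its earlier remark that when both operators are finite, $\mu$ is finite and nonzero exactly when both are isomorphisms. You make explicit the points the paper leaves implicit: that the hypothesis of Proposition \ref{chiconverg} is symmetric under $P\leftrightarrow\overline{P}$, so both $J_{\overline{P}|P}$ and $J_{P|\overline{P}}$ are finite, and that the rational identity specializes at a point where both operators are regular.
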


Recall the Langlands classification:

\begin{thm} \cite[Theorem VII.4.2] {Ren}\label{LangClass}

Let $P=MU$ be a standard parabolic with Levi $M$, let $\sigma$ be an irreducible tempered representation of $M$, and let $\chi$ be an unramified character of $M$ such that $\mathfrak{R}(\chi)$ is strictly dominant.  
Then $i_P^G(\sigma\otimes \chi)$ has a unique irreducible quotient, and every irreducible representation of $G$ is isomorphic to such a quotient for a unique equivalence class of triples $(P, \sigma, \chi)$.
Here two triples $(P = MU, \sigma, \chi)$ and $(P' = M'U', \sigma', \chi')$ are equivalent if $M = M'$, $\mathfrak{R}(\chi'\chi^{-1}) = 0$, and $\sigma\otimes\chi\cong\sigma'\otimes\chi'$.

\end{thm}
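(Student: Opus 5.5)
The plan is to follow the classical argument of Borel--Wallach and Silberger, as presented in \cite{Ren}, using the intertwining operators $J_{\overline{P}|P}$ reviewed above and the Jacquet module exponents.

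\textbf{Existence of a unique irreducible quotient.} Fix $(P,\sigma,\chi)$ with $\sigma$ tempered and $\mathfrak{R}(\chi)$ strictly dominant. Since $\langle\mathfrak{R}(\chi),\check\alpha\rangle>0$ for every $\check\alpha\in\Sigma(P)$, the integral defining $J_{\overline{P}|P}(\sigma\otimes\chi)$ converges absolutely. The argument is the one in Proposition \ref{chiconverg}: the required bound holds for tempered $\sigma$ because its matrix coefficients satisfy the weak Harish-Chandra estimate. Then:
\begin{enumerate}
\item Let $J(P,\sigma,\chi)$ be the image of $J_{\overline{P}|P}(\sigma\otimes\chi)$.
\item Use Frobenius reciprocity and the geometric lemma to compute $r_{\overline{P}}i_P^G(\sigma\otimes\chi)$. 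Among its exponents, the $\chi$-part (with $\mathfrak{R}$ projected to $a_M^*$) occurs only in the summand $w=1$. Hence $\text{Hom}_G(i_P^G(\sigma\otimes\chi), i_{\overline{P}}^G(\sigma\otimes\chi))$ is one-dimensional, spanned by $J_{\overline{P}|P}$.
\item Let $\pi$ be any irreducible quotient of $i_P^G(\sigma\otimes\chi)$. Second adjointness gives a nonzero map $i_P^G(\sigma\otimes\chi)\to\pi\hookrightarrow i_{\overline{P}}^G(\sigma\otimes\chi)$. By step 2 this map is a multiple of $J_{\overline{P}|P}$, so $\pi\cong J(P,\sigma,\chi)$.
\end{enumerate}
The same multiplicity-one statement shows $J(P,\sigma,\chi)$ is irreducible. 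So it is the unique irreducible quotient.

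\textbf{Exhaustion.} Let $\pi$ be irreducible. Consider all pairs $(P',\tau)$ with $\tau$ an irreducible quotient of the Jacquet module $r_{P'}\pi$, and the real exponents $\mathfrak{R}(\text{central character of }\tau)\in a_{M'}^*$. Choose a pair maximal for Langlands' order. This order compares the projections of the exponents onto the closed positive chamber, which is Langlands' combinatorial lemma on convex cones. Write $\tau=\sigma\otimes\chi$ with $\mathfrak{R}(\chi)\in a_{M'}^*$ and $\sigma$ having real exponent in the complementary part. Maximality forces two things:
\begin{itemize}
\item Every exponent of $r_{P''\cap M'}\sigma$ lies in the closed negative obtuse cone. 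By Casselman's criterion, $\sigma$ is tempered.
\item After enlarging $P'$ to the parabolic determined by the face containing the projected point, $\mathfrak{R}(\chi)$ is strictly dominant.
\end{itemize}
Frobenius reciprocity then realizes $\pi$ as a quotient of $i_{P'}^G(\sigma\otimes\chi)$; this may need the contragredient or second adjointness to turn a sub into a quotient.

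\textbf{Uniqueness of the triple.} This step rests on the same maximality. If $\pi$ is the Langlands quotient of $(P,\sigma,\chi)$, compute the exponents of $r_{\overline P}\pi$ via the embedding into $i_{\overline P}^G(\sigma\otimes\chi)$ and the geometric lemma. They show that $\mathfrak{R}(\chi)$ is the unique maximal element of the set of exponents of $\pi$ in Langlands' order. Hence $\mathfrak{R}(\chi)$ determines $P$ (through its face) and is determined by $\pi$. The $\mathfrak{R}(\chi)$-isotypic part of $r_P\pi$ is then $\sigma\otimes\chi$, which recovers $\sigma\otimes\chi$ up to the stated equivalence.

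\textbf{Main obstacle.} I expect the main obstacle to be the convex-geometric Langlands lemma: projecting exponents onto the closed positive chamber, and proving that the maximal projection is attained uniquely and singles out a standard parabolic. Everything else is formal, given the geometric lemma and Casselman's criterion. For this step I would follow Borel--Wallach's treatment directly.
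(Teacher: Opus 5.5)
Your route is the standard Silberger/Borel--Wallach argument, which is what the cited \cite{Ren} carries out; the paper itself gives no proof beyond the citation. Your first part is fine once you note one extra point: multiplicity one makes $\sigma\otimes\chi$ a direct summand of $r_{\overline P}\pi$, so second adjointness really does give $\pi\hookrightarrow i_{\overline P}^G(\sigma\otimes\chi)$.

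The exhaustion and uniqueness steps, however, are oriented inconsistently with the normalization the statement fixes ($\chi_\lambda=q^{-\langle\lambda,H_M(\cdot)\rangle}$, with dominant data giving quotients). In this normalization, Casselman's criterion along \emph{standard} parabolics puts the exponents of a tempered representation in the closed \emph{positive} obtuse cone; for instance $r_B\mathrm{St}=\delta_B^{1/2}$ for $\mathrm{GL}_2$. Run your recipe on $\pi=\mathbf{1}$ of $\mathrm{GL}_2$:
\begin{itemize}
\item Along standard parabolics its exponents are $0$ (from $G$) and $(-\tfrac12,\tfrac12)$ (from $B$, since $r_B\mathbf{1}=\delta_B^{-1/2}$).
\item Both project to $0$, so your recipe returns $P'=G$ and $\sigma=\mathbf{1}$. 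This is not tempered, even though $(-\tfrac12,\tfrac12)$ lies in the closed negative obtuse cone.
\item In the uniqueness step, the $(\tfrac12,-\tfrac12)$-part of $r_B\mathbf{1}$ is zero. So ``the $\mathfrak{R}(\chi)$-isotypic part of $r_P\pi$'' has to be taken in $r_{\overline P}\pi$.
\end{itemize}
The sub-to-quotient conversion also does not preserve the data. From $\pi\hookrightarrow i_{P'}^G(\tau)$ you cannot in general conclude that $\pi$ is a quotient of the same $i_{P'}^G(\tau)$. For example, the Steinberg representation embeds in $i_B^G(\delta_B^{1/2})$, but the irreducible quotient of that induced representation is $\mathbf{1}$.

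The repair is to run the whole argument on Jacquet modules along opposite parabolics, as your uniqueness step already does. For standard $P'$, take $\tau$ to be an irreducible \emph{subrepresentation} of the direct summand of $r_{\overline{P'}}\pi$ on which the real part of the $A_{M'}$-exponent is $\nu_0$. By exactness and transitivity of Jacquet functors, the exponents of $r_{\overline{P''}\cap M'}\tau$ are exponents of $\pi$. With this choice:
\begin{itemize}
\item Relative to the opposite minimal parabolic of $M'$, Casselman's criterion reads exactly as your closed-negative-obtuse-cone condition.
\item Langlands' lemma applies as you state it.
\item Second adjointness, $\mathrm{Hom}_G(i_{P'}^G\tau,\pi)\cong\mathrm{Hom}_{M'}(\tau,r_{\overline{P'}}\pi)$, exhibits $\pi$ directly as a quotient of $i_{P'}^G(\sigma\otimes\chi)$ with $\mathfrak{R}(\chi)=\nu_0$ strictly dominant. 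No further sub-to-quotient conversion is needed.
\end{itemize}
With this bookkeeping, your outline is a complete proof modulo Langlands' combinatorial lemma.
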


\begin{prop}\label{mufiniteimpliesirred}
Let $\sigma$ be an irreducible representation of $M$ which is a direct summand of a parabolically induced unitary supercuspidal, and let $\chi$ be a strictly dominant unramified character of $M$.  
Then if $\mu(\sigma\otimes\chi)$ is finite, $i_P^G(\sigma\otimes\chi)$ is irreducible.  
\end{prop}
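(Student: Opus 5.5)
The natural route is to play the standard intertwining operator against the Langlands classification (Theorem \ref{LangClass}). Write $\sigma\otimes\chi$ with $\mathfrak{R}(\chi)$ strictly dominant for $P$. The plan is to show that $i_P^G(\sigma\otimes\chi)$ is isomorphic to $i_{\overline P}^G(\sigma\otimes\chi)$ through $J_{\overline P|P}(\sigma\otimes\chi)$. Since $\mathfrak{R}(\chi)$ is strictly dominant, $\langle\mathfrak{R}(\chi),\check\alpha\rangle>0$ for every reduced root $\alpha$ of $A_M$ positive for $P$. These are exactly the coroots that are positive for $P$ and negative for $\overline P$, so the hypothesis of Proposition \ref{chiconverg} holds with $P'=\overline P$. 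Hence $J_{\overline P|P}(\sigma\otimes\chi)$ is finite, and the same argument applied with the roles of $P$ and $\overline P$ exchanged shows $J_{P|\overline P}(\sigma\otimes\chi)$ is finite. By Corollary \ref{munonvanishing}, $\mu(\sigma\otimes\chi)$ is nonzero. Since it is finite by hypothesis, $J_{\overline P|P}(\sigma\otimes\chi)$ is an isomorphism, and likewise $J_{P|\overline P}(\sigma\otimes\chi)$ is an isomorphism, because the composite of the two is the nonzero finite scalar $\mu^{-1}\prod_\alpha\gamma(M_\alpha|M)^2$.

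Next I would pin down the socle and the cosocle of $I:=i_P^G(\sigma\otimes\chi)$. The representation $\sigma$ is a summand of a parabolically induced unitary supercuspidal, so it is tempered and irreducible. By Theorem \ref{LangClass}, $I$ therefore has a unique irreducible quotient $J(P,\sigma,\chi)$, and this quotient occurs with multiplicity one as a subquotient of $I$. (Multiplicity one is the standard companion statement to the Langlands classification, proved via exponents/Jacquet modules; I would cite it from \cite{Ren} rather than reprove it.) Applying smooth contragredient and using $i_P^G(\tau)^\vee\cong i_P^G(\tau^\vee)$, together with the fact that $\overline P$-induction from a $P$-dominant twist corresponds to an anti-dominant situation, shows that $i_{\overline P}^G(\sigma\otimes\chi)$ has a unique irreducible \emph{subrepresentation}, and that it is again $J(P,\sigma,\chi)$. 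Concretely, $i_{\overline P}^G(\sigma\otimes\chi)^\vee\cong i_{\overline P}^G(\sigma^\vee\otimes\chi^{-1})$, and $\mathfrak{R}(\chi^{-1})$ is strictly dominant for $\overline P$, so Theorem \ref{LangClass} applies to this dual induction relative to $\overline P$.

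Combining the two steps: $I\cong i_{\overline P}^G(\sigma\otimes\chi)$, so $I$ has a unique irreducible quotient and a unique irreducible subrepresentation, and both equal $J:=J(P,\sigma,\chi)$. Because $J$ occurs in $I$ with multiplicity one, the socle $J$ and the cosocle $J$ must be the same constituent. The composite $J\hookrightarrow I\twoheadrightarrow J$ is then nonzero, which splits $J$ off $I$ as a direct summand. But $I$ has a unique irreducible quotient, so $I$ is indecomposable, and therefore $I=J$ is irreducible.

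I expect the main obstacle to be the middle step: identifying the unique subrepresentation of $i_{\overline P}^G(\sigma\otimes\chi)$ with the Langlands quotient of $i_P^G(\sigma\otimes\chi)$. This requires the contragredient bookkeeping to line up with the dominance conventions of Theorem \ref{LangClass}, for which it is enough to check that the exponents of the Jacquet module along $P$ are compatible. A secondary point to check is that applying Proposition \ref{chiconverg} in the reverse direction, to get finiteness of $J_{P|\overline P}$, is legitimate, since the non-vanishing hypothesis on $\chi$ is symmetric.
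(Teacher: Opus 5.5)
Your argument is correct, but after the first step it takes a different route from the paper. The opening is the same: strict dominance puts you in the setting of Proposition \ref{chiconverg} with $P'=\overline P$, and Corollary \ref{munonvanishing} together with finiteness of $\mu(\sigma\otimes\chi)$ makes $J_{\overline P|P}(\sigma\otimes\chi)$ an isomorphism.

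The paper then finishes in one line. It uses the part of the Langlands classification that realizes the Langlands quotient as the image of $J_{\overline P|P}(\sigma\otimes\chi)$, compatibly with the projection from $i_P^G(\sigma\otimes\chi)$. An isomorphism has full image, so $i_P^G(\sigma\otimes\chi)$ equals its Langlands quotient.

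You instead use the isomorphism only abstractly, as $I\cong i_{\overline P}^G(\sigma\otimes\chi)$, and conclude by a socle/cosocle argument. This needs two further inputs: multiplicity one of the Langlands quotient $J$ in the standard module, and the identification of the unique irreducible subrepresentation of $i_{\overline P}^G(\sigma\otimes\chi)$ with $J$. Given these, your deduction is sound: the composite $J\hookrightarrow I\twoheadrightarrow J$ is nonzero by multiplicity one, so $J$ splits off, and irreducibility of the cosocle kills the complement.

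However, the second input, which you rightly flag as the crux, is normally proved by showing that $J$ is the image of $J_{\overline P|P}(\sigma\otimes\chi)$, or at least that $J$ embeds into $i_{\overline P}^G(\sigma\otimes\chi)$. Your contragredient route needs the compatibility $J(\overline P,\sigma^\vee,\chi^{-1})^\vee\cong J(P,\sigma,\chi)$, which is a fact of the same depth. Once you have the image statement, the multiplicity-one detour is superfluous and you are back at the paper's proof.

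What your version buys is a slightly more robust conclusion: irreducibility follows from any abstract isomorphism $i_P^G(\sigma\otimes\chi)\cong i_{\overline P}^G(\sigma\otimes\chi)$, without knowing which map realizes the Langlands quotient. The price is drawing on more of the Langlands classification package. Also, since $\overline P$ is not standard, applying Theorem \ref{LangClass} to it needs a Weyl conjugation to a standard parabolic; this is bookkeeping rather than a gap.
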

\begin{proof}
By Theorem \ref{LangClass} $i_P^G(\sigma\otimes\chi)$ has a unique irreducible quotient $J(\sigma,\chi)$, which is isomorphic to the image $J_{\overline{P}|P}(\sigma\otimes\chi)$ in such a way that composing $J_{\overline{P}|P}(\sigma\otimes\chi)$ with this isomorphism is equal to the standard projection of $i_P^G(\sigma\otimes\chi)$ to $J(\sigma,\chi)$.  
By Corollary \ref{munonvanishing}, since $\mu(\sigma\otimes\chi)$ is finite, $J_{\overline{P}|P}(\sigma\otimes\chi)$ is an isomorphism, and hence $i_P^G(\sigma\otimes\chi)$ is irreducible.
    
\end{proof}

We will make crucial use of the following theorem of Heiermann:
\begin{thm}\label{HeiThm}\cite[Corollaire 8.7]{Hei}
Let $\sigma$ be a supercuspidal representation of $M(F)$ such that $\mathfrak{R}(\sigma)_G=0$.  Then $[M,\sigma]$ is the supercuspidal support of a discrete series representation of $G$ if and only if $\mu$ has a pole at $\sigma$ of order equal to the parabolic rank of $M$.  Furthermore this is the maximal order a pole at $\sigma$ can have.
\end{thm}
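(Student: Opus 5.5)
Since this result is imported from \cite{Hei}, the plan below reconstructs the argument in the setting of Appendix A. It is organized around Harish-Chandra's Plancherel formula for $G(F)$ restricted to the Bernstein block $\mathfrak{s}(M,\sigma)$.

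\textbf{Step 1: the pole bound.} Identify the unramified twists $\sigma\otimes\chi_\lambda$ with a complex torus $\mathcal{O}$ with coordinate $\lambda\in a_M^*\otimes\mathbb{C}$, and view $\mu$ as a rational function on $\mathcal{O}$. By Proposition \ref{proddecomp}, $\mu=\prod_\alpha\mu_\alpha$, and each factor depends only on $\langle\lambda,\check\alpha\rangle$. The rank-one theory (Harish-Chandra and Silberger) then says each $\mu_\alpha$ has poles of order at most two along finitely many hyperplanes $\langle\lambda,\check\alpha\rangle=c$, and that on the unitary axis a pole can only occur at the reducibility point, where it is simple. Combining this with the constraint $\mathfrak{R}(\sigma)_G=0$, I would show that at any $\sigma$ the pole order of $\mu$ is at most the number of linearly independent singular hyperplanes through $\sigma$. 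That number is bounded by $\dim a_M^{*G}$, the parabolic rank of $M$. This gives the ``maximal order'' assertion.

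\textbf{Step 2: pole of maximal order implies a discrete series.} Suppose the pole of $\mu$ at $\sigma$ has order $\operatorname{rk}(M)$, so $\sigma$ is an isolated residual point of $\mu$. Write the Plancherel formula for a test function $f$ in the block as an integral over the unitary twists $\mathfrak{I}(\chi)$ of $\operatorname{tr}(i_P^G(\sigma\otimes\chi)(f))\,\mu(\sigma\otimes\chi)$. Then shift the contour in $\mathfrak{R}(\chi)$ toward $\sigma$, using Corollary \ref{munonvanishing} and Proposition \ref{chiconverg} to control the intertwining operators on the way. Taking iterated residues, the residue at the isolated point yields a nonzero positive-definite distribution supported on the finitely many irreducible subquotients of $i_P^G(\sigma)$. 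By the Plancherel decomposition, this is precisely the contribution of square-integrable representations. So some subquotient is a discrete series representation with supercuspidal support $[M,\sigma]$, and its formal degree equals that residue up to constants.

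\textbf{Step 3: the converse.} If $\pi$ is a discrete series representation with support $[M,\sigma]$, then $\pi$ contributes an atom to the Plancherel measure. By Harish-Chandra's description of the Plancherel density on the continuous part, such an atom can only arise from a residue of $\mu$ of full rank at $\sigma$. Lower-order poles produce residues spread over positive-dimensional tori, that is, tempered induced contributions. To make this precise I would show the Plancherel measure of $\{\pi\}$ is a nonzero multiple of the top-rank residue, so that residue cannot vanish.

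The main obstacle is the residue calculus in Steps 2 and 3: justifying the contour shifts and the independence of the iterated residue from the order of integration, and ruling out cancellation between contributions from different intersection points. I would handle this with Heiermann's approach, which realizes the block as modules over a Hecke-type algebra whose intertwining operators are rational in $\lambda$.
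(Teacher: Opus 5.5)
The paper gives no proof of this statement; it imports it from Heiermann. Your general direction, a residue calculus for $\mu$ inside the Plancherel formula, is reasonable, but as written the proposal has genuine gaps.

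\textbf{Step 1.} The rank-one input is misquoted. $\mu_\alpha$ is regular and nonnegative on the unitary axis. Its poles are simple and occur only off the unitary axis, at the reducibility points. Its zeros lie on the unitary axis and are double.

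More seriously, no factor-by-factor analysis can give ``pole order $\le$ number of independent singular hyperplanes through $\sigma$''. More than $\dim a_M^{*G}$ factors can be singular at $\sigma$, along linearly dependent hyperplanes. Take split $G_2$, $M=T$, and $\sigma=\chi_\lambda$ unramified with $\lambda=h/2$, where $h$ is the neutral element of a subregular ($G_2(a_1)$) $\mathfrak{sl}_2$-triple in $\hat{\mathfrak g}$. Then $\langle\lambda,\check\gamma\rangle=1$ for four positive roots $\gamma$, $=0$ for one, and $=2$ for the last. So four factors have simple poles, one has a double zero, and the pole order (polar factors minus orders of zeros) is $4-2=2=\dim a_T^{*G}$.

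The bound holds only because zeros compensate poles, and proving that they always do is the entire content of the maximal-order assertion. Already for unramified principal series of a split group, the bound is equivalent to $\dim\hat{\mathfrak g}_q(s)\le\dim Z_{\hat G}(s)-\dim Z(\hat G)$. Here $s\in\hat T$ corresponds to $\lambda$ and $\hat{\mathfrak g}_q(s)$ is the $q$-eigenspace of $\mathrm{Ad}(s)$. This inequality follows from $Z_{\hat G}(s)$ having finitely many orbits on $\hat{\mathfrak g}_q(s)$, which is a global fact that nothing in Step 1 supplies.

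\textbf{Steps 2 and 3.} With the integrand $\mu(\sigma\otimes\chi)\operatorname{tr} i_P^G(\sigma\otimes\chi)(f)$, the trace factor is holomorphic. So a point residue at $\sigma$ is built from $\operatorname{tr} i_P^G(\sigma)(f)$ and its derivatives, and it sees every constituent of $i_P^G(\sigma)$, tempered or not. For $GL_2$ and $\sigma=|\cdot|^{1/2}\otimes|\cdot|^{-1/2}$ it is a multiple of $\operatorname{tr}\mathrm{St}(f)+\operatorname{tr}\mathbf{1}(f)$, while the Plancherel measure sees $\mathrm{St}$ but not $\mathbf{1}$. So the residue is not ``precisely the contribution of square-integrable representations''.

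Also, Harish-Chandra's formula on the block is not that single integral. It already contains the discrete series of $G$ and the representations induced from discrete series of Levis $L$ with $M\subsetneq L\subsetneq G$, which are exactly the terms you want to detect. Shifting its contour therefore does not produce them.

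What is actually needed:
\begin{itemize}
\item a starting identity with no residual terms;
\item an integrand whose residues can be shown to be tempered;
\item an identification of the residual terms with Plancherel terms, via uniqueness of the Plancherel measure and induction over the intermediate Levis;
\item a separate argument that the residue at a pole of maximal order is nonzero. When more than $\dim a_M^{*G}$ polar hyperplanes pass through $\sigma$, as in the $G_2$ example, residues along different flags can cancel.
\end{itemize}
Step 3 presupposes this whole decomposition. These are exactly the points you defer to ``Heiermann's approach'', so what you have is an outline of the cited proof rather than a proof.
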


We now wish to drop the $\mathfrak{R}(\sigma)_G=0$ assumption in the above theorem.  To do so we will need some notation. 
Let $K$ be a maximal compact subgroup of $G(F)$ which is the stabilizer of a special point in the apartment of $A_0$.  Using the Iwasawa decomposition $G=MUK$ it is well known that the restriction map from $I_P^G(\sigma)$ to $I_{P\cap K}^K(\sigma|_{M\cap K})$ is an isomorphism.
Note that for any unramified character $\chi$, $I_{P\cap K}^K(\sigma|_{M\cap K})$ and $I_{P\cap K}^K((\sigma\otimes\chi)|_{M\cap K})$ have the same underlying vector spaces.
This will be used in the proof of the following lemma:
\begin{lem}\label{essentialinvariance}
Let $\chi$ be an unramified character of $G(F)$ restricted to $M(F)$. Then $J_{\overline{P}|P}$ has the same order of vanishing at $\sigma$ and $\sigma\otimes\chi$.
\end{lem}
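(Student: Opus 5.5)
The plan is to exhibit $J_{\overline{P}|P}(\sigma\otimes\chi)$ as $J_{\overline{P}|P}(\sigma)$ conjugated by isomorphisms that are constant along the family $\sigma\otimes\chi_\lambda$. The key input is the fact just recalled: restriction to $K$ identifies every $I_P^G(\sigma\otimes\chi')$ with the fixed space $I_{P\cap K}^K(\sigma|_{M\cap K})$, and likewise for $\overline{P}$. I will work in this compact picture, where the intertwining operators become a meromorphic family of operators on one fixed vector space. Order of vanishing (or of pole) then makes sense as the order at the relevant point of a meromorphic operator-valued function in the unramified parameter $\lambda\in a_M^*\otimes\mathbb{C}$, restricted to any line, or equivalently measured on matrix coefficients.

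The first step uses that $\chi$ is the restriction of an unramified character of $G(F)$. Then multiplication by $\chi$, $f\mapsto \chi\cdot f$ with $(\chi f)(g)=\chi(g)f(g)$, gives a $G(F)$-equivariant isomorphism $m_\chi\colon i_P^G(\sigma)\otimes\chi\to i_P^G(\sigma\otimes\chi)$. This holds because $\chi(pg)=\chi|_M(p)\chi(g)$ for $p\in P$, with $\chi$ trivial on $U$. The same formula gives an isomorphism for $\overline{P}$. Since $\chi$ is trivial on $K$, the map $m_\chi$ is the identity in the compact picture.

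The second step is to check the commutation
\[
J_{\overline{P}|P}(\sigma\otimes\chi\otimes\chi_\lambda)\circ m_\chi=m_\chi\circ J_{\overline{P}|P}(\sigma\otimes\chi_\lambda),
\]
first for $\lambda$ in the range where the defining integral converges. There it follows directly from the integral formula, because $\chi(u'g)=\chi(g)$ for $u'\in\overline{U}$, as $\chi$ is trivial on unipotent elements. I then extend to all $\lambda$ by meromorphic continuation, using \cite[Th\'eor\`eme IV.1.1]{Wal} and uniqueness of continuation.

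Writing $\chi=\chi_{\lambda_0}$ with $\lambda_0\in a_G^*\otimes\mathbb{C}$ gives $J_{\overline{P}|P}(\sigma\otimes\chi_{\lambda_0+\lambda})=J_{\overline{P}|P}(\sigma\otimes\chi_\lambda)$ as operators on the fixed $K$-space. The family near $\lambda_0$ is therefore the translate of the family near $0$, so the orders at $\sigma$ and at $\sigma\otimes\chi$ agree, whichever direction of approach is used to define the order. The main obstacle is making precise which notion of order is meant, and ensuring that the translation by $\lambda_0$ respects it. I expect this to reduce to observing that the identity holds for all $\lambda$ in a neighborhood, so every directional or multivariable order coincides.
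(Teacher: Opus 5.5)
Your proposal is correct and follows the paper's proof: both use triviality of $\chi$ on unipotent elements to get $J_{\overline{P}|P}(\sigma\otimes\chi)=J_{\overline{P}|P}(\sigma)\otimes\chi$, and triviality of $\chi$ on $K$ to identify the two operators on the fixed space $I_{P\cap K}^K(\sigma|_{M\cap K})$. Since this identification holds across the whole unramified family, the orders agree. Your explicit treatment of meromorphic continuation and of the meaning of ``order'' only fills in details the paper leaves implicit in its remark ``since $\sigma$ was arbitrary.''
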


\begin{proof}

Since $\chi$ is trivial on unipotent elements of $G(F)$, $J_{\overline{P}|P}(\sigma\otimes\chi) = J_{\overline{P}|P}(\sigma)\otimes\chi$.
Then triviality of $\chi$ on $K$ and the isomorphisms $I_P^G(\sigma)\cong I_{P\cap K}^K(\sigma|_{M\cap K})$ and $I_P^G(\sigma\otimes\chi)\cong I_{P\cap K}^K((\sigma\otimes\chi)|_{M\cap K})\cong I_{P\cap K}^K(\sigma|_{M\cap K})$ show that $J_{\overline{P}|P}(\sigma)=J_{\overline{P}|P}(\sigma\otimes\chi)$ as operators on $I_{P\cap K}^K(\sigma|_{M\cap K})$.
Since $\sigma$ was arbitrary, $J_{\overline{P}|P}(\sigma)$ and $J_{\overline{P}|P}(\sigma\otimes\chi)$ have the same order of vanishing at all points.
    
\end{proof}

\begin{cor}\label{HeiCor}
Let $\sigma$ be a supercuspidal representation of $M(F)$.  Then $[M,\sigma]$ is the supercuspidal support of an essentially discrete series representation of $G$ if and only if $\mu$ has a pole at $\sigma$ of order equal to the parabolic rank of $M$, which is the maximum possible order $\mu$ could attain at $\sigma$.
\end{cor}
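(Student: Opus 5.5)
The plan is to reduce Corollary \ref{HeiCor} to Theorem \ref{HeiThm} by twisting $\sigma$ by an unramified character of $G(F)$ that kills $\mathfrak{R}(\sigma)_G$. Write $a_M^* = a_G^*\oplus a_M^{*G}$ and decompose $\mathfrak{R}(\sigma)=\mathfrak{R}(\sigma)_G+\mathfrak{R}(\sigma)^G$. Choose $\lambda\in a_G^*$ with $\lambda = -\mathfrak{R}(\sigma)_G$. Then $\chi:=\chi_\lambda$ is an unramified character of $G(F)$ (restricted to $M(F)$), and $\sigma':=\sigma\otimes\chi$ satisfies $\mathfrak{R}(\sigma')_G=0$, so Theorem \ref{HeiThm} applies to $\sigma'$.

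Next, I will check that both sides of the claimed equivalence are unchanged under replacing $\sigma$ by $\sigma'$. On the representation side, $\pi\mapsto\pi\otimes\chi$ is a bijection between irreducible representations with supercuspidal support $[M,\sigma]$ and those with support $[M,\sigma']$, since $i_P^G(\sigma)\otimes\chi\cong i_P^G(\sigma\otimes\chi|_M)$. Twisting by an unramified character of $G$ also preserves being essentially discrete series. Moreover, an essentially discrete series representation with support $[M,\sigma']$ can be twisted by a further unramified (unitary-modulo-center) character of $G$ so that its central character is unitary, i.e.\ becomes a genuine discrete series. Such a twist lies in $a_G^*$ and does not change $\mathfrak{R}(\sigma')_G=0$ (an honest discrete series has unitary central character, which forces the $a_G^*$-component of the real part of its cuspidal support to vanish). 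So ``$[M,\sigma]$ supports an essentially discrete series'' is equivalent to ``$[M,\sigma']$ supports a discrete series''. On the analytic side, Lemma \ref{essentialinvariance} shows $J_{\overline P|P}$ has the same order of vanishing at $\sigma$ and at $\sigma'$. The same argument, with the identical identification of underlying spaces via $I_{P\cap K}^K$, applies to $J_{P|\overline P}$. Since $\mu_G^{-1}$ is, up to the nonzero constants $\gamma$, the composite $J_{P|\overline P}J_{\overline P|P}$ acting on this fixed space, $\mu$ has the same pole order at $\sigma$ and $\sigma'$. More precisely, $\mu(\sigma\otimes\chi\otimes\eta)=\mu(\sigma\otimes\eta)$ as meromorphic functions of unramified $\eta$ of $M$, so pole orders along the inertial orbit match.

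Combining these with Theorem \ref{HeiThm} for $\sigma'$ gives the equivalence, together with the statement that the parabolic rank of $M$ is the maximal possible pole order.

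The main obstacle is the step that transports pole orders of $\mu$ rather than vanishing orders of a single intertwiner. Lemma \ref{essentialinvariance} is stated only for $J_{\overline P|P}$, and ``order of a pole of $\mu$'' means the order in the several-variable unramified parameter. I would handle this by proving the stronger functional identity $\mu(\sigma\otimes\chi\otimes\eta)=\mu(\sigma\otimes\eta)$ directly. Since $\chi$ is trivial on $U(F)$, $\overline U(F)$ and $K$, both defining integrals are literally unchanged on the compact model, and this works before meromorphic continuation, in the convergence region. The identity then extends by uniqueness of meromorphic continuation.
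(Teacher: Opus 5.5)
Your proposal is correct and follows the paper's route: the paper's proof writes an essentially discrete series as a discrete series twisted by an unramified character of $G(F)$, then cites Theorem \ref{HeiThm} and Lemma \ref{essentialinvariance}. Your version makes explicit two points the paper leaves implicit, namely choosing the twist in $a_G^*$ so that $\mathfrak{R}(\sigma\otimes\chi)_G=0$, and strengthening Lemma \ref{essentialinvariance} (which only concerns $J_{\overline P|P}$) to the identity $\mu(\sigma\otimes\chi\otimes\eta)=\mu(\sigma\otimes\eta)$, which is what is actually needed to transport pole orders of $\mu$.
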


\begin{proof}
Since an essentially discrete series representation is by  definition the tensor product of a discrete series representation and an unramified character of $G(F)$, this follows by Theorem \ref{HeiThm} and Lemma \ref{essentialinvariance}.
\end{proof}

We now arrive at the main theorem of this section:

\begin{thm}\label{GenCompRed}
Let $\pi$ be a smooth irreducible representation of $G(F)$ with supercuspidal support $[M,\sigma]$, and let $P=MU$ be a parabolic with Levi $M$.
Assume that $G$ has a classical local Langlands correspondence as in Section \ref{ClassicalLLC},\footnote{It is not necessary for $G$ to have a full $B(G)$-parametrized LLC; it is only necessary for the L-packets of $G$ itself to satisfy Properties \ref{ClassLLCProp}.  The L-packets of $G_b$ when $b\neq1$ are not used in the theorem.} and that the L-parameter $\phi$ of $\pi$ is generic in the sense of Definition \ref{GenLParamDef}.  
Then $i_P^G(\sigma)$ is completely reducible.
\end{thm}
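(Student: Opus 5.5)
We plan to show that $i_P^G(\sigma)$ decomposes completely by reducing to a statement about intertwining operators at the point $\sigma$. Write $\sigma\cong\sigma_0\otimes\chi$ with $\sigma_0$ unitary supercuspidal. By \cite[Corollary 6.6]{KV}-style arguments (the semisimplification of $i_P^G(\sigma)$ is independent of $P$), we may first choose $P$ so that $\mathfrak{R}(\chi)$ is dominant with respect to $P$. Let $L\supseteq M$ be the standard Levi that centralizes $\mathfrak{R}(\chi)$. Then $\langle\mathfrak{R}(\chi),\check\alpha\rangle=0$ exactly for the roots $\alpha$ of $A_M$ lying in $L$. We factor $i_P^G(\sigma)=i_{PL}^G\bigl(i_{P\cap L}^L(\sigma)\bigr)$. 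The inner induction $i_{P\cap L}^L(\sigma)$ is, up to an unramified twist that is central in $L$ (this is harmless by Lemma \ref{essentialinvariance}), a parabolic induction of a unitary supercuspidal. Such an induction is unitary and admissible, so it is completely reducible, say $\bigoplus_j\tau_j$ with each $\tau_j$ a direct summand of a parabolically induced unitary supercuspidal twisted by the character $\chi_L$ of $L$, where $\mathfrak{R}(\chi_L)$ is strictly dominant for $PL$.

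It then suffices to show that each $i_{PL}^G(\tau_j)$ is irreducible. By Proposition \ref{mufiniteimpliesirred} applied with $L$ in place of $M$, this reduces to showing that $\mu_G(\tau_j)$ is finite. Using Proposition \ref{proddecomp} together with the transitivity of intertwining operators and of $\mu$ through $\tilde P$ (as in the proof of Proposition \ref{chiconverg}), we can write $\mu_G(\tau_j)$ in terms of the factors $\mu_\alpha(\sigma)$ for the reduced roots $\alpha$ of $A_M$ not in $L$. So the key claim is: for every such $\alpha$, $\mu_{M_\alpha}$ has no pole at $\sigma$.

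To prove the key claim we use genericity. Suppose $\mu_\alpha$ had a pole at $\sigma$. The pole can have order at most the parabolic rank of $M$ in $M_\alpha$, which is one. So the order is exactly one, and Corollary \ref{HeiCor}, applied to the group $M_\alpha$, shows that $[M,\sigma]$ is the supercuspidal support of an essentially discrete series representation $\delta$ of $M_\alpha(F)$. By Property \ref{ClassLLCProp}(2) the parameter of $\delta$ is essentially discrete for $M_\alpha$. By Property \ref{ClassLLCProp}(4) its semisimplification equals $\iota_M\circ\phi_\sigma^{\text{ss}}$, and pushing this into $^LG$ gives $\phi^{\text{ss}}$. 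Now $\phi_\delta$ does not factor through the proper Levi $^LM$, yet its semisimplification does. A semisimple parameter of this kind cannot be essentially discrete, so $\phi_\delta$ must have nontrivial monodromy. Its image in $^LG$ is then a parameter with nontrivial monodromy whose semisimplification is $\phi^{\text{ss}}$, which contradicts the genericity of $\phi$ (Definition \ref{GenLParamDef}). This argument needs the L-packets of the Levis $M_\alpha$, which we take to be part of the assumed classical correspondence.

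The main obstacle is the reduction in the second step. We must check carefully that the finiteness of $\mu_G(\tau_j)$ really is controlled by the $\mu_\alpha(\sigma)$ for $\alpha\notin L$ alone. The factors with $\alpha\subset L$ lie on the unitary axis, where $\mu$ may vanish but has no poles that matter, and we must make sure these factors do not interfere. We must also check that passing from $\sigma$ to the summands $\tau_j$ does not introduce poles. We would handle this as in Proposition \ref{chiconverg}: realize $J_{\overline{PL}|PL}(\tau_j)$ as a direct summand of $J(\sigma)$ restricted to the roots outside $L$, and use Corollary \ref{munonvanishing} on both compositions.
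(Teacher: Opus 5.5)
Your outline follows the paper's proof closely. You factor through the Levi $L$ cut out by $\mathfrak{R}(\chi)$ and use that $i^L_{P\cap L}(\sigma)$ is a central unramified twist of a unitary induction. You reduce irreducibility of the induced pieces to the absence of poles of the rank-one functions $\mu_{M_\beta}$ at $\sigma$ for $\beta\notin L$. You then rule those poles out with Corollary~\ref{HeiCor}, Properties~\ref{ClassLLCProp}(2),(4), and genericity. You are right that this uses the L-packets of the Levis $M_\beta$, despite the footnote to the statement.

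The gap is in your first step. Independence of the semisimplification of $i_P^G(\sigma)$ from $P$ does not let you replace $P$ by a parabolic for which $\mathfrak{R}(\chi)$ is dominant. Complete reducibility is not a property of the semisimplification: a nonsplit extension and the direct sum of its pieces have the same Jordan--H\"older factors. The theorem is asserted for every $P$ with Levi $M$, so as written your reduction is unjustified for non-dominant $P$.

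The paper closes this at the end of its proof. It takes $P'=(P\cap L)U_{Q'}$, so that the only roots flipped between $P$ and $P'$ are roots $\alpha$ with $\langle\mathfrak{R}(\chi),\check\alpha\rangle\neq0$. Then $J_{P'|P}(\sigma)$ and $J_{P|P'}(\sigma)$ are finite by Proposition~\ref{chiconverg}. Both composites equal a nonzero constant times $\prod_\alpha\mu_\alpha(\sigma)^{-1}$ over these flipped roots. This product is finite because no $\mu_\alpha(\sigma)$ vanishes, by the finiteness of the rank-one operators. It is nonzero because no $\mu_\alpha$ has a pole at $\sigma$, which is your key claim. Hence $i_P^G(\sigma)\cong i_{P'}^G(\sigma)$.

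Alternatively, you can keep your semisimplification idea by applying it one level up, to irreducibility, which \emph{is} detected by semisimplification:
\begin{itemize}
\item Once $i_{Q'}^G(\tau_j)$ is irreducible for the dominant $Q'$, $i_Q^G(\tau_j)$ is irreducible for every parabolic $Q$ with Levi $L$.
\item For every $P$, the induction $i^L_{P\cap L}(\sigma)$ is completely reducible with $P$-independent semisimplification. Hence it is isomorphic to $\bigoplus_j\tau_j$.
\item Since $L$ does not depend on $P$, this gives $i_P^G(\sigma)\cong\bigoplus_j i^G_{PL}(\tau_j)$, which is completely reducible.
\end{itemize}
Either argument closes the gap, but one of them has to be supplied.
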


\begin{rem}

In \cite[Appendix A]{HamEis} Hamann studies intertwining operators for principal series, and gives sufficient conditions for the parabolic induction of a character of a maximal torus to be irreducible.  Essentially, the condition is equivalent to requiring $S_\phi$ be a torus, which can be viewed as a stronger form of generosity that he calls \textit{generic regular} (the actual definition consists of generosity and the requirement that the character not be fixed by any Weyl group element).
    
\end{rem}

\begin{proof}
Without loss of generality we may assume $P$ is standard.  
Write $\sigma$ as $\sigma_0\otimes\chi$ where $\sigma_0$ is unitary and $\chi$ is an unramified character of $M(F)$.
We define $$A_L:=\bigcap\limits_{\{\alpha:\langle\mathfrak{R}(\chi), \check{\alpha}\rangle= 0\}} \text{Ker}(\alpha)^\circ$$ and the semistandard Levi subgroup $L:=C_G(A_L)$.
Choose $\lambda\in a_M^*$ such that $\chi(\cdot)=q^{-\langle\lambda,H_M(\cdot)\rangle}$.

\begin{lem}
$\textup{Re}(\chi)$ can be uniquely extended to a positive unramified character of $L(F)$.  Therefore there is a canonical isomorphism $i_{P\cap L}^L(\sigma)\cong i_{P\cap L}^L(\sigma_0\otimes\textup{Im}(\chi))\otimes\textup{Re}(\chi)_L$.
\end{lem}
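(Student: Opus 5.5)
The plan is to show that the real part $\lambda_{\mathbb R}:=\mathfrak{R}(\chi)\in a_M^*$ is orthogonal to every coroot of $A_M$ that lies in $a_M^L$. This would place it in $a_L^*$, and it would then define a character of $L(F)$ via $H_L$.

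By definition $A_L$ is the identity component of the intersection of the kernels of those roots $\alpha$ with $\langle\mathfrak{R}(\chi),\check\alpha\rangle=0$. So $L=C_G(A_L)$ is the semistandard Levi whose roots relative to $A_M$ are exactly the roots in the rational span of these $\alpha$. Write $\Sigma_L$ for the set of such roots; since $A_L$ is cut out by them, it is closed under rational linear combinations among roots. The coroots spanning $a_M^L$ are then the $\check\beta$ with $\beta\in\Sigma_L$.

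Each $\check\beta$ is a rational combination of the coroots $\check\alpha$ with $\langle\mathfrak{R}(\chi),\check\alpha\rangle=0$; I expect to verify this by passing through a $W_L(M)$-invariant inner product, under which coroots are positive multiples of roots. It follows that $\langle\mathfrak{R}(\chi),\check\beta\rangle=0$. Using the orthogonal decomposition $a_M^*=a_L^*\oplus a_M^{*L}$ recalled above, this says $\mathfrak{R}(\chi)\in a_L^*$.

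For existence, set $\mathrm{Re}(\chi)_L(l):=q^{-\langle\mathfrak{R}(\chi),H_L(l)\rangle}$. Its restriction to $M(F)$ equals $\mathrm{Re}(\chi)$, because $H_L|_{M}$ is the composite of $H_M$ with the projection $a_M\to a_L$, and $\mathfrak{R}(\chi)$ annihilates $a_M^L$. This character is positive and unramified.

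For uniqueness, suppose two positive unramified characters of $L(F)$ agree on $M(F)$. Their ratio is a positive unramified character of $L(F)$, which has the form $q^{-\langle\mu,H_L\rangle}$ with $\mu\in a_L^*$ real. The ratio is trivial on $M(F)$. The image $H_L(M(F))$ is a lattice of full rank in $a_L$; I will justify this by restricting to $A_L(F)\subset M(F)$. So the ratio vanishes there, giving $\mu=0$.

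The canonical isomorphism then follows formally. We have $\sigma=\sigma_0\otimes\mathrm{Im}(\chi)\otimes\mathrm{Re}(\chi)$, and $\mathrm{Re}(\chi)_L$ is trivial on $U\cap L$. Multiplying functions by $\mathrm{Re}(\chi)_L$ therefore identifies $i_{P\cap L}^L(\sigma)$ with $i_{P\cap L}^L(\sigma_0\otimes\mathrm{Im}(\chi))\otimes\mathrm{Re}(\chi)_L$, using $\mathrm{Re}(\chi)_L(pl)=\mathrm{Re}(\chi)(m_p)\mathrm{Re}(\chi)_L(l)$.

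The main obstacle is the root-theoretic step: showing that every root of $L$ is spanned by roots orthogonal to $\mathfrak{R}(\chi)$. This is subtle for a relative, possibly non-reduced root system of $A_M$, which is not a genuine root system when $M$ is not minimal. I would handle it by working entirely with the description of $L$ as the centralizer of $A_L$, which sidesteps any closure property of the orthogonal roots.
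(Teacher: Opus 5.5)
Your proposal is correct and follows the paper's route: show that $\mathfrak{R}(\chi)$ annihilates $a_M^L$, so it descends to $a_L^*$; set $\mathrm{Re}(\chi)_L=q^{-\langle\mathfrak{R}(\chi),H_L(\cdot)\rangle}$; and obtain the isomorphism by multiplying functions by $\mathrm{Re}(\chi)_L^{\pm1}$. You also supply two things the paper only asserts, namely why the coroots of the roots of $L$ are killed by $\mathfrak{R}(\chi)$ (the paper simply identifies $a_M^L$ with the span of the coroots used to define $L$), and a genuine uniqueness argument for the extended character (the paper records only uniqueness of $\mathfrak{R}(\chi)_L\in a_L^*$). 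One small fix for the root--coroot step: use the restriction to $a_M$ of a $W_G$-invariant inner product on $X_*(A_0)\otimes\mathbb{R}$, not merely a $W_L(M)$-invariant one, because $W_L(M)$ can be trivial (e.g.\ $M=\mathrm{GL}_2\times\mathrm{GL}_1\subset L=\mathrm{GL}_3$), and then it does not force $\check\beta\in a_M^{M_\beta}$ to be proportional to the vector representing $\beta$.
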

\begin{proof}
Let $p:a_M\to a_L$ be the canonical projection with kernel $a_M^L$ the span of the coroots $\check\alpha$ used to define $L$.
Since $\langle\mathfrak{R}(\chi),\cdot\rangle: a_M\to\mathbb{C}$ annihilates $a_M^L$ it factors through $a_L$, so $\langle\mathfrak{R}(\chi),-\rangle =p^*\langle\mathfrak{R}(\chi)_L,-\rangle$ for some unique $\mathfrak{R}(\chi)_L\in a_L^*$.
Then $\text{Re}(\chi)_L(\cdot):=q^{-\langle \mathfrak{R}(\chi)_L,H_L(\cdot)\rangle}$ is the desired extension of $\text{Re}(\chi)$ to $L(F)$.
The intertwining operator in the last claim is given by multiplication by $\text{Re}(\chi)_L^{-1}$.
\end{proof}

Since $\sigma_0\otimes\text{Im}(\chi)$ is unitary, $i_{P\cap L}^L(\sigma_0\otimes\text{Im}(\chi))$, and hence $i_{P\cap L}^L(\sigma)\cong i_{P\cap L}^L(\sigma_0\otimes\text{Im}(\chi))\otimes\textup{Re}(\chi)_L$, is completely reducible.
Since temperedness is preserved under parabolic induction, $i_{P\cap L}^L(\sigma_0\otimes\text{Im}(\chi))$ is tempered.
Let $Q'$ be the semistandard parabolic subgroup of $G$ with Levi $L$ such that $\langle\mathfrak{R}(\chi),\check{\alpha}\rangle>0$ for every root $\alpha$ of $U_{Q'}$, and let $P' = (P\cap L)U_{Q'}$.
For each irreducible constituent $\tilde\sigma$ of $i_{P'\cap L}^L(\sigma_0\otimes\text{Im}(\chi))$, by the Langlands classification $i_{Q'}^G(\tilde{\sigma}\otimes\textup{Re}(\chi)_L)$ has a unique irreducible quotient.
By Proposition \ref{mufiniteimpliesirred}, $i_{Q'}^G(\tilde\sigma\otimes \text{Re}(\chi)_L)$ is reducible only if $\mu^L_G$ acquires a pole at $\tilde\sigma\otimes \text{Re}(\chi)_L$.
Suppose this to be the case.
Then by Proposition \ref{proddecomp} there must exist a root $\alpha$ of $A_L$ such that $\mu^L_{L_\alpha}$ has a pole at $\tilde\sigma\otimes \text{Re}(\chi)_L$, and therefore $J_{Q'\cap L_\alpha|\overline{Q'}\cap L_\alpha}\circ J_{\overline{Q'}\cap L_\alpha|Q'\cap L_\alpha}$ has a zero at $ \tilde\sigma\otimes \text{Re}(\chi)_L$.

Let $\textbf{X}_\alpha$ denote the set of positive reduced roots of $A_M$ which are roots of $L_\alpha$ but not $L$ (ie those restricting to a positive multiple of $\alpha$), and let $\tilde{P}_\alpha'$ denote the parabolic $(P'\cap L)\cdot \overline{U}_{Q'\cap L_\alpha}$ of $L_\alpha$.
A definition chase shows \[J_{\overline{Q'}\cap L_\alpha|Q'\cap L_\alpha}(i_{Q'\cap L_\alpha}^{L_\alpha}i_{P'\cap L}^L(\sigma))\cong J_{\tilde{P}_\alpha'|P'\cap L_\alpha}(i_{P'\cap L_\alpha}^{L_\alpha}(\sigma))\] as operators when the defining integrals converge, and in general the isomorphism follows since both operators are rational.
Combining this with the analogous isomorphism in the other direction it follows that 
\begin{align*}
J_{Q'\cap L_\alpha|\overline{Q'}\cap L_\alpha}\circ J_{\overline{Q'}\cap L_\alpha|Q'\cap L_\alpha}(i_{Q'\cap L_\alpha}^{L_\alpha}i_{P'\cap L}^{L}(\sigma)) \cong J_{P'\cap L_\alpha|\tilde{P}_\alpha'}\circ J_{\tilde{P}_\alpha'|P'\cap L_\alpha}(i_{P'\cap L_\alpha}^{L_\alpha}(\sigma))
\end{align*}

By \cite[IV.3(4)]{Wal} and the definition of the $\mu$ function the right side is, up to a nonzero constant independent of $\sigma$, isomorphic to $$(\prod\limits_{\beta\in\textbf{X}_\alpha}\mu_{M_{\beta}}(\sigma)^{-1})i_{P'\cap L_\alpha}^{L_\alpha}(\sigma)\cong (\prod\limits_{\beta\in\textbf{X}_\alpha}\mu_{M_{\beta}}(\sigma)^{-1})\bigoplus\limits_ii_{Q'\cap L_\alpha}^{L_\alpha}(\tilde\sigma_i\otimes\text{Re}(\chi)_L)$$

On the other hand, left side is
\begin{align*}
J_{Q'\cap L_\alpha|\overline{Q'}\cap L_\alpha}\circ J_{\overline{Q'}\cap L_\alpha|Q'\cap L_\alpha}(i_{Q'\cap L_\alpha}^{L_\alpha}i_{P'\cap L}^{L}(\sigma))&\cong J_{Q'\cap L_\alpha|\overline{Q'}\cap L_\alpha}\circ J_{\overline{Q'}\cap L_\alpha|Q'\cap L_\alpha}(i_{Q'\cap L_\alpha}^{L_\alpha}\bigoplus\limits_i\tilde\sigma_i\otimes\text{Re}(\chi)_L) \\&\cong \bigoplus\limits_iJ_{Q'\cap L_\alpha|\overline{Q'}\cap L_\alpha}\circ J_{\overline{Q'}\cap L_\alpha|Q'\cap L_\alpha}(i_{Q'\cap L_\alpha}^{L_\alpha}(\tilde\sigma_i\otimes\text{Re}(\chi)_L))\\ &\cong\bigoplus\limits_i\mu^L_{L_\alpha}(\tilde\sigma_i\otimes\text{Re}(\chi)_L)^{-1}i_{Q'\cap L_\alpha}^{L_\alpha}(\tilde\sigma_i\otimes\text{Re}(\chi)_L)
\end{align*}

And equating the two expressions it follows that if $\mu^L_{L_\alpha}(\tilde\sigma\otimes\text{Re}(\chi)_L)^{-1}$ vanishes then so must $\mu_{M_\beta}(\sigma)^{-1}$ for some $\beta\in \textbf{X}_\alpha$.

By Corollary \ref{HeiCor} the pole of $\mu_{M_\beta}$ at $\sigma$ has order at most $1$, and is therefore simple.
Applying Corollary \ref{HeiCor} again, $i_{P'\cap M_\beta}^{M_\beta}(\sigma)$ has an essentially discrete series subquotient $\tilde{\pi}_\beta$.
By Property \ref{ClassLLCProp} (4) the semisimple L-parameters of $\sigma$, $\tilde{\sigma}\otimes\textup{Re}(\chi)_L$, $\tilde{\pi}_\beta$, and $\pi$ all agree after composing along the respective inclusions from $^LM$, $^LL$, and $^LM_\beta$ into $^LG$.
By Property \ref{ClassLLCProp}(2) the L-parameter $\phi_{\tilde{\pi}_\beta} $ of $\tilde{\pi}_\beta$ is essentially discrete.
Since the semisimple L-parameter of $\tilde\pi_\beta$ factors through $M$, its L-parameter must have nontrivial monodromy, and hence $\phi^{\text{ss}}_\pi = \iota_{M_\beta}\circ\phi^{\text{ss}}_{\tilde{\pi}_\beta}$ cannot be generic.

Since $\phi^{\text{ss}}_\pi$ is generic by assumption, we conclude that $\mu_{M_\beta}$ cannot have a pole at $\sigma$, and therefore $i_{Q'}^G(\tilde{\sigma}\otimes\textup{Re}(\chi)_L)$ is irreducible.
It follows that $i_{P'}^G(\sigma)$ is completely reducible.

Now by Proposition \ref{chiconverg} $J_{P'|P}(\sigma)$ and $J_{P|P'}(\sigma)$ are finite.
By \cite[IV.3(4)]{Wal}, up to a nonzero constant independent of $\sigma$, both $J_{P|P'}\circ J_{P'|P}$ and $J_{P'|P}\circ J_{P|P'} $ are given by $  \prod \mu_\alpha^{-1} $ where the product is over the reduced roots $\alpha$ of $U$ for which $\langle\mathfrak{R}(\chi),\check\alpha\rangle<0$.
To show $J_{P'|P}(\sigma)$ is an isomorphism it therefore suffices to show every factor $\mu_\alpha^{-1}(\sigma)$ is nonzero, which follows since $\mu_\alpha(\sigma)$ does not have a pole.
Therefore $i_P^G(\sigma)\cong i_{P'}^G(\sigma)$ is completely reducible.

\end{proof}

\end{document}